\newcommand{\Ex}[1]{\mathbb{E}_{\,\delta_x}\hspace{-0.05cm}\left( #1\right)}
\newcommand{\Px}[1]{\mathbb{P}_{\delta_x}\hspace{-0.05cm}\left( #1\right)}
\newcommand{\sbullet}{\raisebox{0.4mm}{\scalebox{0.6}{\textbullet}}}
\newcommand{\Pio}{\Pi^{\raisebox{0.4mm}{\scalebox{0.8}{$\perp$}}}}
\newcommand{\Cpio}{C_{\Pio}}
\newcommand{\Cpi}{C_{\Pi}}
\newcommand{\Oun}{\mathcal{O}(1)}
\newcommand{\pcun}{\mathbbm{1}}
\newcommand{\MB}{\mathscr{B}} 
\newcommand{\MBZ}{\mathscr{B}_{0}} 
\newcommand{\CB}{C_{b}(\RR)}
\newcommand{\CBZ}{C_{0}(\RR)}
\newcommand{\mybox}[2]{\makebox[#1cm]{#2}}
\newcommand\stackarrow[3]{%
\mathrel{\stackunder[2pt]{\stackon[-2pt]
{\xrightarrow{\mybox{#3}{}}}
{\hbox{\footnotesize $#1$}}}
{\hbox{\footnotesize $#2$}}}}
\newcommand{\unf}{u^{(f)}_{n}}
\newcommand{\un}{u_{n}}
\newcommand{\vsbullet}{\,\sbullet\,}
 \newcommand{\MBD}{\mathscr{B}_{\tau}}
\newcounter{nquestion}
\newcommand{\B}{\mathscr{B}}    
\newcommand{\CC}{\mathbb{C}}    
\newcommand{\e}{\varepsilon}   
\newcommand{\E}{\mathcal{E}}    
\newcommand{\EE}{\mathbb{E}}     
\newcommand{\F}{\mathcal{F}}    
\newcommand{\ind}{\mathbbm{1}} 
\newcommand{\NN}{\mathbb{N}}
\newcommand{\PP}{\mathbb{P}}    
\newcommand{\RR}{\mathbb{R}}    
\newcommand{\support}{\mathop{\rm supp}}
\numberwithin{equation}{section}
\theoremstyle{plain}
\newtheorem{theorem}{Theorem}[section]
\newtheorem{lemma}[theorem]{Lemma}
\newtheorem{proposition}[theorem]{Proposition}
\newtheorem{corollary}[theorem]{Corollary}
\newtheorem*{remark}{Remark}
\theoremstyle{definition}
\title{Long time behavior and Yaglom limit for real trait-structured Birth and Death Processes}
\author{Pierre Collet\hspace{-0.1cm}
\thanks{CNRS, Ecole polytechnique, Institut polytechnique de Paris, route de
Saclay, 91128 Palaiseau Cedex-France; E-mail:
pierre.collet@cpht.polytechnique.fr} ,
Sylvie M\'el\'eard\hspace{-0.1cm}
\thanks{CMAP, Ecole polytechnique, CNRS, Institut polytechnique de Paris, Inria, route de
Saclay, 91128 Palaiseau Cedex-France; E-mail: 
sylvie.meleard@polytechnique.edu} ,
Jaime San Mart\'in\hspace{-0.1cm}
\thanks{CMM-DIM;  Universidad de Chile; UMI-CNRS 2807; BASAL FB210005; Santiago; Chile. E-mail: jsanmart@dim.uchile.cl} .}
\begin{document}
\parindent=0pt
\maketitle

\begin{abstract}
In this article we study the long time behaviour of measure-valued birth and death processes in continuous time, where the 
dynamics between jumps are one-dimensional Markov processes including diffusion and jumps. We consider the three regimes, 
critical, subcritical and supercritical. Under suitable hypotheses on the Feynman-Kac semigroup, we prove a new  recurrence 
for the moments and the extinction probability, their time asymptotics and the convergence in law for the measure-valued 
birth and death process  conditioned to non extinction, leading to the existence of  $Q$-process and Yaglom limit (in this infinite 
dimensional setting). We develop three classes of natural examples where our results apply. 
\end{abstract}

Keywords: measure-valued process, branching diffusion process, Feynman-Kac semigroup, survival probability, moments recursion, $Q$-process. 

AMS Subject Classification: 60J85, 60J57, 47D07, 47D08.

\tableofcontents
\section{Introduction}

We consider a population dynamics where individuals are characterized by a trait
$x\in \mathbb{R}$ (or a position) and give birth and die in continuous time. More precisely we consider a
branching Markov process $(Z_{t})$ defined as follows: 
an individual with trait $x$ gives birth to a (unique) new individual with the same trait at rate $b(x)\ge 0$, 
and dies with rate $d(x)\ge 0$. 
During their life the individual
trait (or position) variations are modeled by a 
 Markov process  $(X_{t})$ with infinitesimal generator $\mathscr{G}$ accounting for infinitesimal  or macroscopic changes of 
the phenotypes (or of the positions) due for example to adaptation to a changing environment (see \cite{calvez22}, \cite{henry23}). 
We have in mind three classes of examples detailed in Section \ref{sec:example} : continuous diffusion process, 
continuous diffusion process with jumps and drifted jump process.  

\medskip
Denote by ${\cal V}_{t}$ the set of individuals alive at time $t$. Then, 
the process $Z=(Z_{t}, t\ge 0)$ is the point measure valued process on the trait space $\RR$, defined for any $t$ by
$$
Z_{t} = \sum_{i\in  {\cal V}_{t}} \delta_{X^i_{t}},
$$
where $X^i_{t}$ is the trait of individual $i$ at time $t$. We denote the total mass of the branching diffusion process by 
$$ 
N_{t}=\#{\cal V}_{t}=  \langle Z_{t} ,1\rangle.
$$
Recall that the measure valued process $(Z_t)$ has the following semimartingale decomposition:
for all $f\in Dom(\mathscr{G})$ (see for example \cite{champagnatmeleard2007})
\begin{equation}
\label{eq:Master General}
\begin{array}{ll}
\langle Z_t\, ,f\rangle&\hspace{-0.2cm}=f(x)+\int_0^t \langle Z_s,\mathscr{G} f\rangle ds+ M^{f}(t) \\
&\hspace{0.1cm}+\int_{[0,t]\times \RR_+\times \NN} \ind_{1\le j\le N_{s-}} \,f(X^j_{s-})
\left(\ind_{\theta \le b(X^j_{s-})}-\ind_{b(X^j_{s-})<\theta\le b(X^j_{s-})+d(X^j_{s-})}\right)
Q(ds,d\theta,dj),
\end{array}  
\end{equation}
where $Q$ is a Poisson point measure with intensity measure $ds \otimes d\theta \otimes n(dj)$
on $\RR_+\times \RR_+\times \NN$ ($n$ being the counting measure)  
and $M^{f}$ a local martingale. We also point out that for $f\equiv 1$ we have $\mathscr{G} 1=0$ and $M^1=0$.

The main objective of this article is to study the long time behavior for the distribution of 
$\, Z_t\, $ conditioning to non extinction  starting from a Dirac mass
and   the existence of quasi-stationary distributions on the space of finite measures.
This long time behavior depends on   the principal eigenvalue $\,- \lambda_0\,$ of $\,(P_t)$, the Feymnan-Kac 
semigroup associated to $Z$ (see for example \cite{calvez22}, \cite{henry23} and references therein). This semigroup is given for $x\in \RR$ by
\begin{equation}
\label{FK} 
P_{t}f(x) =  \Ex{\langle Z_{t},f \rangle} = \EE_{x}\left(\exp\left(\int_{0}^t V(X_{s})ds\right)\, f(X_{t})\right),
\end{equation}
 for any $f\in C_{b}(\RR)$, $t\ge 0$, $y\in \RR$,
where $$V(y)=b(y)-d(y)$$ is the growth rate of an individual with trait $y$.

    Our approach is
based on properties of this semigroup and on new recursive relations for
the moments and for the extinction probability (Proposition \ref{lem:for_moments}).
We consider  death rates that are not necessary bounded so we cannot use the standard moment recursion  (see for example \cite{harris2022}),  which involves 
explicitly this death rate. This new recursion is adapted from results by Z. Li (cf \cite{ZL}).
Our main hypotheses $(HP)$ concern  the Feynman Kac semigroup. 
We only require a standard spectral gap property for this semigroup, some regularity properties involving the space $\CBZ$, and that the particles do not escape  to infinity in finite time.  For each class of examples, we state the precise set of hypotheses under which Assumptions (HP) are satisfied.

We prove an equation for the uniform limit of 
the extinction probability when time goes to infinity (Corollary \ref{eqh} and Proposition \ref{prop:hcon}).
Then we study the asymptotic behavior of the process distinguishing three  different regimes: critical corresponding to $\lambda_0= 0$, subcritical 
corresponding to $\lambda_0>0$, supercritical corresponding to $\lambda_0<0$.
In all  these regimes we describe the precise time asymptotic behavior for the moments and for the extinction probability, 
the existence of the $Q$-process (Theorem \ref{the:Q-process}) and the convergence in law for the normalized  measure-valued 
branching processes and some kind of Yaglom limit.
 
In the critical case motivated by examples developed in
\cite{calvez22} and \cite{henry23},  we generalize in this infinite-dimensional setting well-known results for Galton-Watson processes
(\cite{Athreya-Ney} and references therein), that is, the extinction probability is of order $1/t$ and the conditional limiting distribution is 
exponential (Theorem \ref{the:critical convergence}). The techniques we developed are inspired by nonlinear dynamical systems theory. The
corresponding results for the subcritical case (Theorem \ref{the:subcritical}) are: the extinction probability is of order 
$e^{-\lambda_{0}t}$ and the conditional limiting distribution in characterized by its moments and do not depend on the initial 
condition. In the supercritical case (Theorem \ref{the:supercritical}), the extinction probability converges to a non trivial 
function and the  conditional limiting distribution also depends on the initial trait. Under extra 
mild conditions (satisfied by our examples) we prove the almost sure convergence of the rescaled measure-valued branching 
process to a  random measure whose mass is positive on the non-extinction event (Theorem \ref{the:convergence a.s.}).

Moment asymptotics  and limit theorems for time continuous  branching-Markov processes were studied in 
particular in \cite{bansaye2013}, \cite{harris2022}, see also \cite{harris2020}, \cite{gonzalez22}  
and \cite{horton20}. In all these papers, the branching rate is assumed to be bounded and appears 
explicitely in the recursive moment equations. They also assume a ratio-Perron-Frobenius theorem  
(Hypothesis (G2) in \cite{harris2022} where the inverse of the dominant eigenfunction appears) and 
in some sense that the birth rate is bounded below (Hypothesis (G5) in \cite{harris2022}).   
Our results cover examples where these hypotheses are not satisfied, namely a 
dominant eigenfunction vanishing at infinity or a birth rate with compact support.


 \medskip
{\bf Notation} :  $\MB$ will denote the set of bounded measurable
functions on $\RR$ and $\MBZ$ the set of bounded measurable functions, which
converge to $0$ at infinity. $\CB$ denotes the set of bounded continuous functions and $\CBZ$ 
the set of continuous functions tending to $0$ at infinity. These spaces are Banach spaces for the supnorm denoted by $\|\vsbullet\|_{\infty}$.
We also denote by $C_c(\RR)$ the set of continuous functions with compact support and $C^\infty_c(\RR)$,
the subset of $C_c(\RR)$, of functions which are infinitely differentiable. We notice that $C^\infty_c(\RR)$
is dense in $\CBZ$. On the other hand, we denote by
$M_{F}(\RR)$ the set of finite measures on $\RR$ endowed with the topology of weak convergence an its
associated Borel $\sigma$-field.

\subsection{Hypotheses}
\label{sec:hypotheses}
We summarize in this subsection the main hypotheses of this article.

\medskip We make the following hypotheses  for the birth and death rates.

\noindent {\bf \large Assumptions} (HV):
\vspace{ -0.5cm}
\begin{enumerate}[HV1)]
\item $b$ and $d$ are nonnegative nonzero and continuous, hence $V=b-d$ is continuous.
\item $b$ is bounded from above  by a finite number $b^*>0$, which implies $A(V):=\sup\limits_{z\in \RR} V(z)<+\infty$.
\end{enumerate}
\medskip

We also make the following hypotheses on the semigroup $(P_t)$.

\noindent {\bf \large Assumptions} (HP):

\noindent HP1) There exists a  positive bounded continuous function $\Theta_{0}\in \CBZ$ and a
  real number $\lambda_{0}$ such that for any $t\ge0$
$$
P_{t}\Theta_{0}=e^{-\lambda_{0}\,t}\Theta_{0}\;.
$$

\noindent HP2) There exists a positive finite measure $\mu_{0}$  charging
every nonempty open set, and two numbers
  $\lambda_{1}>\lambda_{0}$ and $H>0$ such that   $\int \Theta_{0}\,d\mu_{0} = 1$  and the one
dimensional projection
$\Pi$ on the subspace $\RR \Theta_{0}$ in $\CB$ given by
$$
\Pi(g)=\Theta_{0}\;\int g(x)\;\mu_{0}(dx)
$$
satisfies  for all $g\in C_{b}$ and all
$t\ge 0$
\begin{equation}
\label{eq:bound_uniforme}
\|e^{\lambda_0 t}P_t(g)-\Pi(g)\|_\infty\le H \|g\|_\infty e^{
-(\lambda_1-\lambda_0)t}\;.
\end{equation}
 
\noindent HP3) The semi-group $(P_{t})_{t\ge 0}$  is strongly continuous and irreducible in $\CBZ$ 
(in the sense of Arendt et al \cite{AGGGLMNFS} p.  182) and $\int_{0}^ tP_{s}\,ds$ maps $\MB$ to $\CB$, 
in the sense that for any $f\in \MB$, the function
$$
x \mapsto \int_{0}^tP_{s}f(x)ds = \int_{0}^t  \EE_{x}\left(\exp\left(\int_{0}^s V(X_{r})dr\right)\, f(X_{s})\right) ds
$$
belongs to $\CB$.\\

\noindent HP4) For any $t>0$, $P_{t}$ maps $\CB$ to $\CBZ$.

\noindent HP5) This assumption concerns the extreme values of the individuals traits.  For any $m>0$, we define
 $$T_{m} = \inf\{ s\ge 0, \langle Z_{s},\ind_{[-m,m]^c} \rangle>0\}.$$
 We assume that for any $x\in \RR$, $T_{m}$ tends to $+\infty$ $\mathbb{P}_{\delta_{x}}$-a.s.

\begin{remark}
(i) Note that if the semigroup $P_{t}$ is compact and satisfies HP3), then it also satisfies 
HP1) and HP2) (see \cite{AGGGLMNFS} p.210).

(ii)  In \cite{AGGGLMNFS}, $- \lambda_{0}$ is called the spectral bound which, in our setting,  is equal to the growth bound of the semigroup (i.e.  the growth rate of $~\|P_{t}\|_{\infty}$), see \cite{AGGGLMNFS} Theorem 1.4 p. 207.

(iii) HP1) and HP2) imply that, for any $t>0$, $e^{-\lambda_{0}t}$ is a simple isolated eigenvalue of $P_{t}$ in $\CB$. Then  $e^{-\lambda_{0}t}$ is also  a simple isolated eigenvalue of the adjoint $P'_{t}$ in $M_{F}(\RR)$ associated to the eigenvector $\mu_{0}$.

(iv) HP4) implies that the potential $V$ is not bounded below. 
\end{remark}

In the sequel, we will also use the semigroup $(Q_{t})$ defined as the perturbation of $(P_{t})$ by the operator of pointwise
multiplication by the function $-2b$, namely 
\begin{equation}
\label{def:Qt}
 Q_{t}f(x) = \EE_x\left(e^{- \int_0^t (b+d)(X_r) \ dr}f(X_{t})\right).
\end{equation}
We will need an additional assumption in the supercritical case  ($\lambda_{0}<0$).

\noindent {\bf \large Assumption} (HQ)
\vspace{-0.25cm}

 If $\lambda_{0}<0$, either the function $b+d$ does not vanish or the semigroup $(Q_{t})$ is quasicompact in $\CBZ$ (see for example Arendt et al \cite{AGGGLMNFS} p. 214).

\medskip This article is organized as follows. In Section \ref{sec:example}, we give three classes of examples for which  our general assumptions are satisfied, hence our results apply. The three examples are respectively pure diffusion process, diffusion with jumps process and drifted pure jump process.  In Section \ref{sec:results}, we
state the main results in the three cases (critical, subcritical and supercritical): estimates for the
normalized moments, convergence of theses moments, convergence of the
normalized process conditioned on non-extinction and $Q$-process. In
Section \ref{sec:moments}, we establish the recursive relations for
the time evolution of the moments of $\langle Z_{\sbullet},f\rangle$
and for the survival probability $\Px{N_t >0}$.  Sections
\ref{sec:proof_critical}, \ref{sec:proof_subcritical}  and
\ref{sec:proof_supercritical} are devoted to the proofs  of the main
results in the critical, subcritical, supercritical cases,
respectively.  In Section \ref{sec:examples} we check that the general hypotheses hold in the different classes of examples.

\section{ Different classes of examples for our
  results}\label{sec:example} 

We present three classes of examples  of underlying Markov processes for which (HP) and (HQ)  are satisfied. 
The first example deals with diffusion processes driven by a Brownian motion. The proof is mostly based on 
the $L^2$-spectral results developed in \cite{CMSM} but necessitate to be transferred to the space 
$C_{b}$. A key point is to remark that the semigroup $P_{t}$ maps $C_{b}$ in $C_{0}$. 
The second example concerns diffusions with bounded jumps. The proof relies on the results for the 
previous case and the fact that the resolvent of the semigroup is compact. The third example deals 
with a class of drifted pure jump processes introduced in \cite{cloezgabriel}.

In all the following classes of examples we will assume an additional property on the potential $V$ : 

\noindent {\bf \large Assumption} (HD):

(HD) - $d$ has at least a linear growth at $\pm \infty$, then $\lim\limits_{|x|\to \infty} d(x)=+\infty$.

We point out that  $HV2)$ and $HD)$ imply that  there exist constants $E>0, x_0\ge 0$ such that 
for all $|x|\ge x_0$
\begin{equation}
\label{H4}
V(x)\le -E |x|.  
\end{equation}

\subsection{Continuous diffusion processes}
\label{21}
The underlying diffusion process is  given by 
\begin{equation}
\label{diffusion}
dX_{t}= dB_{t} - a(X_{t})\;dt.
\end{equation}

\noindent In this case, the semigroup $(P_{t})$ defined in \eqref{FK}, with this diffusion process,  is  denoted by 
$(P_{t}^{(1)})$ whose generator is given by 
\begin{equation}\label{gendiff}
\mathscr{L}^{(1)} f=\frac12 f''-af'+Vf.
\end{equation}

\noindent
In this section we derive the hypotheses (HP) from hypotheses on the
diffusion process $(X_{t})$. 

We introduce the following assumptions on the drift term $a$ and on the function.
\begin{equation}
\label{def:fonctionl}\ell(x) = \int_{0}^x a(z)dz.
\end{equation}

\noindent {\bf \large Assumptions} (HA) 
\vspace{-0.5cm}
 \begin{enumerate}[{HA}1)]
\item $a\in C^1(\RR)$ and it has at most a linear growth, that is, there exists a constant $C$ such that, for all 
$x\in \RR$
$$
|a(x)|\le C(|x|+1).
$$
\item $|\ell|$ has at most a linear growth,  namely there exists constants $\beta,\gamma$ so that for all $x$
it holds $|\ell(x)|\le \gamma + \beta|x|$.
\item the function $a'-a^2$ is bounded from above.
\end{enumerate}

Note that $a\in C^1(\RR)$, for which $a,a'$ are bounded, satisfies these assumptions.

\begin{remark}
We require $\ell$ to have linear growth to control the eigenfunctions of $\mathcal{L}^{(1)}$. 
It is possible to relax this assumption, at the price of strengthening \eqref{H4}:
$V(x)\le -E (|\ell(x)|+|x|)$, for all large $|x|$.
$(HA3)$ is used to perform a Girsanov transformation relating the spectral properties of $\mathscr{L}^{(1)}$ to
the well-known spectral properties of the operator $\frac12 u''+\tilde V u$, where $\tilde V=V+\frac12 (a'-a^2)$.
\end{remark}

\begin{remark} Notice that our model can be generalized to an underlying  diffusion process
$$dY_{t}= \sigma(Y_{t})dB_{t} - a_{0}(Y_{t})dt,
$$
 with a $C^1$ diffusion coefficient $ \sigma$ bounded below away from zero, 
  by the image-measure transformation on the trait space
(leading to a change of measure on the measure space), with the (classical)
diffeomorphism  $y\to  G(y)=\int_{0}^y \frac{1}{\sigma(u)}du$ .
In that case, the jump rates $b$ and $d$ should be replaced by $b\circ G$ and $d\circ G$. 
\end{remark}


We will use the spectral results in \cite{CMSM} for the long time behavior of the Feynman-Kac semigroup. They involve the so called speed measure $\rho$  which makes $(P_t)$ a symmetric semigroup in $L^2(d\rho)$ and is given by
\begin{equation}
\label{lerho}
\rho(dy)=e^{-2\ell(y)}\, dy,
\end{equation}
where 
$\ell$ has been defined in \eqref{def:fonctionl}.

The following result summarizes the asymptotic properties shown in \cite{CMSM} that we will 
need in this article.

\begin{theorem}
\label{the:A}
Assume (HV), (HD) and (HA), then hypotheses (HP) and HQ) are
satisfied with 
$\mu_{0}(dy)=\Theta_{0}(y)\rho(dy) $.
\end{theorem}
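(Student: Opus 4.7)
The plan is to reduce the problem to the Schr\"odinger-type spectral analysis carried out in \cite{CMSM}. By HA3, the classical Girsanov-style change of unknown $\Theta_0(x)=e^{\ell(x)}\tilde\Theta_0(x)$ transforms the eigenvalue equation $\mathscr{L}^{(1)}\Theta_0=-\lambda_0\Theta_0$ into the Schr\"odinger problem $\frac{1}{2}\tilde\Theta_0''+\tilde V\tilde\Theta_0=-\lambda_0\tilde\Theta_0$ with $\tilde V=V+\frac{1}{2}(a'-a^2)$, and $\Theta_0\in L^2(\rho)$ iff $\tilde\Theta_0\in L^2(dy)$. By HV2 and HA3 the perturbation $\frac{1}{2}(a'-a^2)$ is bounded above, and by HD combined with \eqref{H4} one has $\tilde V(x)\le -E|x|+O(1)$ as $|x|\to\infty$. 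This is exactly the confining-potential setting of \cite{CMSM} in the Hilbert space $L^2(\rho)$ (on which $(P_t)$ is symmetric), which yields a simple isolated smallest eigenvalue $-\lambda_0$ with a strictly positive ground state $\tilde\Theta_0\in L^2(dy)$ and a spectral gap to $-\lambda_1<-\lambda_0$.

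First I would establish HP1. Agmon-type estimates for Schr\"odinger ground states with potential tending to $-\infty$ provide super-exponential decay $\tilde\Theta_0(x)=O(e^{-c|x|^{3/2}})$; combined with the linear growth of $|\ell|$ from HA2, this forces $\Theta_0=e^\ell\tilde\Theta_0\in\CBZ$, and elliptic regularity makes $\Theta_0$ smooth and strictly positive. The measure $\mu_0(dy)=\Theta_0(y)\rho(dy)=e^{-\ell(y)}\tilde\Theta_0(y)\,dy$ is then a finite positive measure charging every open set, with $\int\Theta_0\,d\mu_0=\int\tilde\Theta_0^2\,dy$ which can be normalised to $1$. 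For HP2 I would transfer the $L^2(\rho)$-spectral gap to the uniform estimate \eqref{eq:bound_uniforme} via a three-step smoothing decomposition $P_t=P_{t/3}\circ P_{t/3}\circ P_{t/3}$: the outer factors intertwine $\CB$ with $L^2(\rho)$ using Feynman--Kac bounds and Gaussian heat-kernel estimates for \eqref{diffusion} together with the decay of $V$, while the middle factor contracts on the orthogonal complement of $\Theta_0$ at rate $e^{-\lambda_1 t/3}$. Assumption HP3 follows similarly: strong continuity in $\CBZ$ comes from the Feynman--Kac formula and dominated convergence using $V\le A(V)$, irreducibility is inherited from the strictly positive transition density of $X_t$, and the fact that $\int_0^t P_s(\cdot)\,ds$ maps $\MB$ into $\CB$ is a standard regularization argument using the Gaussian density.

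The main obstacle is HP4. The cleanest route uses the conjugation identity $P_t g(x)=e^{\ell(x)}\tilde P_t(e^{-\ell}g)(x)$, where $\tilde P_t$ is the Schr\"odinger semigroup associated to $\tilde V$. Since $\tilde V(x)\to-\infty$ linearly, pointwise Feynman--Kac estimates combined with Gaussian bounds on the Brownian transition density force $\tilde P_t(u)(x)$ to decay stronger than any exponential for any $u$ of at-most-exponential growth. Multiplying back by $e^{\ell}$, whose modulus is linear by HA2, preserves decay to zero at infinity, so $P_tg\in\CBZ$. Hypothesis HP5 is standard: HA1 guarantees non-explosion of every individual diffusion \eqref{diffusion}, and the bound $b\le b^*$ dominates the number of individuals by a Yule process, so for every $x$ the family $(T_m)_m$ tends to $+\infty$ almost surely under $\mathbb{P}_{\delta_x}$. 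Finally, (HQ) is obtained by re-running the same Schr\"odinger analysis on the semigroup $(Q_t)$ of \eqref{def:Qt}, whose potential is $-(b+d)$: by HD it still tends to $-\infty$, so the corresponding operator has compact resolvent in $\CBZ$ and $(Q_t)$ is quasicompact there.
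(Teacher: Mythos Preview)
Your proposal is correct and, at its core, reproduces the Schr\"odinger-operator strategy of \cite{CMSM} that the paper simply cites: the paper's proof of HP1), HP2) and HP4) consists essentially of invoking Theorem~2.7 and Section~4.2 of \cite{CMSM}, while strong continuity in $\CBZ$ is taken from \cite{BL}. So for these items you are unfolding the same argument the paper outsources.

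There are two places where your route differs from the paper's. For HP5) the paper does not use the qualitative reasoning ``non-explosion of each diffusion plus Yule domination'' that you sketch; instead it runs a single Lyapunov argument with $h(y)=y^{2}+1$ on the measure-valued process, proving via the semimartingale decomposition and Gronwall that $\Ex{\sup_{t\le T}\langle Z_t,h\rangle}\le C_2 h(x)e^{C_2 T}$, which yields the quantitative bound $\Px{T_m<T}\le C_2 h(x)e^{C_2 T}/(1+m^2)$. Your argument is valid but gives no rate. For HQ) the paper takes a shorter path: rather than rerunning the Schr\"odinger analysis on $-(b+d)$, it observes that $P_t^{(1)}$ is compact in $\CBZ$ by \cite{BL}, and since $(Q_t^{(1)})$ arises from the bounded perturbation $-2b$ of the generator, it is compact as well by \cite{Pazy}, hence quasi-compact. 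This avoids repeating the spectral machinery; your approach works too but is heavier.
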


The proof is split in  Proposition  \ref{HP3-diff}, Proposition \ref{HP1-HP2-diff} and Proposition \ref{HP3-HP4-diff}.



\subsection{Continuous diffusion processes with jumps}
\label{22}

In this case the underlying Markov process is a diffusion process with jumps
whose generator is given by 
$$\frac12 f''-af'+ L_{1}f,
$$ where
$$
L_{1}f(y) = \int_{\RR} \big(f(z)-f(y)\big) \;R(y, dz)\;.
$$

We  assume as above Assumptions (HA)  and introduce  on the jump measure  the following

{\bf \large Assumptions} (HJ)
\vspace{-0.5cm} \begin{enumerate}[HJ1)]
\item The total mass and forth order moment of the measure $R(y,.)$  are uniformly bounded :
$$
\sup_{y} \int (1+ |z-y|^4)\; R(y, dz) < +\infty\;.
$$
\item The function $y\to R(y, .)$ is weakly continuous 

\end{enumerate}
This implies that the function $
y\to \int R(y, dz)$ belongs to $\CB$.

Assumptions HA1) and HJ1) ensure the existence and uniqueness of the underlying Markov process $(X_{t})$. The semigroup $(P_{t})$ defined in \eqref{FK} with this Markov process  is denoted by $(P_{t}^{(2)})$ whose generator is given by 
\begin{equation}\label{gendiffsauts}
\mathscr{L}^{(2)} f= \mathscr{L}^{(1)} f + L_{1}f     =\frac12 f''-af'+Vf + L_{1}f.
\end{equation}

\begin{theorem}
\label{the:B}
Assume (HV), (HD),  (HA) and  (HJ). 
Then hypotheses (HP) and HQ) are satisfied. 
\end{theorem}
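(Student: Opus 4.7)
The plan is to treat $\mathscr{L}^{(2)} = \mathscr{L}^{(1)} + L_1$ as a bounded perturbation of the pure-diffusion generator handled by Theorem \ref{the:A} and transfer every conclusion through the Duhamel identity
\[
P_t^{(2)} = P_t^{(1)} + \int_0^t P_{t-s}^{(1)}\, L_1\, P_s^{(2)}\, ds.
\]
The key preliminary observations are, first, that $L_1$ is a bounded linear operator on $\CB$ with $\|L_1\|_\infty \le 2 \sup_y R(y,\RR) < +\infty$ by HJ1), and that HJ2) ensures it sends $\CB$ into $\CB$; second, that $P_t^{(2)} \ge P_t^{(1)}$ pointwise, a consequence of positivity of the jump kernel and the Trotter / path-wise representation, which will serve to transfer positivity and irreducibility from the diffusion case.

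Next I would deduce HP3), HP4) and HP5) directly from this perturbation setup. Strong continuity of $(P_t^{(2)})$ on $\CBZ$ and the mapping $\int_0^t P_s^{(2)}\,ds : \MB \to \CB$ are obtained by iterating Duhamel and combining the analogous properties of $P_s^{(1)}$ from Theorem \ref{the:A} with the boundedness of $L_1$. For HP4), given $f\in\CB$, the first summand $P_t^{(1)} f$ lies in $\CBZ$ by Theorem \ref{the:A}; in the integral term, $L_1 P_s^{(2)} f$ stays in $\CB$, hence $P_{t-s}^{(1)}(L_1 P_s^{(2)} f) \in \CBZ$, and dominated convergence (the integrand being uniformly bounded by a constant times $\|f\|_\infty$) places the integral in $\CBZ$ as well. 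HP5) is independent of spectral theory: HA1) plus HJ1) rule out explosion of $(X_t)$ in finite time, and combined with the bound $b\le b^*$ from HV2) this prevents any individual's trait from leaving $[-m,m]$ before a deterministic time that diverges with $m$.

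For HP1), HP2) and (HQ) my plan is to use compactness. Theorem \ref{the:A} furnishes a compact resolvent $R_\lambda(\mathscr{L}^{(1)})$ on $\CBZ$ for large $\lambda$; the resolvent identity
\[
R_\lambda(\mathscr{L}^{(2)}) = R_\lambda(\mathscr{L}^{(1)}) + R_\lambda(\mathscr{L}^{(1)})\, L_1\, R_\lambda(\mathscr{L}^{(2)})
\]
then exhibits $R_\lambda(\mathscr{L}^{(2)})$ as compact (composition of a compact and a bounded operator), so $(P_t^{(2)})$ has compact resolvent. Combined with HP3) and the irreducibility inherited by domination, the Krein--Rutman / Perron--Frobenius result recalled in the remark after (HP) (see \cite{AGGGLMNFS} p.\,210) yields HP1) and HP2). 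The same perturbation argument applied to $Q_t^{(2)}$ versus $Q_t^{(1)}$ transfers quasicompactness in $\CBZ$, covering (HQ) in the case where $b+d$ vanishes. The principal technical obstacle I expect is that the compactness and spectral information produced in \cite{CMSM} live naturally in the weighted space $L^2(d\rho)$, while HP1)-HP4) must be stated in $\CBZ$; bridging the two frameworks relies on the smoothing and decay estimates already deployed in the proof of Theorem \ref{the:A}, and I will need to verify that these survive the addition of the bounded jump operator $L_1$.
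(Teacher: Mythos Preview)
Your overall strategy---bounded perturbation of $\mathscr{L}^{(1)}$, Duhamel, compactness, then Perron--Frobenius via \cite{AGGGLMNFS}---matches the paper's. But the step you single out for irreducibility, namely the pointwise domination $P_t^{(2)} \ge P_t^{(1)}$, is not correct and cannot be deduced from ``positivity of the jump kernel and the Trotter/path-wise representation''. The operator $L_1 f(x)=\int (f(z)-f(x))\,R(x,dz)$ is \emph{not} a positive operator (it annihilates constants and can take negative values on positive functions), so the Duhamel identity $P_t^{(2)}=P_t^{(1)}+\int_0^t P_{t-s}^{(1)} L_1 P_s^{(2)}\,ds$ gives no sign information. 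Probabilistically, once the first jump occurs the trajectories of $X^{(2)}$ and $X^{(1)}$ decouple and $\int_0^t V(X_s^{(2)})\,ds$ may well be smaller than $\int_0^t V(X_s^{(1)})\,ds$ (e.g.\ if $V$ peaks near the starting point and the jump sends the particle far away).

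The paper fixes this by a different splitting: write $L_1=L_2-\tilde R(\cdot)$ with $L_2 f(x)=\int f(z)\,R(x,dz)$ and $\tilde R(x)=\int R(x,dz)$, absorb $-\tilde R$ into the potential to form $\tilde V=V-\tilde R$, and let $(S_t)$ be the diffusion Feynman--Kac semigroup with potential $\tilde V$. Then Duhamel reads $P_t^{(2)}=S_t+\int_0^t S_{t-u}\,L_2\,P_u^{(2)}\,du$, and since $L_2$ \emph{is} positive on nonnegative functions one gets $P_t^{(2)} f\ge S_t f$ for $f\ge 0$. Because $\tilde V$ still satisfies (HD) (it is $\le V$), Theorem~\ref{the:A} applies to $(S_t)$ and yields its irreducibility, which then transfers to $(P_t^{(2)})$. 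A secondary point: for this splitting to live in $\CBZ$ you also need $L_2:\CBZ\to\CBZ$, which is where the fourth-moment part of HJ1) is used (the paper's Lemma \ref{borneC0}); your bound $\|L_1\|_\infty\le 2\sup_y R(y,\RR)$ only uses the zeroth moment and does not by itself give $L_2 f\in\CBZ$. Once you make these two corrections, the rest of your plan (compactness via bounded perturbation of a compact semigroup, HP4) via Duhamel and $P_{t-s}^{(1)}:\MB\to\CBZ$, (HQ) by the same perturbation argument) goes through and is essentially the paper's proof.
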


The proof is split in  Proposition  \ref{HP3-diff}, Propositions \ref{HP1-HP2-diff-sauts-comp-res}
and \ref{HP3-diff_sauts}.

\subsection{Drifted pure jump process}
\label{23}
This case has been inspired by the spectral gap property proved by  Cloez and Gabriel in \cite{cloezgabriel} in a case of  jump processes
with constant drift. The semigroup $(P_{t})$  denoted by $(P_{t}^{(3)})$, whose generator is given by  
$$
\mathscr{L}^{(3)} f(y) = f'(y) + L_{1}f(y) + V(y)f(y) = f'(y) + \int_{\RR} \big(f(z)-f(y)\big) \;R(y, dz) + V(y)f(y) \;,
$$
with $R$ satisfying (HJ) and the additional assumption

HJ4) : 
$$\exists\, \varepsilon, k_{0}>0, \  \forall x\in\RR, \ R(x,dz) \geq k_{0} 1_{(x-\varepsilon, x+\varepsilon)}(z)dz.$$

\begin{remark} Notice that our model can be generalized to a non constant non vanishing drift $a(x)$ satisfying (HA1) 
  by using a diffeomorphism conjugating to the case of constant drift. 
\end{remark}

\begin{theorem}
\label{the:C}
Assume  (HV), (HD) and (HJ), HJ4). 
Then hypotheses (HP)  and HQ) are satisfied.
\end{theorem}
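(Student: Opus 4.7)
The strategy mirrors the approach of Theorems \ref{the:A} and \ref{the:B}: first import a spectral gap result from \cite{cloezgabriel} in an auxiliary setting, then transfer the spectral information to $\CB$ and $\CBZ$ by exploiting the smoothing produced by the deterministic drift and the confinement produced by the Feynman-Kac weight. The underlying process is $X_t = x + t + J_t$ with $J_t$ a compound jump process of kernel $R$ having uniformly bounded total mass and fourth moment by HJ1), while HJ4) furnishes a Doeblin-type minorization of the one-jump kernel.

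For HP1)--HP2), the plan is to establish that $P_t$ is power-compact on $\CBZ$. This combines the minorization HJ4), which after one jump produces a density with respect to Lebesgue on intervals of length $2\varepsilon$, with the confining effect of $V$ encoded in \eqref{H4}, which sends bounded functions to functions vanishing at infinity. Krein-Rutman then yields a simple dominant eigenvalue $-\lambda_0$ with positive eigenfunction $\Theta_0 \in \CBZ$ and dual eigenvector $\mu_0$; the spectral gap \eqref{eq:bound_uniforme} follows by adapting the Harris/coupling-type argument of Cloez-Gabriel using HJ4), first on a convenient weighted space and then on $\CB$ using the above compactness.

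The remaining hypotheses are checked directly. Strong continuity in HP3) follows from strong continuity of the translation group combined with dominated convergence on the Feynman-Kac factor; irreducibility follows from HJ4), which makes every non-empty open interval reachable with positive density after one jump. The mapping $\int_0^t P_s\,ds : \MB \to \CB$ uses the drift: decomposing on the first jump time and changing variables $u = x+s$ turns the $ds$-integral into the integral of a measurable function against a continuous kernel in $x$, hence a continuous function. For HP4), given $f \in \CB$ and $|x|$ large, the trajectory $X_s$ stays close to the deterministic line $x+s$ uniformly in $s\in[0,t]$ with overwhelming probability (controlled via the fourth moment in HJ1)), so by \eqref{H4} the Feynman-Kac factor is arbitrarily small and $P_t f(x) \to 0$ as $|x|\to\infty$. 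HP5) follows because the total particle number is dominated by a pure birth process at rate $b^*$, and between births each trait has bounded jump intensity and unit drift, hence cannot escape in finite time. Finally for HQ), replacing $V$ by $-(b+d)$ preserves confinement via HD), so $(Q_t)$ inherits quasicompactness on $\CBZ$ by the same argument used for $(P_t)$.

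The main difficulty, in my view, is HP4): in a pure-jump-plus-drift dynamics there is no generator-level smoothing analogous to elliptic diffusions, so the argument must rely entirely on the Feynman-Kac confinement, and a careful concentration estimate on $X_s$ starting from far away, uniform in $s \in [0,t]$, is required. This is where the fourth moment condition in HJ1) enters in an essential way, and it also drives the power-compactness step underlying HP1)--HP2) and HQ).
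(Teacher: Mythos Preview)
Your overall structure is right, but there is a genuine gap in the HP1)--HP2) step: the claim that $P_t^{(3)}$ is power-compact on $\CBZ$ is false. Decomposing on the first jump time $J$ gives $P_t^{(3)} = \tilde G_t + R_t$, where the no-jump part
\[
\tilde G_t f(x)=e^{\int_0^t (V-\tilde R)(x+s)\,ds}\,f(x+t)
\]
is a weighted translation. This operator is never compact on $\CBZ$: taking an oscillating family $f_n(x)=\cos(nx)\psi(x)$ with $\psi\in C_c^\infty$ produces $\tilde G_t f_n$ with no convergent subsequence. Since $R_t$ (the ``at least one jump'' part) is compact when $R$ has a density part, $P_t^{(3)}$ itself cannot be compact for any $t$, hence not power-compact. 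What does hold is quasi-compactness: the growth bound of $(\tilde G_t)$ is $-\infty$ (because the unit drift forces each deterministic trajectory to spend at most bounded time in the region where $V$ is large), so the essential spectral radius of $P_t^{(3)}$ is zero. But Krein--Rutman in its classical form needs compactness, not mere quasi-compactness, so your route as written does not close.

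This is exactly why the paper does \emph{not} mirror the compactness argument used in Theorems~\ref{the:A} and~\ref{the:B} for this case. Instead it imports HP1), HP2) and HQ) directly from Cloez--Gabriel \cite{cloezgabriel}, whose Harris/Doeblin argument (driven by the density minorization HJ4)) yields existence of $(\lambda_0,\Theta_0,\mu_0)$ together with the spectral gap in one stroke, bypassing compactness entirely. Your mention of ``adapting the Harris/coupling-type argument of Cloez--Gabriel'' is the correct idea---it is the whole argument, not a supplement to a compactness step that does not exist here.

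On HP4): the paper again uses the first-jump decomposition $P_t^{(3)}f = \tilde G_t f + \int_0^t \tilde G_u\big[L_2 P_{t-u}^{(3)}f\big]\,du$, and the fact that $\tilde G_u$ maps $\CB$ into $\CBZ$ (Lemma~\ref{prop:G}) does the work. Your concentration argument via the fourth moment is a valid alternative, but the fourth moment is not ``essential'' for HP4) in the way you claim; the paper's decomposition never invokes it there. The fourth moment enters only in the Lyapunov argument for HP5) (Proposition~\ref{HP3-diff}).
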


The proof is split in  Proposition  \ref{HP3-diff}, the results of \cite{cloezgabriel} and Section \ref{HP3-sauts}.

\section{Main results}
\label{sec:results}

In the main theorems, we will always assume the assumptions (HV) and (HP). 

\subsection{Direct consequences of the Assumptions}
In what follows, we denote by $\,\nu\,$ the probability measure given by 
$$
\nu=\frac{ \mu_0}{\int \mu_0(dy)}.
$$
We will use equivalently the notations $\int f(y)\nu(dy)$ and $\nu(f)$. 

The next theorem  summarizes immediate consequences of Hypotheses HP1) and HP2), using the Feynman-Kac formula \eqref{FK} . 
\begin{theorem}
\label{the:corA}
\begin{enumerate}
\item $\eta=-\lambda_0$ is the exponential growth rate of $\,\Ex{N_t}$, that is
$$
\eta=\lim\limits_{t\to \infty} \frac{\log\left(\Ex{N_t}\right)}{t}=-\lambda_0.
$$
\item For all $\phi\in \CB$ and for any $x\in \RR$, we have
$$
\lim\limits_{t\to \infty} e^{\lambda_0 t}\, \Ex{\langle Z_t,\phi\rangle}=
\lim\limits_{t\to \infty} e^{\lambda_0 t}\, \EE_x\left(e^{\int_0^t V(X_s)} \phi(X_t)\right)=
\Theta_0(x) \int \phi(z)\,\mu_0(dz).
$$
In particular, we have the following ratio limit: for all $\phi$ bounded measurable,$$
\lim\limits_{t\to \infty} \frac{ \Ex{\langle Z_t,\phi\rangle}}{\Ex{N_t}}
=\nu(\phi).
$$
\end{enumerate}
\end{theorem}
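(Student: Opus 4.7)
The plan is a direct reduction to HP2) via the Feynman--Kac identity
$$
P_t\phi(x)=\Ex{\langle Z_t,\phi\rangle}=\EE_x\!\left(e^{\int_0^t V(X_s)\,ds}\phi(X_t)\right),
$$
which is built into \eqref{FK} and is valid for every bounded measurable $\phi$. For statement~2 with $\phi\in\CB$, the uniform bound \eqref{eq:bound_uniforme} of HP2) gives pointwise
$$
e^{\lambda_0 t}\Ex{\langle Z_t,\phi\rangle}=e^{\lambda_0 t}P_t\phi(x)\xrightarrow[t\to\infty]{}\Pi(\phi)(x)=\Theta_0(x)\int\phi\,d\mu_0.
$$

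Specializing to $\phi\equiv 1\in\CB$ yields $e^{\lambda_0 t}\Ex{N_t}\to\Theta_0(x)\,\mu_0(\RR)$, which is strictly positive since $\Theta_0>0$ by HP1) and $\mu_0(\RR)>0$ by HP2) ($\mu_0$ charges every nonempty open set). Taking logarithms and dividing by $t$ yields statement~1, namely $t^{-1}\log\Ex{N_t}\to -\lambda_0$. The ratio limit for $\phi\in\CB$ is then the quotient of these two asymptotics: the factor $\Theta_0(x)$ cancels, leaving $\mu_0(\phi)/\mu_0(\RR)=\nu(\phi)$.

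The only step requiring a few extra words is the extension of the ratio limit to $\phi\in\MB$. Writing $K_t(x,dy)$ for the Feynman--Kac kernel, so that $P_t\phi(x)=\int\phi(y)K_t(x,dy)$, the difference $\sigma_t^x:=e^{\lambda_0 t}K_t(x,\cdot)-\Theta_0(x)\mu_0$ is a finite signed Borel measure on the Polish space $\RR$, hence automatically Radon; for any such $\sigma$ one has $\|\sigma\|_{\mathrm{TV}}=\sup\{\int g\,d\sigma:g\in\CB,\,\|g\|_\infty\le 1\}$ (a standard consequence of Urysohn's lemma and inner regularity). Hypothesis \eqref{eq:bound_uniforme} therefore translates into the total--variation estimate $\|\sigma_t^x\|_{\mathrm{TV}}\le He^{-(\lambda_1-\lambda_0)t}$, so that for every $\phi\in\MB$
$$
\left|e^{\lambda_0 t}P_t\phi(x)-\Theta_0(x)\mu_0(\phi)\right|\le\|\phi\|_\infty\,\|\sigma_t^x\|_{\mathrm{TV}}\xrightarrow[t\to\infty]{}0.
$$
Dividing by $e^{\lambda_0 t}\Ex{N_t}\to\Theta_0(x)\mu_0(\RR)$ gives the announced ratio limit. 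No genuine obstacle arises; the only small technical point is the promotion of the $\CB$-sup-norm bound in HP2) to total--variation convergence, which is where the Polishness of $\RR$ is used.
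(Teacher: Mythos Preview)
Your argument is correct and matches the paper's approach, which simply records Theorem~\ref{the:corA} as an ``immediate consequence'' of HP1), HP2) and the Feynman--Kac identity \eqref{FK} without further proof. Your only addition is the total--variation upgrade of \eqref{eq:bound_uniforme} to handle the ratio limit for $\phi\in\MB$; this is a clean way to fill in a detail the paper leaves implicit, and it works exactly as you say since $P_t$ is given by a finite kernel $K_t(x,\cdot)$ (from \eqref{FK} and $V\le A(V)$) and finite signed Borel measures on $\RR$ are inner regular.
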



\subsection{General properties of the survival probability}

For any $t\ge 0$ and $x\in \RR$, we denote
by $u_{0}(t,x) $ the survival probability of $Z_{t}$ issued from one individual with trait $x$, namely
$$ u_{0}(t,x) = \PP_{\delta_{x}}(N_{t}>0).$$

\begin{theorem} Under (HV) and (HP),

(i) For any $t>0$, $u_{0}(t,\vsbullet) \in \CBZ$.

(ii) The function $u_{0}(t,\vsbullet)$ uniformly converges (when $t$ tends to infinity),  to a function  $h\in \CBZ$ which satisfies 
$$
h(x)=\int_{0}^{\infty}\EE_{x}\left(e^{- \int_{0}^{s} (b+d)(X_{\tau})\,d\tau}\big(
2\, b(X_{s})\,h(X_{s})-b(X_{s})\,h^2(X_{s})\big)\;\right)\;ds\;.
$$
\end{theorem}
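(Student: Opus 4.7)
The plan is to derive a renewal equation for $u_0$ by conditioning on the first birth/death event, propagate regularity through it using HP3 and HP4, and then pass to the $t\to\infty$ limit. Along the dynamics $X$ the first event time $\tau_1$ has intensity $b(X_s)+d(X_s)$; at $\tau_1$ the particle either dies or splits into two independent copies (the survival probability in the birth case being $1-(1-u_0(t-\tau_1,X_{\tau_1}))^2=2u_0-u_0^2$). This yields
\begin{equation*}
u_0(t,x)=Q_t\pcun(x)+\int_0^t Q_s\bigl[b\,(2u_0(t-s,\vsbullet)-u_0(t-s,\vsbullet)^2)\bigr](x)\,ds,
\end{equation*}
which will be the main tool.

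For part (i), the decay at infinity is immediate from the domination $u_0(t,x)\le\Ex{N_t}=P_t\pcun(x)\in\CBZ$ (HP4 applied to $\pcun\in\CB$). For continuity, I iterate the renewal equation as a Picard scheme: at each step the integrand is bounded by $2b^*$, and HP3 (together with the perturbation identity $Q_sf=P_sf-2\int_0^s P_r[b\,Q_{s-r}f]\,dr$ to reduce $Q$ to $P$) with HP4 shows the iterates remain in $\CB$. Monotone convergence of the iterates to $u_0$ (using that $F(v):=2bv-bv^2$ is increasing on $[0,1]$) then gives $u_0(t,\vsbullet)\in\CBZ$.

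For part (ii), monotonicity of $\{N_t>0\}$ in $t$ gives $u_0(t,x)\downarrow h(x)\in[0,1]$ pointwise, and the bound $u_0(t,\vsbullet)\le P_1\pcun\in\CBZ$ for $t\ge 1$ yields $h\le P_1\pcun$, so $h$ vanishes at infinity. To pass the renewal equation to the limit, I use that $F$ is increasing on $[0,1]$ and $Q_s$ is positive, so $Q_s[F(u_0(t-s,\vsbullet))]\downarrow Q_s[F(h)]$ for each fixed $s$. The uniform bound $\int_0^T Q_s[F(u_0(t-s,\vsbullet))]\,ds\le u_0(t,x)-Q_t\pcun(x)\le 1$ and monotone convergence force $\int_0^\infty Q_s[F(h)](x)\,ds\le h(x)$; the reverse inequality uses $F(u_0)\ge F(h)$, provided $\lim_{t\to\infty}Q_t\pcun(x)=0$. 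This last point is the main obstacle: HV1 gives $b+d\not\equiv 0$ and continuous, and HP3 irreducibility of $(P_t)$ in $\CBZ$ forces the motion $X$ to visit every nonempty open set, so the accumulated killing $\int_0^\infty(b+d)(X_r)\,dr$ diverges almost surely and $Q_t\pcun\to 0$ by dominated convergence. This yields the integral equation for $h$; continuity of $h$ then follows from the semigroup identity $h=Q_Th+\int_0^T Q_s[F(h)]\,ds$ (for any $T>0$, by splitting and shifting) and the same HP3/HP4 argument used in part (i).

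Finally, uniform convergence $\|u_0(t,\vsbullet)-h\|_\infty\to 0$ follows by combining the uniform decay at infinity ($|u_0(t,x)-h(x)|\le 2P_1\pcun(x)$ for $t\ge 1$) with Dini's theorem on compact intervals $[-R,R]$, applied to the decreasing sequence of continuous functions $u_0(t,\vsbullet)$ converging pointwise to the continuous $h$.
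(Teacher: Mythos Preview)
Your overall architecture---the $Q$-renewal equation, Picard iteration for (i), and Dini for uniform convergence in (ii)---parallels the paper. But there is a genuine gap at the heart of (ii): the claim that $Q_t\pcun(x)\to 0$.

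Irreducibility of $(P_t)$ in $\CBZ$ (no nontrivial closed invariant ideals, in the sense of Arendt et al.) does \emph{not} imply that the motion $X$ visits every open set almost surely, nor that $\int_0^\infty (b+d)(X_r)\,dr=\infty$ a.s. Under (HV) alone, $b+d$ may be compactly supported, and nothing prevents $X$ from being transient and escaping that support with positive probability; the Feynman--Kac semigroup $(P_t)$ can still be irreducible because irreducibility is a statement about the kernel, not about individual paths. So your jump from HP3 to divergence of the accumulated killing is unjustified. Moreover, even pointwise convergence $Q_t\pcun\to 0$ would not close the argument: the inequality $F(u_0)\ge F(h)$ yields $u_0(t,x)-Q_t\pcun(x)\ge\int_0^t Q_s[F(h)]\,ds$, which after $t\to\infty$ gives $h\ge\int_0^\infty Q_s[F(h)]\,ds$---the \emph{same} direction you already had, not the reverse. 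To pass the full integral $\int_0^t Q_s[F(u_0(t-s,\cdot))]\,ds$ to its limit you need a dominating function in $L^1(ds)$, namely $\int_0^\infty Q_s\pcun\,ds<\infty$, i.e.\ a strictly negative growth rate for $(Q_t)$. The paper supplies exactly this (its Lemma on the growth of $(Q_t)$): in the subcritical case it is immediate from $Q_t\le P_t$; in the critical case it uses analytic perturbation of the simple top eigenvalue of $P_1$ by the bounded multiplication operator $-2b$; in the supercritical case it invokes the extra hypothesis HQ. This spectral step is the missing ingredient.

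A smaller issue in (i): monotone pointwise convergence of Picard iterates does not by itself give continuity of the limit, and you have not identified the monotone limit with $u_0$ (uniqueness for the renewal equation is not free). The paper instead runs a contraction on short time intervals (its Lemma~\ref{meilleur}, using the $P$-equation $u_0=P_t v_0-\int_0^t P_s(b\,u_0^2)\,ds$), obtaining uniform convergence of the iterates and hence $u_0(t,\cdot)\in\CBZ$.
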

The proof of this theorem is given in Section \ref{sec:survival}.

\begin{remark} We notice that $h$ satisfies also the equation: for all $\e>0$
\begin{equation}
\label{eq:representation K^+ epsilon}
h(x)=\int_0^\infty e^{-\e s}\, \EE_x\left( e^{-\int_0^s (b+d)(X_u)\, du} \, 
\left((2b(X_s)+\e)\,h(X_s)-b(X_s)(h(X_s))^2\right)\right) ds.
\end{equation} 
This representation can be obtained by applying Proposition 2.9 in \cite{ZL} p.34.
\end{remark}

\medskip
The following results  distinguish the three regimes (critical, sub-critical, supercritical), but our 
main interest is the critical case whose long term behavior is more subtle. Therefore, 
we begin the list of our results by this case.

\subsection{Critical case $\eta=-\lambda_0=0$}

The proofs of the following results are given  in Section \ref{sec:proof_critical}.

We denote by $A$ and $B$ the  two main  constants:
\begin{equation}
\label{eq:AB}
A=   \int \mu_0(dy )
\ \ ;\ \ 
B=  \int \Theta_{0}^2(y) \, b(y)\,   \mu_{0}(dy).
\end{equation}

In the following,  $\,\xi\,$ will denote   an exponential random variable with mean 1, whose distribution is denoted by $\E(1)$.

The next proposition gives the crucial estimates on the moments of $\langle Z_t\,, f\rangle$ for a measurable bounded function $f$. Recall that $\nu=\mu_0/\mu_0(1)$

\begin{proposition}
\label{pro:critical estimation}
Let $f \in \CB$ and assume that $n\ge 1$. There exist constants $0<D<\infty$ independent of $n$
and $0<E_n^f<\infty$ so that, for all $t,x$,
\begin{align}
&|\Ex{\langle Z_t\, ,f\rangle^n}| \le \|f\|_\infty^n\, \Ex{\langle Z_t\, ,1\rangle^n} \le \|f\|_\infty^n\, n!\, D^{2n-1}\,(t+1)^{n-1};
\label{eq:CRITICAL bounds moments I} \\
\nonumber\\
&\left|\frac{\Ex{\langle Z_t\, ,f\rangle^n}}{(t+1)^{n-1}}-\Theta_0(x)\,n! \,\nu(f)^n\, A^n\, B^{n-1}\right|\le E_n^f \,\frac{1}{t+1}.
\label{eq:CRITICAL bounds moments II}
\end{align}

In particular for $n\ge 1$ we have 
\begin{equation}
\label{eq:convergence moments critical}
\lim\limits_{t\to \infty} \frac{B(t+1)}{\Theta_0(x)}\, 
\Ex{\left(\frac{\langle Z_t\, ,f\rangle}{(t+1)AB}\right)^n}
=\left(\nu(f)\right)^n \,n!=\left(\nu(f)\right)^n \EE(\xi^n).
\end{equation}
\end{proposition}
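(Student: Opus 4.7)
The plan is to proceed by induction on $n$, leaning on the moment recursion of Proposition \ref{lem:for_moments}, which in the Li-type form expected from \cite{ZL} reads schematically
\begin{equation*}
u_n(t,x)=P_t(f^n)(x)+\sum_{k=1}^{n-1}\binom{n}{k}\int_0^t P_{t-s}\bigl(b\, u_k(s,\cdot)\, u_{n-k}(s,\cdot)\bigr)(x)\,ds,
\end{equation*}
where $u_n(t,x):=\Ex{\langle Z_t,f\rangle^n}$ and $u_1(t,x)=P_tf(x)$, together with the spectral gap HP2 which in the critical regime $\lambda_0=0$ reads $\|P_tg-\Pi(g)\|_\infty\le H\|g\|_\infty e^{-\lambda_1 t}$ for $g\in\CB$. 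The first inequality in \eqref{eq:CRITICAL bounds moments I} is immediate from $|\langle Z_t,f\rangle|\le\|f\|_\infty N_t$, which reduces the polynomial bound to the estimate $U_n(t,x):=\Ex{N_t^n}\le n!\,D^{2n-1}(t+1)^{n-1}$.

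For the polynomial bound, the base $n=1$ uses that $U_1(t,x)=P_t1(x)$ is uniformly bounded by HP2 applied to $g\equiv 1$. The inductive step applies the recursion to $U_n$ (with $f\equiv 1$), substitutes $U_k(s,\cdot)\le k!D^{2k-1}(s+1)^{k-1}$, and combines boundedness of $b$ by $b^*$ (HV2) and of $\|P_{t-s}1\|_\infty$ with the combinatorial identity $\sum_{k=1}^{n-1}\binom{n}{k}k!(n-k)!=(n-1)n!$ and $\int_0^t(s+1)^{n-2}\,ds\le(t+1)^{n-1}/(n-1)$; the resulting inequality is dominated by $n!D^{2n-1}(t+1)^{n-1}$ once $D$ is fixed (depending only on $b^*$ and the uniform bound on $\|P_t 1\|_\infty$) so that $D^2$ absorbs the constants. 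For the sharp asymptotics \eqref{eq:CRITICAL bounds moments II}, the base $n=1$ follows directly from HP2, since $|u_1(t,x)-\Theta_0(x)A\nu(f)|\le H\|f\|_\infty e^{-\lambda_1 t}$, matching $\Theta_0(x)\cdot 1!\,\nu(f)\,A\,B^0$ with remainder comfortably dominated by $E_1^f/(t+1)$. For the inductive step, I would write $u_k(s,y)=(s+1)^{k-1}\Theta_0(y)\,k!\,\nu(f)^kA^kB^{k-1}+r_k(s,y)$ with $|r_k(s,y)|\le E_k^f(s+1)^{k-2}$, substitute into $u_k u_{n-k}$, and decompose $P_{t-s}=\Pi+R_{t-s}$ with $\|R_{t-s}g\|_\infty\le H\|g\|_\infty e^{-\lambda_1(t-s)}$. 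The $\Pi$-contribution of the leading product uses $\Pi(b\,\Theta_0^2)(x)=\Theta_0(x)\,\mu_0(b\,\Theta_0^2)=\Theta_0(x)\,B$ by definition of $B$; combined with the factor $(s+1)^{n-2}$ inside the $s$-integral and the combinatorial sum $(n-1)n!$, this reconstitutes exactly $\Theta_0(x)\,n!\,\nu(f)^nA^nB^{n-1}(t+1)^{n-1}$.

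The hard part will be the error accounting. The cross-terms (leading$\times$remainder) and (remainder$\times$remainder) in the product $u_k u_{n-k}$ are of order $O((s+1)^{n-3})$, so their $\Pi$-contribution after $s$-integration is $O((t+1)^{n-2})$, which is precisely $E_n^f/(t+1)$ relative to $(t+1)^{n-1}$. The $R_{t-s}$-contribution of even the leading product $(s+1)^{n-2}b\,\Theta_0^2$ is handled by splitting $\int_0^t$ at $s=t/2$: on $[0,t/2]$ the factor $e^{-\lambda_1(t-s)}\le e^{-\lambda_1 t/2}$ crushes the polynomial factor, while on $[t/2,t]$ one bounds $(s+1)^{n-2}\le(t+1)^{n-2}$ and $\int_{t/2}^t e^{-\lambda_1(t-s)}\,ds\le 1/\lambda_1$, so the total contribution is $O((t+1)^{n-2})$ as required. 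A minor technical point is that HP2 is stated for $g\in\CB$, while $b\,u_k u_{n-k}$ is a priori only bounded; this is resolved by observing inductively that $u_k(s,\cdot)\in\CB$ (starting from $u_1(s,\cdot)=P_sf\in\CBZ$ via HP4, and propagating continuity through the recursion using HP3), so $b\,u_k u_{n-k}\in\CB$ and the decomposition $P_{t-s}=\Pi+R_{t-s}$ applies. Finally, \eqref{eq:convergence moments critical} follows immediately from \eqref{eq:CRITICAL bounds moments II} by dividing by $\bigl((t+1)AB\bigr)^n$ and multiplying by $B(t+1)/\Theta_0(x)$: the leading constant collapses to $n!\,\nu(f)^n$, and the remainder becomes $O(1/(t+1))$ and vanishes as $t\to\infty$.
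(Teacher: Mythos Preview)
Your proposal is correct and follows essentially the same route as the paper: induction on $n$ using the moment recursion and the spectral gap HP2. The paper organizes the bookkeeping slightly differently---it works throughout with the normalized quantities $v_n=u_n/(t+1)^{n-1}$, subtracts the target $V_n=n!\Theta_0 A^nB^{n-1}$ directly (using $P_sV_n=V_n$), and treats $n=2$ and $n\ge 3$ by separate explicit computations---whereas you keep the unnormalized $u_n$, decompose $P_{t-s}=\Pi+R_{t-s}$, and control the $R_{t-s}$ piece by splitting the $s$-integral at $t/2$. These are equivalent ways of extracting the same cancellations; your $t/2$-split is a clean uniform alternative to the paper's case-by-case estimates. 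One small correction: the continuity $u_k(s,\cdot)\in\CB$ that you need to feed HP2 comes from HP4 (which gives $P_s\colon\CB\to\CBZ$ for $s>0$) together with dominated convergence in the $s$-integral, not from HP3 (which concerns $\int_0^t P_s\,ds$ applied to a fixed $g$); the argument still goes through once this is said correctly.
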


Proposition \ref{galere} proves the uniform convergence to $0$ of the survival probability.

\begin{proposition}
\label{galere}
The survival probability $\Px{N_t>0}$ decreases to $h\equiv 0$ uniformly in $x$ when $t$ tends to infinity.
\end{proposition}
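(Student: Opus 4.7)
The plan is to prove $h\equiv 0$ in the critical case; combined with the monotone, uniform convergence $u_0(t,\cdot)\downarrow h$ established in the theorem of the previous subsection, this immediately yields the proposition. My approach is a duality argument against the left eigenvector $\mu_0$ of $P_t^*$ (Remark~(iii) after (HP)), which in the critical regime $\lambda_0=0$ satisfies $\int P_t f\,d\mu_0 = \int f\,d\mu_0$ for every $f\in\CB$.

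The starting point is the mild (Duhamel) identity for the survival probability,
\begin{equation*}
u_0(t,x) \;=\; P_t 1(x) \;-\; \int_0^t P_{t-s}\!\big(b\,u_0^2(s,\cdot)\big)(x)\,ds,
\end{equation*}
which should follow from the Kolmogorov-type equation $\partial_t u_0 = \mathscr{G}u_0 + V u_0 - b u_0^2$ (obtainable either from Proposition~\ref{lem:for_moments} applied to the extinction probability, or from a direct first-jump decomposition of the branching dynamics followed by a Duhamel step against $P_t$). Integrating this identity against $\mu_0$ -- with Fubini legitimate since $bu_0^2\le b^*$ by (HV2) and $\mu_0$ is a finite measure -- produces, in the critical regime,
\begin{equation*}
\int u_0(t,\cdot)\,d\mu_0 \;=\; A \;-\; \int_0^t\!\int b(x)\,u_0^2(s,x)\,\mu_0(dx)\,ds.
\end{equation*}
The left-hand side lies in $[0,A]$ for every $t$, so the non-negative double integral on the right is bounded uniformly in $t$. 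Since $u_0(t,x)$ is non-increasing in $t$ (extinction being absorbing), the map $s\mapsto \int b\,u_0^2(s,\cdot)\,d\mu_0$ is non-increasing, and finiteness of its time integral forces its limit to be zero. Combined with the uniform convergence $u_0(s,\cdot)\to h$, this yields $\int b\,h^2\,d\mu_0 = 0$.

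From here the conclusion is short. Because $bh^2$ is continuous, non-negative, and has zero integral against $\mu_0$ -- which charges every nonempty open set by (HP2) -- and because $\{b>0\}$ is a nonempty open set by continuity of $b$ and (HV1), we obtain $h\equiv 0$ on $\{b>0\}$. Plugging this into the integral representation of $h$ from the previous subsection, the integrand $b(X_s)(2h(X_s)-h^2(X_s))$ vanishes for every realization of $X_s$ (either $b(X_s)=0$, or $X_s\in\{b>0\}$ with $h(X_s)=0$), so $h(x)=0$ for every $x\in\RR$; the uniform convergence $u_0(t,\cdot)\to h$ then delivers the stated claim.

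The main obstacle I anticipate is the rigorous derivation of the mild equation for $u_0$: although the underlying PDE is the standard one for branching Markov processes, making the Duhamel step precise needs some care because $u_0(t,\cdot)$ is not a priori in the domain of $\mathscr{G}$. I expect this step to be already contained in (or to follow quickly from) Proposition~\ref{lem:for_moments}; otherwise one can first derive the analogous identity with the semigroup $(Q_t)$ via a first-jump decomposition and then convert it to $(P_t)$ using the perturbation formula that relates the two semigroups through the potential $2b$. Once the mild equation is available, all remaining steps are routine under the standing hypotheses.
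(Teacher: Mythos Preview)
Your proof is correct and follows essentially the same route as the paper: integrate the mild equation \eqref{eq:u0} against $\mu_0$ to deduce $\int b\,h^2\,d\mu_0=0$ (the paper's Lemma~\ref{mu0bh}), then combine this with the integral representation of $h$ from Corollary~\ref{eqh} to force $h\equiv 0$. Your anticipated obstacle is not one: the mild equation you write down is precisely \eqref{eq:u0} from Proposition~\ref{lem:for_moments} (after the change of variable $s\mapsto t-s$), so no additional derivation is needed; your final step---deducing $bh^2\equiv 0$ directly from continuity and the support property of $\mu_0$, then reading $h\equiv 0$ off the integral equation---is in fact a bit cleaner than the paper's contradiction argument in Lemma~\ref{hnul}, but the mathematical content is the same.
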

Theorem \ref{the:critical} proves the  decrease of the survival probability at rate $1/t$ from which  we deduce the convergence of normalized conditional moments. 

\begin{theorem} 
\label{the:critical}
 Assume (HV) and (HP). 
The following asymptotics for the survival probability holds
\begin{equation}
\label{eq:asym survival critical}    
\lim\limits_{t\to \infty} (t+1) \Px{N_t>0}=\frac{1}{B} \Theta_0(x).
\end{equation}
Moreover, we have for all $t\ge 0$
\begin{equation}
\label{eq:Uniform survival prob}
\sup\limits_{x\in \RR}\left|(1+t)\,  \Px{N_t>0}-
\frac{1}{B}\Theta_{0}(x)\right|\le\frac{\Oun\log(2+t)}{1+t}\;. 
\end{equation}
This together with \eqref{eq:convergence moments critical} gives the following conditional asymptotics for the
moments of  $\frac{\langle Z_t\, ,f\rangle}{(t+1)AB}$, for $f\in \CB$,
\begin{equation}
\label{eq:asym normalized moments critical}    
\lim\limits_{t\to \infty} 
\Ex{\left(\frac{\langle Z_t\, ,f\rangle}{(t+1)AB}\right)^n \, \Big | \, N_t>0}
=\left(\nu(f)\right)^n \EE(\xi^n).
\end{equation}
\end{theorem}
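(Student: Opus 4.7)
The plan is to establish the sharp $1/(1+t)$ asymptotic for the survival probability $u_{0}(t,x)=\Px{N_t>0}$, and then to deduce the conditional moment convergence \eqref{eq:asym normalized moments critical} by taking ratios with the unconditional moment estimates of Proposition~\ref{pro:critical estimation}.

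First, I would write down the Volterra integral equation satisfied by $u_{0}(t,\sbullet)$. In the spirit of the fixed-point equation for $h$ stated just after Proposition~\ref{galere}, one obtains
\begin{equation*}
u_{0}(t,x)=Q_{t}1(x)+\int_{0}^{t} Q_{s}\bigl(2b\,u_{0}(t-s,\sbullet)-b\,u_{0}(t-s,\sbullet)^{2}\bigr)(x)\,ds,
\end{equation*}
where $(Q_s)$ is the semigroup of \eqref{def:Qt}. The term $Q_t 1$ decays exponentially and is absorbed into errors, while the quadratic correction is the Riccati-type nonlinearity responsible for the critical $1/t$ decay. I would then extract a scalar equation by integrating against the eigenmeasure $\mu_{0}$. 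Setting $\Phi(t)=\int u_{0}(t,x)\,\mu_{0}(dx)$ and using that in the critical case $\mu_0$ is annihilated by the adjoint of $\mathscr{G}+V$, together with $\int b\,\Theta_{0}^{2}\,d\mu_{0}=B$, one arrives at a Riccati-type equation
\begin{equation*}
\Phi'(t)=-B\,\Phi(t)^{2}+R(t),
\end{equation*}
whose remainder $R(t)$ is shown to be negligible using HP2 and Proposition~\ref{galere}. Elementary integration then gives $\Phi(t)\sim 1/(Bt)$, i.e.\ the correct rate for the $\mu_{0}$-averaged survival probability.

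Next, I would promote this scalar asymptotic to a pointwise uniform one by writing $u_{0}(t,x)=\psi(t)\Theta_{0}(x)+r(t,x)$ and inserting into the integral equation. The spectral gap \eqref{eq:bound_uniforme} with $\lambda_{0}=0$ contracts the remainder to the one-dimensional Perron subspace at geometric rate $e^{-\lambda_{1}s}$, so $r(t,\sbullet)$ is controlled uniformly in $x$ by a Gronwall-type bootstrap. The logarithmic factor in the prescribed bound $\Oun\log(2+t)/(1+t)$ of \eqref{eq:Uniform survival prob} is the classical critical-case signature: it arises when the $1/(1+s)$ leading behaviour of $\psi$ is convolved against the exponentially decaying spectral-gap kernel. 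This establishes \eqref{eq:Uniform survival prob}, and specializing the resulting pointwise asymptotic gives \eqref{eq:asym survival critical}.

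Finally, for \eqref{eq:asym normalized moments critical}, since $\langle Z_t,f\rangle=0$ on $\{N_t=0\}$,
\begin{equation*}
\Ex{\Bigl(\tfrac{\langle Z_t,f\rangle}{(t+1)AB}\Bigr)^{n}\,\Big|\,N_t>0}=\frac{\Ex{(\langle Z_t,f\rangle/((t+1)AB))^{n}}}{\Px{N_t>0}}.
\end{equation*}
By \eqref{eq:convergence moments critical} the numerator is asymptotic to $\nu(f)^{n}\,n!\,\Theta_{0}(x)/(B(t+1))$, while by \eqref{eq:asym survival critical} the denominator is asymptotic to $\Theta_{0}(x)/(B(t+1))$; the $\Theta_{0}(x)$'s cancel and one obtains the limit $\nu(f)^{n}\,n!=\nu(f)^{n}\,\EE(\xi^n)$. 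The main obstacle is the third step: promoting the scalar Riccati asymptotic to a uniform-in-$x$ bound with the sharp $\log(2+t)$ correction rather than a merely qualitative $1/(1+t)$ rate. A naive substitution leaves a remainder that must be controlled by combining HP2--HP4 with Proposition~\ref{galere} in an iterative scheme, carefully tracking how the algebraic decay of $\psi(t)$ interacts with the exponential contraction of the spectral gap.
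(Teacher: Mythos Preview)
Your overall strategy---decompose $u_0$ along the Perron eigenfunction, derive a Riccati equation for the scalar coefficient $\Phi(t)=\int u_0(t,\cdot)\,d\mu_0$, and control the transverse part via the spectral gap---is exactly the paper's. However, there is a genuine circularity you gloss over, and the paper resolves it by a mechanism you do not mention.

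The gap is your claim that ``$R(t)$ is negligible using HP2 and Proposition~\ref{galere}, so elementary integration gives $\Phi(t)\sim 1/(Bt)$.'' Writing $u_0=\Theta_0\,\Phi+\Psi$ with $\Psi=\Pi^\perp u_0$, the exact equation is $\Phi'=-B\Phi^2-\Phi\,\mu_0(2b\Theta_0\Psi)-\mu_0(b\Psi^2)$. To extract $\Phi\sim 1/(Bt)$ you need the error to be $o(\Phi^2)$, i.e.\ $\|\Psi\|_\infty=o(\Phi)$. Proposition~\ref{galere} gives only $\|\Psi\|_\infty\to 0$ with no rate, and HP2 contracts $\Psi$ against an integral that itself contains $u_0^2$, hence $\Psi$ again; neither yields $\|\Psi\|_\infty=o(\Phi)$. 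Your ``Gronwall-type bootstrap'' does not close: the algebraic decay of $\Phi$ and the exponential spectral gap act on incompatible time-scales, and a naive iteration produces no improvement.

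The paper breaks this circularity with two ingredients you omit. First, an \emph{a priori lower bound} $\Phi(t)\ge C_-/(1+t)$, obtained independently of the Riccati equation via Cauchy--Schwarz, $\Px{N_t>0}\ge(\Ex{N_t})^2/\Ex{N_t^2}$, together with the moment asymptotics of Proposition~\ref{pro:critical estimation}. Second, a \emph{trapping-set argument}: whenever $\|\Psi\|_\infty\ge\Phi^2/\delta$, the spectral gap forces $\|\Psi\|_\infty$ to halve over a fixed time step while the lower bound prevents $\Phi$ from collapsing (Lemma~\ref{expodec}), so the trajectory must enter $\{\|\Psi\|_\infty\le\Phi^2/\delta\}$ (Corollary~\ref{cor:entre}); once inside, a self-consistent estimate shows it stays in $\{\|\Psi\|_\infty\le\Gamma\Phi^2\}$ forever (Proposition~\ref{pro:borneGamma}). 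Only then is the remainder $\mathcal{O}(\Phi^3)$, whence $d(1/\Phi)/dt=B+\mathcal{O}(1/(1+t))$ and the $\log(2+t)/(1+t)$ error appears upon integration. The logarithm thus comes from integrating the $\mathcal{O}(1/(1+t))$ correction to the Riccati equation, not from convolving $1/(1+s)$ against the spectral kernel as you suggest.

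Your final paragraph deducing \eqref{eq:asym normalized moments critical} by taking ratios is correct.
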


Theorem \ref{the:critical convergence} provides the convergence in law for the joint normalized random variables
$\langle Z_t\,,f\rangle$ and $\langle Z_t\,,1\rangle$, under the conditioning on survival for large times, a law of large numbers and a Central like limit Theorem.

\begin{theorem}\label{the:critical convergence} 
Assume (HV) and (HP). Let $f\in \CB$.
\begin{enumerate}[(1)]
\item 

The joint asymptotic conditional distribution of $\frac{1}{(t+1)AB}(\langle Z_t\, ,f\rangle, \langle Z_t\, ,1\rangle)$ is given by
the two dimensional distribution of $\, (\xi \nu(f),\xi)$,  namely for all $F:\RR^2\to \RR$, 
bounded and continuous it holds
$$
\lim\limits_{t\to \infty}\Ex{\left.F\left(\frac{1}{(t+1) AB}
\Big(\langle Z_t\, ,f\rangle,\langle Z_t\, ,1\rangle\Big)\right) \,\, \right| \, N_t>0}
=\EE\Big(F( \xi\nu(f),\xi)\Big).
$$
In particular, the  random measures  $\frac{1}{(t+1)AB} Z_t$ 
(taking values on the space 
$M_{F}(\RR)$) conditioned to have no extinction before $t$,  converge
in law (as $t$ tends to infinity),  to the measure $\Lambda$ on $M_F(\RR)$ given by
$$
\Lambda(d\mu)= \int_{0}^\infty e^{-z}\,  \delta_{z\,\nu}(d\mu)\, dz.
$$
This means that $\Lambda$ is a Yaglom limit.

\item Law of Large Numbers holds. The distribution of the ratio 
$$
\frac{\langle Z_t\, ,f\rangle}{\langle Z_t\, ,1\rangle}
$$
conditioned on survival at time $t$, converges to   
the Dirac mass at $\nu(f)$.

\item Assume that $\nu(f)\neq 0$, then the following normalized ratio
$$
\Upsilon_t:=\frac{1}{\nu(f)(AB)^{1/2}} \frac{\frac{\langle Z_t\, ,f\rangle}{t+1}-\nu(f) A B/2}{\sqrt{\frac{N_t}{t+1}}}
$$
conditioned on survival at time $t$, converges conditionally in distribution to 
$$
\xi^{1/2}-\frac12\xi^{-1/2}.
$$
\end{enumerate}
\end{theorem}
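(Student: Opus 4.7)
All three parts will follow from a single joint conditional convergence in distribution, obtained by combining Theorem \ref{the:critical} with the Cram\'er--Wold device; the continuous mapping theorem then handles $(2)$ and $(3)$.

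Set $Y^{(f)}_t=\langle Z_t,f\rangle/((t+1)AB)$ and $Y^{(1)}_t=N_t/((t+1)AB)$. For any $\alpha,\beta\in\RR$ the function $g=\alpha f+\beta$ lies in $\CB$ and satisfies $\langle Z_t,g\rangle=\alpha\langle Z_t,f\rangle+\beta N_t$, $\nu(g)=\alpha\nu(f)+\beta$, so applying \eqref{eq:asym normalized moments critical} to $g$ yields
\[
\lim_{t\to\infty}\Ex{\left(\alpha Y^{(f)}_t+\beta Y^{(1)}_t\right)^n\,\Big|\,N_t>0}=(\alpha\nu(f)+\beta)^n\,n!=\EE\Big[\big(\alpha\,\xi\nu(f)+\beta\,\xi\big)^n\Big].
\]
The law of $(\alpha\nu(f)+\beta)\xi$ is uniquely determined by its moments (Carleman's condition, since $\xi\sim\E(1)$), hence $\alpha Y^{(f)}_t+\beta Y^{(1)}_t$ converges conditionally in distribution to $\alpha\,\xi\nu(f)+\beta\,\xi$. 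The Cram\'er--Wold device then delivers
\[
(Y^{(f)}_t,Y^{(1)}_t)\;\Rightarrow\;(\xi\,\nu(f),\xi)\qquad\text{conditionally on }N_t>0,
\]
which is precisely part $(1)$ for bounded continuous $F$ by the portmanteau theorem. For the random-measure statement, we repeat the argument with $g=\sum_{i=1}^k\alpha_i f_i\in\CB$ to get finite-dimensional convergence of $(\langle \mu_t,f_i\rangle)_{i\le k}$ (where $\mu_t:=Z_t/((t+1)AB)$) to $(\int f_i\,d(\xi\nu))_{i\le k}$. Together with tightness on $M_F(\RR)$ (see below), this yields $\mu_t\Rightarrow \xi\nu$ conditionally on $N_t>0$, whose law on $M_F(\RR)$ is exactly $\Lambda$.

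Parts $(2)$ and $(3)$ follow from the continuous mapping theorem. Since $\xi>0$ almost surely, both maps $(y_1,y_2)\mapsto y_1/y_2$ and $(y_1,y_2)\mapsto (y_1-\nu(f)/2)/(\nu(f)\sqrt{y_2})$ are continuous on $\RR\times(0,\infty)$, i.e.\ on the support of the limit. The first gives $\langle Z_t,f\rangle/N_t=Y^{(f)}_t/Y^{(1)}_t\Rightarrow\xi\nu(f)/\xi=\nu(f)$, and convergence in distribution to a constant equals convergence in probability, yielding part $(2)$. For part $(3)$, writing $\langle Z_t,f\rangle/(t+1)=AB\,Y^{(f)}_t$ and $N_t/(t+1)=AB\,Y^{(1)}_t$, a direct computation gives
\[
\Upsilon_t=\frac{1}{\nu(f)}\cdot\frac{Y^{(f)}_t-\nu(f)/2}{\sqrt{Y^{(1)}_t}}\;\Rightarrow\;\frac{\nu(f)\xi-\nu(f)/2}{\nu(f)\sqrt{\xi}}=\xi^{1/2}-\tfrac12\xi^{-1/2}.
\]

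\textbf{Expected main obstacle.} The most delicate point is establishing tightness of $(\mu_t)$ on $M_F(\RR)$ under the conditional law, needed to promote finite-dimensional convergence to weak convergence of random measures. Tightness of the total mass is already provided by $Y^{(1)}_t\Rightarrow\xi$; the spatial tightness ($\Ex{\mu_t(K^c)\mid N_t>0}$ small uniformly in $t$ for $K$ compact and large) can be obtained by selecting a continuous cut-off $\chi_K\ge\ind_{K^c}$ with $\nu(\chi_K)$ arbitrarily small (possible since $\nu$ is a probability measure), applying the joint convergence to $\chi_K$ together with Markov's inequality. This avoids any circular use of parts $(2)$--$(3)$.
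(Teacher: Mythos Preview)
Your proof is correct and follows essentially the same route as the paper: apply the conditional moment asymptotics \eqref{eq:asym normalized moments critical} to the linear combination $g=\alpha f+\beta\in\CB$, invoke moment determinacy of the exponential limit, and deduce the joint law; parts (2) and (3) then follow by continuous mapping exactly as you do. The only cosmetic difference is that the paper expands $(\nu(g))^n n!$ by multilinearity to identify the mixed moments $\EE[(Y^{(f)}_t)^p(Y^{(1)}_t)^q\mid N_t>0]\to(\nu(f))^p(p+q)!$ directly and then cites a two-dimensional moment-determinacy result from Billingsley, whereas you stay one-dimensional and invoke Cram\'er--Wold; these are equivalent. Your tightness sketch for the $M_F(\RR)$ statement is a detail the paper does not spell out.
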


\begin{remark} We point out that if we consider a finite family 
$\left(\langle Z_t\,, f_1\rangle,...,\langle Z_t\,, f_k\rangle\right)$, 
the joint (normalized) conditional distribution converges to $ \, \left(\nu(f_1),...,\nu(f_k)\right)\xi$.
\end{remark}

\begin{remark}
Part $(2)$ Theorem \ref{the:critical convergence} can also be interpreted as a weak law of large numbers.
Indeed, we have
$$
\Ex{\left.\frac{\langle Z_t\, ,f\rangle}{t+1}\,\right | N_t>0}\to B  \int  f(y) \mu_0(dy )
=AB  \int  f(y) \nu(dy ).
$$
\end{remark}

\begin{remark}Part $(3)$ Theorem \ref{the:critical convergence} can be seen as a non-centered 
non-Gaussian limit Theorem since the asymptotic limit of the expectation of 
$\frac{\langle Z_t\, ,f\rangle}{t+1}$ is $AB \nu(f)$. The extra $1/2$ on the mean, in the numerator, is used
to get a centered limit. In fact, an easy computation using integration by parts shows that the random
variable $\ \xi^{1/2}-\frac12\xi^{-1/2}\,$ is centered. This bias compensates for the fact that we
use $N_t$ as a normalization. Note that since $\langle Z_t\, ,f\rangle\le
\|f\|_\infty N_t$, $N_t$ can be large while $\langle Z_t\, ,f\rangle$ can be small. \end{remark}

\begin{remark}
The distribution of $\xi^{1/2}-\frac12\xi^{-1/2}$ can be calculated
as follows. Let $y\in \RR$, condition
$\xi^{1/2}-\frac12\xi^{-1/2}\le y$ is equivalent to 
$$
\xi-y \xi^{1/2} -1/2\le 0,
$$
which is equivalent (since $\xi\ge 0$) to
$$
\xi^{1/2}\le \frac{ y+\sqrt{y^2+2}}{2} \Leftrightarrow
\xi\le \left(\frac{y+\sqrt{y^2+2}}{2}\right)^2=h(y).
$$
Notice that for large positive $y$ 
$$
\PP(\xi^{1/2}-\xi^{-1/2}/2>y)=e^{-h(y)} = e^{-y^2+O(1)},
$$
while for large negative $y$
$$
\PP(\xi^{1/2}-\xi^{-1/2}/2<y)=1- e^{-h(y)}=\frac{1}{4\,y^2+O(1)}.
$$
The asymptotic distribution is quite non symmetric.
\end{remark}

\subsection{Subcritical case $\eta=-\lambda_0<0$}

The proofs of the results stated in this section are given in Section 
\ref{sec:proof_subcritical}.

The next proposition gives the basic estimates on the moments of $\langle Z_t\, , f\rangle$ 
needed for the main theorem of this section.
\begin{proposition}
\label{pro:subcritical estimation} 
There exists a constant $\mathcal{D}$, so that,
for all $n\ge 1$ and $f\in \CB$, 
$$
\begin{array}{l}
C^-_n=\sup\limits_{t,x} e^{\lambda_0 t}\,\Ex{\langle Z_t\, ,1\rangle^n}<+\infty, \quad
C^-_n(f)=\sup\limits_{t,x} e^{\lambda_0 t}\,\Ex{\langle Z_t\, ,f\rangle^n}\le C^-_n \|f\|^n_\infty\,,\\
C^-_n\le C^-_1+ 
C^-_1 b^*\lambda_0^{-1}\sum\limits_{k=1}^{n-1} \binom{n}{k}  C^-_{k}C^-_{n-k}\le (n!)^{1+\e}\,\mathcal{D}^{3n-2}.
\end{array}
$$
\end{proposition}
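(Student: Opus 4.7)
The plan is to use the recursive moment formula from Proposition \ref{lem:for_moments}, which (adapted from Z.~Li) takes the form
$$u_n(t,x):=\Ex{N_t^n}=P_t 1(x)+\int_0^t P_{t-s}\Bigl(b\sum_{k=1}^{n-1}\binom{n}{k}u_k(s,\cdot)\,u_{n-k}(s,\cdot)\Bigr)(x)\,ds,$$
combined with the contraction estimate coming from HP2. The great advantage of this version of the recursion, as the authors emphasize in the introduction, is that the unbounded death rate $d$ is hidden inside $P_t$ via $V=b-d$, so only the \emph{bounded} birth rate $b^*$ shows up in the nonlinear term.

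First, HP2 applied to $g\equiv 1$ gives $\|e^{\lambda_0 t}P_t 1-\Pi(1)\|_\infty\le H e^{-(\lambda_1-\lambda_0)t}$, so $C^-_1\le\|\Pi(1)\|_\infty+H<\infty$. Since $0\le P_t g\le \|g\|_\infty P_t 1$ pointwise, we get the operator bound $\|P_t g\|_\infty\le C^-_1 e^{-\lambda_0 t}\|g\|_\infty$ for all $g\in \MB$. The inequality $C^-_n(f)\le C^-_n\|f\|^n_\infty$ is then immediate from $|\langle Z_t,f\rangle|^n\le \|f\|^n_\infty N_t^n$, so it suffices to control $C^-_n$.

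Next, I would argue by induction on $n$. Multiplying the recursion by $e^{\lambda_0 t}$ and applying the operator bound:
$$e^{\lambda_0 t}u_n(t,x)\le C^-_1+b^* C^-_1\int_0^t e^{-\lambda_0 s}\sum_{k=1}^{n-1}\binom{n}{k}\bigl(e^{\lambda_0 s}\|u_k(s)\|_\infty\bigr)\bigl(e^{\lambda_0 s}\|u_{n-k}(s)\|_\infty\bigr)\,ds.$$
Bounding the factors inside the integral by $C^-_k$, $C^-_{n-k}$ (finite by the induction hypothesis) and integrating yields $\int_0^\infty e^{-\lambda_0 s}\,ds=\lambda_0^{-1}<\infty$, which, after taking the sup over $(t,x)$, delivers the stated recursive inequality
$$C^-_n\le C^-_1+C^-_1 b^*\lambda_0^{-1}\sum_{k=1}^{n-1}\binom{n}{k}C^-_k C^-_{n-k}.$$

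For the factorial-power bound, set $a_n=C^-_n/n!$ and use $\binom{n}{k}k!(n-k)!=n!$ to rewrite the recursion as $a_n\le C^-_1/n!+C^-_1 b^*\lambda_0^{-1}\sum_{k=1}^{n-1}a_k a_{n-k}$. With the ansatz $a_n\le \mathcal{D}^{3n-2}(n!)^\varepsilon$, the nonlinear term is bounded by $\mathcal{D}^{3n-4}(n!)^\varepsilon\sum_{k=1}^{n-1}\binom{n}{k}^{-\varepsilon}$, and the crucial combinatorial fact is that for any $\varepsilon>0$ the sum $\sum_{k=1}^{n-1}\binom{n}{k}^{-\varepsilon}$ is bounded uniformly in $n$ (the terms decay geometrically in $n^\varepsilon$ from both ends). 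Choosing $\mathcal{D}$ large enough so that $\mathcal{D}^2\ge C^-_1 b^*\lambda_0^{-1}\sup_n\sum_k\binom{n}{k}^{-\varepsilon}$ and $\mathcal{D}\ge C^-_1$ closes the induction, yielding $C^-_n\le(n!)^{1+\varepsilon}\mathcal{D}^{3n-2}$. The main obstacle is precisely this last bootstrap: the ansatz $(n!)^{1+\varepsilon}$ (rather than the naive $n!$ one would get for a linear recursion) is forced by the quadratic right-hand side and the fact that $\sum\binom{n}{k}^{-\varepsilon}$ is uniform only for strictly positive $\varepsilon$, while $\varepsilon=0$ gives the divergent $n-1$.
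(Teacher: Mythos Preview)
Your derivation of the recursive inequality is correct and matches the paper's argument in Section~\ref{sec:moments_subcritical}: both multiply the moment recursion by $e^{\lambda_0 t}$, use HP2) to bound $e^{\lambda_0 t}P_t 1$ uniformly, and pull out $b^*$ and $C_k^- C_{n-k}^-$ from the convolution integral.

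Where you diverge is in the final factorial bound. Your route---the ansatz $a_n\le\mathcal{D}^{3n-2}(n!)^\varepsilon$ closed via the elementary fact $\sup_n\sum_{k=1}^{n-1}\binom{n}{k}^{-\varepsilon}<\infty$ for $\varepsilon>0$ (which follows from $\binom{n}{k}\ge 2^{\min(k,n-k)}$)---is correct and self-contained. The paper instead appeals to its Lemma~\ref{lem:bounds moments subcritial}, a generating-function argument: writing $B^N(u)=\sum_{k\le N}(a_k/k!)u^k$, the quadratic recursion forces $B^N(u)<a_1(e^u-1)+\eta B^N(u)^2$, and analyzing the roots of $\eta z^2-z+a_1(e^r-1)=0$ traps $B^N$ below $1/(2\eta)$ on $(0,r_*)$, yielding the sharper bound $C_n^-\le n!/(2\eta r_*^n)$, i.e.\ effectively $\varepsilon=0$. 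Your observation that $\varepsilon=0$ fails in the direct induction is exactly right; the generating-function trick is what bypasses it. Your bound suffices for the proposition as stated, but note that later (Section~\ref{sec:Convergence in law subcritical}) the paper needs growth $\le n!K^n$ to invoke determinacy of the Hamburger moment problem, and your $(n!)^{1+\varepsilon}$ growth does \emph{not} satisfy Carleman's condition for $\varepsilon>0$; so if you carry your approach forward you would eventually need the paper's sharper lemma anyway.
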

The proof of this Proposition can be found in Section \ref{sec:moments_subcritical}.

We introduce the normalized moments and survival probability by
$$
\begin{array}{l}
v_n^-(f)(t,x)=e^{\lambda_0 t}\,\Ex{\langle Z_t\, ,f\rangle^n}:\, n\ge 1,\hbox{ and }
v_0^-(t,x)=e^{\lambda_0 t}\,\Px{N_t>0},
\end{array}
$$
We also denote by $v_n^-(t,x)=e^{\lambda_0 t}\Ex{N_t^n}$.

\begin{theorem} 
\label{the:subcritical}
Assume (HV) and (HP) and that the process  $(Z_t)$ is subcritical ($\lambda_{0}>0$).
For any $f\in \CB$, 
\begin{enumerate}[(1)]
\item {\bf Asymptotic for the moments.} The following limits hold for the normalized moments: 
$$
\begin{array}{l}
V^-_1(f)=\frac{1}{\Theta_0(x)}\lim\limits_{t\to \infty} e^{\lambda_0 t}\,\Ex{\langle Z_t\, ,f\rangle}
=\int f(y) \mu_0(dy),\hbox{ and for $n\ge 2$}\,,\\
V^-_n(f)=\frac{1}{\Theta_0(x)}\lim\limits_{t\to \infty} e^{\lambda_0 t}\,\Ex{\langle Z_t\, ,f\rangle^n}
=V^-_1(f^n)+\sum\limits_{k=1}^{n-1} \binom{n}{k} 
\int_0^\infty e^{-\lambda_0 r} \int v_{n-k}^-(f)(r,y)\,v_{k}^-(f)(r,y)\, b(y)\, \mu_0(dy)\, dr.
\end{array}
$$
We also have the following bound on the rate of convergence
$$
F^-_n(f)=\sup\limits_{t,x} e^{\,\beta_n t} \, |e^{\lambda_0 t}\Ex{\langle Z_t\, ,f\rangle^n}-\Theta_0(x) V^-_n(f)|<+\infty,
$$
with $\beta_1=\lambda_1-\lambda_0>0$ and $\beta_n=\frac{\lambda_0}{\lambda_1}(\lambda_1-\lambda_0)>0$, 
for all $n\ge 2$.

\item {\bf The survival probability.} The following asymptotic for the survival probability holds
\begin{equation} 
\label{eq:Uniform survival prob subcritical}
\sup\limits_{t,x} \frac{e^{\beta t}}{1+t\, \ind_{\lambda_1=2\lambda_0}}
\left|e^{\lambda_0 t}\Px{N_t> 0}-\Theta_0(x) \mathfrak{K}^-\right|<+\infty,
\end{equation}
where $\beta=\lambda_0 \wedge (\lambda_1-\lambda_0)>0$ and 
\begin{equation}
\label{kappamoins}
\mathfrak{K}^-=\int \mu_0(dy)-\int_0^\infty e^{-\lambda_0 r} \int (v_0^-(r,y))^2 b(y) 
\mu_0(dy)\, dr>0.
\end{equation}

\item {\bf Convergence in law: Yaglom Limit.} The distribution of $\langle Z_t\, ,f\rangle$ conditioned on
survival at time $t$  
defined on the Borel sets (of $M_F(\RR)$) as:
$$
\mu_t(f,x)(A)=\Px{\langle Z_t\, ,f\rangle \in A \,\, | \, N_t>0}
$$
converges to a probability measure $\zeta^-_f(dy)$, which is independent of $x$, defined  as the unique solution 
of the Hamburger moment problem : for all $n\ge 1$
$$
\int_\RR y^n \, \zeta^-_f(dy)=\frac{V^-_n(f)}{\mathfrak{K}^-}.
$$

The distribution of the random measure $Z_t$, conditioned to non extinction,  
converges to a unique probability measure $\Lambda^-$ defined in $M_F(\RR)$. 
This unique probability measure does not depend on the initial condition $\delta_x$
and satisfies
$$
\int_{M_{F(\RR)}} e^{-\langle \mu\,,f\rangle} \Lambda^-(d\mu)= \int_\RR e^{-y} \zeta^-_f(dy). 
$$
Then, $\Lambda^-$ is a Yaglom limit for $(Z_t)_t$.
\end{enumerate}
\end{theorem}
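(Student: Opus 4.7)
Each of the three parts is proved by combining the moment and survival recursions of Proposition~\ref{lem:for_moments} with the uniform spectral gap estimate \eqref{eq:bound_uniforme}. We proceed by induction on $n$ for part (1). The base case $n=1$ is immediate: $\Ex{\langle Z_t,f\rangle}=P_tf(x)$ by the Feynman--Kac formula \eqref{FK}, and HP2) yields $v_1^-(f)(t,x)=\Theta_0(x)\int f\,d\mu_0+O(\|f\|_\infty e^{-(\lambda_1-\lambda_0)t})$ uniformly in $x$, which identifies $V_1^-(f)$ and $F_1^-(f)$. For $n\ge 2$, the recursion takes the Duhamel form
\begin{equation*}
\Ex{\langle Z_t,f\rangle^n}=P_t(f^n)(x)+\sum_{k=1}^{n-1}\binom{n}{k}\int_0^t P_{t-s}\bigl(b\cdot U_k(s,\cdot)\,U_{n-k}(s,\cdot)\bigr)(x)\,ds,
\end{equation*}
where $U_k(s,y)=\EE_{\delta_y}[\langle Z_s,f\rangle^k]$. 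Multiplying by $e^{\lambda_0 t}$ and inserting factors $e^{\pm\lambda_0 s}$ rewrites the integrand in terms of the normalized moments $v_k^-(f)$, with an extra $e^{-\lambda_0 s}$ (from subcriticality) ensuring integrability on $[0,\infty)$. Proposition~\ref{pro:subcritical estimation} dominates $v_k^-$ uniformly, and HP2) drives the inner semigroup to its limit $\Theta_0(x)\int b\,v_k^-v_{n-k}^-\,d\mu_0$; dominated convergence then delivers the stated formula for $V_n^-(f)$. The quantitative rate $F_n^-(f)<\infty$ is obtained by splitting the time integral at a fraction of $t$, using \eqref{eq:bound_uniforme} with exponent $\lambda_1-\lambda_0$ on the outer semigroup and the inductive rate $\beta_{n-1}$ inside; the exponent $\beta_n=\frac{\lambda_0}{\lambda_1}(\lambda_1-\lambda_0)$ arises as the fixed point of this splitting scheme.

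\textbf{Survival probability.} The same recursion with a quadratic nonlinearity gives the integral equation
\begin{equation*}
u_0(t,x)=P_t 1(x)-\int_0^t P_{t-s}\bigl(b\,u_0(s,\cdot)^2\bigr)(x)\,ds,
\end{equation*}
for $u_0(t,x)=\Px{N_t>0}$. Since $u_0\le P_t 1(x)\to 0$ uniformly in $x$, the function $h$ of Section~3.2 vanishes identically in the subcritical regime. Multiplying by $e^{\lambda_0 t}$ and repeating the argument of part (1), using that $v_0^-$ is uniformly bounded, we obtain $v_0^-(t,x)\to\Theta_0(x)\mathfrak{K}^-$ with the quantitative rate \eqref{eq:Uniform survival prob subcritical}. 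The logarithmic correction when $\lambda_1=2\lambda_0$ reflects a resonance between the decay $e^{-\lambda_0 s}$ of $(v_0^-)^2$ and the spectral rate $e^{-(\lambda_1-\lambda_0)(t-s)}$ in the Duhamel integral. Positivity of $\mathfrak{K}^-$ follows because $u_0>0$ for every finite $t$ by HV1), which rules out the trivial solution $\mathfrak{K}^-=0$.

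\textbf{Yaglom limit.} Dividing parts (1) and (2) yields the $x$-independent limit $\Ex{\langle Z_t,f\rangle^n\mid N_t>0}\to V_n^-(f)/\mathfrak{K}^-$. The factorial bound $V_n^-(f)/\mathfrak{K}^-\le C\,\|f\|_\infty^n (n!)^{1+\e}\mathcal{D}^{3n-2}$ from Proposition~\ref{pro:subcritical estimation} verifies Carleman's criterion (for $\e$ small enough), so the Hamburger moment problem has a unique solution $\zeta_f^-$, and the method of moments yields convergence in distribution of $\langle Z_t,f\rangle$ conditioned on $\{N_t>0\}$. Applying this to $\lambda f$ for $\lambda\ge 0$ and $f\ge 0$ bounded continuous, together with uniform integrability from the moment bounds, delivers convergence of the conditional Laplace functional $\Ex{e^{-\langle Z_t,f\rangle}\mid N_t>0}\to\int_\RR e^{-y}\zeta_f^-(dy)$; Kallenberg's theorem on random measures then produces the unique $\Lambda^-$ on $M_F(\RR)$ with the stated Laplace functional. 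The principal obstacle lies in part (1): the bilinear structure of the Duhamel recursion makes a naive propagation of the exponential rate lose a factor at each step, and the identification of the correct asymptotic exponent $\beta_n=\frac{\lambda_0}{\lambda_1}(\lambda_1-\lambda_0)$ requires a careful optimization of the splitting that is robust in $n$.
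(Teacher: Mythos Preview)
Your overall architecture for parts (1) and (2)---the Duhamel recursion multiplied by $e^{\lambda_0 t}$, domination via Proposition~\ref{pro:subcritical estimation}, and the time-splitting at $M=\tfrac{\lambda_1-\lambda_0}{\lambda_1}t$ to extract the rate $\beta_n$---is exactly the paper's approach. Two steps, however, do not go through as written.

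\textbf{Positivity of $\mathfrak{K}^-$.} Saying ``$u_0(t,x)>0$ for every finite $t$ rules out $\mathfrak{K}^-=0$'' is not a valid argument: a strictly positive function of $t$ can perfectly well have $e^{\lambda_0 t}u_0(t,x)\to 0$, so nothing prevents the limit $\Theta_0(x)\mathfrak{K}^-$ from vanishing. The paper obtains positivity from the Cauchy--Schwarz inequality
\[
e^{\lambda_0 t}\Px{N_t>0}\;\ge\; e^{\lambda_0 t}\,\frac{(\Ex{N_t})^2}{\Ex{N_t^2}}
\;=\;\frac{(v_1^-(t,x))^2}{v_2^-(t,x)}\;\longrightarrow\;\Theta_0(x)\,\frac{(V_1^-)^2}{V_2^-}>0,
\]
which directly gives $\mathfrak{K}^-\ge (V_1^-)^2/V_2^->0$.

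\textbf{Moment determinacy.} The bound $C_n^-\le (n!)^{1+\e}\mathcal{D}^{3n-2}$ from Proposition~\ref{pro:subcritical estimation} is \emph{not} enough for Carleman, no matter how small $\e>0$ is. Indeed, with $m_{2n}\lesssim ((2n)!)^{1+\e}C^{2n}$ one only gets $m_{2n}^{-1/(2n)}\gtrsim n^{-(1+\e)}$, which is summable; so the criterion $\sum m_{2n}^{-1/(2n)}=\infty$ cannot be verified from that estimate. The paper closes this gap with an additional lemma (Lemma~\ref{lem:bounds moments subcritial}): any positive sequence satisfying the quadratic recursion $a_n\le a_1+\eta\sum_{k=1}^{n-1}\binom{n}{k}a_ka_{n-k}$ in fact obeys $a_n\le n!/(2\eta r_*^n)$ for some $r_*>0$. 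This sharpened bound, applied to $(C_n^-)$ and hence to $V_n^-(f)$ via \eqref{eq:bound Vn-}, gives $|V_n^-(f)|\le \|f\|_\infty^n\, n!\, r_*^{-n}/(\text{const})$, which does make the Hamburger moment problem determinate. Your proof sketch needs this intermediate step; without it, uniqueness of $\zeta_f^-$ (and hence of $\Lambda^-$) is not established.
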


\subsection{Supercritical case $\eta=-\lambda_0>0$}
\label{sec:Theorem supercritical}

The proofs of the results stated in this section are given in 
Section \ref{sec:proof_supercritical}.

The next proposition gives the basic estimates on the moments of $\langle Z_t\, , f\rangle$ 
needed for the main theorem of this section.
\begin{proposition}
\label{pro:supercritical estimation} 
 Assume (HV), (HP), then there exists a positive constant $\mathcal{D}$, so that,
for all $n\ge 1$ and $f\in \CB$, 
$$
\begin{array}{l}
C^+_n=\sup\limits_{t,x} e^{n\lambda_0 t}\,\Ex{\langle Z_t\, ,1\rangle^n}<+\infty,
\quad C^+_n(f)=\sup\limits_{t,x} e^{n\lambda_0 t}\,\Ex{\langle Z_t\, ,f\rangle^n}\le C^+_n \|f\|_\infty^n\,,\\
C^+_n\le C^+_1+ \frac{C^+_1 b^*|\lambda_0|^{-1}}{n-1}
\sum\limits_{k=1}^{n-1} \binom{n}{k}  C^+_{k}C^+_{n-k}\le n! \mathcal{D}^{2n-1}.
\end{array}
$$
\end{proposition}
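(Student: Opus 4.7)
The plan is to combine the moment recursion from Proposition \ref{lem:for_moments} with the semigroup bounds guaranteed by HP2, and then close the estimate by induction on $n$.

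First, the base case $C^+_1<+\infty$ follows directly from HP2: since $\Ex{N_t}=P_t 1(x)$ by the Feynman--Kac formula \eqref{FK}, the inequality $\|e^{\lambda_0 t}P_t 1-\Pi(1)\|_\infty\le H e^{-(\lambda_1-\lambda_0)t}$ together with boundedness of $\Pi(1)=\Theta_0\,\mu_0(1)$ gives a uniform bound on $e^{\lambda_0 t}\,\Ex{N_t}$. The second claim, $C^+_n(f)\le C^+_n\|f\|_\infty^n$, is then immediate from the pointwise estimate $|\langle Z_t,f\rangle|^n\le \|f\|_\infty^n N_t^n$; so it is enough to control $C^+_n$ (equivalently, to treat the case $f\equiv 1$, writing $u_n(s,y)=\EE_{\delta_y}[N_s^n]$).

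For $n\ge 2$, I would apply the $P_t$-form of the moment recursion supplied by Proposition \ref{lem:for_moments},
$$u_n(t,x) = P_t 1(x) + \sum_{k=1}^{n-1}\binom{n}{k}\int_0^t P_{t-s}\bigl[b\, u_k(s,\cdot)\,u_{n-k}(s,\cdot)\bigr](x)\, ds ,$$
and multiply through by $e^{n\lambda_0 t}$. The first term is $e^{n\lambda_0 t}P_t 1(x)=e^{(n-1)\lambda_0 t}\bigl(e^{\lambda_0 t}P_t 1(x)\bigr)\le C^+_1$, using $(n-1)\lambda_0\le 0$. For the integral, combine the operator bound $\|P_{t-s} g\|_\infty\le C^+_1\|g\|_\infty e^{-\lambda_0(t-s)}$ (valid from the $n=1$ bound) with the pointwise hypothesis $u_k(s,\cdot)\le C^+_k e^{-k\lambda_0 s}$ to get an integrand bounded by $b^*\,C^+_1\,C^+_k\,C^+_{n-k}\,e^{(n-1)\lambda_0(t-s)}$. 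Since $\lambda_0<0$ and $n-1\ge 1$, the time integral contributes $\int_0^t e^{(n-1)\lambda_0(t-s)}\,ds\le \tfrac{1}{(n-1)|\lambda_0|}$, yielding the first displayed inequality of the statement.

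To close with factorial growth, I would run a strong induction: assuming $C^+_k\le k!\,\mathcal{D}^{2k-1}$ for all $k<n$, the identity $\binom{n}{k}k!(n-k)!=n!$ gives $\sum_{k=1}^{n-1}\binom{n}{k}C^+_k C^+_{n-k}\le (n-1)\,n!\,\mathcal{D}^{2n-2}$, whence the first inequality becomes $C^+_n\le C^+_1 + (C^+_1 b^*/|\lambda_0|)\,n!\,\mathcal{D}^{2n-2}$. Choosing $\mathcal{D}$ large enough that $\mathcal{D}\ge C^+_1$ and $\mathcal{D}\ge 1+C^+_1 b^*/|\lambda_0|$ gives $C^+_n\le n!\,\mathcal{D}^{2n-1}$. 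The principal obstacle is not the inductive bookkeeping, which is routine, but rather invoking the recursion in the $P_t$ form stated above: one must verify that the natural $Q_t$-recursion (coming from conditioning on the first birth/death event, which produces $k=0$ and $k=n$ self-referential terms) can indeed be inverted to eliminate those terms in favor of $P_t$ using the Dyson identity $P_t=Q_t+2\int_0^t Q_{s}\,b\,P_{t-s}\,ds$. Once that inversion is available from Proposition \ref{lem:for_moments}, the remainder of the argument is the straightforward induction above.
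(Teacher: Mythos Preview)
Your proposal is correct and follows essentially the same route as the paper: both start from the $P_t$-form moment recursion of Proposition~\ref{lem:for_moments} (your version with $P_{t-s}$ acting on $u_k(s,\cdot)$ is the paper's $P_s$ acting on $u_k(t-s,\cdot)$ after the change of variable $s\mapsto t-s$), multiply by $e^{n\lambda_0 t}$, bound the linear term by $C_1^+$ and the integral via $e^{\lambda_0 r}P_r 1\le C_1^+$, and close by the same strong induction with $\mathcal{D}$ chosen so that $\mathcal{D}\ge C_1^+\vee 1$ and $\mathcal{D}\ge 1+C_1^+ b^*|\lambda_0|^{-1}$ (the paper takes $\mathcal{D}=2(C_1^+ b^*|\lambda_0|^{-1}\vee C_1^+\vee 1)$). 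Your concern about the $Q_t\to P_t$ inversion is already discharged by Proposition~\ref{lem:for_moments}, so no additional work is needed there.
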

The proof of this Proposition can be found in Section \ref{sec:moments_supercritical}.

We consider the normalized moments and survival probability:
$$
\begin{array}{l}
v_n^+(f)(t,x)=e^{n\lambda_0 t}\,\Ex{\langle Z_t\, ,f\rangle^n}:\, n\ge 1, \hbox{ and }
v_0^+(t,x)=\Px{N_t>0}.
\end{array}
$$
We also denote by $v_n^+(t,x)=e^{n\lambda_0 t}\, \Ex{N_t^n}$.

\begin{theorem} 
\label{the:supercritical}
Assume (HV), (HP), assume that  the process $(Z_t)$ is supercritical 
($\lambda_{0}<0$) and (HQ), then for $f\in \CB$, 
\vspace{-0.3cm}
\begin{enumerate}[(1)]
\item {\bf Asymptotic for the moments.} The following limits hold for the moments for all $x\in \RR$
\begin{equation}
\label{eq:moments supercritical}
\begin{array}{l}
V^+_1(f,x)=\lim\limits_{t\to \infty} e^{\lambda_0 t}\,\Ex{\langle Z_t\, ,f\rangle}
=\Theta_0(x) \int f(y) \mu_0(dy), \hbox{ and for $n\ge 2$}\,,\\
V^+_n(f,x)=\lim\limits_{t\to \infty} e^{n\lambda_0 t}\,\Ex{\langle Z_t\, ,f\rangle^n}
=\int_0^\infty e^{n\lambda_0 s} P_s\left(b(\sbullet)
\sum\limits_{k=1}^{n-1} \binom{n}{k} V^+_{n-k}(f,\sbullet) V^+_k(f,\sbullet)\right)(x)\, ds.
\end{array}
\end{equation}
Therefore, for every $n\ge 1$, we have
$V^+_n(f,x)=V^+_n(\Theta_0,x)\left(\int f\, \mu_0\right)^n$.

Moreover, we have the following bounds for the moments and the rate of convergence:
for some finite constants $\mathcal{D}$, $0<\beta_n$ and $\gamma_n$ (see \eqref{eq:bound moments I}
and \eqref{eq:iteration moments supercritical f})
\begin{align}
&\sup\limits_{x} |V^+_n(f,x)| \le n! \,\mathcal{D}^{2n-1}\|f\|_\infty^n\,,\label{eq:bound moments 0}\\
&F^+_n(f):=\sup\limits_{t,x} e^{\,\beta_n \, t}\, \left|e^{n\lambda_0 t}\,\Ex{\langle Z_t\, ,f\rangle^n}-V^+_n(f,x)\right|
\le \gamma_n \|f\|_\infty^n.
\label{eq:bound speed convergence moments supercritical}
\end{align}

\item {\bf The survival probability.} As $t$ tends to $\infty$, $\Px{N_t> 0}$ 
converges, uniformly on $x\in\mathbb{R}$,
to a function $h \in \CBZ$  satisfying  the extra properties: 
\begin{equation}
\label{eq:extra properties K*}    
\forall x\in \mathbb{R}, \  h(x)>0,\quad
\sup\limits_{x\in \RR} h(x)<1.
\end{equation}

\item {\bf Convergence in Law.} For all $x$, the law of $e^{\lambda_0 t} Z_t$, in $M_F$ 
computed under the conditional
probability $\Px{\hspace{0.2cm}\vsbullet\hspace{0.2cm} \Big | \, N_t>0}$, converges for the weak topology to a 
probability measure $\Lambda_x^+$, characterized by the moments:
for all $f\in \CB,\, n\ge 1$
$$
\int_{M_f} (\langle \mu, f\rangle)^n \, \Lambda_x^+(d\mu)=\frac{V_n^x(f,x)}{h(x)}=\frac{V_n^+(\Theta_0,x)}{h(x)}
\left(\hbox{ $\int f \,d\mu_0$}\right)^n
$$
The measure $\Lambda_x^+$ is a Yaglom limit for the process $(e^{\lambda_0 t} Z_t)_t$.

\item The process $\left(e^{\lambda_0 t} \langle Z_t, \Theta_0\rangle\right)_t$ is a 
non-negative martingale, which is uniformly integrable in all $L^n(\PP_{\delta_x})$ for all $ 1\le n<\infty, \, x\in \RR$. 
Therefore there exists a non-negative random 
variable $W_\infty(x)$,  such that 
\begin{equation}
\label{eq:a.s. convergence Theta0}
e^{\lambda_{0}t} \langle Z_{t},\Theta_{0}\rangle \stackarrow{\PP_{\delta_{x}}\hbox{-- a.s.}}{t\to \infty}{1.1}
W_{\infty}(x).
\end{equation}
The distribution of $~W_\infty(x)$ is characterized by its moments, which are given by
$\Ex{W_\infty(x)^n}=V_+^n(\Theta_0,x)$.
In particular, $\Ex{W_\infty(x)}=\Theta_0(x)>0$. In addition we have 
\begin{equation}
\label{eq:prob W>0}
\Px{W_\infty(x)>0}=h(x)=\lim\limits_{t\to\infty}\Px{N_t>0},
\end{equation}
and  $\Lambda_x^+$ is characterized by
$$
\Lambda_x^+(\vsbullet)=\Px{W_\infty(x)\mu_0 \in \vsbullet \Big |\, W_\infty(x)>0}.
$$

\item {\bf Convergence in probability.} For every $x\in \RR$, the measure 
valued process $(e^{\lambda_0 t} Z_t)_t$, converges
as $t\to\infty$, in probability (under $\PP_{\delta_x}$) for the weak topology on $M_F$, to the random measure
$W_\infty(x) \mu_0$.

\item For all $1\le n<\infty$, we have the strong $L^n$ convergence: 
\begin{equation}
\label{eq:strong convergence Ln}
\lim\limits_{t\to\infty}\,
\sup\limits_{\hbox{\scriptsize$\stackunder{0\le t,\,x\in \RR}{\|f\|_{\infty}\! \le 1}$}}
\,\Big\| \, e^{\lambda_{0}t} \langle Z_{t},f \rangle - 
W_{\infty}(x) \,\hbox{$\int f\,  d\mu_{0}$ }\Big \|_{L^n\left(\PP_{\delta_x}\right)} = 0.
\end{equation}
\end{enumerate}
\end{theorem}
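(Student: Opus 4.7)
The plan attacks the six parts of Theorem \ref{the:supercritical} in a fixed order: moments, survival, convergence in law (via method of moments), martingale convergence for $\langle Z_t,\Theta_0\rangle$, identification of $\Px{W_\infty>0}$ with $h$, and finally strong $L^n$ convergence (which also yields the probability convergence of Part 5).

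\textbf{Moments (Part 1).} Starting from the moment recursion of Proposition \ref{lem:for_moments},
$$
\Ex{\langle Z_t,f\rangle^n}=P_tf^n(x)+\sum_{k=1}^{n-1}\binom{n}{k}\int_0^t P_{t-s}\bigl(b(\cdot)\,\Ex{\langle Z_s,f\rangle^k}\,\Ex{\langle Z_s,f\rangle^{n-k}}\bigr)(x)\,ds,
$$
multiply by $e^{n\lambda_0 t}$ and induct on $n$. The base case $n=1$ is HP2). For $n\ge 2$, change variable $u=t-s$: the homogeneous term $e^{n\lambda_0 t}P_tf^n$ is bounded by $C\|f\|_\infty^n e^{(n-1)\lambda_0 t}\to 0$ since $\lambda_0<0$, and the Duhamel integrand $e^{n\lambda_0 u}P_u(b\cdot v_k^+(f)(t-u,\cdot)\,v_{n-k}^+(f)(t-u,\cdot))(x)$ is dominated by $C e^{(n-1)\lambda_0 u}\|b\|_\infty C_k^+ C_{n-k}^+\|f\|_\infty^n$, integrable for $n\ge 2$. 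Dominated convergence with the induction hypothesis yields \eqref{eq:moments supercritical}, and the factored form $V_n^+(f,x)=V_n^+(\Theta_0,x)(\int f\,d\mu_0)^n$ follows by a second induction. Quantitative rates and \eqref{eq:bound speed convergence moments supercritical} come from splitting the Duhamel integral on $[0,t/2]\cup[t/2,t]$ and using the spectral gap $\lambda_1-\lambda_0$ on each piece.

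\textbf{Survival (Part 2) and convergence in law (Part 3).} The convergence $\Px{N_t>0}\to h\in\CBZ$ is from Section \ref{sec:survival}. Strict positivity $h(x)>0$ follows from Paley--Zygmund: $\Px{N_t>0}\ge \Ex{N_t}^2/\Ex{N_t^2}\to \Theta_0(x)^2\mu_0(1)^2/V_2^+(1,x)>0$ by Part 1. For $\sup h<1$: the supremum is attained on a compact set (since $h\in\CBZ$); the irreducibility from HP3) ensures that with positive probability the trait of the single initial individual reaches a region where $d>0$ and the individual dies before reproducing, so $\PP_{\delta_{x_0}}$-a.s.\ extinction has positive probability and $h(x_0)<1$. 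Part 3 is the method of moments: the conditional moments
$$
\Ex{\langle e^{\lambda_0 t}Z_t,f\rangle^n\mid N_t>0}=\frac{e^{n\lambda_0 t}\Ex{\langle Z_t,f\rangle^n}}{\Px{N_t>0}}\longrightarrow\frac{V_n^+(f,x)}{h(x)};
$$
the factorial bound \eqref{eq:bound moments 0} satisfies Carleman's criterion, so these limits determine a unique probability measure $\Lambda_x^+$, and the same argument applied to joint moments of $(\langle Z_t,f_1\rangle,\dots,\langle Z_t,f_k\rangle)$ upgrades to weak convergence on $M_F(\RR)$.

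\textbf{Martingale and identification of $h$ (Part 4).} By HP1) and the branching/Markov property, $M_t:=e^{\lambda_0 t}\langle Z_t,\Theta_0\rangle$ is a nonnegative martingale; Part 1 with $f=\Theta_0$ gives $\sup_t\Ex{M_t^n}<\infty$ for every $n$, so by Doob/Vitali $M_t\to W_\infty(x)$ almost surely and in every $L^n$, with $\Ex{W_\infty(x)^n}=V_n^+(\Theta_0,x)$. Since $W_\infty=0$ on extinction, $\Px{W_\infty>0}\le h(x)$; for the reverse inequality \eqref{eq:prob W>0}, $q(x):=\Px{W_\infty(x)=0}$ satisfies the same branching fixed-point equation as $1-h$, and since $\Ex{W_\infty(x)}=\Theta_0(x)>0$ forces $q\not\equiv 1$, a Kesten--Stigum-type dichotomy, leveraging the spectral gap HP2) and HQ, identifies $q$ with $1-h$. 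The characterization of $\Lambda_x^+$ as the law of $W_\infty(x)\mu_0$ conditional on $\{W_\infty(x)>0\}$ follows by combining the a.s.\ convergence below with Part 3.

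\textbf{Strong convergence (Parts 5, 6) and main obstacle.} For the $L^2$ case of \eqref{eq:strong convergence Ln}, expand
$$
\Ex{\bigl(e^{\lambda_0 t}\langle Z_t,f\rangle-W_\infty(x)\,\textstyle\int f\,d\mu_0\bigr)^2}
$$
and use the identity $\Ex{e^{\lambda_0 t}\langle Z_t,f\rangle\,W_\infty(x)}=e^{2\lambda_0 t}\Ex{\langle Z_t,f\rangle\langle Z_t,\Theta_0\rangle}$, obtained by conditioning on $\mathcal{F}_t$ and applying the martingale identity for $M_t$. The bilinear analogue of Part 1, applied to the recursion for $\Ex{\langle Z_t,f\rangle\langle Z_t,\Theta_0\rangle}$ (derived from the quadratic recursion via polarization) shows that all three resulting terms converge to $V_2^+(\Theta_0,x)(\int f\,d\mu_0)^2$ and cancel. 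Higher $n$ follows by multinomial expansion, and uniformity in $\|f\|_\infty\le 1$ and $x$ comes from the uniform rate bounds $F_n^+(f)$ of Part 1. Part 5 is then an immediate consequence of $L^1$ convergence applied to each test function. \emph{The most delicate step is the identification $\Px{W_\infty>0}=h(x)$ in Part 4}: it requires transferring the classical Kesten--Stigum dichotomy to the infinite-dimensional trait-space setting, and this is precisely where HQ enters to rule out nontrivial alternative solutions of the branching fixed-point equation in $\CBZ$.
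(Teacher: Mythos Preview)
Your outline for Parts 1--3 and 6 tracks the paper closely: the moment recursion plus induction and dominated convergence, Cauchy--Schwarz for $h>0$, Carleman's criterion for the moment problem, and an expansion argument for $L^n$ convergence are all the same ingredients the paper uses. One minor improvement: your direct induction for the factorization $V_n^+(f,x)=V_n^+(\Theta_0,x)(\int f\,d\mu_0)^n$ is cleaner than the paper's route, which obtains this identity only later (Lemma~\ref{lem:convergence over tk}, equation~\eqref{eq:moments W infty II}) as a byproduct of a.s.\ convergence along subsequences. For Part~6 the paper takes a different tack than your polarization: it writes $f=\Pi^\perp f+\Theta_0\,\mu_0(f)$, uses that $V_n^+(\Pi^\perp f,x)=0$ for all $n$ (so the rate bound~\eqref{eq:iteration moments supercritical f} alone controls $e^{\lambda_0 t}\langle Z_t,\Pi^\perp f\rangle$ in $L^n$), and for the $\Theta_0$-piece exploits the inequality $|u-v|^n\le |u^n-v^n|$ for nonnegative $u,v$ together with the martingale structure to reduce to $|V_{2n}^+(\Theta_0,x)-v_{2n}^+(\Theta_0)(t,x)|^{1/2}$. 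Both approaches work; the paper's decomposition gives explicit uniform rates more transparently.

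The genuine gap is your identification $\Px{W_\infty(x)>0}=h(x)$ in Part~4. You invoke a ``Kesten--Stigum-type dichotomy'' asserting that $q(x)=\Px{W_\infty=0}$ and $1-h$ are both fixed points of the same branching equation, and that HQ yields uniqueness. Two problems: first, you have not established that $q$ satisfies the integral equation of Corollary~\ref{eqh}; the branching property gives that $\prod_i q(X_s^i)$ is a martingale, but passing from this to the nonlinear integral equation requires work. Second, and more seriously, uniqueness of nontrivial fixed points of that equation in $\CBZ$ is never proved in the paper and is not obvious; HQ is used only in Lemma~\ref{lem:growth} to show the semigroup $(Q_t)$ has negative growth rate, which feeds into the \emph{existence} of $h$, not into any uniqueness statement. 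The paper instead bypasses the fixed-point route entirely: it computes $\Ex{1-e^{zW_\infty}}$ as the limit of $\Ex{1-e^{z e^{\lambda_0 t}\langle Z_t,\Theta_0\rangle}}$, matches it with $h(x)\int(1-e^{zy})\,\zeta^+_{\Theta_0,x}(dy)$ for $z$ near $0$ via the moment convergence of Part~3, analytically continues both sides to $\{\Re z<0\}$, and lets $z\to-\infty$ along the real axis. This is short and self-contained; your proposed argument, as written, is not.
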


The following result is an a.s. convergence of the normalized random measures. For that, 
we need an extra hypothesis.

{\bf \large Assumption} (HSM): 
\vspace{-0.5cm}
\begin{enumerate}
\item We assume that the Markov process $(X_t)$
has a generator $\mathscr{G}$ whose domain contains $\mathscr{H}=C_c^\infty(\RR)$ such that 
for any $f\in \mathscr{H}$, there exists a constant $C(f)$, so that
$$
|\mathscr{G} f|\le C(f) \ .
$$
\item We also assume that in the semimartingale decomposition \eqref{eq:Master General},
 $M^{f}$ is a square integrable martingale with predictable quadratic 
variation satisfying
$$
d\left\langle M^f\right\rangle_t \le C^2(f) \langle Z_t,1\rangle \, dt
$$
\end{enumerate}
We recall that $f\equiv 1$ is in the domain of the generator and that 
$\mathscr{G} 1 = 0$ as well as $M^1=0$, so formally we have $C(1)=0$.

Note that this assumption is quite general and it is obviously satisfied for our three examples. 

\begin{theorem} 
\label{the:convergence a.s.}
Assume (HV), (HP) and (HSM). 
For every $x\in \RR$, the measure valued process $(e^{\lambda_0 t} Z_t)$ converge
$\PP_{\delta_x}$-a.s., when $t\to\infty$ for the weak topology on $M_F$, to the random measure
$W_\infty(x) \,\mu_0$.
\end{theorem}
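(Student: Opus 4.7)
The aim is to upgrade the convergence in probability of Theorem~\ref{the:supercritical}(5) to almost sure weak convergence in $M_F(\RR)$. The strategy is to first establish almost sure convergence of $e^{\lambda_0 t}\langle Z_t,f\rangle \to W_\infty(x)\int f\, d\mu_0$ for every $f\in\mathscr{H}=C_c^\infty(\RR)$, using the semimartingale decomposition \eqref{eq:Master General} together with (HSM) and the estimates of Theorem~\ref{the:supercritical}, and then to extend to weak convergence through a countable convergence-determining class. For $f\in\mathscr{H}$, hypothesis (HSM) places $f$ in $\mathrm{Dom}(\mathscr{G})$ with $\mathscr{G} f$ bounded. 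Rewriting the Poisson integral in \eqref{eq:Master General} as its compensator $\int_0^t\langle Z_s,Vf\rangle\, ds$ plus a martingale and applying integration by parts with $e^{\lambda_0 t}$ yields
\begin{equation*}
e^{\lambda_0 t}\langle Z_t,f\rangle = f(x) + \int_0^t e^{\lambda_0 s}\langle Z_s, Lf\rangle\, ds + \mathcal{M}_t^f,
\end{equation*}
where $L=\mathscr{G}+V+\lambda_0$ and $\mathcal{M}^f$ is a local martingale.

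\textbf{Martingale and drift terms.} By (HSM), $\|b\|_\infty\le b^*$ and continuity of $d$ on $\mathrm{supp}(f)$, the predictable quadratic variation of $\mathcal{M}^f$ is dominated by $C(f)\int_0^t e^{2\lambda_0 s}\langle Z_s,1\rangle\, ds$, with expectation uniformly bounded in $t$ because $\Ex{\langle Z_s,1\rangle}=O(e^{-\lambda_0 s})$ by Theorem~\ref{the:corA} and $\lambda_0<0$. Hence $\mathcal{M}^f$ is $L^2$-bounded and converges a.s. For the drift integral, set $g=Lf$; since $\mu_0$ is a left eigenmeasure of $(P_t)$ with eigenvalue $e^{-\lambda_0 t}$, duality gives $\int g\, d\mu_0=0$, so $V_1^+(g,\cdot)\equiv 0$. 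The recursion \eqref{eq:moments supercritical} at $n=2$ then yields $V_2^+(g,x)=0$, and \eqref{eq:bound speed convergence moments supercritical} provides $\sup_x\Ex{(e^{\lambda_0 s}\langle Z_s,g\rangle)^2}\le C_g\, e^{-\beta_2 s}$. By Cauchy--Schwarz, $\int_0^\infty \Ex{|e^{\lambda_0 s}\langle Z_s,g\rangle|}\, ds<\infty$, so $\int_0^\infty e^{\lambda_0 s}\langle Z_s,g\rangle\, ds$ converges absolutely a.s. Together with the $L^n$-limit from Theorem~\ref{the:supercritical}(6), this identifies the a.s.\ limit of $e^{\lambda_0 t}\langle Z_t,f\rangle$ as $W_\infty(x)\int f\, d\mu_0$.

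\textbf{Weak convergence and the main obstacle.} Choose a countable subset $\mathscr{H}_0\subset\mathscr{H}$ dense in $C_0(\RR)$; on a common full-probability event, $e^{\lambda_0 t}\langle Z_t,f\rangle\to W_\infty(x)\int f\, d\mu_0$ for every $f\in\mathscr{H}_0$ and also for $\Theta_0$ by Theorem~\ref{the:supercritical}(4). Since weak convergence of finite measures on $\RR$ is equivalent to vague convergence combined with convergence of total masses, it remains to prove $e^{\lambda_0 t}\langle Z_t,1\rangle\to A\,W_\infty(x)$ a.s. Writing $1 = A\,\Theta_0 + \phi$ with $\phi\in C_b(\RR)$ and $\int\phi\, d\mu_0 = 0$, the $\Theta_0$ summand is handled by Theorem~\ref{the:supercritical}(4). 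The residual $\phi$ is not compactly supported, so the semimartingale argument above does not apply directly (for $f\equiv 1$ the drift is $V+\lambda_0$, which is unbounded below). The plan is to approximate $\phi$ by cut-offs $\phi_m\in C_c^\infty(\RR)$ on a sequence of exhausting compacts and then to use (HP5) on non-explosion of traits, combined with the a.s.\ convergence already obtained for the $\phi_m$, to exclude any escape of mass to infinity along the full time axis. This last step is the main technical difficulty of the proof.
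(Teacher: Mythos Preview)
Your semimartingale argument for $f\in C_c^\infty(\RR)$ is a genuinely different route from the paper's and it works. The paper never writes $e^{\lambda_0 t}\langle Z_t,f\rangle$ as an $L^2$-bounded martingale plus an absolutely convergent drift; instead it first proves almost sure convergence along a lacunary time sequence $(t_k)_k$ (this is Lemma~\ref{lem:convergence over tk}, which uses only the second-moment decay $\Ex{(e^{\lambda_0 t}\langle Z_t,g\rangle)^2}\le C e^{-\beta_2 t}$ for $\Pi g=0$ together with Borel--Cantelli), and then controls the oscillation of $e^{\lambda_0 s}\langle Z_s,f\rangle$ on each interval $[t_k,t_{k+1}]$ by direct estimates on the semimartingale increments and Doob's inequality (Lemma~\ref{lem:Cinfty}). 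Your approach is more direct and conceptually cleaner for smooth compactly supported $f$; the paper's approach has the advantage that it never needs $Lf=(\mathscr G+V+\lambda_0)f$ to lie in $C_b(\RR)$, which (HSM) alone does not guarantee (it only says $\mathscr G f$ is bounded, not continuous), so to apply the moment estimates of Theorem~\ref{the:supercritical} to $g=Lf$ you should either check continuity in the concrete examples or observe that the recursion \eqref{eq:un} and the bounds of Section~\ref{sec:moments_supercritical} extend to bounded measurable test functions.

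The genuine gap is your treatment of $f\equiv 1$. Hypothesis (HP5) says only that $T_m\to\infty$ almost surely, i.e.\ no particle ever reaches $\{|y|>m\}$ before a time that grows with $m$. This is a finite-time non-explosion statement and gives no control on $e^{\lambda_0 t}\langle Z_t,\ind_{[-m,m]^c}\rangle$ uniformly in large $t$: for each $m$ the set $\{t:\langle Z_t,\ind_{[-m,m]^c}\rangle>0\}$ may well be unbounded, and nothing in your outline bounds the resulting mass once renormalised by $e^{\lambda_0 t}$. The paper closes this gap by a dedicated argument (Lemma~\ref{lem:masse}) that does not use (HP5) at all: on each interval $I_k=[t_k,t_{k+1}]$ one bounds $\sup_{s\in I_k}e^{\lambda_0 s}N_s-e^{\lambda_0 t_k}N_{t_k}$ from above by the Poisson integral counting only births (since deaths can only decrease $N$), whose second moment is at most $C\,\Delta_k\,e^{\lambda_0 t_k}$; a symmetric bound controls the infimum against $e^{\lambda_0 t_{k+1}}N_{t_{k+1}}$. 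Summability over $k$ plus Borel--Cantelli then upgrades the subsequence convergence of Lemma~\ref{lem:convergence over tk} for $f\equiv 1$ to the full limit. This monotonicity trick for the total mass is the missing idea in your plan, and your cut-off scheme does not substitute for it.
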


\subsection{The $Q$-process} 
\label{sec:Q-process}

Note that \eqref{FK} can be generalized to trajectories by introducing the historical process $(L_{t}, t\ge 0)$ (cf. 
\cite{dawsonperkinsAMS}, \cite{perkins} or \cite{meleardtran_suphist}), i.e. the point measure valued process on the 
trajectories, defined for any $t$  by
\begin{equation}
\label{historical}
    L_{t} = \sum_{i\in  {\cal V}_{t}} \delta_{X^i_{.\wedge t}}.
\end{equation}

Indeed, for any $T>0$ and $F: C([0,T],\RR) \to \RR$ a bounded and continuous function and $x\in \RR$, 
\begin{equation}
\label{FK2} 
\Ex{\langle L_{T},F \rangle} = \EE_{x}\Big(\exp\Big(\int_{0}^T V(X_{s})ds\Big)F\left(X^T\right)\Big),
\end{equation}
where $X^T=(X_{t\wedge T})$. These formulas/theory  come from
\cite{kurtz1997conceptual,lyons1995conceptual} further developed for
instance in \cite{bansaye_limit_2011,cloez,hardyharris3,Marguet2}. Different  and  simpler proofs are given in \cite{calvez22}.
\medskip

The proof of the following theorem is given in Section \ref{sec:Q-process proof}.

\begin{theorem} 
\label{the:Q-process} Assume (HV) and (HP).
Let us fix  $s>0$. For all $\Phi:C([0,s],\RR)\to \RR$ and $F:\RR\to \RR$ bounded continuous functions, we have
\begin{enumerate}[(1)]
\item In the critical and subcritical cases, 
the following limit exists (recall that $\lambda_0=0$ in the critical case)
\begin{equation}
\label{eq:lim Q-process critical & subcritical}    
\lim\limits_{T\to \infty} \Ex{ F(\langle L_s,\Phi\rangle) \,\, |\,\, N_T>0}=
\Ex{F(\langle L_s,\Phi\rangle)\, e^{\lambda_0 s}\frac{\langle Z_s, \Theta_0\rangle}{\Theta_0(x)}}.
\end{equation}
\item In the supercritical case, the limit is given by
\begin{equation}
\label{eq:lim Q-process supercritical}
\lim\limits_{T\to \infty} \Ex{ F(\langle L_s,\Phi\rangle) \,\, |\,\, N_T>0}=
\Ex{F(\langle L_s,\Phi\rangle)\, 
\frac{1-\exp\left(\langle Z_s, \log(1-h)\rangle\right)}
{h(x)}}.
\end{equation}
\end{enumerate}
\end{theorem}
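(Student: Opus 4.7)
The plan is to condition on $\mathcal{F}_s$ at time $s$ and exploit the branching property. Given $\mathcal{F}_s$, the sub-populations issued from the particles $\{X^i_s\}_{i\in\mathcal{V}_s}$ evolve independently, so
$$
\Px{N_T>0\mid \mathcal{F}_s}
=1-\prod_{i\in\mathcal{V}_s}\bigl(1-u_0(T-s,X^i_s)\bigr)
=1-\exp\bigl(\langle Z_s,\log(1-u_0(T-s,\vsbullet))\rangle\bigr)=:\Psi_T.
$$
Since $L_s$ is $\mathcal{F}_s$-measurable, the tower property yields
$$
\Ex{F(\langle L_s,\Phi\rangle)\mid N_T>0}
=\frac{\Ex{F(\langle L_s,\Phi\rangle)\,\Psi_T}}{\Px{N_T>0}},
$$
and the task reduces to passing $T\to\infty$ inside the expectation by dominated convergence.

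For the pointwise limit in the critical case, the uniform asymptotic \eqref{eq:Uniform survival prob} reads $u_0(T-s,y)=\Theta_0(y)/[B(T-s+1)]+O(\log T/T^2)$ uniformly in $y$. Combining the elementary expansions $-\log(1-u)=u+O(u^2)$ and $1-e^{-x}=x+O(x^2)$ gives
$$
\Psi_T=\langle Z_s,u_0(T-s,\vsbullet)\rangle+O\bigl(N_s\,\|u_0(T-s,\vsbullet)\|_\infty^2\bigr),
$$
and dividing by $\Px{N_T>0}\sim \Theta_0(x)/[B(T+1)]$ yields the $\omega$-pointwise limit $\langle Z_s,\Theta_0\rangle/\Theta_0(x)$, which matches \eqref{eq:lim Q-process critical & subcritical} since $\lambda_0=0$. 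The subcritical case proceeds identically, using $u_0(T-s,y)\sim e^{-\lambda_0(T-s)}\mathfrak{K}^-\Theta_0(y)$ and $\Px{N_T>0}\sim e^{-\lambda_0 T}\mathfrak{K}^-\Theta_0(x)$ from Theorem~\ref{the:subcritical}(2); the surviving exponential factor is exactly $e^{\lambda_0 s}$. In the supercritical case, Theorem~\ref{the:supercritical}(2) asserts that $u_0(T-s,\vsbullet)\to h$ uniformly with $\sup h<1$, so $\log(1-u_0(T-s,\vsbullet))\to\log(1-h)$ uniformly and boundedly; hence $\Psi_T\to 1-\exp(\langle Z_s,\log(1-h)\rangle)$ pointwise and $\Px{N_T>0}\to h(x)>0$, producing \eqref{eq:lim Q-process supercritical}.

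For the dominated convergence step, I exploit the crude bound $0\le\Psi_T\le\langle Z_s,-\log(1-u_0(T-s,\vsbullet))\rangle$. In the critical and subcritical regimes $\|u_0(T-s,\vsbullet)\|_\infty\to 0$, so $-\log(1-u_0)\le 2u_0$ for $T$ large, which combined with the matching lower asymptotic on $\Px{N_T>0}$ gives a dominant of the form $C(x,s)\,N_s$; this is integrable since $\Ex{N_s}=P_s\mathbbm{1}(x)\le e^{A(V)s}<\infty$. In the supercritical case $-\log(1-u_0(T-s,\vsbullet))$ is uniformly bounded for $T$ large and $\Px{N_T>0}\ge h(x)/2>0$, so a dominant of the form $C(x)\,N_s$ again applies. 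The main technical obstacle is the critical regime, where both numerator and denominator vanish like $1/T$: the quantitative uniformity in \eqref{eq:Uniform survival prob} is crucial both to identify the limit and to control the dominant, since the naive bound $\Psi_T\le 1$ does not suffice when $\Px{N_T>0}\to 0$.
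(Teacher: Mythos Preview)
Your proof is correct and follows essentially the same route as the paper: condition on $\mathcal{F}_s$, use the branching property to write $\Px{N_T>0\mid\mathcal{F}_s}=1-\exp\bigl(\langle Z_s,\log(1-u_0(T-s,\vsbullet))\rangle\bigr)$, identify the pointwise limit of the ratio from the survival-probability asymptotics, and pass to the limit under the expectation. The only cosmetic difference is that in the supercritical case the paper invokes the monotone convergence theorem (since $u_0(t,\vsbullet)\downarrow h$ makes $\Psi_T$ monotone in $T$), whereas you use dominated convergence with an explicit $N_s$-bound; both work.
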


\section{Recursion for the moments.}
\label{sec:moments}

\subsection{A priori bounds on the total mass}

From a coupling based on representation \eqref{eq:Master General}, we get that for all initial 
value $x$, the process  $\langle Z_t\, ,1\rangle$ is dominated   by the Yule (pure birth) 
process $N^*$ whose birth rate is $b^*$ (cf. Assumption HV2)) and $N^*_0=1$, given by
$$N^* =  1+\int_{[0,t]\times \RR_+\times \NN} \ind_{1\le i\le N^*_{s}} \,
\ind_{\theta \le b^*}
Q(ds,d\theta,di).$$

 This motivates the following estimates of Yule processes.

\begin{lemma}
\label{moment-geometric}
For a Yule
process $N^*$ whose birth rate is $b^*$ and $N^*_0=1$, it holds
$$
\EE((N_t^*)^n)\le n!\, e^{n b^* t}.
$$
\end{lemma}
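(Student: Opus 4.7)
The plan is to exploit the fact that the Yule process starting from one individual with birth rate $b^*$ has, at time $t$, a shifted geometric distribution, and then bound the $n$-th moment via rising factorials.

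First I would recall the classical identity for the Yule process:
\begin{equation*}
\PP(N_t^* = k) = (1-p)^{k-1}\,p, \qquad k\ge 1, \quad \text{where } p = e^{-b^* t}.
\end{equation*}
This can be obtained either by solving the Kolmogorov forward equations for the pure birth chain with rates $k b^*$, or by decomposing $N_t^*$ as a sum of independent exponentials (waiting times between successive births) and computing the distribution directly.

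Next, since $N_t^*$ is integer-valued and positive, I would use the elementary bound $k^n \le k(k+1)\cdots(k+n-1) = n!\binom{k+n-1}{n}$ to pass from ordinary moments to rising factorial moments. Then, with $q = 1-p$,
\begin{equation*}
\EE\binom{N_t^* + n - 1}{n} \;=\; p\sum_{k=1}^{\infty}\binom{k+n-1}{n} q^{k-1}
\;=\; p\sum_{m=0}^{\infty}\binom{m+n}{n} q^{m}\;=\;\frac{p}{(1-q)^{n+1}}\;=\;\frac{1}{p^n},
\end{equation*}
using the standard negative binomial generating function $\sum_{m\ge 0}\binom{m+n}{n}q^m = (1-q)^{-(n+1)}$. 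Substituting $p = e^{-b^*t}$ yields $\EE(N_t^*(N_t^*+1)\cdots(N_t^*+n-1)) = n!\,e^{nb^*t}$, and combining with the inequality $k^n\le k(k+1)\cdots(k+n-1)$ gives the claimed bound $\EE((N_t^*)^n)\le n!\,e^{nb^*t}$.

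There is really no hard step here: the only conceptual ingredient is the explicit geometric law of the Yule process at fixed time, and the only calculation is the generating function evaluation. An alternative would be an induction on $n$ using the differential equation $\varphi_n'(t) = b^*\sum_{j=0}^{n-1}\binom{n}{j}\varphi_{j+1}(t)$ obtained from the generator of $N^*$, but this requires more bookkeeping to match the constant $n!$ sharply, whereas the geometric-distribution route gives it for free.
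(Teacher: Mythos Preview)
Your proof is correct. Both you and the paper start from the same fact---that $N_t^*$ is geometric with parameter $p=e^{-b^*t}$---but you take a different combinatorial route to the bound. The paper uses the Carlitz (Worpitzky) identity
\[
\sum_{k\ge 1} k^n p(1-p)^{k-1} = \frac{1}{p^n}\sum_{q=0}^{n-1}\left\langle\!\begin{array}{c}n\\q\end{array}\!\right\rangle (1-p)^q,
\]
where the angle brackets denote Eulerian numbers, and then bounds each $(1-p)^q$ by $1$ and uses $\sum_q \left\langle\!\begin{smallmatrix}n\\q\end{smallmatrix}\!\right\rangle = n!$. You instead dominate $k^n$ by the rising factorial $k(k+1)\cdots(k+n-1)$ and evaluate the resulting moment exactly via the negative binomial generating function. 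Your route is slightly more elementary (no need to invoke Eulerian numbers), while the paper's route has the advantage of giving an exact closed form for $\EE((N_t^*)^n)$ before bounding. Either way the argument is a couple of lines.
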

\begin{proof} Denote by $p=e^{-b^*t}$. It is well known that for fixed
  $t$, the distribution of $N_t^*$ is a geometric law with parameter
  $p$ (see for example \cite{Ross} page 377). 
  Then by the Carlitz identity (see for example 
\cite{Petersen} formula (1.10) page 10) we have for all $n\ge1$
$$
\EE((N_t^*)^n)=\sum\limits_{k\ge 1} k^n p(1-p)^{k-1}=\frac{1}{p^{n}}
\sum\limits_{q=0}^{n-1}\left\langle\begin{array}{c}
n\\q\end{array}\right\rangle (1-p)^{q},
$$
with $\left\langle\begin{array}{c}
n\\q\end{array}\right\rangle$ the Eulerian numbers.
It follows immediately from the combinatorial definition of the Eulerian
numbers that
$$
\EE((N_t^*)^n)= \frac{1}{p^{n}}
\sum\limits_{q=0}^{n-1}\left\langle\begin{array}{c}
n\\q\end{array}\right\rangle\le \frac{n!}{p^{n}}.
$$
  \end{proof}

Using the previous coupling and  that for all  
$f\in \MB$, $|\langle Z_t\, ,f\rangle | \le \|f\|_{\infty} \langle Z_t\, ,1\rangle\le  \|f\|_{\infty}  N_t^*$ 
and  the fact that the Yule process is increasing, we  immediately deduce the  following a priori 
estimates on the total mass moment.

\begin{corollary}
\label{massmoment}
For all $n\ge 1$, $x\in \RR, T> 0$, $f$ a bounded measurable function,
\begin{equation}
\label{eq:bound_un}
\left|\,\Ex{\sup_{t\le T} \langle Z_t\, ,f\rangle^n} \right| \le \|f\|_\infty^n \, n! \, e^{nb^* T}.
\end{equation}
\end{corollary}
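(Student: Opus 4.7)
The plan is to chain together the three ingredients that the authors already highlight immediately before the statement: pointwise domination of $\langle Z_t,f\rangle$ by $\|f\|_\infty N_t$, pathwise domination of $N_t$ by a Yule process $N^*$ with rate $b^*$, and the geometric moment bound from Lemma \ref{moment-geometric}. There is no genuine obstacle; the statement really is a corollary.

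First, I would observe that for any bounded measurable $f$ and any $t\ge 0$,
$$|\langle Z_t,f\rangle|\le \|f\|_\infty\langle Z_t,1\rangle=\|f\|_\infty N_t,$$
so raising to the $n$-th power and taking a supremum over $t\le T$ yields
$$\Bigl|\sup_{t\le T}\langle Z_t,f\rangle^n\Bigr|\le \sup_{t\le T}|\langle Z_t,f\rangle|^n\le \|f\|_\infty^n\sup_{t\le T}N_t^n.$$
Next, using the semimartingale representation \eqref{eq:Master General}, construct the Yule process $N^*$ of rate $b^*$ with $N^*_0=1$ from the very same Poisson point measure $Q$ by retaining all the marks with $\theta\le b^*$ and discarding the death marks. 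Since $b(X^j_{s-})\le b^*$, every actual birth in $Z$ corresponds to a birth in $N^*$, whereas deaths in $Z$ are not matched by anything in $N^*$; hence $N_t\le N^*_t$ pathwise.

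Because $N^*$ is a pure birth process its paths are non-decreasing, so $\sup_{t\le T}N^*_t=N^*_T$. Taking $\PP_{\delta_x}$-expectations (and noting that the law of $N^*$ does not depend on $x$, only on the initial count $N^*_0=1$) and invoking Lemma \ref{moment-geometric} gives
$$\Ex{\sup_{t\le T}\langle Z_t,f\rangle^n}\le \|f\|_\infty^n\,\EE\!\left((N^*_T)^n\right)\le \|f\|_\infty^n\,n!\,e^{nb^*T},$$
which is the desired bound. The only subtlety worth mentioning in writing it up is the sign issue when $f$ is not nonnegative, which is handled by inserting absolute values before the $n$-th power, as above.
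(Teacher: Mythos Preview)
Your proposal is correct and follows exactly the same route as the paper: dominate $|\langle Z_t,f\rangle|$ by $\|f\|_\infty N_t$, couple $N_t$ with the Yule process $N_t^*$ built from the same Poisson measure, use monotonicity of $N^*$ to replace the supremum by $N_T^*$, and apply Lemma~\ref{moment-geometric}. The paper compresses this into a single sentence before the corollary; your write-up simply makes each step explicit, including the absolute-value handling for signed $f$.
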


\subsection{Cutoff on the death rate}
\label{sec:cutoff}

We now deal with the unboundedness of the death rate $d$.

In this section we prove that $Z$ can be approximated by a sequence of branching diffusion 
processes $(Z^m)_{m\in \NN}$
with bounded death rates. For that purpose, consider the following truncation of $d$
$$
d^{(m)}(x)=\begin{cases} d(x) &\hbox{if } |x|\le m\\
					      d(m) 	&\hbox{if } x\ge m\\
					      d(-m) &\hbox{if } x\le -m\\
	\end{cases}
$$

Let $Z^m$ be the branching diffusion process with birth rate $b$ and death rate $d^{(m)}$, 
driven by the same Brownian motions $(B^i)$ and Poisson point measure $Q$ as
 for $Z$.
 
 \begin{proposition} Assume (HP5). For all $x,n,t$ and all $f$ bounded
and measurable, the following limits hold uniformly in $t,x$ in compact sets
$$
\lim\limits_{m\to \infty} \Ex{\langle Z^m_t\, ,f\rangle^n}=
\Ex{\langle Z_t\, ,f\rangle^n},
$$
and
$$
\lim\limits_{m\to \infty} \Px{\langle Z^m_t,\ind\rangle>0}=
\Px{\langle Z_t,\ind\rangle>0}.
$$

\end{proposition}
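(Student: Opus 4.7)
The plan is to build a pathwise coupling between $Z$ and $Z^m$ and then bound the difference of the moments (respectively of the survival probabilities) by a Yule-type quantity times the probability that some individual of $Z$ has escaped from $[-m,m]$ by time $t$. Since both processes are driven by the same Brownian motions $(B^i)$ and the same Poisson point measure $Q$, and since $d^{(m)}\equiv d$ on $[-m,m]$, the semimartingale representation \eqref{eq:Master General} produces identical births, deaths and trait paths for $Z$ and $Z^m$ as long as every living individual carries a trait in $[-m,m]$. Formally, $Z_s=Z^m_s$ for every $s\le T_m$, with $T_m$ the stopping time of HP5. In particular, on $\{T_m>t\}$ both $\langle Z_t,f\rangle^n-\langle Z^m_t,f\rangle^n$ and $\mathbf{1}_{\{\langle Z_t,1\rangle>0\}}-\mathbf{1}_{\{\langle Z^m_t,1\rangle>0\}}$ vanish identically.

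On the complementary event $\{T_m\le t\}$, I dominate both total masses pathwise by the single Yule process $N^*$ with rate $b^*$ built from $Q$ (which does not depend on $x$). For any bounded measurable $f$ and $n\ge 1$ this gives
$$|\langle Z_t,f\rangle^n-\langle Z^m_t,f\rangle^n|\le 2\|f\|_\infty^n (N^*_t)^n\mathbf{1}_{\{T_m\le t\}}.$$
Combining Cauchy--Schwarz with Lemma \ref{moment-geometric} yields
$$|\Ex{\langle Z_t,f\rangle^n}-\Ex{\langle Z^m_t,f\rangle^n}|\le 2\|f\|_\infty^n\sqrt{(2n)!\,e^{2nb^*t}}\,\sqrt{\Px{T_m\le t}},$$
while the same coupling gives the trivial bound $|\Px{\langle Z_t,1\rangle>0}-\Px{\langle Z^m_t,1\rangle>0}|\le \Px{T_m\le t}$. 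Both right-hand sides are controlled purely in terms of $\Px{T_m\le t}$ on any compact time interval.

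It remains to show that $\Px{T_m\le t}\to 0$ as $m\to\infty$ uniformly in $(t,x)$ on compacts. Fix compacts $K\subset\RR$ and $[0,T_0]\subset\RR_+$. Since $t\mapsto\Px{T_m\le t}$ is nondecreasing, $\sup_{t\in[0,T_0]}\Px{T_m\le t}=\Px{T_m\le T_0}$, so the $t$-uniformity is immediate. For each fixed $x$, HP5 implies $\Px{T_m\le T_0}\downarrow 0$ as $m\to\infty$; combined with the continuity in $x$ of this quantity (a Feller-type property inherited from the underlying Markov process $X$, and verified case-by-case in Section \ref{sec:example}), Dini's theorem upgrades the pointwise convergence into uniform convergence on $K$. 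The two claimed limits then follow.

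The main obstacle I anticipate is precisely this last step: HP5 is only a pointwise almost-sure statement, whereas the conclusion requires uniform control of $\Px{T_m\le t}$ in $x$ on compacts. Passing from one to the other relies on continuity of the initial-data dependence of the first exit time, which is not encoded in the abstract hypotheses and is where specific regularity of the driving Markov process $X$ must enter.
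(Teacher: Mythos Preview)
Your approach is essentially identical to the paper's: the same pathwise coupling $Z_{s}=Z^m_{s}$ for $s\le T_m$, the same Yule domination, and the same Cauchy--Schwarz bound (the paper writes it as $4\|f\|_\infty^n C(2n,t)^{1/2}\Px{T_m\le t}^{1/2}$, and for the survival probability it gets $2\Px{T_m<t}$ where you get the slightly sharper $\Px{T_m\le t}$).

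Where you go further than the paper is in the last paragraph. The paper simply writes ``We conclude using Assumption (HP5)'' and stops, leaving the passage from pointwise convergence $\Px{T_m\le t}\to 0$ to uniform convergence on compacts implicit. You are right that HP5 alone is a pointwise-in-$x$ statement and does not by itself yield the uniformity; your monotonicity-plus-Dini argument is a clean way to close the gap, but it does rely on continuity of $x\mapsto\Px{T_m\le T_0}$, which is not part of the abstract hypotheses. In the paper this issue is effectively resolved only at the level of the examples: Proposition~\ref{HP3-diff} proves the quantitative bound $(1+m^2)\Px{T_m<T}\le C_2(1+x^2)e^{C_2 T}$, which gives uniform convergence on compacts in $(t,x)$ directly, without any appeal to continuity or Dini. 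So your identification of the obstacle is accurate, and the paper's abstract proof has the same gap you flag; it is only filled when the concrete estimate from the examples is available.
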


\begin{proof}
(i) First, prove that for all $x$ it holds $\PP_{\delta_x}$-a.s., for all $t\ge 0$,
$$
Z_{t\wedge T_m}=Z^m_{t\wedge T_m}.
$$
 It is enough to consider $|x|<m$, because when $|x|\ge m$, then $T_m=0$ and the result
is obvious. So, we assume that $T_m>0$.\\
Consider $(\mathscr{T}_k)_{k\ge 1}$ the increasing sequence of 
random times associated with the branching mechanism that defines $Z$. We set $\mathscr{T}_0=0$. 
Since $Z$ is well defined on $\mathbb{R}_{+}$, there exists in the event that $T_m<+\infty$, 
a unique finite $k$ so that $\mathscr{T}_k<T_m\le \mathscr{T}_{k+1}$.
If $k=0$ then under $\PP_{\delta_x}-$a.s. $Z_{t\wedge T_m}=\delta_{X_{t\wedge T_m}}=Z^m_{t\wedge T_m}$, 
where the last equality
follows from the representation \eqref{eq:MasterZ} for $Z^m$ because
$d^{(m)}(X_{\sbullet \wedge T_m})=d(X_{\sbullet \wedge T_m})$.
Now, assume that $k\ge 1$. This means that $T_m>\mathscr{T}_1$ and, therefore, both
$Z,Z^m$ are killed at $\mathscr{T}_1$ or
both are branched, where two independent particles are born at position $X_{\mathscr{T}_1}$. 
This shows that
$Z_{t}=Z^m_{t}$ for $t\in 
[\mathscr{T}_1,T_m\wedge \mathscr{T}_2]$, proving
the case $k=1$. When $k\ge 2$
a similar argument shows that $Z_{t\wedge \mathscr{T}_j}=Z^m_{t\wedge \mathscr{T}_j}$ for $ j=1,\cdots,k$. 
Again, the representation \eqref{eq:MasterZ} shows
that $Z_{t}=Z^m_{t}: t \in [\mathscr{T}_k,T_m]$. When $T_m=+\infty$ we replace $T_m$ by
$T_m\wedge n$ for each fixed $n\in \NN$, in the previous argument.  

(ii)
Consider $n\ge 1, t\ge 0,1\le m\le \infty, x\in \RR$, and $f$ a bounded measurable function, then
$$
\begin{array}{ll}
\left|\Ex{\langle Z^m_t\, ,f\rangle^n}-\Ex{\langle Z^m_{t\wedge T_m},f\rangle^n}\right|
&\hspace{-0.2cm}\le 2\|f\|_{\infty}^n \,\Ex{\sup\limits_{s\le t} \,\langle Z^m_s,\ind\rangle^{2n}}^{1/2}
\Px{T_m\le t}^{1/2}\\
&\hspace{-0.2cm}\le 2\|f\|_{\infty}^n \,C(2n,t)^{1/2}\,  \Px{T_m\le t}^{1/2},
\end{array}
$$
where $C(2n,t)$ is the $2n$-th moment of a pure birth process with birth rate $b^*$ starting from $1$ at time $0$.
From Lemma \ref{moment-geometric}, we have that
$$
C(2n,t) \le (2n)! e^{2nb^*t}.
$$
The same bound holds for $Z$.
$$
\def\arraystretch{1.4}
\begin{array}{ll}
\left|\Ex{\langle Z_t\, ,f\rangle^n}-\Ex{\langle Z_{t\wedge T_m},f\rangle^n}\right|
\hspace{-0.2cm}&\le 2\|f\|_{\infty}^n \, \Ex{\sup\limits_{s\le t} \,\langle Z_s,\ind\rangle^{2n}}^{1/2}
\Px{T_m\le t}^{1/2}\\
\hspace{-0.2cm}&\le 2\|f\|_{\infty}^n\, C(2n,t)^{1/2}\, \Px{T_m\le t}^{1/2},
\end{array}
$$
and then
\begin{equation}
\label{eq:Moments_Z_Z^m}
\left|\Ex{\langle Z_t\, ,f\rangle^n}-\Ex{\langle Z^m_{t},f\rangle^n}\right|\le 
4 \|f\|_{\infty}^n\, C(2n,t)^{1/2}\, \Px{T_m\le t}^{1/2}.
\end{equation}
Finally, we also have 
\begin{equation}
\label{eq:survival_Z_Z^m}
\left | \Px{\langle Z^m_t,\ind\rangle>0}-
\Px{\langle Z_t,\ind\rangle>0}\right| \le 2 \Px{T_m<t}.
\end{equation}
We conclude using Assumption (HP5). 
\end{proof}

\subsection{Moments recursion and survival probability equation}
In this section we provide formulas for the moments and survival probability.
In what follows, for $t\ge 0, x\in \RR, n\ge 1$, we denote
\begin{equation}
\label{def:moments}
\def\arraystretch{1.5}
\begin{array}{l}
u^f_n(t,x)=\Ex{\langle Z_t\, ,f\rangle^n},\, u_n(t,x)=\Ex{\langle Z_t\, ,1\rangle^n}\ ;\\
u_0(t,x)=\Px{\langle Z_t\, ,1\rangle>0}.\\
\end{array}
\end{equation}

The key to studying the asymptotic for the moments and the survival probability is the following 
result, giving  recursion equations for these quantities.

\begin{proposition}
\label{lem:for_moments} 
\begin{enumerate}[(i)]
\item For  $f\in \CB$ and all $T<\infty$, the function 
$H_{f}$ given by 
$$
(t,x,w)\in [0,T]\times \RR\times \left\{w\in\CC: \, |w|<\left(\|f\|_\infty e^{b^* T}\right)^{-1}\right\}
\longrightarrow H_{f}(t,x,w)=\Ex{1-e^{w \langle Z_t\,,\, f\rangle}}\in \CC,
$$
is well defined. 

For each $(t,x)\in [0,T]\times \RR$ the function $H_{f}(t,x,\sbullet)$ is analytic in 
$\left\{w\in\CC: \, |w|<\left(\|f\|_\infty e^{b^* T}\right)^{-1}\right\}$ 
and satisfies the equation
\begin{equation}
\label{eq:equation for Laplace}   
H_{f}(t,x,w)=P_t\left(1-e^{w \,f(X_t)}\right)(x)-
\int_0^t P_s\left( b(\sbullet) H_{f}^2(t-s,\sbullet,w) \right)(x)\, ds.
\end{equation}

\item For all $t,x$ and $f\in \CB$, the following set of formulas hold for the survival probability and moments:
\begin{align}
&u_{0}(t,x)=u_{1}(t,x)-\int_0^t P_s\left(b(\sbullet)\, u_{0}^2(t-s,\sbullet)\right)(x)\, ds\label{eq:u0}\;;\\
&u^f_1(t,x)=\Ex{\langle Z_t\, ,f\rangle}=\Px{f} \hbox{ and for } n\ge 2 \label{eq:u1}\;; \\
&u_n^f(t,x)=u_1^{f^n}(t,x)+\int_0^t P_s\!\left(b(\sbullet)\,\sum\limits_{k=1}^{n-1} 
\binom{n}{k}\, u_{n-k}^f(t-s,\sbullet)\, u_{k}^f(t-s,\sbullet)\right)\!(x)\, ds\label{eq:un}.
\end{align}
\end{enumerate}
\end{proposition}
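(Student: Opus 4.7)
My plan is to prove (i) first, establishing a cumulant integral equation for the Laplace transform $H_f$, and then to extract (ii) from it: the moment recursion \eqref{eq:un} will follow by matching Taylor coefficients at $w=0$, while \eqref{eq:u0} (for which $u_0$ is not a finite Taylor coefficient of $H_f$) will be obtained by a parallel first-event argument. Throughout, the central technical device is a reduction to the cutoff processes $Z^m$ of Section~\ref{sec:cutoff}, whose death rate $d^{(m)}$ is bounded so that the standard branching PDE / Duhamel machinery applies pointwise; the result is then transferred to $Z$ by letting $m\to\infty$.

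\textbf{Part (i).} Well-definedness and analyticity of $H_f(t,x,\sbullet)$ on the disk $D_T=\{w\in\CC:\,|w|<(\|f\|_\infty e^{b^*T})^{-1}\}$ follow from the pathwise domination $\langle Z_t,1\rangle\le N_t^*$ (Corollary~\ref{massmoment}) and $\EE[(N_t^*)^n]\le n!\,e^{nb^*t}$: on $[0,T]\times\RR\times D_T$,
\[
\Ex{\bigl|1-e^{w\langle Z_t,f\rangle}\bigr|}\,\le\,\sum_{n\ge 1}(|w|\,\|f\|_\infty e^{b^*T})^n\,<\,\infty,
\]
which gives both convergence of the defining series and, since each term is polynomial in $w$, analyticity in $D_T$. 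For the equation itself, I first work with $Z^m$: a first-event decomposition (conditioning on the first branching-or-death time $\tau$, with rate $b+d^{(m)}$) shows that $V^m(t,x,w)=\Ex{e^{w\langle Z_t^m,f\rangle}}$ solves the semilinear Cauchy problem
\[
\partial_t V^m=\mathscr{G}V^m+b\,(V^m)^2-(b+d^{(m)})\,V^m+d^{(m)},\qquad V^m(0,x)=e^{wf(x)}.
\]
Substituting $H^m_f=1-V^m$ turns this into $\partial_t H^m_f=(\mathscr{G}+b-d^{(m)})H^m_f-b\,(H^m_f)^2$, and Duhamel inversion against the Feynman--Kac semigroup $P^m_t$ (whose potential is precisely $b-d^{(m)}$), followed by the change of variable $s\mapsto t-s$, produces the equation \eqref{eq:equation for Laplace} with $P^m$ in place of $P$. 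Passing $m\to\infty$ by means of the convergence statements of Section~\ref{sec:cutoff}, extended to the Laplace transform via the uniform-in-$m$ Yule domination above, recovers \eqref{eq:equation for Laplace}.

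\textbf{Part (ii).} Inside $D_T$ I expand
\[
H_f(t,x,w)=-\sum_{n\ge 1}\frac{w^n}{n!}\,u_n^f(t,x),\qquad P_t(1-e^{wf})(x)=-\sum_{n\ge 1}\frac{w^n}{n!}\,u_1^{f^n}(t,x),
\]
using $P_t(f^n)(x)=u_1^{f^n}(t,x)$ from \eqref{FK}. The Cauchy product gives coefficient of $w^n/n!$ in $H_f^2$ equal to $\sum_{k=1}^{n-1}\binom{n}{k}u_{n-k}^f u_k^f$, so matching powers of $w^n$ in \eqref{eq:equation for Laplace} yields \eqref{eq:un} for $n\ge 2$, and \eqref{eq:u1} is immediate. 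For \eqref{eq:u0}, I argue directly: conditioning on the first event, branching (rate $b$) produces two independent lines whose joint extinction probability is $(1-u_0)^2$, hence the backward equation for $u_0$ is the same semilinear PDE $\partial_t u_0=(\mathscr{G}+b-d)u_0-b\,u_0^2$ as for $H_f$, now with initial condition $u_0(0,x)=1$. Duhamel's formula with respect to $P_t$, together with $P_t\mathbf{1}(x)=u_1(t,x)$, produces \eqref{eq:u0}.

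\textbf{Main obstacle.} The principal difficulty is the unboundedness of $d$ under (HV), which prevents a direct Itô/PDE derivation for $Z$ itself. Rather, every step must be carried out on the cutoff $Z^m$ and then passed to the limit; the crucial point, beyond the convergences of Section~\ref{sec:cutoff} for moments and survival probability, is to control the nonlinear integrand $b\,(H^m_f)^2$ under $P^m_s$ on $[0,t]$. This is achieved by the boundedness of $b$ from (HV2) combined with the uniform-in-$m$ Yule-process domination, which provides the dominating function needed for the limit.
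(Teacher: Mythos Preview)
Your approach is essentially the same as the paper's: first-event decomposition for the cutoff process $Z^m$, conversion from the $Q^m$-semigroup form to the $P^m$-semigroup form, then $m\to\infty$ via the Yule domination and the convergences of Section~\ref{sec:cutoff}. Your PDE/Duhamel phrasing is equivalent to the paper's direct use of the integral equation together with Proposition~2.9 of \cite{ZL} (which performs exactly that semigroup switch); the paper stays with integral equations throughout and so avoids having to justify differentiability in $t$.

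The one genuine methodological difference is your derivation of \eqref{eq:u0}. You repeat the first-event argument directly for $u_0$; the paper instead observes that $u_0(t,x)=\lim_{w\to-\infty}H_1(t,x,w)$, analytically continues \eqref{eq:equation for Laplace} with $f\equiv 1$ to the half-plane $\Re w<0$, and passes to the limit by dominated convergence. The paper's route is a bit slicker since it reuses the already-established equation rather than re-running the cutoff argument; your route is perfectly valid but you should make explicit that it too goes through $Z^m$ (the PDE $\partial_t u_0=(\mathscr{G}+b-d)u_0-bu_0^2$ is formal when $d$ is unbounded).
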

\begin{proof}
$(i)$ Consider  $f\in \CB$.
Corollary \ref{massmoment}  implies   the convergence of the series (in $w$) 
$\sum_{n\ge 1}\Ex{\langle Z_{t},f\rangle^n} \frac{w^n }{n!}$  in the complex open 
disc of radius $\left(\|f\|_\infty e^{b^* T}\right)^{-1}$. 
Then  for all  $x\in \RR$  and all $t\le T<\infty$, the function 
$H_{f}(t,x,\vsbullet)$ is well defined and analytic in the complex open disc of radius 
$\left(\|f\|_\infty e^{b^* T}\right)^{-1}$.      

To derive \eqref{eq:equation for Laplace}, we need to truncate $Z$ as in 
Section \ref{sec:cutoff}. For any $m\ge 1$, we define the function $H_{{f,m}}$ given by 
$$
H_{f,m}(t,x,w)=\Ex{1-e^{w \langle Z_t^m\,,\, f\rangle}}.
$$
Since we have similar moment estimates for $Z^m$, we deduce that
for each $(t,x)\in [0,T]\times \RR$, the function $H_{f,m}(t,x,\sbullet)$ is analytic in 
$\left\{w\in\CC: \, |w|<\left(\|f\|_\infty e^{b^* T}\right)^{-1}\right\}$. Let us prove that $H_{f,m}$ satisfies the equation 
\begin{equation}
\label{eq:equation for Laplace m}   
H_{f,m}(t,x,w)=\Ex{e^{\int_0^t W^m(X_s) ds} \left(1-e^{w f(X_t)}\right)}-
 \int_0^t  \Ex{e^{\int_0^s W^m(X_r)dr} H_{f,m}^2(t-s,X_s,w)}\, ds,
\end{equation}
where $W^{(m)}=-\; (b+d^{(m)})$ is the rate function associated to the first branching event.

Conditioning on the  first branching time, we have
\begin{align*}
H_{f,m}(t,x,w)=&\EE_x\left(e^{\int_0^t W^{(m)}(X_s) \, ds} \, \left(1-e^{w f(X_t)}\right)\right)\\
&+
\int_0^t \EE_x\left(\left(b+d^{(m)}\right)(X_s) \,e^{\int_0^s W^{(m)}(X_u)\, du} \frac{b(X_s)}{(b+d^{(m)})(X_s)}   \EE_{\delta_{X_s}}
\left(1-\exp(w\langle Z^m_{t-s}+Z^{'m}_{t-s}\,,f\rangle)\right)\right)ds,
\end{align*}
where $\ Z^m$ and $Z^{\,'m}$ are two independent identically distributed branching processes. 
For every $z\in \RR$, we have
$$
\EE_{\delta_{z}}\left(1-\exp(w\langle Z^m_{t-s}+Z^{'m}_{t-s}\,,f\rangle)\right)=2H_{f,m}(t-s,z,w)-H_{f,m}^2(t-s,z,w).
$$
So, we have
\begin{equation}
\label{eq:formula con R}
\begin{array}{ll}
H_{f,m}(t,x,w)\hspace{-0.2cm}&=\EE_x\left(e^{\int_0^t W^{(m)}(X_s) \, ds} \, \left(1-e^{w f(X_t)}\right)\right)\\
&\hspace{0.2cm}+\int_0^t \EE_x{e^{\int_0^s W^{(m)}(X_u)\, du} 
\left( 2b(X_s) H_{f,m}(t-s,X_s,w)-b(X_s)H^2_{f,m}(t-s,X_s,w)\right)}ds.
\end{array}
\end{equation}
Now, we use Proposition 2.9 in \cite{ZL}  replacing $W^{(m)}$ by $V^{(m)}= b- d^{(m)}$, which gives the desired equation
$$
H_{f,m}(t,x,w)=\EE_x\left(e^{\int_0^t V^{(m)}(X_s) \, ds} \,  \left(1-e^{w f(X_t)}\right)\right)-
\int_0^t \EE_x\left( e^{\int_0^s V^{(m)}(X_u)\, du}\, b(X_s)H^2_{f,m}(t-s,X_s,w)\right)ds. 
$$
 The bounds for the moments of $\langle Z^m_t,f\rangle$, uniform on $t,x$, 
and the convergence of these moments to the moments of $\langle Z_t,f\rangle$, proves that for all $t\le T, x\in \RR,
0<a<\left(\|f\|_\infty e^{b^* T}\right)^{-1}$ and all $|w|\le a$, we have the pointwise convergence
$\ 
H_{f,m}\to H_f\ 
$
as $m\to\infty$, where the sequence $(H_{f,m})$ is uniformly bounded by a constant, depending on $T,a$. 
The Dominated Convergence Theorem allows us to conclude that $H_f$ satisfies the desired equation.

$(ii)$  The equations \eqref{eq:un}, satisfied by the moments, are obtained by successive derivations of $H$ w.r.t. 
$w$, at $w=0$ in equation \eqref{eq:equation for Laplace} using the Dominated Convergence Theorem.

Finally, we prove that $u_0$ satisfies equation \eqref{eq:u0}. This is obtained by taking the limit in 
$$
H(t,x,w)=\Ex{1-e^{w\langle Z_t,1\rangle}},
$$
as $w\to -\infty$. For $t,x$ fixed, this function is also analytic in the open left half plane $\Re w<0$. By analytic
continuation, $H$ satisfies the same equation on this domain and it is bounded near $-\infty$. The result, that is, equation 
\eqref{eq:u0} follows from the Dominated Convergence Theorem, since $H(t,x,w)\to \Px{\langle Z_t,1\rangle >0}$, as 
$w\to-\infty, w\in \RR$.
\end{proof}

Using \eqref{eq:u0} and the semigroup property of $(P_{t})$, one can also prove that for any $0\le t_{0} \le t$, 
\begin{equation}
\label{eq:eq:asym survival critical II}
u_0(t,x)=P_{t-t_{0}}(u_0(t_{0},\sbullet))(x)-\int_{0}^{t-t_{0}}P_{s}\left(b(\sbullet)\,u_0^2(t-s,\sbullet)\right)(x)\, ds.
\end{equation}
See \cite{ZL} Proposition 2.12, page 35.

\section{Some properties of the survival probability}
\label{sec:survival}

\begin{theorem}
\label{thm:smooth-survival}
Under assumptions (HV) and (HP), for any $t>0$, $u_{0}(t, \sbullet) $ belongs to $\CBZ$. 
\end{theorem}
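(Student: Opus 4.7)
The plan is to decompose the target property $u_0(t,\sbullet)\in\CBZ$ into (a) vanishing at infinity and (b) continuity plus boundedness, and to exploit the integral identity \eqref{eq:u0}
$$
u_0(t,x)=P_t 1(x)-\int_0^t P_s\!\left(b(\sbullet)\,u_0^2(t-s,\sbullet)\right)(x)\,ds
$$
together with the regularity built into (HP3) and (HP4).

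Part (a) is immediate. By Markov's inequality applied to the integer-valued variable $N_t$,
$$
u_0(t,x)=\Px{N_t>0}\le \Ex{N_t}= P_t 1(x),
$$
and hypothesis (HP4) forces $P_t 1\in\CBZ$; hence $u_0(t,x)\to 0$ as $|x|\to\infty$. Boundedness is trivial since $0\le u_0\le 1$. So the whole problem reduces to the continuity of the integral term $G(t,x):=\int_0^t P_s(b\,u_0^2(t-s,\sbullet))(x)\,ds$.

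To handle continuity, I would set up a Picard scheme $v_0\equiv 0$ and $v_{n+1}(s,x)=P_s 1(x)-\int_0^s P_r(b\,v_n^2(s-r,\sbullet))(x)\,dr$, and prove by induction on $n$ that $v_n(s,\sbullet)\in \CB$ for every $s\in[0,t]$. The base case is obvious. For the inductive step, discretise the time integral: given a partition $0=r_0<\dots<r_N=s$ with small mesh $\delta$, approximate $v_n(s-r,\sbullet)$ by $v_n(s-r_i,\sbullet)$ on each $[r_i,r_{i+1}]$, which yields a Riemann sum
$$
\sum_{i=0}^{N-1}\left[\int_{r_i}^{r_{i+1}}P_r\,dr\right]\!\left(b\,v_n^2(s-r_i,\sbullet)\right)(x).
$$
Each function $b\,v_n^2(s-r_i,\sbullet)$ is bounded measurable, so (HP3) guarantees that the corresponding summand belongs to $\CB$; hence so does the Riemann sum. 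To pass to the limit, I would propagate, along the iteration, the continuity of $s\mapsto v_n(s,\sbullet)$ in sup norm (on $[\epsilon,t]$, say), which stems from strong continuity of $(P_s)$ in $\CBZ$ (HP3) and from $P_s 1\in\CBZ$ for $s>0$ (HP4); then the Riemann sums converge uniformly in $x$ to the integral, so $v_{n+1}(s,\sbullet)\in\CB$. Convergence $v_n\to u_0$ uniformly on $[0,t]\times\RR$ is obtained from a Gronwall-type estimate using $|v^2-w^2|\le 2|v-w|$, $0\le v_n,u_0\le 1$, and $\|P_s\|_\infty\le e^{A(V)s}$, first on a short time interval and then iterated to cover $[0,t]$. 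A uniform limit of $\CB$ functions lies in $\CB$, which combined with (a) yields $u_0(t,\sbullet)\in\CBZ$.

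The main technical obstacle is the Riemann-sum step: although (HP3) provides continuity of $x\mapsto\int_0^r P_s f(x)\,ds$ for any single $f\in\MB$, here the integrand $f_s=b\,v_n^2(s-r,\sbullet)$ depends on the integration variable, so one needs uniform (in $s$, in sup norm over $x$) continuity of $s\mapsto v_n(s,\sbullet)$ to carry out the approximation. This is where I expect to spend the effort, relying on strong continuity of $(P_s)$ in $\CBZ$ and on the identity $\int_\epsilon^t P_s\phi_s\,ds=P_\epsilon\big(\int_0^{t-\epsilon}P_r\phi_{r+\epsilon}\,dr\big)$, which together with (HP4) transfers any $\MB$-valued time integral into $\CBZ$ after a small smoothing lag $\epsilon$, and then letting $\epsilon\downarrow 0$.
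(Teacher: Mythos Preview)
Your overall strategy matches the paper's: both use the integral identity \eqref{eq:u0} and a Picard iteration on it, then pass to the uniform limit. However, you make the inductive step harder than necessary. You appeal to (HP3) and Riemann sums to obtain continuity of the integral term, which forces you to propagate sup-norm time regularity of $s\mapsto v_n(s,\cdot)$ along the iteration --- exactly the ``main technical obstacle'' you flag. The paper bypasses this entirely: once the iterate $w_{n-1}(t-s,\cdot)\in\CBZ$ (the induction hypothesis), the function $b\,w_{n-1}^2(t-s,\cdot)$ lies in $\CB$, and (HP4) gives $P_s\big(b\,w_{n-1}^2(t-s,\cdot)\big)\in\CBZ$ for each $s\in(0,t)$ --- not merely vanishing at infinity, but continuous as well. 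A single Dominated Convergence argument in $ds$ then shows the integral is in $\CBZ$. No Riemann sums, no time regularity. You already invoked (HP4) for part (a); use it for part (b) too and the whole Riemann-sum machinery disappears.

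There is also a gap in your convergence step: you assert $0\le v_n\le 1$ in order to use $|v^2-w^2|\le 2|v-w|$. With $v_0\equiv 0$ you get $v_1=P_s 1$, which can exceed $1$ whenever $A(V)>0$ (always the case in the supercritical regime), and later iterates need not remain nonnegative. The paper handles this by restricting to a short time window $[0,\tau]$ with $\tau=1/(4C^2 b^*)$ (where $C=1+\sup e^V$), on which a direct recursion yields the uniform bound $\|w_n\|_\infty\le 2C$ and hence a contraction factor $3/4$; the iterates then converge to the unique bounded solution, namely $u_0$. One reaches arbitrary $t$ by iterating over successive windows $[n\tau,(n+1)\tau]$ via the nonlinear semigroup identity \eqref{eq:eq:asym survival critical II}, taking the freshly established $u_0(n\tau,\cdot)\in\CBZ$ as the new initial datum.
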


\begin{proof}
In Proposition \ref{lem:for_moments}, see equation \eqref{eq:u0}, we have proved that
$$
u_{0}(t,x)=u_{1}(t,x)-\int_0^t P_s\left(b(\sbullet)\, u_{0}^2(t-s,\sbullet)\right)(x)\, ds.
$$
By assumption HP4) we know that for any $t>0$, $u_{1}(t,x)= P_{t}1(x)$ belongs to $\CBZ$. 
The proof  of the theorem is then  based on the following lemma whose proof is postponed at the end of the section.

\begin{lemma}\label{meilleur} We introduce the  number
$\ 
C=1+\sup_{x\in\RR}e^{V(x)}\;
$ and  
for $0<\tau<1$ fixed, let 
 $\MBD$ denote the Banach space of bounded measurable functions on
$[0,\tau]\times \RR$ equipped with the sup norm denoted also by $\|\vsbullet\|_{\infty}$.
Assume (HV) and (HP) and let
$$\tau= \frac{1}{4\,C^{2}\,b_{*}}\in (0,1).$$
  Then, for any 
$v_{0}\in\CB$ with $v_{0}\ge 0$ and $\|v_{0}\|_{\infty}\le 1$ and 
any function $w\in \MBD$ satisfying $0\le w\le  1$ and  for any $t\in[0,\tau]$,
$$
w(t,\vsbullet)=P_{t}v_{0}(\vsbullet)-
\int_{0}^{t}P_{s}\left(b\,w^2(t-s)\right)(\sbullet) \;ds,
$$
we have $w(t,\vsbullet)\in\CBZ$ for any $0<t\le \tau$. Moreover,
$w(0,\vsbullet)=v_{0}$ and for any $x\in\RR$
$$
\lim_{t\searrow0}w(t,x)=v_{0}(x)\;.
$$
\end{lemma}

The proof of Theorem \ref{thm:smooth-survival} consists then in applying  recursively Lemma \ref{meilleur}  
with $v_{0} = 1$  for $n = 0$ and on successive time intervals
$[n\tau,\,(n+1)\tau]$ for $n \ge 1$, with
$ v_{0} = u_{0}(n\tau, \sbullet)$, using the nonlinear semigroup property \eqref{eq:eq:asym survival critical II}.
\end{proof}

\begin{proof}of Lemma \ref{meilleur}

The proof is based on Picard's method of successive approximations. 
Let $w_{0}$ be defined by
$$
w_{0}(t,x)=P_{t}v_{0}(x)\;.
$$
Note that $w_{0}\in\MBD$, that $\|w_{0}\|_{\infty}\le C$, and for any
$0<t\le\tau$, $w_{0}(t,\sbullet)\in\CBZ$ by HP4).

For $n\ge1$, we define recursively a function $w_{n}$ on
$[0,\tau]\times\RR$ by
$$
w_{n}(t,\vsbullet)=P_{t}v_{0}(\vsbullet)-
\int_{0}^{t} P_{s}\left(b\,w_{n-1}^2(t-s,\vsbullet)\right)(\sbullet) \;ds\;.
$$
\
We first show that for any $n$, $w_{n}\in \MBD$. The proof is recursive.
Assume $w_{n-1}\in \MBD$, then for any $0\le t\le \tau$ and any $x\in\RR$
we have
$$
\int_{0}^{t}\big(P_{s}(b\,w_{n-1}(t-s)^{2})\big)(x) \;ds
\le \tau\,C\,b_{*}\,\|w_{n-1}\|_{\infty}^{2}\;.
$$
Choosing
$\tau=\frac{1}{4\,C^{2}\,b_{*}}$,
if $\|w_{n-1}\|_{\infty}\le 2\,C$, then   $\|w_{n}\|_{\infty}\le
2\,C$. This implies recursively that for every $n\ge0$, $w_{n}\in\MBD$ and
$$
\|w_{n}\|_{\infty}\le 2\,C\;.
$$
We now show that for any $n\ge0$ and for any $0<t\le\tau$,
$w_{n}(\tau,\vsbullet)\in\CBZ$. This is true for $w_{0}$. For
$n>0$ the proof is recursive. Assume that for any $0<t\le\tau$,
$w_{n-1}(t,\vsbullet)\in\CBZ$. Then for any $0<s<t\le\tau$ fixed,
by HP4) the function
$$
x\mapsto P_{s}\left(b\,w_{n-1}^{2}(t-s)\right)(x)
$$
belongs to $\CBZ$. It follows from the Dominated Convergence Theorem
that for any $0<t\le\tau$, the function
$$
x\mapsto \int_{0}^{t}\big(P_{s}(b\,w_{n-1}(t-s)^{2})\big)(x) \;ds
$$
belongs to $\CBZ$, and then  $w_n(t,\vsbullet)\in \CBZ$, for all $0<t\le \tau$.

We have for any $0<t\le\tau$ and any $n\ge0$
\begin{align*}
w(t,\vsbullet)-w_{n+1}(t,\vsbullet)=&
\int_{0}^{t} P_{s}\left(b\,\big[w_{n}^2(t-s)-w^2(t-s)\big]\right)(\sbullet) \;ds\\
=&\int_{0}^{t} P_{s}\Big(b\,[w_{n}(t-s)-w(t-s)]\;[w_{n}(t-s)+w(t-s)]\Big)(\sbullet) \;ds\;.
\end{align*}
Since  $C\ge 1$, our choice of $\tau$ implies that
$$
\sup_{0<t\le\tau}\sup_{x\in\RR}\big|w(t,x)-w_{n+1}(t,x)\big|
\le \tau\,b_{*}\,C\,(1+2C)\,\sup_{0<t\le\tau}\sup_{x\in\RR}\big|w(t,x)-w_{n}(t,x)\big|
\le \frac{3}{4}\;
\sup_{0<t\le\tau}\sup_{x\in\RR}\big|w(t,x)-w_{n}(t,x)\big|
\;.
$$
This implies for any $0<t\le\tau$
$$
\lim_{n\to\infty} \sup_{x\in\RR}\big|w(t,x)-w_{n}(t,x)\big|=0\;.
$$
Since $\CBZ$ is closed in $\MB$ in the sup norm,  we get
$w(t,\vsbullet)\in\CBZ$ for any $0<t\le\tau$.

The last assertion of the  lemma
follows from the  Dominated Convergence Theorem.
\end{proof}

It is clear that for each $x$ the function $t\to u_0(t,x)$ is bounded by 1 and decreasing on $t$.
We now need to prove that  $u_0$ converges to a continuous function $h$ when $t$ tends to infinity, uniformly on $x$.
For that, we need another
equation for $u_0$. Passing to the limit in $m\to \infty$ and then $w\to -\infty$ in \eqref{eq:formula con R}, with
$f\equiv1$, and defining   
$$
W= -(b+d),
$$
we obtain the equality
\begin{align}
u_0(t,x)&=\Px{N_t>0}=\EE_x\left(e^{\int_0^t W(X_r) \ dr}\right)+ \int_0^t \EE_x\left(e^{\int_0^s W(X_r) \ dr} 
(2b(X_s) u_0(t-s,X_s)-b(X_s)u_0^2(t-s,X_s))\right) \, ds\nonumber\\
&=
\label{eq:u0Q}
Q_t(1)(x)+ \int_0^t Q_{t-s}
\Big(2b(\sbullet) u_0(s,\sbullet)-b(\sbullet)u_0^2(s,\sbullet))\Big)(x)\,ds.
\end{align}
Recall that $(Q_{t})$ is the perturbation of $(P_{t})$ by the operator of pointwise
multiplication by the function$-2b$ defined in \eqref{def:Qt}, namely
$$Q_{t}f(x) = \EE_x\left(e^{\int_0^t W(X_r) \ dr}f(X_{t})\right).$$

\begin{lemma}\label{prop:Qt} Under HP3) and HP4)
the semigroup  $(Q_{t})$ is strongly continuous and irreducible in
$\CBZ$. It maps $\CB$ to $\CBZ$ for any $t>0$. For any $T>0$,
$\ 
\int_{0}^{T}Q_{t}\;dt\ 
$
maps $\MB$ to $\CB$ in the sense that for any $f\in\MB$ the function 
$$
x\mapsto \int_{0}^{T}\EE_x\left(e^{\int_0^t W(X_r) \ dr}f(X_{t})\right)\;dt
$$
belongs to $\CB$.
\end{lemma}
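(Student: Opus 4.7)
The strategy is to exploit that $(Q_t)$ is a bounded perturbation of $(P_t)$ by the operator of multiplication by $-2b$, which is bounded on $\CBZ$ (and on $\CB$, $\MB$) since $b\in \CB$ with $\|b\|_\infty\le b^*$ (HV2). The key technical tool is the Duhamel identity
\begin{equation*}
Q_t f = P_t f - \int_0^t Q_s\bigl(2b\,P_{t-s}f\bigr)\,ds,
\end{equation*}
obtained by comparing the differential equations for $\phi(t)=e^{\int_0^t W(X_r)\,dr}$ and $\psi(t)=e^{\int_0^t V(X_r)\,dr}$ (using $W=V-2b$), integrating the difference, and applying the Markov property. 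Together with HP3) and HP4), this identity will let us transfer each desired property from $(P_t)$ to $(Q_t)$.

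For strong continuity on $\CBZ$: $(P_t)$ is a $C_0$-semigroup on $\CBZ$ by HP3), and multiplication by $-2b$ is bounded on $\CBZ$; the standard bounded perturbation theorem for $C_0$-semigroups then yields that the perturbed semigroup is again a $C_0$-semigroup on $\CBZ$, and by Feynman-Kac it coincides with $(Q_t)$. For irreducibility, the weight $e^{\int_0^t W(X_r)dr}$ is $\PP_{\delta_x}$-a.s.\ strictly positive, so for any $f\ge 0$ one has $Q_t f(x)=0\Leftrightarrow P_t f(x)=0$; hence the closed lattice ideals of $\CBZ$ invariant under $(Q_t)$ coincide with those invariant under $(P_t)$, and irreducibility transfers from HP3).

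To show $Q_t$ maps $\CB$ into $\CBZ$ for $t>0$, use $|Q_tf(x)|\le \|f\|_\infty P_t1(x)$ and HP4) applied to $1\in \CB$ to get the vanishing at infinity. For continuity in $x$, apply the Duhamel identity: $P_tf\in \CBZ$ by HP4); for $s\in(0,t]$, $P_{t-s}f\in \CBZ$, so $2b\,P_{t-s}f\in \CBZ$ (multiplication by $2b\in \CB$ preserves $\CBZ$), and then $Q_s(2b\,P_{t-s}f)\in \CBZ$ by the previous paragraph. The integrand is uniformly bounded and continuous in sup norm on $(0,t]$, so the Bochner integral lies in $\CBZ$ and hence so does $Q_tf$.

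For the last statement, integrate the Duhamel identity over $t\in[0,T]$ and apply Fubini (justified by uniform boundedness of the integrand) to obtain
\begin{equation*}
R_Tf:=\int_0^T Q_tf\,dt=S_Tf-\int_0^T Q_s\bigl(2b\,S_{T-s}f\bigr)\,ds,\qquad S_Tf:=\int_0^T P_uf\,du.
\end{equation*}
For $f\in \MB$, HP3) gives $S_{T-s}f\in \CB$, so $2b\,S_{T-s}f\in \CB$; by the previous paragraph $Q_s(2b\,S_{T-s}f)\in \CBZ$ for $s\in(0,T]$, uniformly bounded in $s$. The map $s\mapsto 2b\,S_{T-s}f$ is Lipschitz in sup norm, and $s\mapsto Q_sg$ is continuous in sup norm on $(0,\infty)$ for each $g\in \CB$ (factor $Q_s=Q_{s-\epsilon}\circ Q_\epsilon$ using that $Q_\epsilon\colon\CB\to \CBZ$ from the previous step and the strong continuity of $(Q_s)$ on $\CBZ$), so the integrand is Bochner integrable with values in $\CB$, giving $R_Tf\in \CB$. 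The main obstacle is the bookkeeping for this Bochner integral, especially near the endpoint $s=0$ where $Q_0=\mathrm{id}$ does not map $\CB$ into $\CBZ$; this is handled by the uniform bound on the integrand, which makes the contribution of any neighborhood of $s=0$ arbitrarily small in sup norm.
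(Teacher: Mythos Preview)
Your proof is correct and follows essentially the same route as the paper: bounded perturbation for strong continuity (the paper cites Pazy), the same Duhamel identity $Q_tf=P_tf-2\int_0^t Q_u(bP_{t-u}f)\,du$ together with HP4) for the $\CB\to\CBZ$ mapping, and then integration over $t$ plus Fubini for the last claim; your irreducibility argument via the a.s.\ positivity of the weight is a more hands-on version of the paper's citation to a general perturbation result. One small slip: in the third paragraph the interval should be $s\in[0,t)$ rather than $(0,t]$, since at $s=t$ one has $P_0f=f\notin\CBZ$ in general---but this is a measure-zero endpoint handled by exactly the uniform-bound argument you invoke in the last paragraph.
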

\begin{proof}
The operator of pointwise multiplication by the function $-2b$ is
bounded in $\CBZ$. It follows from Theorem 1.1 page 75 in
\cite{Pazy} that $(Q_{t})$ is strongly continuous in $\CBZ$.

Irreducibilty follows from Proposition 3.3 page 183 in
\cite{AGGGLMNFS}.

Let $f\in \CB$ and consider the function
$$
F(x,u)=\EE_x\left(e^{\int_0^t V(X_r) \ dr} e^{-2\int_0^u b(X_s)\,ds}f(X_{t})\right)\;.
$$
We have $F(x,0)=P_{t}f(x)$ and $F(x,t)=Q_{t}f(x)$. This function is
differentiable in $u$ and we have for all $x$
\begin{equation}\label{Duhamel}
Q_{t}f(x)=P_{t}f(x)+\int_{0}^{t}\partial_{u}F(x,u)\;du
=P_{t}f(x)-2\int_{0}^{t}\EE_x\left(e^{\int_0^t V(X_r) \ dr} 
e^{-2\int_0^u b(X_s)\,ds}b(X_{u})\;f(X_{t})\right)\;du\;.
\end{equation}
By the Markov property we have for all $0<u<t$
$$
\EE_x\left(e^{\int_0^t V(X_r) \ dr} 
e^{-2\int_0^u b(X_s)\,ds}b(X_{u})\;f(X_{t})\right)
=\EE_x\left(e^{\int_0^u W(X_r) \ dr} b(X_{u})\EE_{X_{u}}\left[
e^{\int_0^{t-u} V(X_r) \ dr}\;f(X_{t-u})\right]\right)
$$
$$
=Q_{u}\big(b\,P_{t-u}f)(x)\in\CBZ,
$$
since by HP4) $P_{t-u}f\in\CBZ$. It follows from the Dominated Convergence Theorem that
$$
-2\int_{0}^{t}\EE_x\left(e^{\int_0^t V(X_r) \ dr} 
e^{-2\int_0^u b(X_s)\,ds}b(X_{u})\;f(X_{t})\right)\;du\in \CBZ\;,
$$
hence $Q_{t}f\in\CBZ$ since $P_{t}f\in \CBZ$.

Using equation \eqref{Duhamel} we have for $T>0$ and $f\in\MB$,
$$
\int_{0}^{T}\EE_x\left(e^{\int_0^t W(X_r) \ dr}f(X_{t})\right)\;dt=
\int_{0}^{T}P_{t}f(x)\;dt-2\int_{0}^{T}\int_{0}^{u}
\EE_x\left(e^{\int_0^u W(X_r) \ dr} b(X_{u})\EE_{X_{u}}\left[
e^{\int_0^{t-u} V(X_r) \ dr}\;f(X_{t-u})\right]\right)\;du\;dt\;.
$$
The first term belongs to $\CB$ by HP4). For the second term we use
Fubini's theorem to get
$$
\int_{0}^{T}\int_{0}^{u}
\EE_x\left(e^{\int_0^u W(X_r) \ dr} b(X_{u})\EE_{X_{u}}\left[
e^{\int_0^{t-u} V(X_r) \ dr}\;f(X_{t-u})\right]\right)\;du\;dt
$$
$$
=\int_{0}^{T}du \;
\EE_x\left(e^{\int_0^u W(X_r) \ dr} b(X_{u})\int_{u}^{T}\EE_{X_{u}}\left[
e^{\int_0^{t-u} V(X_r) \ dr}\;f(X_{t-u})\right]\right)\;dt
$$
$$
=\int_{0}^{T}du \;
\EE_x\left(e^{\int_0^u W(X_r) \ dr} b(X_{u})\int_{0}^{T-u}\EE_{X_{u}}\left[
e^{\int_0^{s} V(X_r) \ dr}\;f(X_{s})\right]\;ds \right)\;.
$$
From HP4) it follows that for $0<u<T$ the function
$$
y\mapsto \int_{0}^{T-u}\EE_{y}\left[
e^{\int_0^{s} V(X_r) \ dr}\;f(X_{s})\right]\;ds=
\int_{0}^{T-u}P_{s}f(y)\;ds
$$
belongs to $\CB$. Therefore since we already proved that $Q_{t}$ maps
$\CB$ to $\CBZ$, $0<u<T$ the function
$$
x\mapsto \EE_x\left(e^{\int_0^u W(X_r) \ dr} b(X_{u})\int_{u}^{T}\EE_{X_{u}}\left[
e^{\int_0^{t-u} V(X_r) \ dr}\;f(X_{t-u})\right]\right)\;dt
=Q_{u}\left(b\,\int_{0}^{T-u}P_{s}f\;ds\right)(x)
$$
belongs to $\CBZ$. It follows from the Dominated Convergence Theorem
that the function 
$$
x\mapsto\int_{0}^{T}du \;
\EE_x\left(e^{\int_0^u W(X_r) \ dr} b(X_{u})\int_{0}^{T-u}\EE_{X_{u}}\left[
e^{\int_0^{s} V(X_r) \ dr}\;f(X_{s})\right]\;ds \right)\;.
$$
belongs to $\CBZ$. 
\end{proof}

\begin{lemma}
\label{lem:growth} We assume HP1), HP2) and HQ). 
The semigroup  $(Q_{t})$ has a strictly negative asymptotic growth rate in $\CBZ$ and in $\CB$.

\end{lemma}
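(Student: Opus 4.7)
The strategy is to establish $\omega(Q) < 0$ across the three regimes by first deducing a general comparison bound and then extracting strict negativity in the critical and supercritical cases by using the structural perturbation $-2b$.

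Since $-(b+d) \le V$ pointwise (as $b \ge 0$), the Feynman--Kac representation yields $|Q_t f| \le Q_t|f| \le P_t|f|$ for every $f \in \CBZ$, hence $\|Q_t\|_{\CBZ \to \CBZ} \le \|P_t\|_{\CBZ \to \CBZ}$. By HP1), HP2) and Theorem 1.4 p.~207 of \cite{AGGGLMNFS}, the growth bound of the positive semigroup $(P_t)$ on $\CBZ$ coincides with its spectral bound $-\lambda_0$, so $\omega(Q) \le -\lambda_0$; the same estimate passes to $\CB$ using Lemma \ref{prop:Qt}. In the subcritical regime $\lambda_0 > 0$ this already gives $\omega(Q) \le -\lambda_0 < 0$.

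For the critical regime $\lambda_0 = 0$, the plan is to use the Duhamel identity obtained from \eqref{Duhamel} together with the Markov property, namely $P_t f = Q_t f + 2\int_0^t Q_{t-s}(b P_s f)\,ds$, applied to $\Theta_0$. Since $P_s\Theta_0 = \Theta_0$, this yields
\[
\Theta_0 \;=\; Q_t\Theta_0 \,+\, 2\int_0^t Q_u(b\,\Theta_0)\,du,
\]
so $t \mapsto Q_t\Theta_0$ is pointwise non-increasing, bounded above by $\Theta_0 \in \CBZ$, and converges pointwise to some $\psi_\infty \ge 0$ that is $Q_t$-invariant. The key step is to show $\psi_\infty \equiv 0$: applying the same Duhamel identity to $\psi_\infty$ and invoking HP2) (uniform convergence $P_t\psi_\infty \to \Theta_0 \int \psi_\infty\,d\mu_0$) gives $\psi_\infty \le c\,\Theta_0$ with $c = \int\psi_\infty\,d\mu_0$; a ratio/monotonicity argument combined with the irreducibility of $(Q_t)$ from Lemma \ref{prop:Qt} forces $\psi_\infty$ to be a non-negative multiple of $\Theta_0$. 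If this multiple is positive, then $Q_t\Theta_0 \equiv \Theta_0$ and therefore $Q_s(b\Theta_0) \equiv 0$ for all $s > 0$, contradicting HV1) and the positivity/irreducibility of $(Q_t)$; hence $\psi_\infty = 0$. Dini's theorem on the one-point compactification of $\RR$ (each $Q_t\Theta_0$ belongs to $\CBZ$ by Lemma \ref{prop:Qt}) then upgrades the pointwise decrease $Q_t\Theta_0 \searrow 0$ to uniform convergence, from which a comparison and irreducibility argument deliver $\omega(Q) < 0$.

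For the supercritical regime $\lambda_0 < 0$ I would invoke (HQ). If $b + d$ does not vanish, continuity of $b+d$ together with HP4) and HP5) (control of excursions to infinity) produces an effective lower bound on $\int_0^t (b+d)(X_r)\,dr$ that yields exponential decay of the Feynman--Kac weight. In the quasi-compact sub-case, the essential spectral radius of $(Q_t)$ on $\CBZ$ is strictly smaller than its spectral radius, so the peripheral spectrum of the generator consists of finitely many isolated eigenvalues of finite multiplicity; Perron--Frobenius applied to the positive irreducible semigroup $(Q_t)$ furnishes a positive eigenfunction $\varphi$ at the spectral bound, and applying the Duhamel identity to $\varphi$ together with the fact that $P_t$ grows at rate $-\lambda_0 > 0$ forces the corresponding eigenvalue to be strictly negative. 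The main obstacle I anticipate is the critical case, where the naive comparison $\omega(Q) \le -\lambda_0 = 0$ is tight and the strict inequality must be extracted from the additional killing $-2b$ by combining the Duhamel representation, the spectral gap HP2), and the irreducibility of $(Q_t)$.
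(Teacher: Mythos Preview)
Your subcritical argument is correct and matches the paper. The critical and supercritical cases, however, have genuine gaps.

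\textbf{Critical case.} Your Duhamel identity $\Theta_0 = Q_t\Theta_0 + 2\int_0^t Q_u(b\Theta_0)\,du$ is correct and does give $Q_t\Theta_0 \searrow \psi_\infty$ with $Q_s\psi_\infty = \psi_\infty$. But the two subsequent steps fail. First, the assertion that $\psi_\infty$ must be a multiple of $\Theta_0$ is not justified: irreducibility of $(Q_t)$ tells you that a nontrivial positive fixed point is strictly positive, but not that it is proportional to $\Theta_0$ (which is an eigenfunction of $P_t$, not of $Q_t$). Second, and more seriously, even if you obtain $\|Q_t\Theta_0\|_\infty \to 0$ uniformly via Dini, this does \emph{not} yield $\omega(Q) < 0$. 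Since $\Theta_0 \in \CBZ$ vanishes at infinity, a general $f \in \CBZ$ need not satisfy $|f| \le C\Theta_0$, so you cannot dominate $Q_t f$ by $Q_t\Theta_0$; and mere uniform decay to zero on a single vector gives no exponential rate without some compactness. The paper avoids all this by using analytic perturbation theory (Kato): the family $(Q_t^z)$ obtained by perturbing $P_t$ with $zb$ is holomorphic in $z$, so for $|z|$ small the leading eigenvalue moves as $-\tilde\lambda_0(z) = z\int b\Theta_0\,d\mu_0 + O(|z|^2)$, which is strictly negative for small $z<0$; then the pointwise comparison $Q_t|f| \le Q_t^{-(r\wedge 2)}|f|$ transfers this to $Q_t$.

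\textbf{Supercritical case.} In the quasi-compact branch, you correctly obtain a positive eigenfunction $\varphi$ with $Q_t\varphi = \varphi$ if $\omega(Q) = 0$, but ``applying Duhamel together with the growth of $P_t$'' does not produce a contradiction: you get $P_t\varphi \ge \varphi$, and $P_t\varphi$ growing like $e^{-\lambda_0 t}$ is perfectly consistent with that. The paper's argument is a maximum-principle one: from $\varphi(x) = \EE_x\big(e^{\int_0^t W}\varphi(X_t)\big) \le \EE_x(\varphi(X_t))$, equality at a maximizer $x_*$ of $\varphi$ forces $X_t^{x_*}$ to stay a.s.\ in the compact set $K$ of maximizers; this contradicts HP1)--HP2), which make $P_t f(x_*) > 0$ for large $t$ and any nonzero $f \in \CBZ$ supported in $K^c$, hence $\EE_{x_*}(f(X_t)) > 0$. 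In the ``$b+d$ does not vanish'' branch, your appeal to HP5) is misplaced (HP5) concerns the branching process, not $X$); the intended argument is simply that $\inf(b+d) > 0$ gives $Q_t 1 \le e^{-t\inf(b+d)}$ directly.
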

\begin{proof}
We prove the result in $\CBZ$. The generalization to $\CB$ follows by approximating  bounded 
pointwisely $f \in \CB$ by a sequence of truncated functions belonging to $\CBZ$  and 
using the Dominated Convergence Theorem.

The subcritical case is obvious.

In the critical case, for $z\in \CC$, denote by $(Q_{t}^{z})$ the semigroup $(P_{t})$
perturbed by the multiplication operator $z \,b$. We are of course
interested in the case $z=-2$. As proved in Lemma \ref{prop:Qt}, $(Q_{t}^{z})$ is a strongly
continuous semigroup in $\CBZ$.
It follows from the convergence of the Dyson-Phillips series (see
Theorem 4.9 page 121 
in \cite{BA}) that $(Q_{t}^{z})$
is a holomorphic semigroup in $\CBZ$ (see \cite{Kato}). In particular
$Q_{1}^{z}$ is analytic in $z$ with $Q_{1}^0=P_{1}$. Therefore we can apply analytic
perturbation theory (see \cite{Kato}). 
It follows from HP2), Theorem 1.7
page 369 in \cite{Kato} and the results of section VII.1.5 in \cite{Kato}
 that there exists $r>0$ such that  for $|z|\le r$,
$Q_{1}^{z}$ has a simple eigenvalue 
$$
-\tilde\lambda_{0}(z)=z \,\int b\,\Theta_{0}\,d\mu_{0} +\mathcal{O}(|z|^{2})
$$
of multiplicity one, and the rest of the spectrum belongs to the
closed disk of radius $\exp(-\lambda_{1})+\mathcal{O}(|z|)$\;.
From the irreductibility and Proposition 3.5 page 185 in [1], we deduce that since $b \Theta_{0}$ is  positive and $\mu_{0}$ is positive (in the sense of \cite{AGGGLMNFS} p.118-119), then 
$$
\int b\,\Theta_{0}\,d\mu_{0}>0\;,
$$
and the semigroup $(Q_{t}^{-(r\wedge 2)})$ has strictly negative
asymptotic growth rate. The result follows observing that for any
$f\in\CBZ$ and any $x$
$$
|Q_{t}f(x)|\le Q_{t}|f|(x)\le Q_{t}^{-(r\wedge 2)}|f|(x)\;.
$$

Let us now focus on the supercritical case under HQ). 
If $b+d$ doesn't vanish, then its minimum is strictly positive and the results immediately follows. 
Now, assume that $(Q_{t})$ is quasi-compact. The growth
rate of $Q_{t}$ cannot be positive since $W\le0$. On the other hand, if the growth rate is zero, we shall get a 
contradiction, showing the result. Indeed, assume the growth rate is $0$, then
by Corollary 2.11(b) page 216 in \cite{AGGGLMNFS}, we conclude there is a strictly
positive  function $G_{0}\in\CBZ$ such that for any $t\ge0$ and any $x$
$$
G_{0}(x)=Q_{t}G_{0}(x)=
\EE_x\left(e^{\int_0^t W(X_r) \ dr}G_{0}(X_{t})\right)\le
\EE_x\big(G_{0}(X_{t})\big)
$$
since $W\le0$.  Let $K$  be the set of maximum for $G_{0}$. 
Since $G_{0}\in\CBZ$, $K$ is a nonempty compact set.  For any $x_{*}\in K$,
we have for all $t\ge0$
$$
G_{0}(x_{*})\le \EE_{x_{*}}\big(G_{0}(X_{t})\big)\le G_{0}(x_{*})\;.
$$
Therefore if $(X^{x_{*}}_{t})$ denotes the process starting from
$x_{*}$, we have for any $t$
$$
G_{0}(X_{t}^{x_{*}})=G_{0}(x_{*})\;,
$$
almost surely. Hence for all $t\ge 0$
\begin{equation}
\label{eq:support}
\mathbb{P}(X_{t}^{x_{*}}\in K) =1.
\end{equation}
Consider now a non zero $\CBZ$-function $f$ such $\support(f) \cap K =\emptyset$. 
Thanks to HP1 and HP2), and for $t$ large enough, $P_{t}f(x_{*})>0$. 
From the obvious inequality
$$P_{t}f(x_{*}) \le e^{b^* t}\,\mathbb{E}_{x_{*}}(f(X_{t})),$$
we then deduce that $\mathbb{E}_{x_{*}}(f(X_{t})) >0$. That is a contradiction with \eqref{eq:support}.
\end{proof}

\bigskip \noindent 
Let us now prove that $u_{0}(t,.)$ converges uniformly to $h$. For any fixed $x$, the function $u_{0}(t,x)$ is nonincreasing in
$t$. We denote by $h$ the measurable function
$$
h(x)=\lim_{t\to\infty}u_{0}(t,x)\;.
$$
We obviously have $0\le h(x)\le 1$ for all $x$.

\begin{corollary}\label{eqh}
Under Hypotheses HP) and HQ), the function $h$ satisfies for all $x$ the equation
$$
h(x)=\int_{0}^{\infty}\EE_{x}\left(e^{\int_{0}^{s} W(X_{\tau})\,d\tau}\big(
2\, b(X_{s})\,h(X_{s})-b(X_{s})\,h^2(X_{s})\big)\;\right)\;ds\;.
$$
\end{corollary}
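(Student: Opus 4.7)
The plan is to pass to the limit $t\to\infty$ in the integral equation \eqref{eq:u0Q} for $u_0(t,x)$, using the strictly negative asymptotic growth rate of $(Q_t)$ established in Lemma \ref{lem:growth}. Since $u_0(t,x)$ decreases pointwise to $h(x)$ as $t\to\infty$, and $0\le u_0,h\le 1$, the main analytic task is a dominated convergence argument, performed twice.

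First, I would rewrite \eqref{eq:u0Q} using the substitution $s\mapsto t-s$ in the integral:
$$u_0(t,x)=Q_t(\mathbf{1})(x)+\int_0^t Q_s\bigl(2b(\sbullet)u_0(t-s,\sbullet)-b(\sbullet)u_0^2(t-s,\sbullet)\bigr)(x)\,ds.$$
By Lemma \ref{lem:growth}, $(Q_t)$ has strictly negative growth bound in $\CB$, so $\|Q_t(\mathbf{1})\|_\infty\to 0$ exponentially; in particular $Q_t(\mathbf{1})(x)\to 0$ for every $x$. This disposes of the first term.

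For the integral term, fix $x\in\RR$ and $s>0$. For every $t>s$, the integrand
$$I_t(s,x)=\EE_x\!\left(e^{\int_0^s W(X_r)dr}\bigl(2b(X_s)u_0(t-s,X_s)-b(X_s)u_0^2(t-s,X_s)\bigr)\right)$$
is bounded in absolute value by $2\,Q_s(b)(x)$, since $0\le 2bu-bu^2\le 2b$ on $[0,1]$ and $b$ is bounded by $b^*$. As $t\to\infty$, $u_0(t-s,y)\downarrow h(y)$ for each $y$, so dominated convergence inside the expectation (with dominator $2b^*$) gives
$$\lim_{t\to\infty} I_t(s,x)=\EE_x\!\left(e^{\int_0^s W(X_r)dr}\bigl(2b(X_s)h(X_s)-b(X_s)h^2(X_s)\bigr)\right).$$

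Finally, to pass the limit through the outer integral $\int_0^t\cdots ds$, I use the uniform bound $|I_t(s,x)|\ind_{s\le t}\le 2Q_s(b)(x)$ and the fact that $s\mapsto Q_s(b)(x)$ is integrable on $[0,\infty)$. The latter follows from Lemma \ref{lem:growth}: there exist constants $c,\alpha>0$ with $\|Q_s\|_{\CB\to\CB}\le c e^{-\alpha s}$, hence $Q_s(b)(x)\le c\,b^*\,e^{-\alpha s}$, which is integrable. Applying dominated convergence yields
$$h(x)=\int_0^\infty \EE_x\!\left(e^{\int_0^s W(X_\tau)d\tau}\bigl(2b(X_s)h(X_s)-b(X_s)h^2(X_s)\bigr)\right)ds,$$
which is the claimed equation.

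The main obstacle is ensuring the integrability of the dominating function $Q_s(b)(x)$ on $[0,\infty)$; this is exactly what Lemma \ref{lem:growth} (and hence hypothesis (HQ) in the supercritical case) is designed to supply. Once that exponential decay is in hand, both applications of dominated convergence are routine and the identity follows immediately.
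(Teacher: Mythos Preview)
Your proof is correct and follows essentially the same approach as the paper: both arguments pass to the limit in the integral equation \eqref{eq:u0Q} using dominated convergence with the exponential decay of $(Q_t)$ from Lemma \ref{lem:growth} as the key integrability input. The only cosmetic difference is that the paper carries out a single dominated convergence on the product measure $ds\times d\PP_x$ with dominator $2\|b\|_\infty\,e^{\int_0^s W(X_\tau)\,d\tau}$, whereas you iterate: first inside the expectation, then over $s$; these are equivalent via Fubini--Tonelli.
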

\begin{proof}
The  function on $[0,\infty)\times \Omega$
$$
(s,X_{\sbullet})\mapsto e^{\int_{0}^{s} W(X_{\tau})\,d\tau}
$$
is integrable with respect to $ds\times d\PP_{x}$ since by Lemmas
\ref{prop:Qt} and \ref{lem:growth},  $Q_{t}1(x)$
decays exponentially fast in $t$.

The family of functions (with parameter $t$)
$$
(s,X_{\sbullet})\mapsto
\pcun_{[0,t]}(s)e^{\int_{0}^{s} W(X_{\tau})\,d\tau}
\big(
2\, b(X_{s})\,u_{0}(t-s,\,X_{s})-b(X_{s})\,u_{0}^{2}(t-s,\,X_{s})\big)
$$
converges pointwise in $[0,\infty)\times \Omega$, when $t$ tends to infinity,  to
$$
(s,X_{\sbullet})\mapsto e^{\int_{0}^{s} W(X_{\tau})\,d\tau}\big(
2\, b(X_{s})\,h(X_{s})-b(X_{s})\,h(X_{s})^{2}\big)\;.
$$
We have obviously
$$
\pcun_{[0,t]}(s)e^{\int_{0}^{s} W(X_{\tau})\,d\tau}
\big(
2\, b(X_{s})\,u_{0}(t-s,\,X_{s})-b(X_{s})\,u_{0}(t-s,\,X_{s})^{2}\big)\le 2\,\|b\|_{\infty} \;e^{\int_{0}^{s} W(X_{\tau})\,d\tau}\;,
$$
and the result follows from the  Dominated Convergence Theorem (for the measure $ds\times d\PP_{x}$).
\end{proof}

\begin{proposition}\label{prop:hcon}
The function  $h$ belongs to $\CBZ$, and $u_{0}(t,\vsbullet)$ converges to
$h$ uniformly.
\end{proposition}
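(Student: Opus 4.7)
The plan is to split the argument into two main steps: first show that $h \in \CBZ$ using the integral representation from Corollary \ref{eqh}, and then deduce uniform convergence of $u_0(t,\sbullet)$ to $h$ from monotonicity together with a Dini-type argument.

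For the first step, write $h(x) = \int_0^\infty Q_s(\varphi)(x)\,ds$ with $\varphi = 2b h - b h^2$. Since $0\le h\le 1$ and $b$ is bounded by $b^*$, we have $\varphi \in \MB$ with $\|\varphi\|_\infty \le 3 b^*$. By Lemma \ref{lem:growth}, $(Q_t)$ has strictly negative growth rate, so there exist constants $M, \alpha > 0$ with $\|Q_s \varphi\|_\infty \le M e^{-\alpha s} \|\varphi\|_\infty$. Consequently the tail $\int_T^\infty Q_s \varphi\,ds$ tends to $0$ uniformly in $x$ as $T \to \infty$, and $h$ is the uniform limit of the truncated integrals $H_T(x) := \int_0^T Q_s \varphi(x)\,ds$. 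By Lemma \ref{prop:Qt}, $\int_0^T Q_s\,ds$ maps $\MB$ into $\CB$, so $H_T \in \CB$ for every $T$, whence $h \in \CB$. To upgrade to $\CBZ$, observe that $t \mapsto u_0(t,x)$ is nonincreasing (extinction is absorbing), so $h(x) \le u_0(t_0, x)$ for every $t_0 > 0$; by Theorem \ref{thm:smooth-survival}, $u_0(t_0, \sbullet) \in \CBZ$, hence $h(x) \to 0$ as $|x| \to \infty$, giving $h \in \CBZ$.

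For the second step, fix $\varepsilon > 0$. Using $u_0(1, \sbullet) \in \CBZ$, choose $R > 0$ so that $u_0(1,x) < \varepsilon/2$ for all $|x| \ge R$. By monotonicity, for every $t \ge 1$ and $|x| \ge R$ we have $0 \le h(x) \le u_0(t,x) \le u_0(1,x) < \varepsilon/2$, so $|u_0(t,x) - h(x)| < \varepsilon$. On the compact set $[-R, R]$, the net $u_0(t,\sbullet)$ of continuous functions decreases pointwise to the continuous function $h$, so Dini's theorem provides a $T_0 \ge 1$ with $|u_0(t,x) - h(x)| < \varepsilon$ for all $t \ge T_0$ and $x \in [-R, R]$. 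Combining the two estimates yields $\|u_0(t,\sbullet) - h\|_\infty < \varepsilon$ for $t \ge T_0$.

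The main obstacle is the proof that $h$ is continuous. Since $\varphi = 2bh - bh^2$ is only a priori bounded measurable, one cannot directly invoke $Q_s \varphi \in \CBZ$ for individual $s > 0$ (that statement in Lemma \ref{prop:Qt} requires $\varphi \in \CB$); what saves the argument is the global smoothing property from the same lemma, namely that the time-integrated operator $\int_0^T Q_s\,ds$ sends $\MB$ into $\CB$, combined with the uniform exponential tail estimate extracted from Lemma \ref{lem:growth}. Once $h \in \CBZ$ is established, the uniform convergence is a soft consequence of monotonicity and of the joint vanishing of $u_0(t,\sbullet)$ and $h$ at infinity.
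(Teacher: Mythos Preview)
Your proof is correct and the uniform-convergence step (tail bound from $u_0(1,\sbullet)\in\CBZ$ plus Dini on compacts) is exactly the paper's argument. The route to continuity of $h$, however, differs. The paper argues by semicontinuity: $h$ is upper semicontinuous as the pointwise decreasing limit of the continuous functions $u_0(t,\sbullet)$, and lower semicontinuous as the pointwise increasing limit of the truncated integrals $h_n=\int_0^n Q_s(2bh-bh^2)\,ds\in\CBZ$; hence $h$ is continuous. Your argument is more direct: you show that $H_T\to h$ \emph{uniformly} (using the exponential decay of $Q_t$ from Lemma~\ref{lem:growth}), so continuity of $h$ follows immediately from continuity of each $H_T$. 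Your approach is slightly cleaner, but it leans on a quantitative tail estimate, whereas the paper's semicontinuity trick only needs monotone pointwise convergence on both sides.

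One small point to tighten: Lemma~\ref{lem:growth} is stated for $\CBZ$ and $\CB$, while your $\varphi=2bh-bh^2$ is only in $\MB$ a priori. The fix is immediate by positivity: $0\le\varphi\le 2b^*$ gives $0\le Q_s\varphi\le 2b^*\,Q_s\mathbbm{1}$, and since $\mathbbm{1}\in\CB$ the exponential decay of $\|Q_s\mathbbm{1}\|_\infty$ yields the uniform tail bound you need. This is implicit in your write-up but worth making explicit.
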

\begin{proof}
For each $x$, the function $u_{0}(t,x)$ is decreasing in $t$. Therefore
$h$ is the limit of a decreasing sequence of continuous functions,
hence upper  semi continuous, see for example \cite{Royden}, page 51.

Similarly, let for $n\in\NN$
$$
h_{n}(x)=\int_{0}^{n}\EE_{x}\left(e^{\int_{0}^{s} W(X_{\tau})\,d\tau}\big(
2\, b(X_{s})\,h(X_{s})-b(X_{s})\,h(X_{s})^{2}\big)\;\right)\;ds
=\int_{0}^{n}Q_{s}\big(2\,b\,h-b\,h^{2})\;ds.
$$  
Since $0\le h\le1$, we have $2\,b\,h-b\,h^{2}\ge0$ and the sequence
$(h_{n})$ is non decreasing. By Lemma \ref{prop:Qt}, $h_{n}\in\CBZ$
for all $n$. Therefore
$h$ is the limit of an increasing sequence of continuous functions,
hence lower  semi continuous, see again \cite{Royden}, page 51.

Being upper and lower semi continuous, $h$ is continuous. Since for
all $x$ we have  $0\le h(x)\le u_{0}(1,\,x)$ and $u_{0}(1,\vsbullet)$
belongs to $\CBZ$ by Theorem \ref{thm:smooth-survival}, we conclude that
$h\in\CBZ$.

Now, the uniform convergence of $u_0(t,\sbullet)$ towards $h$ is direct from Dini's Theorem. 
Indeed, since $0\le  u_{0}(1,\vsbullet)\in\CBZ$ for any $\epsilon>0$, there exists a
compact set $K_{\epsilon}$ such that
$$
\sup_{y\in K_{\epsilon}^{c}}u_{0}(1,y)\le \epsilon\;.
$$
Therefore since $0\le h\le  u_{0}(1,\vsbullet)$,
$$
\sup_{y\in K_{\epsilon}^{c}}\big|h(y)-u_{0}(1,y)\big|\le 
\sup_{y\in K_{\epsilon}^{c}}\big(u_{0}(1,y)-h(y)\big)\le \epsilon.
$$
On $K_{\epsilon}$ the family of continuous functions
$\big(u_{0}(t,\vsbullet)\big)$ is decreasing and converges pointwise
to the  continuous function $h$. Uniform convergence on $K_{\epsilon}$
follows from Dini's Theorem. The result follows. 
\end{proof}

\section{Proofs in the critical case $\lambda_0=0$}
\label{sec:proof_critical}

\subsection{Convergence of the moments: proof of Proposition \ref{pro:critical estimation}}
\label{sec:critical_moments}

We start with the asymptotic for the moments of $\langle Z_t\,,1\rangle$. The general case
$\langle Z_t\, ,f\rangle$, for bounded measurable $f$ is treated similarly. We will use 
repeatedly that $\Theta_0$ is a right  eigenvector for each $P_t$, associated to
the eigenvalue $e^{\lambda_0 t}=1$ (Assumption HP1)). Recall that $u_{n}(t,x)$ is 
the $n$-th moment of the total mass,  defined in \eqref{def:moments}.
We consider  the normalized moments as 
$$
v_n(t,x)=\frac{u_n(t,x)}{(t+1)^{n-1}} , n\ge 1
$$
and we notice that $v_1=u_1$.
Therefore, a recursive  expression for $v_n$ (see \eqref{eq:un}) is given by
$$
v_n(t,x)=\frac{u_1(t,x)}{(t+1)^{n-1}}+
\int_0^t P_s\left(b(\sbullet)\sum\limits_{k=1}^{n-1} \binom{n}{k}\, v_k(t-s,\sbullet)\,v_{n-k}(t-s,\sbullet)\right)(x)\, 
\frac{(t-s+1)^{n-2}}{(t+1)^{n-1}} ds\,,
$$
and we will prove by induction that all $v_n$ are uniformly bounded on $(t,x)$. Indeed, from Assumption HP2), 
take $C_1$ a constant greater or equal to 1, that bounds $u_1$, that is 
\begin{equation}
\label{eq:0.0}
C_1:=\sup_{t\ge 0,x} u_1(t,x)\vee 1.
\end{equation}
By induction, we show that $v_n$ is bounded by 
\begin{equation}
\label{eq:0.00}
C_n:=\sup_{t,x} v_n(t,x)\le n! D^{2n-1},
\end{equation}
where $D=C_1 (b^*\vee 1)$. Indeed, we have
$$
\def\arraystretch{1.4}
\begin{array}{ll}
v_n(t,x)&\hspace{-0.2cm}\le \frac{C_1}{(t+1)^{n-1}}+b^* n! D^{2n-2}(n-1)\int_0^t u_1(s,x) 
\frac{(t-s+1)^{n-2}}{(t+1)^{n-1}} ds\\
&\hspace{-0.2cm}\le \frac{C_1}{(t+1)^{n-1}}+b^*C_1 n! D^{2n-2} \left(1-\frac{1}{(t+1)^{n-1}}\right)
\le n! D^{2n-1}.
\end{array}
$$
That concludes the proof of  \eqref{eq:CRITICAL bounds moments I}.

We now prove that $v_n(t,x)$, for $n$ fixed, converges as $t\to\infty$, uniformly on $x$, to
\begin{equation}
\label{eq:vn critical}
    V_n(x)=n!\Theta_0(x) A^n B^{n-1},
\end{equation}
at rate $1/(t+1)$. 
We recall that
\begin{equation}
\label{eq:A&B}
A=\int  \mu_0(dy),\quad B=\int b(y) \Theta_0^2(y) \, \mu_0(dy).
\end{equation}
The convergence result  is true for $n=1$ and we prove it by induction on $n$. 
 Recall that from Assumptions (HP), there exists a constant $H$ 
such that for all $g\in \CB$ and all $t\ge 0$, 
\begin{equation}
\label{eq:0}
\|P_t(g)- \Pi(g)\|_{\infty}\le H\;\|g\|_\infty \, e^{-\lambda_1 t} \
\hbox{ and then }\ \|P_t(g)- \Pi(g)\|_{\infty}\le \widetilde H\; \|g\|_\infty\, \frac{1}{t+1}\;,
\end{equation}
with $H<\widetilde H=\widetilde H(\lambda_1)<\infty$ and since $\lambda_{1}>0$ in this case (where $\lambda_{0}=0$).

Let us prove that for all $n\ge 1$ there exists a constant $E_n$ so that 
\begin{equation}
\label{eq:0.1}
\|v_n(t,x)-V_n(x)\|_\infty\le E_n \frac{1}{t+1}.
\end{equation}
This is true for $n=1$ with $E_1=\widetilde H$. Moreover, it also holds $|v_1(t,x)-V_1(x)|\le He^{-\lambda_1 t}$.
Now, we continue proving the bound by induction on $n$. We do the case $n=2$ and then for general $n\ge 3$.
Recall that $V_2= 2 A^2B\, \Theta_0$ is bounded by a constant $C_{2}$ and $P_s\Theta_0=\Theta_0$. Then $P_sV_2=V_2$ and we have
\begin{align*}
|v_2(t,x)-V_2(x)|&\leq\frac{v_1(t,x)+V_2(x)}{t+1}+\int_0^t 
\left|P_s(2b(\sbullet) v_1^2(t-s,\sbullet)-V_2(\sbullet))(x)\right| \frac{1}{t+1} ds\\
&\le\frac{v_1(t,x)+V_2(x)}{t+1}+\int_0^t \left|P_s\Big(2b(\sbullet) v_1^2(t-s,\sbullet)-
\Theta_0(\sbullet)\int 2 b(y) v_1^2(t-s,y) \mu_0(dy)\Big)(x)\right| \frac{1}{t+1} ds\\
&\hspace{+0.2cm}+2\int_0^t \left|P_s\Big(\Theta_0(\sbullet)\int b(y) v_1^2(t-s,y) \mu_0(dy)
-\Theta_0(\sbullet)A^2B\Big)(x)\right| \frac{1}{t+1} ds\\
&\le\frac{C_1+C_2}{t+1}+2H\int_0^t \|b v^2_1(t-s,\sbullet)\|_\infty e^{-\lambda_1 (t-s)}\frac{1}{t+1} ds\\
&\hspace{+0.2cm}+2\int_0^t \left|P_s\Big(\Theta_0(\sbullet)\int b(y) (v_1^2(t-s,y)-V_1^2(y)) \mu_0(dy)\Big)(x)\right| 
\frac{1}{t+1} ds\\
&\le\frac{C_1+C_2}{t+1}+2Hb^*C_1^2 \lambda_1^{-1}\frac{1}{t+1}+4\|\Theta_0\|_\infty b^* C_1H A \frac{1}{t+1}
\int_0^t e^{-\lambda_1(t-s)} ds\\
&\le \frac{C_1+C_2+2Hb^*C_1\lambda_1^{-1}(C_1+2A\|\Theta_0\|_\infty)}{t+1}=E_2\frac{1}{t+1}.
\end{align*}
Now, for $n\ge 3$ we use the same ideas with $V_n$ defined in \eqref{eq:vn critical}.
$$
\def\arraystretch{1.4}
\begin{array}{l}
|v_n(t,x)-V_n(x)|\le \frac{v_1(t,x)+V_n(x)}{(t+1)^{n-1}}+\int\limits_0^t \sum\limits_{k=1}^{n-1} \binom{n}{k} 
\Big|P_s(b(\sbullet)\, v_kv_{n-k}(t-s,\sbullet)-k!(n-k)!\Theta_0 A^n B^{n-1})(x)\Big| 
\frac{(t-s+1)^{n-2}}{(t+1)^{n-1}} ds\\
\le \frac{(C_1+C_n)}{(t+1)^{n-1}}+\sum\limits_{k=1}^{n-1} \binom{n}{k}
\int\limits_0^t \Big|P_s(b(\sbullet)\, v_kv_{n-k}(t-s,\sbullet)-b(\sbullet) 
V_kV_{n-k}+b(\sbullet) V_kV_{n-k}-k!(n-k)!\Theta_0 A^n B^{n-1})(x) \Big|
\frac{(t-s+1)^{n-2}}{(t+1)^{n-1}} ds \\
\le \frac{(C_1+C_n)}{(t+1)^{n-1}}+\sum\limits_{k=1}^{n-1} \binom{n}{k}
\int\limits_0^t \left(\Big|P_s(b(\sbullet)\,(v_k(t-s,\sbullet)-V_k)v_{n-k})(x)\Big|+
\Big|P_s(b(\sbullet)\,(v_{n-k}(t-s,\sbullet)-V_{n-k})V_{k})(x)\Big|\right) \frac{(t-s+1)^{n-2}}{(t+1)^{n-1}} ds\\
\hspace{0.4cm} +n!\sum\limits_{k=1}^{n-1}  A^n B^{n-2} \int\limits_0^t \Big|P_s(b\Theta_0^2-\Theta_0 B)(x)\Big|
\frac{(t-s+1)^{n-2}}{(t+1)^{n-1}} ds \\
\le \frac{(C_1+C_n)}{(t+1)^{n-1}}+\sum\limits_{k=1}^{n-1} \binom{n}{k}
(C_{n-k}E_k+C_kE_{n-k})\, b^*\int\limits_0^t P_s(\ind)(x) \frac{(t-s+1)^{n-3}}{(t+1)^{n-1}} ds
+ n!\sum\limits_{k=1}^{n-1}  A^n B^{n-2} \widetilde H b^* \|\Theta_0\|_\infty^2
 \int\limits_0^t \frac{(t-s+1)^{n-2}}{(t+1)^{n-1}} e^{-\lambda_1 s}\, ds\\ 
\le \frac{(C_1+C_n)}{(t+1)^{n-1}}+C_1b^*\sum\limits_{k=1}^{n-1} \binom{n}{k}
(C_{n-k}E_k+C_kE_{n-k})\int\limits_0^t \frac{(t-s+1)^{n-3}}{(t+1)^{n-1}} ds
+ n!\sum\limits_{k=1}^{n-1}  A^n B^{n-2} \widetilde H b^* \|\Theta_0\|_\infty^2
 \frac{(t+1)^{n-2}}{(t+1)^{n-1}} \int\limits_0^{t}  e^{-\lambda_1 s} ds\\
\le \left(C_1+C_n+\frac{C_1b^*}{n-2}\sum\limits_{k=1}^{n-1} {n \choose k}
(C_{n-k}E_k+C_kE_{n-k}) + n!(n-1) A^n B^{n-2} \widetilde H b^* \|\Theta_0\|_\infty^2 \lambda_1^{-1}\right)\frac1{t+1}\\
\le \Big[C_1+n!D^{2n-1} + n! b^*(C_1\vee (\widetilde H\lambda^{-1}_1))\Big(\frac{2}{n-2}\sum\limits_{k=1}^{n-1} \frac{D^{2k-1}}{(n-k)!}E_{n-k} 
+ (2A)^{n}B^{n-2} \|\Theta_0\|_\infty^2\Big)\Big] \frac{1}{t+1}=E_n\frac{1}{t+1}.\\
\end{array}
$$
Assume that $E_k\le k! G^{2k+1}$ for $k=1,...,n-1$ and consider a large enough $G$ so that 
\begin{equation}
\label{eq:bound_G}
E_1\vee D\le G, E_2\le 2 G^5, 2+5b^*(C_1\vee (\widetilde H\lambda_1^{-1}))\le G, 2AB\le G^2, A^2\|\Theta_0\|_\infty^2\le G^4,
\end{equation}
then
$$
E_n\le n! G^{2n+1}.
$$

So far, we have proved \eqref{eq:convergence moments critical} for $f=1$, that is, for all $x, n\ge 1$
$$
\lim\limits_{t\to\infty} \frac{B}{\Theta_0(x)}\, (t+1)\, 
\EE_x\left(\left(\frac{N_t}{(t+1)\, \int  \mu_0(dy)  \int   b(y) \Theta_0^2(y)\, \mu_0(dy)}\right)^n\right)
=n!=\EE(\xi^n),
$$
where $\xi$ is an exponential random variable with parameter 1.

\medskip
We now sketch the proof for
$f\in \CB$ generalizing the case $f\equiv 1$.
The moments $\unf(t,x)=\Ex{\langle Z_t\, ,f\rangle^n}$ satisfy a similar recursion as $\un$:
$$
v_1^{(f)}(t,x)=u_1^{(f)}(t,x)=\EE_x(\langle Z_t\, ,f\rangle)=\EE_x(\langle Z_t\, ,f\rangle, N_t>0),
$$ 
and for $n\ge 2$
$$
\begin{array}{l}
\unf(t,x)=u_1^{f^n}(t,x)
+\int_0^t \sum_{k=1}^{n-1} \binom{n}{k} P_{s}\Big(b(\sbullet) u_k^{(f)}(t-s,\sbullet)u_{n-k}^{(f)}(t-s,\sbullet)\Big)(x) ds\,;\\
\\
v_n^{(f)}(t,x)=
\frac{u_1^{f^n}(t,x)}{(t+1)^{n-1}}
+\int_0^t \sum_{k=1}^{n-1} \binom{n}{k} P_{s}\Big(b(\sbullet) v_k^{(f)}(t-s,\sbullet)v_{n-k}^{(f)}(t-s,\sbullet)\Big)(x) 
\frac{(t-s+1)^{n-2}}{(t+1)^{n-1}}ds .\\
\end{array}
$$
Notice that $v^{(f)}_1(t,x)=u^{(f)}_1(t,x)$ tends to $ \Theta_0(x) \int f(y) \mu_0(dy)$ when $t$ 
tends to infinity. By induction we deduce that
$$
V_n^{(f)}(x)=\lim\limits_{t\to \infty} v_n^{(f)}(t,x)=n!\Theta_0(x) \left(\int f(y) 
\mu_0(dy)\right)^n 
\left(\int  \Theta_0(y)^2  \mu_0(dy)\right)^{n-1}.
$$
Note that $\int f(y)  \mu_0( dy)=\nu(f)A$.
We obtain the uniform bound, for all $t\ge 0, x$ (see \eqref{eq:0.00}) 
\begin{equation}
\label{eq:0.00f}
|v_n^{(f)}(t,x)|\le \|f\|_\infty^n v_n(t,x)\le n! 
\|f\|_\infty^n D^{2n-1}.
\end{equation}
Notice that $V_n^{(f)}(x)\le n! \|f\|_\infty^n(C_1)^{2n-1}$.
Similarly as \eqref{eq:0.1}, we get the uniform convergence
$$
|v_n^{(f)}(t,x)-V_n^{(f)}(x)|\le E_n^{(f)}\frac{1}{t+1},
$$
where
\begin{equation}
\label{eq:2.f}
E_n^{(f)}\le n! (G^{(f)})^{2n+1},
\end{equation}
and $G^{(f)}$ is given by \eqref{eq:bound_G} replacing $D,C_1$ and $A$ by $\|f\|_\infty D,
\|f\|_\infty C_1, A\|f\|_\infty$,  respectively.

\subsection{The survival probability: proof of  Proposition \ref{galere}  }
\label{sec:surv_prob_critical0}

We have seen in  Proposition \ref{prop:hcon} that  $u_0$ converges to $h$ when $t$ tends to infinity, uniformly on $x$. 
Now, we want to prove that $h\equiv 0$. 

 First we prove the

\begin{lemma}\label{mu0bh}
Under Hypotheses (HV) and (HP),
$$
\int b\, h\,\,d\mu_{0}=0\;.
$$
\end{lemma}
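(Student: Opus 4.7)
\textbf{Proof plan for Lemma \ref{mu0bh}.}

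The plan is to integrate the identity \eqref{eq:u0} for $u_0$ against the left eigenmeasure $\mu_0$, use the fact that $\lambda_0 = 0$, and exploit the monotonicity of $t\mapsto u_0(t,x)$ together with uniform convergence to $h$. Recall from Remark (iii) after the hypotheses that $\mu_0$ is a left eigenvector of $P_s$ with eigenvalue $e^{-\lambda_0 s}=1$, so for every $g\in \MB$ one has $\int P_s g\,d\mu_0 = \int g\,d\mu_0$. Integrating \eqref{eq:u0} against $\mu_0$ and applying Fubini (justified since $u_0\le 1$, $b\le b^*$ and $\mu_0$ is a finite measure), I therefore obtain, after the substitution $r=t-s$,
\begin{equation*}
\int u_0(t,x)\,\mu_0(dx) \;=\; A \;-\; \int_0^t \!\!\int b(y)\,u_0^2(r,y)\,\mu_0(dy)\,dr,
\end{equation*}
where $A=\int \mu_0(dy)$ as in \eqref{eq:AB}.

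The left-hand side is nonnegative and bounded by $A$, so the double integral on the right is finite for every $t$, and by monotone convergence
\begin{equation*}
\int_0^\infty \!\!\int b(y)\,u_0^2(r,y)\,\mu_0(dy)\,dr \;\le\; A \;<\; +\infty.
\end{equation*}
Since $r\mapsto u_0(r,y)$ is nonincreasing (the event $\{N_r>0\}$ is nonincreasing in $r$), the inner integral $r\mapsto \int b\,u_0^2(r,\sbullet)\,d\mu_0$ is a nonincreasing, nonnegative function of $r$; being moreover integrable on $[0,\infty)$, it must tend to $0$ as $r\to\infty$.

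On the other hand, by Proposition \ref{prop:hcon} $u_0(r,\sbullet)\to h$ uniformly, so $b\,u_0^2(r,\sbullet)\to b\,h^2$ pointwise with uniform domination by $b^*$, and dominated convergence (with respect to the finite measure $\mu_0$) gives
\begin{equation*}
\lim_{r\to\infty}\int b(y)\,u_0^2(r,y)\,\mu_0(dy) \;=\; \int b(y)\,h^2(y)\,\mu_0(dy).
\end{equation*}
Combining the two displays, $\int b\,h^2\,d\mu_0 = 0$. Since $b,h\ge 0$ and $\mu_0$ is a positive measure, this forces $b(y)h(y)^2=0$ for $\mu_0$-almost every $y$; but $bh^2=0$ is equivalent to $bh=0$ pointwise (if $h(y)=0$ or $b(y)=0$ then in particular $b(y)h(y)=0$), so $\int b\,h\,d\mu_0=0$, as claimed.

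The only mildly delicate point is justifying the passage to the limit, which is handled by combining the monotonicity of $u_0$ in $t$ (giving integrability and decay of the time-integrand) with the uniform convergence $u_0\to h$ of Proposition \ref{prop:hcon} (identifying the limit); no spectral decomposition of the perturbed semigroup is needed at this stage.
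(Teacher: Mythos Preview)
Your proof is correct and follows essentially the same route as the paper: integrate \eqref{eq:u0} against the invariant measure $\mu_0$, use $\lambda_0=0$ to kill the semigroup, and deduce $\int b\,h^2\,d\mu_0=0$, hence $\int b\,h\,d\mu_0=0$. The only cosmetic difference is that the paper argues more directly by observing $u_0(s,\sbullet)\ge h$ (monotonicity plus definition of $h$), so that $t\int b\,h^2\,d\mu_0\le A$ for every $t$; your detour through ``integrable nonincreasing $\Rightarrow$ tends to $0$'' plus identification of the limit via dominated convergence reaches the same conclusion but is slightly longer, and the appeal to Proposition~\ref{prop:hcon} (uniform convergence) is not strictly needed---pointwise convergence $u_0(r,y)\downarrow h(y)$ together with the uniform bound $b\,u_0^2\le b^*$ already suffices for DCT.
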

\begin{proof}
Integrating  \eqref{eq:u0} with respect to $\mu_{0}$ we obtain
$$
\int u_{0}(t,x)\;d\mu_{0}(x)=\int d\mu_{0}(x)-
\int_{0}^{t}\int b(x) \; u_{0}(t,x)^{2}\;d\mu_{0}(x)\;ds\;.
$$
Since $u_{0}(t,x)$ is nonnegative, monotone decreasing in $t$ and
smaller than one  
this implies for any $t\ge 0$
$$
\int_{0}^{t}\int b(x) \; h(x)^{2}\;d\mu_{0}(x)\;ds\le \int d\mu_{0}(x)\;.
$$
Since 
$$
\int b\,h^{2}\,d\mu_{0}\ge 0
$$
this implies 
$$
\int b\,h^{2}\,d\mu_{0}=0\;.
$$
Since $b\ge0$ and $h\ge0$, the result follows. 
\end{proof}

\begin{lemma}\label{hnul} Under Hypotheses  (HV) and (HP),
for any $x\in\RR$, $h(x)=0$.

Hence,
$$\lim_{t\to \infty} \|u_{0}(t,.)\|_{\infty} = 0.$$
\end{lemma}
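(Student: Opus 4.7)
The plan is to combine the integral identity from Lemma \ref{mu0bh} with the support property of $\mu_0$ and the continuity of $b$ and $h$, then feed the resulting pointwise information back into the fixed-point equation from Corollary \ref{eqh}.

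First, I would look carefully at the proof of Lemma \ref{mu0bh}: it actually establishes the stronger statement $\int b(x) h(x)^2\,d\mu_0(x)=0$. Since $b$ is continuous (HV1) and $h\in \CBZ$ by Proposition \ref{prop:hcon}, the function $x\mapsto b(x)h(x)^2$ is continuous and nonnegative. Because $\mu_0$ charges every nonempty open set (HP2), any nonnegative continuous function whose $\mu_0$-integral vanishes must itself be identically zero. Hence
$$
b(x)\,h(x)^2=0 \quad \text{for every } x\in\RR.
$$
At each point this forces either $b(x)=0$ or $h(x)=0$, so in either case $b(x)h(x)=0$ and $b(x)h(x)^2=0$ pointwise on $\RR$.

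Next, I would plug this into the nonlinear fixed-point equation provided by Corollary \ref{eqh}:
$$
h(x)=\int_{0}^{\infty}\EE_{x}\!\left(e^{\int_{0}^{s} W(X_{\tau})\,d\tau}\bigl(2\, b(X_{s})\,h(X_{s})-b(X_{s})\,h(X_{s})^2\bigr)\right)ds.
$$
Since the identity $bh\equiv bh^2\equiv 0$ is a pointwise statement on $\RR$, the integrand vanishes identically along every trajectory of $(X_\tau)$, and therefore $h(x)=0$ for every $x\in\RR$.

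Finally, the uniform convergence $\|u_0(t,\vsbullet)\|_\infty\to 0$ is immediate from Proposition \ref{prop:hcon}, which gives the uniform convergence of $u_0(t,\vsbullet)$ to $h\equiv 0$. The only step that might look subtle is the passage from the integrated equality $\int b h^2\,d\mu_0=0$ to the pointwise identity $bh\equiv 0$, but this is completely routine once one uses continuity of the integrand together with the fact that $\mu_0$ has full topological support; no spectral or probabilistic argument beyond HP2 is needed.
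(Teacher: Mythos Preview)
Your argument is correct and, in fact, slightly cleaner than the paper's. Both proofs hinge on the same ingredients: the identity $\int b\,h^{2}\,d\mu_{0}=0$ (equivalently $\int b\,h\,d\mu_{0}=0$) from Lemma~\ref{mu0bh}, the full-support property of $\mu_{0}$ in HP2), and the fixed-point equation for $h$ in Corollary~\ref{eqh}. The difference is in how these pieces are assembled. The paper argues by contradiction: assuming $h(x_{0})>0$, it uses Corollary~\ref{eqh} to produce $\int_{0}^{T}P_{s}(bh)(x_{0})\,ds>0$, invokes HP3) to get continuity of this function, then integrates against $\mu_{0}$ to force $\int b\,h\,d\mu_{0}>0$, a contradiction. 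You instead go directly from $\int b\,h^{2}\,d\mu_{0}=0$ to the pointwise identity $bh\equiv 0$, using only continuity of $b$ and $h$ (the latter from Proposition~\ref{prop:hcon}) and the full support of $\mu_{0}$; then the integrand in Corollary~\ref{eqh} vanishes identically and $h\equiv 0$ follows at once. Your route avoids the detour through the semigroup $P_{s}$ and HP3) at this step, at the modest cost of relying on Proposition~\ref{prop:hcon} for the continuity of $h$; since that proposition is already available, this is a genuine simplification.
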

\begin{proof}
The proof is by contradiction.

Assume there exists $x_{0}\in\RR$ such that $h(x_{0})>0$, Corollary \ref{eqh}  implies
that for some $T>0$
$$
\int_{0}^{T}\EE_{x_{0}}\left(e^{\int_{0}^{s} W(X_{\tau})\,d\tau}\big(
2\, b(X_{s})\,h(X_{s})-b(X_{s})\,h(X_{s})^{2}\big)\right)\;ds>0\;.
$$
Hence since $V\ge W$ and $2\,b\,h-b\,h^{2}\le 2\,b\,h$ (since $h\ge0)$,
$$
\int_{0}^{T}\EE_{x_{0}}\left(e^{\int_{0}^{s}V(X_{\tau})\,d\tau}\big(
2\, b(X_{s})\,h(X_{s}\big)\right)\;ds =\int_{0}^{T} P_{s}(2bh)(x_{0})ds >0\;.
$$
It follows from HP3) that $\int_{0}^{T} P_{s}(2bh)(\vsbullet)\,ds \in \CB$. 
Hence there exists $\delta>0$ such that
$$
\inf_{x\in [x_{0}-\delta,\,x_{0}+\delta]}
\int_{0}^{T}\EE_{x}\left(e^{\int_{0}^{s}V(X_{\tau})\,d\tau}\,
2\, b(X_{s})\,h(X_{s}\big)\right)\;ds>0\;.
$$
Since 
$$
\int_{0}^{T}\EE_{y}\left(e^{\int_{0}^{s}V(X_{\tau})\,d\tau}\,
2\, b(X_{s})\,h(X_{s})\right)\;ds\ge0
$$
for all $y$, 
we conclude  by integrating with respect to
$d\mu_{0}$ and using Fubini's Theorem that
$$
 \int \int_{0}^{T} P_{s}(bh)(y)ds\,\mu_{0}(dy)=\int_{0}^{T}\int b\,h\;\mu_{0}(dy)\;ds >0,
$$
since $\mu_{0}$ charges positive mass to every open set, by HP2) in Section \ref{sec:hypotheses}.
This gives a contradiction with the result of Lemma \ref{mu0bh}. Hence there cannot
exist $x_{0}$ such that $h(x_{0})>0$. 
\end{proof}

\subsection{The survival probability: proof of  Theorem \ref{the:critical} }
\label{sec:surv_prob_critical}
Recall that the one-dimensional projection $\Pi$ has been defined in HP2). 
We consider $\Pio= Id -\Pi$  and define $\Psi$ as  $\Pio u_{0}$. We have
\begin{align}
u_0(t,x)
&= \Theta_0(x)\,r(t) + \Psi(t,x),
\end{align}
where $r(t)= \int u_0(t,y)\mu_0(dy)$. Using that $\mu_{0}$ is a left eigenmeasure of 
$P_{t}$ and \eqref{eq:u0}, we have\begin{align}  r(t)&:=\int u_0(t,y)\mu_0(dy)=\int u_1(t,y)\mu_0(dy)-
\int_0^t \int P_s\left(b(\sbullet)\,u_0^2(t-s,\sbullet)\right)(y)\, \mu_0(dy)\,ds\nonumber \\
\hspace{0.7cm}&=\int \mu_0(dy)-\int_0^t \int b(y) u_0^2(t-s,y) \, \mu_0(dy) \,ds
=A-\int_0^t \,\int  b(y) u_0^2(s,y)\,\mu_0(dy) \,ds.
\label{eq:def r}
\end{align}

Since $\mu_{0}$ is a positive measure, we deduce from Proposition \ref{galere} that 
the function $t\to r(t)$ is decreasing, bounded and converges to $0$
as $t\to \infty$, and  also  that 
$$
\lim\limits_{t\to \infty}\|\Psi(t,\sbullet)\|_\infty=0.
$$ 
 
 The crucial part of the proof is to show the upper bound
\begin{equation}
\label{eq:upper bound r(t)}
\sup\limits_{t\ge 0}\,(t+1)r(t)<+\infty. 
\end{equation}
For that purpose we introduce some constants that play an important role in the proof. 
We denote by $\Cpio=\|\,\Pio\,\|_\infty\vee 1$ and $\Cpi=\|\,\Pi\,\|_\infty\vee 1$, where as usual
$$
\|\,\Pio\,\|_\infty=\sup\limits_{f\in L^\infty:\, \|f\|_\infty=1} \|\Pio(f)\|_\infty\ ; \ \|\,\Pi\,\|_\infty=\sup\limits_{f\in L^\infty:\, \|f\|_\infty=1} \|\Pi(f)\|_\infty,
$$
are the norms of $\Pio$ and $\Pi$ on $\MB$. We note that $\Cpio\le \Cpi+1$.

In what follows, for every $\e>0$, we denote by $t(\e)>0$ a finite
number such that
\begin{equation}
\label{eq:t(e)}    
\forall t\ge t(\e):\quad  r(t)\le \e \ \hbox{ and }\   \|\Psi(t,\sbullet)\|_\infty\le \e.
\end{equation}
We introduce the semigroup
$$
\mathfrak{S}_t=P_{t}\,\Pi^{\perp},
$$
which from Assumption HP2) satisfies that for $f\in \CB$,
\begin{equation}
\label{eq:gap for S}
\|\mathfrak{S}_t(f)\|_\infty \le H e^{-\lambda_1 t} \|f\|_\infty.
\end{equation}
Using the definitions of $r$ and $\Psi$, Equations \eqref{eq:u0} and  \eqref{eq:eq:asym survival critical II} and the commutation between $P_{t}$ and $\Pio$, we can easily derive the following  equations\begin{align}
&\frac{dr}{dt}= -B\,r^{2}-r\,\mu_0\Big(2\,\Theta_0\, b\, \Psi(t,\sbullet)\Big)
-\mu_0\Big(  b \Psi^2(t,\sbullet)\Big), \label{eq:for r}\\
&\Psi(t,x)=\mathfrak{S}_{t-t_{0}}\left(\,\Psi(t_{0},\sbullet)\right)(x)
-\int_{0}^{t-t_{0}} \mathfrak{S}_{s}\Big(\Pio\Big(b(\sbullet) u_0^2(t-s,\sbullet)\Big)\Big)(x)\,ds,\label{eq:for Psi}
\end{align}
for any $t_{0}\in [0,t[$ and 
where as above $B=\int \Theta_0^2(y) b(y) \mu_0(dy)$. 

The first estimation we need is a lower bound for $r$.

\begin{lemma}
\label{lem:lower bound surv proba}
There exists a finite constant $0<C_-<+\infty$, so that for all $t\ge 0$, 
$$
r(t)\ge \frac{C_-}{t+1}.
$$
\end{lemma}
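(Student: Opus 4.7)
The ODE \eqref{eq:for r} has principal term $-B r^2$, whose unperturbed equation $\rho'= -B\rho^2$ with $\rho(0)=A$ has the explicit solution $\rho(t)=(1/A+Bt)^{-1}$, which already satisfies the target lower bound $\rho(t)\geq C_-/(t+1)$. The strategy is therefore to show that for every $\epsilon>0$ there is a finite $T_\epsilon$ with $r'(t)\geq -(B+\epsilon)\,r(t)^2$ for all $t\geq T_\epsilon$, and then to compare with this autonomous ODE. Before that, I record that $r(t)>0$ for every $t\geq 0$: since $u_0\leq 1$, the flux $g(s)=\int b\,u_0^2\,d\mu_0$ satisfies $g(s)\leq b^*\,r(s)$, hence $r'\geq -b^* r$, hence $r(t)\geq A\,e^{-b^* t}>0$. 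This already supplies a strictly positive lower bound on any compact interval $[0,T_\epsilon]$.

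The main technical input is to estimate $\|\Psi(t,\sbullet)\|_\infty$ in comparison to $r(t)$. Bounding the error terms in \eqref{eq:for r} crudely by
\begin{equation*}
\bigl|2r\,\mu_0\bigl(\Theta_0\,b\,\Psi(t,\sbullet)\bigr)\bigr|\leq 2\|\Theta_0\|_\infty b^* A\cdot r(t)\,\|\Psi(t,\sbullet)\|_\infty,\qquad \bigl|\mu_0\bigl(b\,\Psi^2(t,\sbullet)\bigr)\bigr|\leq b^* A\cdot \|\Psi(t,\sbullet)\|_\infty^2,
\end{equation*}
reduces the matter to proving $\|\Psi(t,\sbullet)\|_\infty=o(r(t))$ as $t\to\infty$. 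To this end I would exploit the Duhamel formula \eqref{eq:for Psi} with the spectral gap \eqref{eq:gap for S}. Fix $\tau>0$ so that $H\,e^{-\lambda_1\tau}\leq 1/2$ and take $t_0=t-\tau$; using that $s\mapsto \|u_0(s,\sbullet)\|_\infty$ is non-increasing (since $u_0(s,x)=\Px{N_s>0}$ is monotone in $s$), one obtains
\begin{equation*}
\|\Psi(t,\sbullet)\|_\infty\leq \tfrac12\|\Psi(t-\tau,\sbullet)\|_\infty+\frac{H\Cpio b^*}{\lambda_1}\,\|u_0(t-\tau,\sbullet)\|_\infty^2.
\end{equation*}
Combining this with the trivial bound $\|u_0\|_\infty\leq \|\Theta_0\|_\infty r+\|\Psi\|_\infty$ and the fact (Proposition \ref{galere}) that both $r(t)$ and $\|\Psi(t,\sbullet)\|_\infty$ tend to $0$, a bootstrap should upgrade this estimate to $\|\Psi(t,\sbullet)\|_\infty\leq \epsilon\,r(t)$ for any prescribed $\epsilon>0$ and $t$ sufficiently large.

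Granted this smallness, for $t\geq T_\epsilon$ we deduce $r'(t)\geq -(B+C\epsilon)\,r(t)^2$ where $C$ depends only on $\|\Theta_0\|_\infty$, $b^*$ and $A$. Dividing by $r(t)^2$ and integrating from $T_\epsilon$ to $t$ yields $1/r(t)-1/r(T_\epsilon)\leq (B+C\epsilon)(t-T_\epsilon)$, and hence $r(t)\geq C_-/(t+1)$ for $t\geq T_\epsilon$; on $[0,T_\epsilon]$ one uses the positivity established above.

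The main obstacle is the bootstrap in the second paragraph. A naive iteration of the Duhamel inequality must contend with the fact that, while $\|\Psi(\sbullet,\sbullet)\|_\infty$ contracts by a factor $1/2$ over each backward step of length $\tau$, the ratio $r(t-\tau)/r(t)$ may a priori grow up to $e^{b^*\tau}$ (from the crude Gr\"onwall lower bound), which can swamp the contraction. Closing this loop most likely requires either establishing a companion upper bound $r(t)\leq C_+/(t+1)$ in parallel (so that the backward ratio $r(t-\tau)/r(t)$ is itself $O(1)$), a judicious choice of a slowly growing time-scale $\tau=\tau(t)$, or a coupled Gr\"onwall argument run simultaneously on $r(t)$ and $\|\Psi(t,\sbullet)\|_\infty$.
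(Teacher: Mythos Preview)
Your approach has a genuine gap that you yourself identify: the bootstrap needed to show $\|\Psi(t,\sbullet)\|_\infty=o(r(t))$ is not closed, and the obstacle you describe (that the backward ratio $r(t-\tau)/r(t)$ may grow exponentially while the contraction is only by $1/2$) is real. In fact, the estimate $\|\Psi(t,\sbullet)\|_\infty\le \Gamma\, r(t)^2$ is precisely what the paper establishes \emph{after} this lemma, in the ``trapping set'' analysis (Proposition~\ref{pro:borneGamma} and Proposition~\ref{pro:upper bound}), and that analysis explicitly \emph{uses} the present lower bound as an input (see the proof of Corollary~\ref{cor:entre}, which invokes Lemma~\ref{lem:lower bound surv proba} to rule out exponential decay of $r$). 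So your plan inverts the logical order of the paper and runs into circularity.

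The paper's proof bypasses all of this with a two-line probabilistic argument. From Cauchy--Schwarz,
\[
u_0(t,x)=\Px{N_t>0}\ge \frac{\big(\Ex{N_t}\big)^2}{\Ex{N_t^2}}=\frac{u_1(t,x)^2}{u_2(t,x)},
\]
and the critical moment asymptotics already proved in Proposition~\ref{pro:critical estimation} give $u_1(t,x)\to \Theta_0(x)A$ and $u_2(t,x)/(t+1)\to 2\Theta_0(x)A^2B$, hence $\liminf_{t\to\infty}(t+1)u_0(t,x)\ge \Theta_0(x)/(2B)$. Fatou then yields $\liminf_{t\to\infty}(t+1)r(t)\ge 1/(2B)$, and monotonicity of $r$ handles the compact interval $[0,t_0]$. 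The key point you missed is that the moment estimates for $N_t$ and $N_t^2$ are already available at this stage and give the lower bound for free, without any need to control $\Psi$.
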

\begin{proof} From Cauchy-Schwartz' inequality, we have  
$$
\Px{N_t>0}\ge \frac{(\EE_x(N_t))^2}{\EE_x(N_t^2)}=\frac{(\EE_x(N_t))^2}{(t+1)\frac{\EE_x(N_t^2)}{t+1}}
$$
which implies that, see \eqref{eq:convergence moments critical}, 
$$
\liminf\limits_{t\to \infty} (t+1)u_0(t,x)\ge \frac{\Theta_0(x)}{2B},
$$
and from Fatou's lemma
$$
\liminf\limits_{t\to \infty}\,\, (t+1)r(t)\ge \frac{1}{2B}\int \Theta_0(y) \mu_{0}(dy) =\frac{1}{2B}.
$$
Hence, there exists a $\,t_0>0\,$ so that $r(t)\ge \frac{1}{2B(t+1)}$ for all $t\ge t_0$. On the other hand
for $0\le t\le t_0$, it holds 
$$
r(t)\ge r(t_0)\ge \frac{1}{2B(t_0+1)}\ge \frac{1}{2B(t+1)}\frac{t+1}{t_0+1}\ge \frac{C_-}{(t+1)},
$$
where $C_-=\frac{1}{2B(t_0+1)}$.
\end{proof}
\subsubsection{A trapping set.}

We shall prove that the functions $r(t),\|\Psi(t,\sbullet)\|_\infty$
satisfy for all large $t$ the inequality 
$$
\|\Psi(t,\sbullet)\|_\infty\le \Gamma r(t)^2,
$$
for some finite constant $\Gamma$.
For that purpose we define in advance  constants that play
 an important role in proving this inequality:
$$
\def\arraystretch{1.4}
\begin{array}{l}
0<\beta= \lambda_1\wedge 1,\quad C_L=H\vee 1, \quad K=1+C_L, \quad \alpha= 2\, (1\vee b^*) \Cpio \Big(\|\Theta_{0}\|_\infty^{2}+\Cpio\Big),\\
\e_{1}=\frac{\beta}{20\, C_L\,\alpha\,K}\wedge 1,\quad \delta=\frac{\beta}{20\, C_L\,\alpha\,K}, \quad \tau=\,\frac{\log(C_L\,4K)}{\beta}.
\end{array}
$$
In particular we have $\|\mathfrak{S}_t\|_\infty\le C_L e^{-\beta t}$ (see \eqref{eq:gap for S}).
\begin{lemma}
\label{expodec}
Let $0<\e<\e_{1}$ and $t(\e)$ defined in \eqref{eq:t(e)}. Assume that for some $t_{0}\ge t(\e)$,
$\|\Psi(t_{0},\sbullet)\|_{\infty}\ge \frac{1}{\delta}\,r(t_{0})^{2}$. Then 
$$
\|\Psi(t_{0}+\tau,\sbullet)\|_{\infty}\le \frac{1}{2\,K}\;\|\Psi(t_{0},\sbullet)\|_{\infty} \;.
$$
Moreover 
$$
\sup_{t_{0}\le t\le t_{0}+\tau}\|\Psi(t,\sbullet)\|_{\infty}\le 
K\; \|\Psi(t_{0},\sbullet)\|_{\infty}\;.
$$
\end{lemma}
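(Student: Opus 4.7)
The starting point would be the Duhamel formula \eqref{eq:for Psi}, which after the substitution $\tau' = t-s$ reads
\[
\Psi(t,\sbullet) = \mathfrak{S}_{t-t_{0}}\!\left(\Psi(t_{0},\sbullet)\right) - \int_{t_{0}}^{t} \mathfrak{S}_{t-\tau'}\!\left(\Pio(b(\sbullet) u_{0}^{2}(\tau',\sbullet))\right) d\tau'.
\]
Applying the spectral gap bound $\|\mathfrak{S}_{s}\|_{\infty} \le C_{L} e^{-\beta s}$ (from \eqref{eq:gap for S}) together with the decomposition $u_{0} = \Theta_{0} r + \Psi$ and the elementary inequality $(a+b)^{2} \le 2(a^{2}+b^{2})$ gives the scalar integral inequality
\[
\psi(t) \le C_{L} e^{-\beta(t-t_{0})} \psi(t_{0}) + 2 C_{L} \Cpio\, b^{*} \int_{t_{0}}^{t} e^{-\beta(t-\tau')} \bigl(\|\Theta_{0}\|_{\infty}^{2} r^{2}(\tau') + \psi^{2}(\tau')\bigr) d\tau',
\]
where $\psi(s) := \|\Psi(s,\sbullet)\|_{\infty}$. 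This is the inequality on which everything else rests.

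The main step is a bootstrap: let $T \in (t_{0}, t_{0}+\tau]$ denote the largest time with $\psi \le K \psi(t_{0})$ on $[t_{0}, T]$. Continuity of $s \mapsto \Psi(s,\sbullet)$ in sup-norm (inherited from continuity of $u_{0}(s,\sbullet)$ and $r(s)$, the former being ensured by Theorem \ref{thm:smooth-survival}) gives $T > t_{0}$. On $[t_{0}, T]$ I would use the monotonicity $r(\tau') \le r(t_{0})$, the hypothesis $r(t_{0})^{2} \le \delta \psi(t_{0})$, the bootstrap $\psi(\tau') \le K \psi(t_{0})$, and $\psi(t_{0}) \le \e \le \e_{1} \le \delta$ to bound the integrand pointwise by $(\|\Theta_{0}\|_{\infty}^{2} + K^{2})\delta \psi(t_{0})$, so that (after absorbing the constants into $\alpha$ and using $\delta = \beta/(20 C_{L} \alpha K)$) the integral contributes at most $\psi(t_{0})/20$. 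Combined with $C_{L} e^{-\beta(t-t_{0})} \le C_{L}$ this yields $\psi(t) \le (C_{L} + 1/20) \psi(t_{0}) < K \psi(t_{0})$ strictly on $[t_{0}, T]$; a standard open-closed argument then forces $T = t_{0}+\tau$, proving the second (``Moreover'') assertion.

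Having $\psi \le K \psi(t_{0})$ on the whole interval, I would plug $t = t_{0}+\tau$ back into the integral inequality. The choice $\tau = \log(4 K C_{L})/\beta$ makes $C_{L} e^{-\beta\tau} = 1/(4K)$, while the quadratic integral contributes at most $\psi(t_{0})/(20K)$ (the extra factor $1/K$ comes from the tight $\psi^{2}(\tau') \le K^{2} \delta \psi(t_{0})$ bound used above in its $K$-sensitive form), so that $\psi(t_{0}+\tau) \le \bigl(1/(4K) + 1/(20K)\bigr) \psi(t_{0}) \le \psi(t_{0})/(2K)$, as desired.

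The principal obstacle is the bookkeeping: one must tune the quadratic-term estimate so that the bootstrap closes (requiring $C_{L} + 1/20 < K = 1 + C_{L}$, which is automatic) \emph{and} the final ratio is $1/(2K)$, not merely $<1$; this forces the integral piece to be no larger than $1/(4K) \cdot \psi(t_{0})$, which in turn dictates the factor $K$ in the denominator of $\delta$. The algebraic content beyond this calibration is routine linear-semigroup estimation.
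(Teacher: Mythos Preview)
Your argument is sound in structure, but the paper takes a cleaner and more direct route that avoids the bootstrap entirely. The key observation you are missing is that $u_0(t,x)=\Px{N_t>0}$ is \emph{monotone decreasing in $t$}, so for every $t\ge t_0$ one has the a priori bound $\|u_0(t,\sbullet)\|_\infty\le\|u_0(t_0,\sbullet)\|_\infty$. Consequently
\[
\big\|\Pio\big(b\,u_0^{2}(t,\sbullet)\big)\big\|_{\infty}
\le b^*\,\Cpio\,\|u_0(t_0,\sbullet)\|_\infty^2
\le 2 b^*\Cpio\big(\|\Theta_0\|_\infty^2 r(t_0)^2+\|\Psi(t_0,\sbullet)\|_\infty^2\big)
\le \alpha(\delta+\e)\,\|\Psi(t_0,\sbullet)\|_\infty
\]
holds for \emph{all} $t\ge t_0$ at once, using only the hypothesis $r(t_0)^2\le\delta\,\|\Psi(t_0,\sbullet)\|_\infty$ and $\|\Psi(t_0,\sbullet)\|_\infty\le\e$. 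Plugging this constant-in-$t$ bound into the Duhamel formula \eqref{eq:for Psi} gives immediately
\[
\|\Psi(t_0+s,\sbullet)\|_\infty\le C_L e^{-\beta s}\|\Psi(t_0,\sbullet)\|_\infty+\frac{C_L\alpha}{\beta}(\delta+\e)\,\|\Psi(t_0,\sbullet)\|_\infty
\]
for every $s\ge 0$. Both conclusions then follow by reading off $s=\tau$ and $0\le s\le\tau$.

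What this buys: the paper never needs the continuity of $s\mapsto\|\Psi(s,\sbullet)\|_\infty$ that your open--closed argument relies on. Note that Theorem~\ref{thm:smooth-survival} only gives $u_0(t,\sbullet)\in\CBZ$ for fixed $t$; it does not assert sup-norm continuity in $t$. Your bootstrap can be rescued (pointwise continuity in $t$ plus lower semicontinuity of the sup, together with a crude global bound on the short overshoot interval), but this is extra work that the monotonicity trick renders unnecessary. Also, your numerical bookkeeping (the $(\|\Theta_0\|_\infty^2+K^2)$ factor) does not quite match the paper's $\alpha$, which is calibrated to $(\|\Theta_0\|_\infty^2+\Cpio)$ precisely because the paper bounds $\|\Psi(t,\sbullet)\|_\infty$ via $\Cpio\|u_0(t_0,\sbullet)\|_\infty$ rather than via $K\|\Psi(t_0,\sbullet)\|_\infty$.
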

\begin{proof}
For $t\ge t_{0}$ we have $r(t)\le r(t_{0})$ and
$$
\|\Psi(t,\sbullet)\|_{\infty}\le \Cpio\,\|u_0(t,\sbullet)\|_{\infty}
\le  \Cpio\,\|u_0(t_{0},\sbullet)\|_{\infty}
\le \Cpio\,\big(
\|\Psi(t_{0},\sbullet)\|_{\infty}+r(t_{0})\, \|\Theta_0\|_\infty\big)\;.
$$
Hence, from the definition of $t(\e)$, see \eqref{eq:t(e)}, we have
$\|\Psi(t_{0},\sbullet)\|_\infty\le \e\le \Cpio \e$ and then we deduce from the assumptions that
$$
\def\arraystretch{1.6}
\begin{array}{ll}
\big\|\Pio\big((b(\sbullet)\,u_0^{2}(t,\sbullet)\big)\big\|_{\infty}
&\hspace{-0.2cm}\le b^*\, \Cpio\, \big\|u_0^{2}(t,\sbullet)\big\|_{\infty}
\le b^*\, \Cpio\, \big\|u_0^{2}(t_{0},\sbullet)\big\|_{\infty}
\le b^*\, \Cpio\,\big\|\left(r(t_{0})\,\Theta_{0}+\Psi(t_{0},\sbullet)\right)^{2}\big\|_{\infty}\\
&\hspace{-0.2cm}\le 2\, b^*\,
\Cpio\,\left(r^{2}(t_0)\,\|\Theta_0\|_\infty^{2} +\big\|\Psi(t_{0},\sbullet)\big\|^{2}_{\infty}\right)\le 2\, b^*\,
\Cpio\,\left(\delta\,\|\Theta_0\|_\infty^{2} \,\big\|\Psi(t_{0},\sbullet)\big\|_\infty+
\big\|\Psi(t_{0},\sbullet)\big\|^{2}_{\infty}\right)\\
&\hspace{-0.2cm}\le 2\, b^*\,\Cpio\,\big(\delta\,\|\Theta_0\|_\infty^{2}
+\Cpio\,\e\big)\,\,\big\|\Psi(t_{0},\sbullet)\big\|_{\infty}
\le \kappa\,(\delta+\e)\;.
\end{array}
$$
This implies that for $s\ge0$ (see \eqref{eq:for Psi}),
$$
\big\|\Psi(t_{0}+s,\sbullet)\big\|_{\infty}
\le C_L\, e^{-\beta s}\, \|\Psi(t_{0},\sbullet)\big\|_{\infty}
+C_L\; \frac{\kappa}{\beta}\; \big(\delta
+\e\big)\;\|\Psi(t_{0},\sbullet)\|_{\infty}\;.
$$
The results follow from the definitions of $\tau$, $\delta$, $K$, $\kappa$ and
$\e\le\e_{1}$ since
$$
C_L\, e^{-\beta \tau}\le \frac{1}{4\,K}, \hbox{ and } C_L\;\frac{\kappa}{\beta}\; \big(\delta+\e\big)\le \frac{1}{4\,K}<1,
$$
and for $0\le s\le \tau$, $\, C_L\, e^{-\beta s}\le C_{L}$.
\end{proof}

\begin{corollary}
\label{cor:entre}
For any $0<\e \le \e_{1}$ and any $t_0\ge t(\e)$
such that $\|\Psi(t_0,\sbullet)\|_{\infty} > \frac{1}{\delta}\,r(t_0)^{2}$,
we have
$$
\inf\left\{t>t_{0}:\,  \|\Psi(t,\sbullet)\|_{\infty} \le \frac{1}{\delta}\,r(t)^{2}\right\}<\infty.
$$
\end{corollary}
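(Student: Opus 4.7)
The plan is a proof by contradiction that pits the geometric contraction provided by Lemma \ref{expodec} against the polynomial lower bound for $r(t)$ furnished by Lemma \ref{lem:lower bound surv proba}. Fix $0 < \varepsilon \le \varepsilon_1$ and $t_0 \ge t(\varepsilon)$ with $\|\Psi(t_0,\sbullet)\|_\infty > \frac{1}{\delta} r(t_0)^2$, and suppose toward a contradiction that the set $\{t > t_0 : \|\Psi(t,\sbullet)\|_\infty \le \frac{1}{\delta} r(t)^2\}$ is empty, i.e.\ that $\|\Psi(t,\sbullet)\|_\infty > \frac{1}{\delta} r(t)^2$ for every $t > t_0$.

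First I would iterate Lemma \ref{expodec} on the arithmetic grid $t_k := t_0 + k\tau$, $k \ge 0$. By the contradiction hypothesis the assumption $\|\Psi(t_k,\sbullet)\|_\infty \ge \frac{1}{\delta} r(t_k)^2$ holds at every $t_k$, and since $t_k \ge t(\varepsilon)$ for all $k$, the lemma applies at each step, yielding the geometric bound
$$
\|\Psi(t_k,\sbullet)\|_\infty \le (2K)^{-k}\, \|\Psi(t_0,\sbullet)\|_\infty.
$$
On the other hand, Lemma \ref{lem:lower bound surv proba} gives the polynomial lower bound $r(t_k)^2 \ge C_-^2/(t_0 + k\tau + 1)^2$. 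Plugging both estimates into the standing contradiction hypothesis at time $t_k$ produces
$$
(2K)^{-k}\, \|\Psi(t_0,\sbullet)\|_\infty \;\ge\; \|\Psi(t_k,\sbullet)\|_\infty \;>\; \frac{C_-^2}{\delta\,(t_0 + k\tau + 1)^2}
$$
for \emph{every} $k \ge 0$. Since $2K \ge 4$, the left-hand side tends to $0$ exponentially fast in $k$, whereas the right-hand side decays only like $k^{-2}$; the inequality therefore fails for all sufficiently large $k$, which is the desired contradiction.

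The only delicate point, which is really bookkeeping, is to confirm that the iteration is self-consistent: the hypothesis of Lemma \ref{expodec} at time $t_k$ coincides with the negation of what we want to prove, so it is automatically available throughout the argument; moreover the ``moreover'' clause of Lemma \ref{expodec} keeps $\|\Psi(t,\sbullet)\|_\infty$ bounded by $K\varepsilon$ on each interval $[t_k, t_{k+1}]$, while $r(t) \le r(t_0) \le \varepsilon$ is automatic from the monotonicity of $r$, so one remains in the regime $t \ge t(\varepsilon)$ at every step. Once this self-consistency is recorded, the exponential-versus-polynomial mismatch closes the argument immediately and $\inf\{t > t_0 : \|\Psi(t,\sbullet)\|_\infty \le \frac{1}{\delta} r(t)^2\}$ is finite, as claimed.
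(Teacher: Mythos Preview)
Your proof is correct and follows essentially the same route as the paper: both iterate Lemma~\ref{expodec} along the grid $t_0+k\tau$ under the contradiction hypothesis, obtain geometric decay of $\|\Psi(t_k,\sbullet)\|_\infty$, and clash this against the polynomial lower bound on $r$ from Lemma~\ref{lem:lower bound surv proba}. The paper phrases the contradiction as $r(t_0+n\tau)\le (2K)^{-n/2}\sqrt{\delta\,\|\Psi(t_0,\sbullet)\|_\infty}$ decaying exponentially, while you equivalently compare both sides of the standing inequality at $t_k$; your closing paragraph about the ``moreover'' clause is unnecessary (the condition $t_k\ge t(\varepsilon)$ is immediate from $t_k\ge t_0$), but the argument stands.
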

\begin{proof} Suppose not, then for all $t>t_0$ it holds
$$
r(t)< \sqrt{\delta \,\|\Psi(t,\sbullet)\|_{\infty}}\,.
$$
So, we can apply the previous Lemma to $t_0$ to conclude that for some $\tau\ge 0$,
$$
r(t_0+\tau)< \sqrt{\delta \,\|\Psi(t_0+\tau,\sbullet)\|_{\infty}} \le \gamma \sqrt{\delta \,\|\Psi(t_0,\sbullet)\|_{\infty}}\,,
$$
with $\gamma=\sqrt{1/(2K)}<1$. Inductively, we can apply Lemma \ref{expodec} to $t_0+n\tau$, for $n\ge 1$, to get
$$
r(t_0+n\tau)\le \gamma^n \sqrt{\delta \,\|\Psi(t_0,\sbullet)\|_{\infty}}\,,
$$
which decays exponentially fast, contradicting the lower bound
from Lemma \ref{lem:lower bound surv proba}.
\end{proof}
Now, we prove the crucial step to show the upper bound.
\begin{proposition}
\label{pro:borneGamma}
There exist constants $\Gamma<+\infty, 0<\e_2\le \e_1$, such that if for some $t_{1}>t(\varepsilon_{2})$ we have $r(t_{1})<\e_{2}$ and
$$
\|\Psi(t_{1},\sbullet)\|_{\infty}\le \frac{r(t_{1})^{2}}{\delta}\;,
$$
then for any $t\ge t_{1}$ we have
$$
\|\Psi(t,\sbullet)\|_{\infty}\le \Gamma\;r(t)^{2}\;.
$$
\end{proposition}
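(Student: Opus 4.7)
The strategy is a continuity/bootstrap argument. Fix $\Gamma > 1/\delta$ and $\varepsilon_2 \in (0,\varepsilon_1]$, both to be adjusted at the end, and suppose $t_1 > t(\varepsilon_2)$ satisfies $r(t_1) < \varepsilon_2$ and $\|\Psi(t_1,\sbullet)\|_\infty \le r(t_1)^2/\delta$. Define
$$
t^* = \inf\{t \ge t_1 :\; \|\Psi(t,\sbullet)\|_\infty > \Gamma\, r(t)^2\} \in [t_1,+\infty].
$$
The initial assumption gives $\|\Psi(t_1,\sbullet)\|_\infty \le r(t_1)^2/\delta < \Gamma\, r(t_1)^2$, so by continuity in $t$ of $r$ and of $\|\Psi(t,\sbullet)\|_\infty$ one has $t^* > t_1$; moreover, if $t^* < \infty$, then $\|\Psi(t^*,\sbullet)\|_\infty = \Gamma\, r(t^*)^2$. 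The task is to show that this identity is incompatible with the integral equation, forcing $t^* = +\infty$.

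Apply \eqref{eq:for Psi} with $t_0 = t_1$ at $t = t^*$, use $\|\mathfrak{S}_s\|_\infty \le C_L e^{-\beta s}$, and the pointwise bound $u_0^2 = (\Theta_0 r+\Psi)^2 \le 2\Theta_0^2 r^2 + 2\Psi^2$. This yields
$$
\|\Psi(t^*,\sbullet)\|_\infty \le \frac{C_L\, e^{-\beta(t^*-t_1)}}{\delta}\, r(t_1)^2 + 2 b^* \Cpio C_L \int_0^{t^*-t_1} e^{-\beta s}\Big(\|\Theta_0\|_\infty^2\, r(t^*-s)^2 + \|\Psi(t^*-s,\sbullet)\|_\infty^2\Big)\, ds.
$$
On $[t_1,t^*]$ the bootstrap gives $\|\Psi(s,\sbullet)\|_\infty^2 \le \Gamma^2 r(s)^4 \le \Gamma^2\varepsilon_2^2\, r(s)^2$, so choosing $\varepsilon_2$ so that $\Gamma\,\varepsilon_2 \le \|\Theta_0\|_\infty$ absorbs the $\Psi^2$ term into the $\Theta_0^2 r^2$ term. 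The whole problem then reduces to bounding
$e^{-\beta(t^*-t_1)}\, r(t_1)^2$ and $\int_0^{t^*-t_1} e^{-\beta s}\, r(t^*-s)^2\, ds$ by constant multiples of $r(t^*)^2$.

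The core of the argument is this last estimate, obtained from \eqref{eq:for r}. Under the bootstrap, \eqref{eq:for r} reads $\dot r = -Br^2 + O(r^3)$ uniformly for $r \le \varepsilon_2$, so $-\dot r/r^2 \in [c,\tilde c]$ for positive constants $c,\tilde c$ close to $B$ once $\varepsilon_2$ is small. Integrating on $[u,t^*]$,
$$
\frac{1}{r(t^*)} - \frac{1}{r(u)} \in [c(t^*-u),\,\tilde c(t^*-u)], \qquad u\in[t_1,t^*].
$$
Hence $r(t^*-s) \le 2\, r(t^*)$ whenever $s \le s_0 := 1/(2\tilde c\, r(t^*))$, while for $s \ge s_0$ we fall back on $r(t^*-s) \le \varepsilon_2$ together with $e^{-\beta s} \le e^{-\beta/(2\tilde c\, r(t^*))}$ - a quantity super-polynomially small in $r(t^*)$ and therefore dwarfed by $r(t^*)^2$ once $\varepsilon_2$ is small. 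Splitting the integral at $s_0$ yields $\int_0^{t^*-t_1} e^{-\beta s}\, r(t^*-s)^2\, ds \le M\, r(t^*)^2$, and the same dichotomy applied to $t^*-t_1$ gives $e^{-\beta(t^*-t_1)}\, r(t_1)^2 \le M'\, r(t^*)^2$ (either $r(t_1) \le 2 r(t^*)$, or $e^{-\beta(t^*-t_1)}$ beats $\varepsilon_2^2$ super-polynomially), with $M, M'$ depending only on $\beta$ and $\tilde c$.

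Assembling the pieces, $\|\Psi(t^*,\sbullet)\|_\infty \le C''\, r(t^*)^2$ where $C''$ depends only on $B, b^*, \Cpio, \|\Theta_0\|_\infty, C_L, \delta, \beta$, and crucially \emph{not} on $\Gamma$. Setting $\Gamma := \max(2C'', 1/\delta + 1)$ and then taking $\varepsilon_2$ small enough that (i) $\Gamma\,\varepsilon_2\le\|\Theta_0\|_\infty$, (ii) the enveloping bounds $-\dot r/r^2\in[c,\tilde c]$ hold throughout $r\le\varepsilon_2$, and (iii) $\varepsilon_2^2 e^{-\beta/(2\tilde c\,\varepsilon_2)} \le \varepsilon_2^2$ remains usable, contradicts $\|\Psi(t^*,\sbullet)\|_\infty = \Gamma\, r(t^*)^2$, so $t^* = +\infty$ and the Proposition follows. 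The main obstacle is precisely the convolution estimate in the third paragraph: the spectral-gap kernel $e^{-\beta s}$ decays exponentially while $r$ decays only algebraically like $1/t$, so one must use the explicit leading-order ODE $\dot r \approx -Br^2$ to align the two time scales $1/(Br(t^*))$ and $1/\beta$ sharply enough that the convolution remains of the natural order $r(t^*)^2$.
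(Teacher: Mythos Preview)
Your proof is correct and follows the same bootstrap/continuity strategy as the paper: define $t^*$, feed the trapping bound $\|\Psi\|_\infty\le\Gamma r^2$ back into equation \eqref{eq:for r} to get two-sided control on $r$, then into \eqref{eq:for Psi} to force $\|\Psi(t^*,\sbullet)\|_\infty<\Gamma r(t^*)^2$ and obtain a contradiction.

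The only substantive difference is in how the convolution $\int_0^{t^*-t_1}e^{-\beta s}r(t^*-s)^2\,ds$ and the boundary term $e^{-\beta(t^*-t_1)}r(t_1)^2$ are bounded by multiples of $r(t^*)^2$. The paper splits the integral at the midpoint $(t^*-t_1)/2$ and uses the explicit rational envelope \eqref{eq:bound r II} together with the elementary inequality $(1+x)^2\le 2e^x$; this keeps every constant explicit and lets $\Gamma$ be written down a priori. You instead split at the intrinsic time scale $s_0=1/(2\tilde c\,r(t^*))$ and use a dichotomy (near part: $r(t^*-s)\le 2r(t^*)$; far part: $e^{-\beta s_0}$ is super-polynomially small in $r(t^*)$). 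Your route is slightly more conceptual about why the two time scales $1/\beta$ and $1/(Br(t^*))$ interact favourably, at the cost of less explicit constants. Note that your argument only uses the upper bound $\tilde c$ on $-\dot r/r^2$, and under $\Gamma\varepsilon_2\le\|\Theta_0\|_\infty$ this $\tilde c$ is bounded independently of $\Gamma$, so there is no circularity in choosing $\Gamma$ after $C''$. Your condition (iii) is misstated (as written it is trivial); what you actually need is $\varepsilon_2^2 e^{-\beta/(2\tilde c r)}\le r^2$ for all $0<r\le\varepsilon_2$, which holds once $\varepsilon_2\le e\beta/(4\tilde c)$.
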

\begin{proof} 
Define the constants
\begin{align*}
&\Gamma=\left(17\,\frac{C_L}{\delta}+\frac{257}
{\beta}\,C_L\,\Cpio\;b^*\,\big(\|\Theta_{0}\|_{\infty}+1\big)^{2}\right)\vee 2\,,\\
&\e_{2}=\e_{1}\wedge\frac{1}{2\Gamma+5(B\vee 1)/\beta+3\,\left(\Gamma\, \mu_0(2\Theta_O b)+
\Gamma^{2}\;\mu_0(b)\right)/B}\,.
\end{align*}

Let $t^*=\inf\{t\ge t_1:\, \|\Psi(t,\sbullet)\|_\infty> \Gamma\,
r(t)^2\}$. We proceed by contradiction assuming $t^{*}<\infty$.

Since $\Gamma > 1/\delta$, we
have $t^*>t_1$ and by continuity we obtain that
$$
\|\Psi(t,\sbullet)\|_\infty\le \Gamma\, r(t)^2: \quad t\in[t_1,t^*].
$$
For $t\in [t_1, t^*]$,  $r(t)\le r(t_1)\le \e_2$. Further, we get
$$
\begin{array}{ll}
\left|r(t) \mu_0\Big(2\Theta_0 b \Psi(t,\sbullet)\Big)+\mu_0\Big(b\Psi^2(t,\sbullet)\Big)\right|&\hspace{-0.2cm}\le
\Gamma\,r^{3}(t)\,\mu_0(2\Theta_0 b)+\Gamma^{2}\,r^{4}(t)\;\mu_0(b)\\
&\hspace{-0.2cm}\le r^{2}(t)\left(\e_{2}\Gamma\,\mu_0(2\Theta_0 b)
+\e_{2}^{2}\Gamma^{2} \mu_0(b)\right)\le \frac{B}{2}\; r^{2}(t)\;.
\end{array}
$$
Therefore, using the equation \eqref{eq:for r}, we obtain
$$
-2\,B\;r^{2}(t)\le \frac{dr}{dt}\le -\frac{B}{2}\;r^{2}(t)\;,
$$
and then for all $t\in [t_1,t^*]$, it holds
\begin{equation}
\label{eq:bound r II}    
\frac{r(t_{1})}{1+2 {B}\, (t-t_{1})\,r(t_{1})}\le r(t)\le 
\frac{r(t_{1})}{1+ {B}(t-t_{1})\,r(t_{1})/2}\;.
\end{equation}
On the other hand, since $\Gamma \e_2\le 1$, we get for all $t\in [t_1,t^*]$,
$$
\|u_0(t,\sbullet)\|_{\infty}\le r(t)\;\|\Theta_{0}\|_{\infty}+
\Gamma\,r^{2}(t)\le r(t)\;\big(\|\Theta_{0}\|_{\infty}+1\big)\;,
$$
which together with the integral equation for $\Psi$ (see \eqref{eq:for Psi}, with $t_{0}=t_{1}$) gives the bound
\begin{equation}
\label{eq:bound Psi II}
\|\Psi(t,\sbullet)\|_{\infty}\le \frac{C_{L}}{\delta}\,e^{-\beta(t-t_{1})}\,r(t_{1})^{2}+
C_{L}\,\Cpio\;b^* \left(\|\Theta_{0}\|_{\infty}+1\right)^{2}\int_{0}^{t-t_{1}}\,e^{-\beta \,s}
\;r(t-s)^{2}\,ds.
\end{equation}
The lower bound for $r$ in \eqref{eq:bound r II} and the facts 
$\,2{{B}}\e_2\le \beta\,$ and $\,(1+x)^2\le 2e^x\,$ for $x\ge 0$, 
gives
$$
\begin{array}{ll}
\frac{C_{L}}{\delta} \;e^{-\beta(t-t_{1})}\,r(t_{1})^{2}&\hspace{-0.2cm}\le \frac{C_{L}}{\delta} 
\;e^{-\beta(t-t_{1})}\,\big(1+2{B}\, (t-t_{1})\,r(t_{1})\big)^{2}\,r(t)^{2}\\
&\hspace{-0.2cm}\le \frac{C_{L}}{\delta}
\;e^{-\beta(t-t_{1})}\,\big(1+\beta\,(t-t_{1})\big)^{2}\,
r(t)^{2}\le \frac{2\,C_{L}}{\delta} r(t)^{2}\le \frac{\Gamma}{4}\;r(t)^{2}\;.
\end{array}
$$
Consider 
$G=C_{L}\,\Cpio\;b^* \left(\|\Theta_{0}\|_{\infty}+1\right)^{2}$. Now, we split the integral in \eqref{eq:bound Psi II} in two pieces.
Using that $2{{B}}\e_2\le \beta/2$ and 
${{\frac{8G}{\beta}\le \Gamma}}$,
one obtains for $ t_1\le t\le t^{*}$
$$
G\;\int_{(t-t_{1})/2}^{t-t_{1}}e^{-\beta\,s}
r(t-s)^{2}\,ds\le \frac{G}{\beta}\;
e^{-\beta\,(t-t_{1})/2}r(t_{1})^{2}
\le \frac{G}{\beta} e^{-\beta\,(t-t_{1})/2}
\big(1+2 {B}\, (t-t_{1})\,r(t_{1})\big)^{2}\;r(t)^{2}
\le \frac{2\, G}{\beta}\;r(t)^{2}\le \frac{\Gamma}{4}\;r(t)^{2}\;.
$$
Finally, using the upper and the lower bounds on $r(t)$ given in \eqref{eq:bound r II}, 
we have for the other part of the integral
$$
\begin{array}{l}
G\;\int_{0}^{(t-t_{1})/2}e^{-\beta\,s} r^{2}(t-s)\,ds\le \frac{G}{\beta}\;r^2(t_1+(t-t_{1})/2)^{2}
\le \frac{G}{\beta}\;\left(\frac{r(t_{1})}{1+{{B}}(t-t_{1})\,r(t_{1})/4}\right)^{2}
\le \frac{G}{\beta}\left(\frac{1+2{{B}}\,(t-t_{1})\,r(t_{1})}
{1+{{B}}(t-t_{1})\,r(t_{1})/4}\right)^{2}\;r(t)^{2}\\
\\
\le \frac{64\, G}{\beta}\;r(t)^{2}\le \frac{\Gamma}{4}\;r(t)^{2}\;,
\end{array}
$$
where we have used that ${{\frac{256\, G}{\beta}\le \Gamma}}$.
Then, taking $t=t^*$ gives that
$$
\Gamma \,r^{2}(t^*)=\|\psi(t^*,\bullet)\|_{\infty}\le 
\frac{3}{4}\;\Gamma \,r^{2}(t^*)\;,
$$
a contradiction since $r(t^*)>0$.
\end{proof}
Now, we prove the required upper bound.
\begin{proposition}
\label{pro:upper bound}
There exists $t_{2}>0$ such that for $t\ge t_{2}$,
$$
r(t)\le \frac{1}{B(t-t_{2})/2+1/r(t_{2})}\;,
$$
and
$$
\|\Psi(t,\bullet)\|_{\infty}\le \frac{\Gamma}{\big(B\,
(t-t_{2})/2+1/r(t_{2})\big)^{2}}\;.
$$
\end{proposition}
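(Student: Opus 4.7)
The idea is to combine the three ingredients already in place: (i) the uniform convergence $r(t)\to 0$ and $\|\Psi(t,\sbullet)\|_\infty\to 0$ coming from Proposition \ref{galere}; (ii) Corollary \ref{cor:entre} guaranteeing that the trajectory $(r,\Psi)$ enters the set $\{\|\Psi(t,\sbullet)\|_\infty\le r(t)^2/\delta\}$; and (iii) Proposition \ref{pro:borneGamma}, which says that once it enters, the inequality $\|\Psi(t,\sbullet)\|_\infty\le \Gamma r(t)^2$ is preserved for all later times. Once this trapping region is reached, the scalar equation \eqref{eq:for r} for $r$ becomes essentially autonomous, and integration gives the two bounds claimed.

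Concretely, I would first fix $\e_2>0$ given by Proposition \ref{pro:borneGamma} and consider $t(\e_2)$ as in \eqref{eq:t(e)}; at that time $r(t(\e_2))\le \e_2$ and $\|\Psi(t(\e_2),\sbullet)\|_\infty\le \e_2$. If $\|\Psi(t(\e_2),\sbullet)\|_\infty\le r(t(\e_2))^2/\delta$ already, set $t_2=t(\e_2)$; otherwise, Corollary \ref{cor:entre} produces a finite
\[
t_2=\inf\bigl\{t>t(\e_2):\|\Psi(t,\sbullet)\|_\infty\le r(t)^2/\delta\bigr\},
\]
at which, by continuity, we still have $r(t_2)\le\e_2$ and $\|\Psi(t_2,\sbullet)\|_\infty\le r(t_2)^2/\delta$. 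Proposition \ref{pro:borneGamma} applied from $t_1=t_2$ then yields
\[
\|\Psi(t,\sbullet)\|_\infty\le \Gamma\, r(t)^2\quad\text{for every } t\ge t_2,
\]
which is exactly the second displayed bound provided we have already shown the first one.

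To obtain the first bound, I would insert $\|\Psi(t,\sbullet)\|_\infty\le \Gamma r(t)^2$ and $r(t)\le r(t_2)\le \e_2$ into \eqref{eq:for r}, bounding the two correction terms exactly as in the proof of Proposition \ref{pro:borneGamma}:
\[
\bigl| r(t)\mu_0(2\Theta_0 b\,\Psi(t,\sbullet))+\mu_0(b\Psi^2(t,\sbullet))\bigr|\le \bigl(\e_2\Gamma\,\mu_0(2\Theta_0 b)+\e_2^2\Gamma^2\mu_0(b)\bigr) r(t)^2\le \tfrac{B}{2}r(t)^2,
\]
the last inequality holding by the very choice of $\e_2$ in the proof of Proposition \ref{pro:borneGamma}. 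Hence $\tfrac{dr}{dt}\le -\tfrac{B}{2}r(t)^2$ for all $t\ge t_2$. Integrating this Bernoulli inequality between $t_2$ and $t$ (equivalently, comparing $1/r$) gives
\[
\frac{1}{r(t)}\ge \frac{1}{r(t_2)}+\frac{B}{2}(t-t_2),
\]
which is the first estimate. Substituting this into $\|\Psi(t,\sbullet)\|_\infty\le \Gamma r(t)^2$ yields the second.

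I do not anticipate a real obstacle: the genuinely delicate analytical work (the trapping inequality and the control of $\Psi$ in terms of $r^2$) is already done in Proposition \ref{pro:borneGamma}; the only care needed here is to verify that the choice of $\e_2$ inherited from that proof is tight enough to kill the correction terms in \eqref{eq:for r} by $B/2$ rather than only $B/2+o(1)$, which is exactly how $\e_2$ was defined. The rest is a one-line ODE integration.
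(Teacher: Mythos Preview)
Your proposal is correct and follows essentially the same route as the paper: pick a time after which $r<\e_2$, invoke Corollary \ref{cor:entre} to land in the set $\{\|\Psi\|_\infty\le r^2/\delta\}$, apply Proposition \ref{pro:borneGamma} to trap $\|\Psi\|_\infty\le\Gamma r^2$ thereafter, feed this into \eqref{eq:for r} to get $dr/dt\le -\tfrac{B}{2}r^2$, and integrate. The only cosmetic difference is that the paper drops the term $-\mu_0(b\Psi^2)$ outright (it is nonpositive) and bounds only the cross term, whereas you bound both; either way one arrives at $dr/dt\le -\tfrac{B}{2}r^2$ via the definition of $\e_2$.
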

\begin{proof}
Since $r$ is decreasing and tends to zero, we can find $t_{2}'>0$ such
that $r(t_{2}')<\e_{2}$. If 
$$
\|\Psi(t_{2}',\bullet)\|_{\infty}\le \frac{r(t_{2}')^{2}}{\delta}\;,
$$
we take $t_{2}=t_{2}'$.
Otherwise, we  define 
$t_{2}=\inf\left\{t>t_{2}'\,,\, \|\Psi(t,\bullet)\|_{\infty}
\le \frac{1}{\delta}\,r(t)^{2}\right\}$,
which is finite by Corollary \ref{cor:entre}. 

We now apply Proposition \ref{pro:borneGamma} using $r(t_{2})\le r(t_{2}')<\e_{2}\,$ and  
$\,\,\|\Psi(t_{2},\bullet)\|_{\infty}\le \frac{r(t_{2})^{2}}{\delta}$. For $t\ge t_{2}$ we have
$$
\frac{dr}{dt}\le  -B\,r^{2}+r^{3}\,\Gamma\,\mu_0(2 \Theta_O b)
=-(B-r\,\Gamma \mu_0(2\Theta_O b))\,r^{2}
\le 
-(B-\e_{2}\,\Gamma \mu_0(2\Theta_O b))\,r^{2}\le -\frac{B}{2} r^{2}(t)\;, 
$$
because the choice of $\e_{2}$ gives $\,\e_{2}\,\Gamma \mu_0(2\Theta_O b)\le \frac{B}{2}$,
and the first statement follows. 
The second statement follows from the first one and Proposition \ref{pro:borneGamma}.
\end{proof}
Now, we prove that $(t+1)r(t)$ converges, and we give an upper bound for the speed of convergence.

\begin{proposition}
For any $t\ge0 $ we have 
$$
\left|(1+t)\,r(t)-\frac{1}{B}\right|\le 
\frac{\Oun\,\log (2+ t)}{1+t}\;.
$$
Therefore
$$
\lim_{t\to\infty} (1+t)\,r(t)=\frac{1}{B}\;.
$$
\end{proposition}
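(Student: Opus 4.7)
The plan is to reduce everything to a scalar ODE for $r(t)$ and then integrate its reciprocal. Starting from equation \eqref{eq:for r},
$$
\frac{dr}{dt}=-B\,r^{2}-E(t),\qquad E(t):=r(t)\,\mu_0\!\bigl(2\Theta_0\,b\,\Psi(t,\sbullet)\bigr)+\mu_0\!\bigl(b\,\Psi^2(t,\sbullet)\bigr),
$$
I first observe that thanks to Proposition~\ref{pro:upper bound}, for all $t\ge t_{2}$ there are constants $c_1,c_2<\infty$ with $r(t)\le c_1/(1+t)$ and $\|\Psi(t,\sbullet)\|_\infty\le c_2/(1+t)^2$. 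Combined with the trivial bounds $|\mu_0(2\Theta_0 b\,\Psi)|\le 2\|\Theta_0\|_\infty b^*\mu_0(1)\|\Psi\|_\infty$ and $|\mu_0(b\,\Psi^2)|\le b^*\mu_0(1)\|\Psi\|_\infty^2$, this gives a uniform control $|E(t)|\le c_3/(1+t)^3$ for all $t\ge t_{2}$.

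Next I perform the change of variable $s(t):=1/r(t)$, which is legitimate because $r(t)>0$ (Lemma~\ref{lem:lower bound surv proba}). A direct computation yields
$$
\frac{ds}{dt}=-\frac{1}{r(t)^{2}}\,\frac{dr}{dt}=B+\frac{E(t)}{r(t)^{2}}.
$$
The key point is that the lower bound $r(t)\ge C_-/(1+t)$ from Lemma~\ref{lem:lower bound surv proba} gives $1/r(t)^{2}\le (1+t)^{2}/C_-^{2}$, so the error term obeys
$$
\left|\frac{E(t)}{r(t)^{2}}\right|\le \frac{c_{3}(1+t)^{2}}{C_-^{2}(1+t)^{3}}=\frac{c_{4}}{1+t},\qquad t\ge t_{2}.
$$
Integrating from $t_{2}$ to $t$ then produces
$$
s(t)=s(t_{2})+B(t-t_{2})+\int_{t_{2}}^{t}\!\frac{E(u)}{r(u)^{2}}\,du = B\,t+\alpha(t),
\qquad |\alpha(t)|\le \Oun\bigl(1+\log(2+t)\bigr),
$$
for a uniform constant depending only on $t_{2}$, $s(t_{2})$, $c_{4}$ and $B$.

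Finally, inverting gives $r(t)=\bigl(B\,t+\alpha(t)\bigr)^{-1}$ for $t\ge t_{2}$, hence
$$
(1+t)\,r(t)-\frac{1}{B}=\frac{B(1+t)-Bt-\alpha(t)}{B\bigl(B\,t+\alpha(t)\bigr)}=\frac{B-\alpha(t)}{B^{2}t+B\,\alpha(t)}=\frac{\Oun\log(2+t)}{1+t},
$$
where in the last step I use that the denominator is comparable to $t$ once $t$ is large enough (since $|\alpha(t)|=o(t)$). For the remaining finite range $0\le t\le t_{2}$ the estimate is trivial because $(1+t)\,r(t)$ and $1/B$ are both bounded and $\log(2+t)/(1+t)$ is bounded below on that compact interval. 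Taking the limit $t\to\infty$ gives $(1+t)r(t)\to 1/B$.

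\emph{Expected main obstacle.} The routine part is the change of variables and the integration; the only delicate point is to be sure that the perturbation $E(t)/r(t)^{2}$ is integrable with at most a logarithmic loss, which is exactly where the tight matching between the $O(1/t)$ lower bound on $r$ (Lemma~\ref{lem:lower bound surv proba}) and the $O(1/t^{2})$ upper bound on $\|\Psi(t,\sbullet)\|_\infty$ (Proposition~\ref{pro:upper bound}) is used. Once those two bounds are in hand the $\log$ factor is essentially forced.
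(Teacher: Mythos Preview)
Your proof is correct and follows essentially the same route as the paper: pass to $s=1/r$, use Lemma~\ref{lem:lower bound surv proba} and Proposition~\ref{pro:upper bound} to show $ds/dt=B+\Oun/(1+t)$, integrate, and invert. The only cosmetic difference is that you package the perturbation as $E(t)/r^{2}$ after first bounding $|E(t)|\le c_{3}/(1+t)^{3}$, whereas the paper bounds the two terms $\tfrac{1}{r}\mu_0(2\Theta_0 b\Psi)$ and $\tfrac{1}{r^{2}}\mu_0(b\Psi^{2})$ separately; the arithmetic is identical.
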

\begin{proof}
It is enough to prove the result for $t$ large enough, namely we
assume $t\ge t_{2}$ (see Proposition \ref{pro:upper bound} for the
definition of $t_{2}$).  We have, using
Lemma \ref{lem:lower bound surv proba} and Proposition \ref{pro:upper bound}
$$
\frac{d}{dt}\frac{1}{r}=-\frac{1}{r^{2}}\;\frac{dr}{dt}=
B+\frac{1}{r}\,\mu_0\Big(2\Theta_0 b \Psi\Big)
+\frac{1}{r^{2}}\mu_0\Big(b\,\Psi^{2}\Big)=B+\frac{\Oun}{1+t}\;.
$$
Therefore
$$
\frac{1}{r(t)}-\frac{1}{r(1)}=B\,(t-1)+\Oun \log(2+t),
$$
namely
$$
r(t)=\frac{1}{B\,(t-1)+\Oun \log(2+t)+1/r(1)},
$$
and the result follows.
\end{proof}
This allows us to prove the main result of this section, which is Theorem \ref{the:critical} formula \eqref{eq:Uniform survival prob}.
\begin{corollary}
For any $t\ge0 $
$$
\sup\limits_{x\in \RR}\left|(1+t)\,u_0(t,x)-
\frac{1}{B}\Theta_{0}(x)\right|\le\frac{\Oun\log(2+t)}{1+t}\;. 
$$
\end{corollary}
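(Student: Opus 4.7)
The plan is to decompose $u_0(t,x)$ via the projection splitting already in play, namely
$$u_0(t,x) = \Theta_0(x)\, r(t) + \Psi(t,x),$$
and then control each piece separately using the two preceding results. Writing
$$(1+t)u_0(t,x) - \frac{1}{B}\Theta_0(x) = \Theta_0(x)\Bigl[(1+t)r(t) - \tfrac{1}{B}\Bigr] + (1+t)\Psi(t,x),$$
the triangle inequality reduces everything to bounding the two summands uniformly in $x$.

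First I would handle the $\Theta_0$-component. By the preceding proposition, $\bigl|(1+t)r(t) - 1/B\bigr| \le \mathcal{O}(1)\log(2+t)/(1+t)$, and since $\Theta_0 \in \CBZ$ is bounded, this immediately gives
$$\sup_{x\in\RR}\Bigl|\Theta_0(x)\bigl((1+t)r(t) - \tfrac{1}{B}\bigr)\Bigr| \le \|\Theta_0\|_\infty\cdot\frac{\mathcal{O}(1)\log(2+t)}{1+t}.$$

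Next I would handle the orthogonal component. By Proposition \ref{pro:upper bound}, for $t\ge t_2$ we have the quadratic decay
$$\|\Psi(t,\sbullet)\|_\infty \le \frac{\Gamma}{\bigl(B(t-t_2)/2 + 1/r(t_2)\bigr)^2} = \mathcal{O}\!\left(\frac{1}{(1+t)^2}\right),$$
so $(1+t)\|\Psi(t,\sbullet)\|_\infty = \mathcal{O}(1/(1+t))$, which is actually stronger than the required $\mathcal{O}(\log(2+t)/(1+t))$. For $t\le t_2$ everything is bounded by a constant, which can be absorbed into the $\mathcal{O}(1)$ factor since $\log(2+t)/(1+t)$ is bounded below on any compact time interval.

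Summing the two contributions yields the claimed uniform bound with a single $\mathcal{O}(1)$ constant. There is no real obstacle here: the hard work has been done in establishing the trapping estimate (Proposition \ref{pro:borneGamma}) and the resulting quadratic decay of $\|\Psi(t,\sbullet)\|_\infty$; the corollary is essentially a bookkeeping step assembling the $\Theta_0$-component asymptotics with the faster decay of $\Psi$.
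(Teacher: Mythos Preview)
Your proposal is correct and matches the paper's approach exactly: the paper's proof is the single line ``This result follows from the definition of $\Psi$ and the previous result,'' and you have simply spelled out the details of combining the decomposition $u_0 = \Theta_0\, r + \Psi$ with the preceding proposition on $r(t)$ and the quadratic decay of $\|\Psi(t,\sbullet)\|_\infty$ from Proposition~\ref{pro:upper bound}.
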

\begin{proof}
This result follows from the definition of $\Psi$ and the previous result.
\end{proof}

\subsection{Proof of Theorem \ref{the:critical convergence} }
\label{sec:Laplace_critical}

We  prove the asymptotic conditional distribution given in Theorem \ref{the:critical convergence}. For $r,u\in \RR$, we consider $g=rf+u $. 
We have shown the following result
$$
\lim_{t \to +\infty} \EE_{\delta_x}\Bigg(\Bigg(\frac{\langle Z_t\, ,g\rangle}
{(t+1)BA}\Bigg)^n \,\, \Bigg |\,   N_t >0 \Bigg)
=n! (\nu(g))^n= (\nu(g))^n \EE(\xi^n),
$$
where $\xi$ is a $\E(1)$ random variable.
By multilinearity,  we obtain for all 
$p$ and $q$ nonnegative integers that
$$
\lim_{t \to +\infty} \EE_{\delta_x}\Bigg(\Bigg(\frac{\langle Z_t\, ,f\rangle}{(t+1)BA}\Bigg)^p
\Bigg(\frac{\langle Z_t\, ,1\rangle}{(t+1)BA}\Bigg)^q
\Bigg) \,\, \Bigg |\,   N_t >0 \Bigg)
= (\nu(f))^p\EE(\xi^{p+q}).
$$
The distribution of the random vector $\xi(\nu(f),1)$ is determined by 
its moments (cf \cite[Problem 30.3]{Billingsley}). 
By \cite[Problem 30.5]{Billingsley}, the random vector 
$\ \Bigg(\frac{\langle Z_t\, ,f\rangle}{(t+1)BA}, \frac{\langle Z_t\, ,1\rangle}{(t+1)BA}\Bigg)$ conditioned 
to non extinction  converges in law (as $t$ tends to infinity) to the random vector $\ (\nu(f) \xi, \xi)$. 

This finishes the proof of Part $(1)$. 

It is immediate from Part $(1)$ that the random variables $\frac{\langle Z_t\, ,f\rangle}{\langle Z_t\, ,1\rangle}$ 
conditioned on non extinction converge in law to the random variable $\frac{\nu(f) \xi}{\xi} =\nu(f)$ and that proves Part $(2)$.
 
Recall that
$$
\begin{array}{ll}
\Upsilon_t&\hspace{-0.2cm}=\frac{1}{\nu(f)(AB)^{1/2}} \frac{\frac{\langle Z_t\, ,f\rangle}{t+1}-\nu(f)AB/2}{\sqrt{\frac{N_t}{t+1}}}
=\frac{1}{\nu(f)(AB)^{1/2}}\frac{\langle Z_t\, ,f\rangle-(t+1)\nu(f)AB/2}{\sqrt{(t+1) N_t}}\\
&\hspace{-0.2cm}=\frac{1}{\nu(f)(AB)^{1/2}}\sqrt{(t+1)N_t} \, \frac{\langle Z_t\, ,f\rangle-(t+1)\nu(f)AB/2}{(t+1) N_t}
=\frac{1}{\nu(f)(AB)^{1/2}}\left(\sqrt{\frac{N_t}{t+1}} 
\left(\frac{\langle Z_t\, ,f\rangle}{N_t}\right)-\sqrt{\frac{t+1}{N_t}}\nu(f)AB/2\right).
\end{array}
$$
By Theorem  \ref{the:critical convergence} part $(1)$, it   converges in distribution 
(for $\Px{\hspace{0.1cm}\vsbullet\hspace{0.1cm} | \, N_t>0}$) to 
$$
\Upsilon=\frac{1}{\nu(f)(AB)^{1/2}}\left((\xi AB)^{1/2}\nu(f) - (\xi AB)^{-1/2} \nu(f)AB/2\right)=\xi^{1/2}-\frac12 \xi^{-1/2},
$$
where $\xi$ is $\E(1)$. Part $(3)$ is proved.

\section{Subcritical case $\eta=-\lambda_0<0$: Proof of Proposition \ref{pro:subcritical estimation}  and Theorem
  \ref{the:subcritical}}
\label{sec:proof_subcritical}

\subsection{Proof of Proposition \ref{pro:subcritical estimation}}

Recall that $\un(t,x)=\Ex{\langle Z_t\, ,1\rangle^n}$ and $v_n^-(t,x)=e^{\lambda_0 t}\un(t,x)$.
The equation satisfied by $v_n^-$ is given by $v_1^-(t,x)=e^{\lambda_0 t}u_1(t,x)=e^{\lambda_0 t}P_t(\ind)(x)$
and for $n\ge 2$,
\begin{equation}
\label{eq:moment_subcritical}
\def\arraystretch{1.4}
\begin{array}{ll}
v_n^-(t,x)&\hspace{-0.2cm}=e^{\lambda_0 t}u_1(t,x)+
\int_0^t e^{-\lambda_0(t-s)} e^{\lambda_0 s}P_s\left(b(\sbullet)\sum\limits_{k=1}^{n-1} \binom{n}{k} 
v_{n-k}^-(t-s,\sbullet)v_{k}^-(t-s,\sbullet)\right)\!(x)\, ds\\
&\hspace{-0.2cm}=v_1^-(t,x)+I_t(x).
\end{array}
\end{equation}
Then using induction on $n$, we show that 
$$
C^-_n=\sup\limits_{t,x} v_n^-(t,x)<\infty.
$$
This follows from the recursive inequality 
\begin{equation}
\label{eq:recursion Cn-}
\begin{array}{ll}
C^-_n&\hspace{-0.2cm}\le C^-_1+\int_0^t e^{-\lambda_0(t-s)} e^{\lambda_0 s}
P_s\left(b(\sbullet)\sum\limits_{k=1}^{n-1} \binom{n}{k} v_{n-k}^-(t-s,\sbullet)v_{k}^-(t-s,\sbullet)\right)(x)\,ds\\
&\hspace{-0.2cm}\le C^-_1 +b^* \sum\limits_{k=1}^{n-1} \binom{n}{k} C^-_{k}C^-_{n-k}
\int_0^t e^{-\lambda_0(t-s)} e^{\lambda_0 s}P_s(\ind)(x) ds\le
C^-_1 +C^-_1 b^*\sum\limits_{k=1}^{n-1} \binom{n}{k} C^-_{k}C^-_{n-k}\int_0^t e^{-\lambda_0(t-s)} ds\\
&\hspace{-0.2cm}\le C^-_1+ C^-_1 b^*\lambda_0^{-1}\sum\limits_{k=1}^{n-1} \binom{n}{k}  C^-_{k}C^-_{n-k}\,,
\end{array}
\end{equation}
and $C^-_1\le H+\int \mu_0(dy) \, \|\Theta_0\|_\infty$\,, by Assumption HP2) which states that
$$
F_1=\sup\limits_{t,x} e^{(\lambda_1-\lambda_0)t}\left|v_1^-(t,x)-\Theta_0(x)V^-_1\right|\le H,
$$
where $V^-_1=\int \mu_0(dy)$.

\subsection{Proof of Theorem
  \ref{the:subcritical} (1) - Convergence of moments.}
\label{sec:moments_subcritical}

Assume inductively that $v_k^-(t,x)$, converges, when $t\to \infty$
uniformly on $x$ (exponentially fast) to $V_k^-(x)=\Theta_0(x) V^-_k$, for some constant
$V^-_k$, for $k=1,...,n-1$ with $n\ge 2$. Then, we shall prove this property
for $n$ and provide a recursion for $V^-_n$. For $n=1$, the result is true. Moreover, we shall prove that
$$
F_{n}=\sup\limits_{t,x} e^{\frac{\lambda_0}{\lambda_1}(\lambda_1-\lambda_0)t}\,|v_n^-(t,x)-\Theta_0(x) V^-_n|<\infty.
$$

For $0<M<t$, we decompose the integral
in \eqref{eq:moment_subcritical} as
$I_t(x)=J_{t,M}(x)+K_{t,M}(x)
$, where
$$
\begin{array}{ll}
J_{t,M}(x)&\hspace{-0.2cm}=\int_{t-M}^t e^{-\lambda_0(t-s)} e^{\lambda_0 s}P_s\left(b(\sbullet)\sum\limits_{k=1}^{n-1} \binom{n}{k} 
v_{n-k}^-(t-s,\sbullet)v_{k}^-(t-s,\sbullet)\right)\!(x)\, ds\\
&\hspace{-0.2cm}=\Theta_0(x)\sum\limits_{k=1}^{n-1} \binom{n}{k} 
\int_0^\infty e^{-\lambda_0 r} \int v_{n-k}^-(r,y)v_{k}^-(r,y) b(y) \mu_{0}(dy)\, dr+S_{t,M}(x)
\end{array}
$$
with
$$
\begin{array}{ll}
|S_{t,M}(x)|&\hspace{-0.2cm}\le \int_0^M e^{-\lambda_0 r} \sum\limits_{k=1}^{n-1} \binom{n}{k} 
\left|e^{\lambda_0 (t-r)} P_{t-r}\left(b(\sbullet) v_{n-k}^-(r,\sbullet)v_{k}^-(r,\sbullet)\right)(x)- \Theta_0(x) 
\int v_{n-k}^-(r,y)v_{k}^-(r,y) b(y)\mu_0(dy) \right |dr\\
&\hspace{0.2cm}+\Theta_0(x)\sum\limits_{k=1}^{n-1} \binom{n}{k}  
\int_M^\infty e^{-\lambda_0 r} \int v_{n-k}^-(r,y)v_{k}^-(r,y) b(y)\mu_0(dy) dr\\
&\hspace{-0.2cm}\le \Big[ H e^{-(\lambda_1-\lambda_0)(t-M)} 
+\|\Theta_0\|_\infty \int \mu_0(dy)\,  e^{-\lambda_0 M}\Big]
\,\left( b^*\lambda_0^{-1} \sum\limits_{k=1}^{n-1} \binom{n}{k} C^-_{n-k}C^-_{k} \right)\\
&\hspace{-0.2cm}=\zeta_n e^{-(\lambda_1-\lambda_0)(t-M)}+ \rho_n  e^{-\lambda_0 M}.
\end{array}
$$
On the other hand,
$$
|K_{t,M}(x)|\le \int_0^{t-M}\!\!\! e^{-\lambda_0(t-s)} e^{\lambda_0 s} P_s(\ind)(x) ds \,\, 
b^*\sum\limits_{k=1}^{n-1}  \binom{n}{k} C^-_{n-k}C^-_{k} 
\le C_1^- \lambda_0^{-1} b^*\sum\limits_{k=1}^{n-1}  \binom{n}{k} C^-_{n-k}C^-_{k} 
\, e^{-\lambda_0 M} =\gamma_n \, e^{-\lambda_0 M} .
$$
Define 
$$
V^-_n=V^-_1+\sum\limits_{k=1}^{n-1} \binom{n}{k} 
\int_0^\infty e^{-\lambda_0 r} \int v_{n-k}^-(r,y)\,v_{k}^-(r,y)\, b(y)\, \mu_0(dy)\, dr,
$$
to obtain that
\begin{align*}
&|v_n^-(t,x)-\Theta_0(x)V^-_n| \\
& \le    |v_1^-(t,x)-\Theta_0(x)V^-_1| + |J_{t,M}(x)- \Theta_0(x)\sum\limits_{k=1}^{n-1} \binom{n}{k} 
\int_0^\infty e^{-\lambda_0 r} \int v_{n-k}^-(r,y)\,v_{k}^-(r,y)\, b(y)\, \mu_0(dy)\, dr| +    |K_{t,M}(x)|\\
& \le  |v_1^-(t,x)-\Theta_0(x)V^-_1| + |S_{t,M}(x)| +    |K_{t,M}(x)|\\
& \le (H e^{-(\lambda_1-\lambda_0)t}+\zeta_n e^{-(\lambda_1-\lambda_0)(t-M)}+
(\rho_n+\gamma_n) e^{-\lambda_0 M})\\
& \le  \varsigma_n \left(e^{-(\lambda_1-\lambda_0)(t-M)}+e^{-\lambda_0 M}\right),
\end{align*}
with $\varsigma_n=4\Big(H\vee \zeta_n\vee \rho_n\vee \gamma_n\Big)$.
Optimizing on $M\in (0,t)$, namely  with $M=\frac{\lambda_1-\lambda_0}{\lambda_1}t$, we obtain that
$$
F_n=\sup\limits_{t,x} e^{\frac{\lambda_0}{\lambda_1}(\lambda_1-\lambda_0)t}|e^{\lambda_0 t}
\Ex{\langle Z_t\, ,1\rangle^n}-\Theta_0(x)V^-_n|\le 2\varsigma_n.
$$
The result follows.

\subsection{Proof of  Theorem
  \ref{the:subcritical} (2) -The survival probability.}
\label{sec:surv_prob__subcritical}

Consider $v_0^-(t,x)=e^{\lambda_0 t}u_0(t,x)$, the rescaled survival probability. This 
function satisfies the equation
$$
v_0^-(t,x)=e^{\lambda_0 t}u_1(t,x)-\int_0^\infty e^{-\lambda_0(t-s)} e^{\lambda_0 s} P_s\left(b(\sbullet) \, (v_0^-(t-s,\sbullet))^2\right)(x)\, ds.
$$
So, we have
$$
\sup_{t,x} v_0^-(t,x)\le \sup_{t,x} e^{\lambda_0 t}u_1(t,x)=C^-_1<+\infty.
$$
On the other hand, consider
$$
\mathfrak{K}^-=\int \mu_0(dy)-\int_0^\infty e^{-\lambda_0 r} \int  b(y)\, (v_0^-(r,y))^2\,
\mu_0(dy),
$$
and let us study
$$
\def\arraystretch{1.4}
\begin{array}{ll}
\left|v_0^-(t,x)-\Theta_0(x)\mathfrak{K}^-\right|&\hspace{-0.2cm}\le \left|e^{\lambda_0 t}u_1(t,x)-\Theta_0(x)A\right|+
\Theta_0(x)\int_t^\infty e^{-\lambda_0 r}\int b(y) (v_0^-(r,y))^2 \, \mu_0(dy) \,dr\\
&\hspace{0cm}+\int_0^t e^{-\lambda_0 r} \left| e^{\lambda_0 (t-r)} P_{t-r}\left( b(\sbullet) (v_0^-(r,\sbullet))^2\right)(x)-
\Theta_0(x) \int b(y) (v_0^-(r,y))^2 \, \mu_0(dy) \right|\, dr\\
&\hspace{-0.2cm}\le H\,C^-_1 e^{-(\lambda_1-\lambda_0)t} 
+\frac{\|\Theta_0\|_\infty b^*(C^-_1)^2 A}{\lambda_0} e^{-\lambda_0 t}
+ H\,b^*\,(C^-_1)^2\int_0^t e^{-\lambda_0 r} e^{-(\lambda_1-\lambda_0)(t-r)} dr\\
&\hspace{-0.2cm}\le C\left(1+t \ind_{\lambda_1=2\lambda_0}\right) e^{-\beta t},
\end{array}
$$
where $C$ is a suitable constant and $\beta=\lambda_0\wedge (\lambda_1-\lambda_0)$.

The fact that $\mathfrak{K}^->0$, follows from the Cauchy-Schwartz inequality and the convergence of the normalized
moments. Indeed,
$$
\liminf\limits_{t\to \infty} e^{\lambda_0 t}\, \Px{N_t>0} \ge  \lim\limits_{t\to \infty}  
e^{\lambda_0 t}\,  \frac{\left(\Ex{N_{t}}\right)^2}{\Ex{N_{t}^2}}= \lim\limits_{t\to \infty} \frac{\left(v_1^-(t,x)\right)^2}{v^-_2(t,x)}
=\Theta_0(x)\frac{(V^-_1)^2}{V^-_2},
$$
and then
$$
\mathfrak{K}^-\ge \frac{(V^-_1)^2}{V^-_2}>0.
$$

\subsection{Proof of Theorem
  \ref{the:subcritical} (3) - Convergence in Law.}
\label{sec:Convergence in law subcritical}

Let $f\in \CB$. In this section we show the convergence of the law for 
the random variable
$\langle Z_t\, ,f\rangle$,
under the conditional  probability $\Px{\hspace{0.3cm} \bullet\hspace{0.3cm} \Big | \, \langle Z_t\, ,1\rangle > 0}$. As  in the previous section,
$$
C^-_n(f)=\sup\limits_{t,x} |v_n^-(f)(t,x)|\le \|f\|_\infty^n C^-_n
$$
satisfies the recursive bound
$$
C^-_n(f)\le C^-_1(f^n)+ C^-_1b^*\lambda_0^{-1}\sum\limits_{k=1}^{n-1} \binom{n}{k} C^-_{n-k}(f)C^-_k(f).
$$
Also, we have
$$
F^-_n(f)=\sup\limits_{t,x} e^{\frac{\lambda_0}{\lambda_1}(\lambda_1-\lambda_0)t} |v_n^-(f)(t,x)-\Theta_0(x) V^-_n(f)|<\infty,
$$
and in particular, for all $x$, $|\Theta_0(x)V^-_n(f)|\le C^-_n(f) \le \|f\|_\infty^n C^-_n$. Since $V^-_n(f)$
does not depend on $x$, we obtain the bound
\begin{equation}
\label{eq:bound Vn-}
|V^-_n(f)|\le \frac{\|f\|_\infty^n}{\|\Theta_0\|_\infty} C^-_n.
\end{equation}
Then, we have the limits, for all $n\ge 1$
$$
\Ex{\langle Z_t\, ,f\rangle^n \,\, \Big | \,\,\langle Z_t\, ,1\rangle >0}=
\frac{\Ex{\langle Z_t\, ,f\rangle^n}}{\Px{\langle Z_t\, ,1\rangle >0}}=
\frac{e^{\lambda_0 t}\Ex{\langle Z_t\, ,f\rangle^n}}
{e^{\lambda_0 t}\Px{\langle Z_t\, ,1\rangle >0}}\longrightarrow \frac{V^-_n(f)}{\mathfrak{K}^-}.
$$
The uniform bounds for the first conditional moment shows the tightness of the measures (in $(\RR,\B)$): 
$$
\mu_t(A)=\Ex{\langle Z_t\, ,f\rangle \in A \,\, \Big | \,\,\langle Z_t\, ,1\rangle >0}.
$$
Then, for every sequence $t_n\uparrow \infty$, there exists a subsequence $(t_{n'})_{n'}$ and a probability measure $\zeta$,
that depends on $f$, on the sequence $(t_{n'})_{n'}$ and the initial condition $x$, defined 
on $(\RR_+,\B)$ so that
$$
\mu_{t_{n'}} \stackarrow{\mathcal{L}}{n'\to \infty}{1} \zeta(dy)
$$
Using the uniform convergence of moments we get, by induction, that for all $n\ge 1$
$$
\int_\RR y^n \, \zeta(dy)=\frac{V^-_n(f)}{\mathfrak{K}^-}
$$
So, if we can prove that $\zeta$ is uniquely determined by its moments, then we would get the convergence in law and 
the limiting distribution $\zeta$ depends only on $f$.

\begin{lemma} 
\label{lem:bounds moments subcritial}
Assume that $(a_n)_{n\ge 1}$ is a positive sequence that satisfies for
some $\eta>0$ and any
$n\ge 2$ the inequality
\begin{equation}
\label{eq.recursion (an)}
a_n\le a_1+\eta \sum\limits_{k=1}^{n-1} \binom{n}{k}\, a_{n-k}\,a_k.   
\end{equation}
Then, there exists $0<r_*=r_{*}(a_{1},\eta)$ such that for any $n\ge2$,
$$
a_n\le \frac{n!}{2\,\eta\,r_{*}^{n}}\;.
$$
\end{lemma}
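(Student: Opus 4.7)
The plan is to construct an explicit majorant by passing to the exponential generating function. First I would introduce the companion sequence $\bar a_1 = a_1$ and, for $n\ge 2$,
$$
\bar a_n = a_1 + \eta \sum_{k=1}^{n-1} \binom{n}{k}\,\bar a_{n-k}\,\bar a_k.
$$
A straightforward induction on $n$, using that the right-hand side of \eqref{eq.recursion (an)} is non-decreasing in each $a_k$ with $k<n$, shows $a_n\le \bar a_n$, so it suffices to bound $\bar a_n$.

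Next I would analyse $\bar A(r) = \sum_{n\ge 1} \bar a_n\, r^n/n!$. Multiplying the recursion for $\bar a_n$ by $r^n/n!$, summing over $n\ge 2$, and recognising the binomial convolution as the Cauchy product $\bar A(r)^2$, one derives the quadratic identity
$$
\bar A(r) = a_1(e^r - 1) + \eta\, \bar A(r)^2,
$$
which, combined with the boundary condition $\bar A(0)=0$, singles out the branch
$$
\bar A(r) = \frac{1 - \sqrt{1 - 4\eta\, a_1(e^r - 1)}}{2\eta}.
$$
By the implicit function theorem applied to $\eta y^2 - y + a_1(e^r-1)=0$ at $(r,y)=(0,0)$, this branch is analytic on the disc of radius $R := \log\!\bigl(1 + \tfrac{1}{4\eta\, a_1}\bigr)>0$; its Taylor coefficients at the origin are non-negative and, by uniqueness of formal power series solutions, they coincide with $\bar a_n/n!$.

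To conclude, I would fix any $0 < r_* < R$ and apply Cauchy's inequalities on the circle $|z|=r_*$. Since the Taylor coefficients of $\bar A$ are non-negative, the maximum of $|\bar A(z)|$ on that circle equals $\bar A(r_*)$, and the explicit formula yields $\bar A(r_*) \le 1/(2\eta)$ (the numerator is bounded by $1$ wherever the radicand is non-negative). Therefore
$$
\frac{\bar a_n}{n!}\le \frac{\bar A(r_*)}{r_*^n}\le \frac{1}{2\eta\, r_*^n},
$$
giving $a_n \le \bar a_n \le n!/(2\eta\, r_*^n)$ for any chosen $r_*=r_*(a_1,\eta)\in(0,R)$. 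The only mildly delicate point in the argument is the identification of the formal power series $\bar A$ with the explicit square-root branch on a neighborhood of $0$; once this is in hand, the remainder is a routine application of Cauchy's bounds.
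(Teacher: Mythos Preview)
Your proof is correct and follows essentially the same route as the paper: both pass to the exponential generating function of the majorant sequence, recognize that it satisfies the quadratic $y = a_1(e^r-1) + \eta y^2$, and bound the relevant branch by $1/(2\eta)$ on $(0,R)$ with $R = \log(1+1/(4\eta a_1))$. The only technical difference is that the paper works with the partial sums $B^N(u)=\sum_{k\le N} b_k u^k$ (with $b_k=\bar a_k/k!$) and a real-variable continuity argument, thereby sidestepping the a priori convergence question you handle via the implicit function theorem and Cauchy's inequalities; the paper's choice also lets it take $r_*=R$ exactly, but since the lemma only asks for existence of some $r_*$, your $r_*\in(0,R)$ is enough.
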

\begin{proof} Let $(b_{n})$ be the sequence defined recursively by
$b_{0}=0$, $b_{1}=a_{1}$ and for $n\ge2$
$$
b_{n}=\frac{a_{1}}{n!}+\eta\sum\limits_{k=1}^{n-1} b_{n-k}\,b_k\;. 
$$
It is easy to verify recursively that this sequence is positive.

For any integer $N\ge 2$ we define the polynomial
$$
B^{N}(u)=\sum_{k=0}^{N}b_{k}\;u^{k}\;.
$$
It follows from the recursion relation satisfied by $(b_k)_k$ that, for $u>0, N\ge 1$, we have $B^N(u)>0$ and
\begin{equation}
\label{eq:cuadratica}
B^{N}(u)< a_{1}\left(e^{u}-1\right)+\eta\; B^{N}(u)^{2}\;.
\end{equation}
Let $r_*=r_{*}(a_{1},\eta)>0$ be defined by
$$
r_{*}=\log\big(1+1/(4\eta\,a_{1})\big)\;.
$$
For $0\le r< r_{*}$, the equation
$$
\eta \,z^{2}-z+a_{1}\left(e^{r}-1\right)=0
$$
has two nonnegative distinct roots $s_1(r)<s_2(r)$, given by
$$
s_1(r)= \frac{1-\sqrt{1-4\, \eta\,a_{1}\left(e^{r}-1\right)}}{2\eta}\le \frac{1}{2\eta}<s_2(r)= 
\frac{1+\sqrt{1-4\, \eta\,a_{1}\left(e^{r}-1\right)}}{2\eta}.
$$
Since $B_N(0)=0$ and $B_N(u)$ satisfies the strict inequality \eqref{eq:cuadratica}, for $u>0$, we should have
that $B_N(u)<s_1(u)$ or $B_N(u)>s_2(u)$ for $0<u<r_{*}$. The second case cannot happen for small $u$. Thus, by continuity,
we prove that, for all $0<u<r_*$,
$$
b_N u^N\le B^{N}(u)\le \frac{1-\sqrt{1-4\, \eta\,a_{1}\left(e^{u}-1\right)}}{2\eta}\le \frac{1}{2\eta},
$$
showing that 
$$
b_{N}\le \frac{1}{2\eta \,r_{*}^{N}}\;.
$$
Since $a_{1}=b_{1}$, it is easy to check recursively that for
any $k\ge 2$,
$$
a_{k}\le b_{k}\,k!,
$$
and the lemma is shown.
\end{proof}

Recall that $V^-_n(f)\le \frac{\|f\|_\infty^n}{\|\Theta_0\|_\infty} C^-_n$. So, using the previous lemma with
$(a_n)_n= (C^-_n)_n$, which satisfies the recursion \eqref {eq.recursion (an)} (see \eqref{eq:recursion Cn-}) 
with $a_1=C^-_1$ and $\eta=C^-_1b^*\lambda_0^{-1}>0$, 
shows that
$$
\big|V^-_n(f)\big|\le \frac{\|f\|_\infty^n}{\|\Theta_0\|_\infty}  
\;\frac{n!}{2\,\eta\,r_{*}^{n}}\,,
$$
for $r_*=r_*(C^-_1,\eta)>0$.
This ensures that the Hamburger moment problem for the sequence
$\big(V^-_n/\mathfrak{K}^-\big)_n$ has a unique solution (see, for example,
\cite{Reed-Simon} pages 145, 205).

This implies that the (tight) family of random variables $(\langle
Z_t\, ,f\rangle)_{t\ge0}$ converges in law,  when conditioned on survival and 
$t\to\infty$, to a non-trivial limit law denoted by $\zeta_f$.

According to \cite{dawsonMaisonneuveSpencer} Theorem 3.2.6 and Corollary 3.2.7, there exists a unique probability
measure $\Lambda^-$, over $M_F(\RR)$, so that conditioned to non extinction, the process
$Z_t$ converges in law to $\Lambda^-$, independent of
the initial condition $\delta_x$. That is, for all nonnegative $f\in \CB$, we have
$$
\lim\limits_{t\to \infty}\Ex{\exp{(-\langle Z_t\, ,f\rangle)} \,\, \Big | \,\,\langle Z_t\, ,1\rangle >0}
=\int_{M_F(\RR)} \exp{(-\langle \mu\,, f\rangle) }\, \Lambda^-(d\mu)=\int_{\RR} \exp{(-y)}\,  \zeta^-_f(dy),
$$
and $\Lambda_-$ is a Yaglom Limit for $(Z_t)_t$.

\section{Supercritical case $\eta=-\lambda_0>0$.}
\label{sec:proof_supercritical}

The proof of Theorem \ref{the:supercritical} is split into several subsections. 

\subsection{Convergence of the moments.}
\label{sec:moments_supercritical}

In this section, we study the asymptotic for the moments in the supercritical case. We prove 
Proposition \ref{pro:supercritical estimation} and the asymptotic given in Theorem \ref{the:supercritical} $(1)$. 
The correct scaling is $v_n^+(t,x)=e^{n\lambda_0 t}\un(t,x)$.
The equation satisfied by $v_n^+$ is given by: $v_1^+(t,x)=e^{\lambda_0 t}u_1(t,x)=e^{\lambda_0 t}P_t(1)(x)$
and for $n\ge 2$,
\begin{align}
\label{eq:moment_supercritical}
v_n^+(t,x)&=e^{n\lambda_0 t}u_1(t,x)+
\int_0^t e^{n\lambda_0 s}P_s\left(b(\vsbullet)\sum\limits_{k=1}^{n-1} \,\binom{n}{k} \,
v_{n-k}^+(t-s,\vsbullet)v_{k}^+(t-s,\vsbullet)\right)\!(x)\, ds\nonumber \\
&=v_1^+(t,x)+I_t(x).
\end{align}
Notice that $e^{n\lambda_0 t}u_1(t,x)=e^{(n-1)\lambda_0 t}C^+_1\le C^+_1$, where 
\begin{equation}
\label{eq:C1+}
C^+_1=\sup\limits_{t,x} e^{\lambda_0 t}u_1(t,x)\le H+\|\Theta_0\|_\infty\int \mu_0(dy),
\end{equation}
by Hypothesis HP1), HP2).
Then, using induction on $n$, we show that 
\begin{equation}
\label{eq:Cn+}
C^+_n=\sup\limits_{t,x} v_n^+(t,x)<+\infty,
\end{equation}
satisfies the recursion: $C^+_1\le H+\|\Theta_0\|_\infty\int \mu_0(dy)$ and for $n\ge 2$,
$$
C^+_n\le C^+_1+C^+_1b^*\sum\limits_{k=1}^{n-1} \binom{n}{k}\, C^+_{n-k}C^+_{k} \int_0^\infty e^{(n-1)\lambda_0 s} ds
\le C^+_1+C^+_1b^*\left|\lambda_0^{-1}\right| \, \frac1{n-1}\sum\limits_{k=1}^{n-1} \binom{n}{k}\, C^+_{n-k}C^+_{k},
$$
from which it follows (proof by a recursion) that 
\begin{equation}
\label{eq:bound moments I}
C^+_n\le n! \,\mathcal{D}^{2n-1},\hbox{ with }
\mathcal{D}=2\, (C^+_1b^*|\lambda_0^{-1}|\vee C^+_1 \vee 1).
\end{equation}
Recall also that, from HP2),
$$
F^+_1:=\sup\limits_{t,x} e^{(\lambda_1-\lambda_0)t}\left | e^{\lambda_0 t} u_1(t,x)-\Theta_0(x) \int \mu_0(dy)\right|\le H<+\infty.
$$
In particular $V^+_1=\Theta_0(x) \int \mu_0(dy)$.

On the other hand, one can prove by induction that $v_n^+(t,x)$ converges uniformly to $V^+_n(x)$ which
are given by the following recursion. For $n\ge 2$,
\begin{align}
\label{eq:iteration moments supercritical}
V^+_n(x) & =\int_0^\infty e^{n\lambda_0 s} P_s\left(b(\sbullet)
\sum\limits_{k=1}^{n-1} \binom{n}{k}\, V^+_{n-k}(\sbullet) V^+_k(\sbullet)\right)(x)\, ds\le C^+_n,\nonumber \\
F^+_n & =\sup\limits_{t,x} e^{\,\beta_n t} \left|e^{\lambda_0 n t}u_n(t,x)-V_n^+(x)\right|<\infty, \nonumber \\
\beta_1 & =(\lambda_1-\lambda_0),\hbox{ and for } n\ge 2: \beta_n=\frac{\beta_{n-1}|\lambda_0|(n-1)}
{\beta_{n-1}+|\lambda_0|(n-1)}<\beta_{n-1}.
\end{align}
Indeed, we proceed by induction. Assume $n\ge 2$ and consider the integral $I_t(x)$ in
\eqref{eq:moment_supercritical}, where as before we take $0<M<t$ to be fixed,
and $0<\beta=\beta_{n-1}\le \beta_k: k=1,...,n-1$,
so that $F^+_k<\infty$, for all $k=1,...,n-1$ (defined in \eqref{eq:iteration moments supercritical}). 
Then, we have (see \eqref{eq:moment_supercritical})
$$
\begin{array}{l}
\left|I_t(x)-\sum\limits_{k=1}^{n-1}\binom{n}{k}\int_0^\infty e^{n\lambda_0 s} 
P_s\left(b(\sbullet) V^+_{n-k}(\sbullet)V^+_{k}(\sbullet)\right)(x)\, ds\right|
\le 2b^*\sum\limits_{k=1}^{n-1}\binom{n}{k} C^+_{n-k}F^+_ke^{-\beta(t-M)}\int_0^M e^{(n-1)\lambda_0 s} e^{\lambda_0 s} P_s(\ind)(x) \, ds\\
+  2b^*\sum\limits_{k=1}^{n-1}\binom{n}{k} C^+_{n-k}C^+_{k} \int_M^\infty e^{(n-1)\lambda_0 s} e^{\lambda_0 s} P_s(\ind)(x) \, ds\\
\le \left[2b^* C^+_1\left|\lambda_0^{-1}\right|\,\frac1{n-1} 
\sum\limits_{k=1}^{n-1}\binom{n}{k} C^+_{n-k}F^+_k+ C^+_{n-k}C^+_{k}\right]
\left( e^{-\beta(t-M)}+e^{(n-1)\lambda_0 M}\right)=\alpha_n\left( e^{-\beta(t-M)}+e^{(n-1)\lambda_0 M}\right),
\end{array}
$$
with
\begin{equation}
\label{eq:alfa n}
\alpha_n=2b^* C^+_1\left|\lambda_0^{-1}\right|\,\frac1{n-1} 
\sum\limits_{k=1}^{n-1}\binom{n}{k} \left( C^+_{n-k}F^+_k+ C^+_{n-k}C^+_{k}\right).
\end{equation}
Again, we take $\beta(t-M)=(n-1)|\lambda_0| M$, that is 
$M=\frac{\beta}{\beta+(n-1)|\lambda_0|}t$. On the other hand, the first term in
\eqref{eq:moment_supercritical} is bounded by $C^+_1 e^{(n-1)\lambda_0 t}\le C^+_1 e^{(n-1)\lambda_0 M}$ and therefore
$$
\sup\limits_{x} |v_n^+(t,x)-V_n^+(x)|\le (2\alpha_n+C^+_1) 
e^{-\beta_n t},
$$
where $\beta_n=\frac{\beta_{n-1}(n-1)|\lambda_0|}{\beta_{n-1}+(n-1)|\lambda_0|}<
\beta_{n-1}\le  \beta_1=(\lambda_1-\lambda_0)$. We have proved 
\eqref{eq:iteration moments supercritical} with 
$$
F^+_n\le \gamma_n=(2\alpha_n+C^+_1).
$$
So, the limits in \eqref{eq:moments supercritical} are shown for the function 
$f\equiv 1$. For the general case $f\in \CB$ 
the same technique works. 
We denote by $v_n^+(f)(t,x)=e^{\lambda_0 n t}\,\Ex{\langle Z_t\, ,f\rangle^n}$ and 
\begin{equation}
\label{eq:bound C^+_n(f)}
C^+_n(f)=\sup\limits_{t,x} |v_n^+(f)(t,x)|\le C^+_n \|f\|_\infty^n\le  n!\, \|f\|_\infty^n \mathcal{D}^{2n-1}.
\end{equation}
We have that
$$
\lim\limits_{t\to \infty} e^{\lambda_0 n t}\,\Ex{\langle Z_t\, ,f\rangle^n}=V_n^+(f,x),
$$
where $(V_n^+(f,x))_{n\ge 1}$ are recursively given by
\begin{equation}
\label{eq:iteration moments supercritical f}
\def\arraystretch{1.7}
\begin{array}{l}
V^+_n(f,x)=\begin{cases}\Theta_0(x) \int f(y) \mu_0(dy), &\hbox{for } n=1\\
\vspace{-0.2cm}
\int_0^\infty e^{n\lambda_0 s} P_s\left(b(\sbullet)
\sum\limits_{k=1}^{n-1} \binom{n}{k} V^+_{n-k}(f,\sbullet) V^+_k(f,\sbullet)\right)(x)\, ds &\hbox{for } n\ge 2
\end{cases}\\
\sup\limits_x |V^+_n(f,x)|\le C^+_n\,\|f\|_\infty^n\,,\\
F^+_n(f):=\sup\limits_{t,x} e^{\,\beta_n \, t} \left|v_n^+(f)(t,x)-V^+_n(f,x)\right|
\le (2\alpha_n+C^+_1)\,\|f\|_\infty^n\,,
\end{array}
\end{equation}
with $\beta_n$ as in \eqref{eq:iteration moments supercritical}, $\alpha_1=0$ and  $\alpha_n, n\ge 2$ as in \eqref{eq:alfa n}.
The recursion \eqref{eq:moments supercritical} for the moments
$(V^+_n(f,x))_n$ is shown, and the speed of convergence given by 
\eqref{eq:bound speed convergence moments supercritical} is obtained with the bound provided in
\eqref{eq:iteration moments supercritical f} and $\gamma_n=2\alpha_n+C_1^+$,
\begin{equation}
\label{eq:bound moments f supercritical}
F^+_n(f)\le\gamma_n \|f\|_\infty^n.
\end{equation}

\subsection{The survival probability.}
\label{sec:surv_prob__supercritical}

We have already proved in Section \ref{sec:survival} the uniform convergence of $u_{0}(t,\vsbullet)$ to $h\in \CBZ$ 
and characterized $h$ as solution of the equation in Corollary \ref{eqh}. It remains to prove that $h(x)>0$ 
for all $x$ and that its supremum is strictly less than $1$.

 The positivity follows from the Cauchy-Schwartz inequality. In fact, we have the inequality
$$
\Px{N_t>0}\ge \frac{\left(\Ex{N_{t}}\right)^2}{\Ex{N_{t}^2}}=\frac{\left( v_1^+(t,x)\right)^2}{v_2^+(t,x)},
$$
which implies the following inequality
$$
\liminf\limits_{t\to \infty} \Px{N_t>0}\ge \frac{\left(V_1^+(x)\right)^2}{V_2^+(x)}>0.
$$
Since the function $t\to \Px{N_t>0}$ is clearly decreasing,  we obtain
$\ 
h(x)>0
$.
Moreover, it is clear that for every $x, t>0$ the probability $\Px{N_t>0}<1$. 
Indeed, there is a positive probability that the first death event occurs before $t$. 
 This implies that $h(x)<1$, for all $x$ and in particular for an $x$ were $h$ attains its maximum.
 
 \subsection{Convergence in distribution.}
 \label{sec:convergence in distribution supercritical}
 
 As it has been done in previous sections, using the convergence of the moments 
 and the bounds on them, we get that, for
 all $x$ and all $f\in \CB$ and for all complex numbers $z$ in the ball 
 $B\left(0,\big(\|f\|_\infty \mathcal{D}\big)^{-1}\right)$,
 \begin{equation}
 \label{eq:laplace transform}
 \lim\limits_{t\to\infty}\Ex{\exp{\left(z e^{\lambda_0 t}\langle Z_t, f\rangle\right)}\, \Big | \, N_t>0}=
 1+\frac{1}{h(x)}\sum_{k\ge 1} \frac{V^+_k(f,x)}{k!} z^k.
 \end{equation}
 Carleman's condition
 $$
\sum\limits_{n\ge 1} \left(V^+_{2n}(f,x)/h(x)\right)^{-\frac{1}{2n}}=+\infty
$$
 prove  that  there exists a unique probability measure $\zeta^+_{f,x}$ defined on $\RR$, 
so that for all continuous and bounded real functions $\Psi$
$$
\Ex{\Psi\left(\langle e^{\lambda_0 t} Z_t\, ,f\rangle\right) \, \Big |\, N_t>0 }=
\frac{\Ex{\Psi\left(\langle e^{\lambda_0 t} Z_t\, ,f\rangle,\right),\, N_t>0}}{\Px{N_t>0}} \hspace{0.2cm}
\stackarrow{$ $}{t\to \infty}{1} \hspace{0.2cm} \int_{\RR} \Psi(y) \, \zeta^+_{f,x}(dy), 
$$
and its Laplace transform, in the ball $B\left(0,\big(\|f\|_\infty \mathcal{D}\big)^{-1}\right)$ is given by 
\eqref{eq:laplace transform}.

Using the results in \cite{dawsonperkinsAMS} (see Theorem 3.2.9), allows us to conclude that 
there exists a unique probability measure $\Lambda^+_x$, on $M_+(\RR)$, so that
$$
\int_{\RR} \Psi(y) \, \zeta^+_{f,x}(dy)=\int_{M_+(\RR)} \Psi(\langle \mu, f\rangle) \, \Lambda^+_x(d\mu),
$$
that is, the law of $e^{\lambda_0 t} Z_t$ under 
$\Px{\hspace{0.2cm}\vsbullet\hspace{0.2cm} \Big | \, N_t>0}$ 
converges to $\Lambda^+_x$.

A similar argument shows that the law of $e^{\lambda_0 t} Z_t$ under $\PP_{\delta_x}$ 
converges to a probability measure
$\Gamma_{\hspace{-0.07cm}x}$, which is related to $\Lambda_x^+$, by
$$
\Gamma_{\hspace{-0.07cm}x}=h(x)\Lambda_x^++(1-h(x))\delta_0.
$$

\subsection{Almost sure convergence for $(e^{\lambda_0 t}\langle Z_t,\Theta_0\rangle)_t$.}
\label{sec:a.s. convergence Theta_0}
Due to HP2), we observe that the real-valued process $e^{\lambda_{0}t} \langle Z_{t}, \Theta_{0}\rangle $ 
is a nonnegative martingale (for the filtration of the process $Z$). Indeed, for any $s\le t$, we have
$$
\Ex{e^{\lambda_{0}t} \langle Z_{t}, \Theta_{0}\rangle \, \Big |\, {\cal F}_{s} } 
= e^{\lambda_{0}s}\langle Z_{s}, e^{\lambda_{0}(t-s)}P_{t-s}\Theta_{0}\rangle 
=e^{\lambda_{0}s}\langle Z_{s}, \Theta_{0}\rangle .
$$
Then $(e^{\lambda_{0}t} \langle Z_{t}, \Theta_{0}\rangle)_t $ converges $\PP_{{\delta_{x}}}$-a.s. 
to a nonnegative random variable $W_{\infty}(x)$. From the bound  
\eqref{eq:bound C^+_n(f)}, applied to the function $\Theta_0$, we obtain
(uniformly on $x$)
$$
\sup\limits_{t, x} v_{n}^+(\Theta_{0})(t,x) = 
\sup\limits_{t,x}\Ex{\left(e^{\lambda_{0}t} \langle Z_{t}, \Theta_{0}\rangle\right)^n }\le 
\|\Theta_0\|_\infty^n  n!\, \mathcal{D}^{2n-1}<+\infty.
$$
Then, $(e^{\lambda_{0}t} \langle Z_{t}, \Theta_{0}\rangle)_t$ is a uniformly integrable martingale in 
$L^n(\PP_{\delta_x})$ for all $\, n\ge1$ and all $x$. Therefore  for all $n,x$,
$$
e^{\lambda_{0}t} \langle Z_{t}, \Theta_{0}\rangle \stackarrow{L^n(\PP_{\delta_x})}{t\to \infty}{1}
W_\infty(x).
$$ 
 In particular, we get
 \begin{equation}
\label{eq:moments W infty}
 \Ex{W_\infty(x)}=\Theta_0(x)>0\,, \, \Ex{(W_\infty(x))^n}=V_+^n(\Theta_0,x) \hbox{ for all } n\ge 2.
 \end{equation}
 
 We finish this section with a result a.s. for every function $f\in \CB$ along a fixed subsequence that converges to 
 infinity sufficiently fast.
 \begin{lemma} 
 \label{lem:convergence over tk}
 There exists a constant $\beta>0$, so that, for all sequences $(t_k)_k\subset \RR_+$
 satisfying $t_k\uparrow \infty$ and $\sum_k e^{-\beta t_k}<\infty$, we have for all $x,\, f\in \CB$,
 $$
 e^{\lambda_0 t_k}\langle Z_{t_k}, f\rangle  \stackarrow{\PP_{\delta_x}\hbox{-a.s.}}{k\to\infty}{1} W_\infty(x) 
 \hbox{ $\int f \,d\mu_0$}.
 $$ 
 The set of measure $0$ where the convergence doesn't hold could depend on $x,f$ and $(t_k)_k$. 
 
 In particular, we conclude that for all $1\le n<\infty, \, x\in \RR,\, f  \in \CB$,
 $$
 e^{\lambda_0 t}\langle Z_{t}, f\rangle \stackarrow{L^n(\PP_{\delta_x})}{t\to \infty}{1} W_\infty(x) 
 \hbox{ $\int f \,d\mu_0$},
 $$
 showing also that, for all $n\ge 1$
 \begin{equation}
 \label{eq:moments W infty II}
 V_n^+(f,x)=\lim\limits_{t\to \infty} \Ex{\left(e^{\lambda_0 t}\langle Z_t,f\rangle\right)^n}=
 \Ex{W_\infty(x)^n }\hbox{ $\left(\int f \,d\mu_0\right)^n$}= V_n^+(\Theta_0,x)\hbox{ $\left(\int f \,d\mu_0\right)^n$}.
 \end{equation}
 \end{lemma}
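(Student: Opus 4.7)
The plan is to reduce everything to a single exponentially decaying $L^2$-bound on the defect
$Y_t := e^{\lambda_0 t}\langle Z_t,f\rangle - W_\infty(x)\,\mu_0(f)$, where we abbreviate
$\mu_0(g):=\int g\, d\mu_0$ and $X_t(g):=e^{\lambda_0 t}\langle Z_t,g\rangle$. From this estimate, the almost sure convergence along any $(t_k)$ with $\sum_k e^{-\beta t_k}<\infty$ will follow by Markov's inequality and Borel-Cantelli, while the $L^n$ convergence for all $n<\infty$ (and hence the moment identity \eqref{eq:moments W infty II}) will follow by interpolation with the uniform higher-moment bounds \eqref{eq:bound C^+_n(f)}.

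For the $L^2$ bound, expand $\Ex{Y_t^2}$ into three terms. For the diagonal, \eqref{eq:iteration moments supercritical f} gives $\Ex{X_t(f)^2} = V_2^+(\Theta_0,x)\,\mu_0(f)^2 + O(e^{-\beta_2 t})$ uniformly in $x$, and \eqref{eq:moments W infty} yields $\Ex{W_\infty(x)^2}=V_2^+(\Theta_0,x)$. For the cross term, note that $(X_s(\Theta_0))_s$ is an $L^2$-bounded martingale with limit $W_\infty(x)$, so for $s\ge t$ the tower and martingale properties give $\Ex{X_t(f)\,X_s(\Theta_0)}=\Ex{X_t(f)\,X_t(\Theta_0)}$; letting $s\to\infty$ yields $\Ex{X_t(f)\,W_\infty(x)}=\Ex{X_t(f)\,X_t(\Theta_0)}$. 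The polarization identity $2ab=(a+b)^2-a^2-b^2$ applied to $a=X_t(f),\,b=X_t(\Theta_0)$, together with \eqref{eq:iteration moments supercritical f} applied to $f+\Theta_0,\,f,\,\Theta_0$, then gives
\[
\Ex{X_t(f)\,X_t(\Theta_0)} = V_2^+(\Theta_0,x)\,\mu_0(f)\,\mu_0(\Theta_0)+O(e^{-\beta_2 t}) = V_2^+(\Theta_0,x)\,\mu_0(f)+O(e^{-\beta_2 t}),
\]
since $\mu_0(\Theta_0)=1$ by HP2). Collecting the three terms, the $V_2^+(\Theta_0,x)\mu_0(f)^2$ contributions cancel, and we obtain $\Ex{Y_t^2}\le \kappa_f\, e^{-\beta_2 t}$ for some finite constant $\kappa_f$ depending on $\|f\|_\infty$ and $x$.

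Setting $\beta:=\beta_2>0$, Markov's inequality gives $\PP_{\delta_x}(|Y_{t_k}|>\epsilon)\le \kappa_f e^{-\beta t_k}/\epsilon^2$, so the summability hypothesis and Borel-Cantelli produce a $\PP_{\delta_x}$-null set off which $|Y_{t_k}|\le \epsilon$ eventually. Letting $\epsilon$ run through a positive sequence tending to $0$ yields $Y_{t_k}\to 0$ $\PP_{\delta_x}$-a.s., which is the first claim. For the $L^n$ convergence in $t$, \eqref{eq:bound C^+_n(f)} applied with $n=2m$ gives $\sup_t \|X_t(f)\|_{L^{2m}(\PP_{\delta_x})}<\infty$ for every integer $m$, and $W_\infty(x)\mu_0(f)$ lies in every $L^q$ by \eqref{eq:moments W infty}, so $\sup_t \|Y_t\|_{L^q}<\infty$ for every finite $q$. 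Combining $\|Y_t\|_{L^2}\to 0$ with log-convexity of $L^p$-norms upgrades this to $\|Y_t\|_{L^n}\to 0$ for every finite $n$. Passing to the limit in $\Ex{X_t(f)^n}$ via this $L^n$ convergence and using $\Ex{W_\infty(x)^n}=V_n^+(\Theta_0,x)$ from \eqref{eq:moments W infty} then yields the moment identity \eqref{eq:moments W infty II}.

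The main obstacle is the cross-term computation: identifying $\Ex{X_t(f)\,W_\infty(x)}$ with $\Ex{X_t(f)\,X_t(\Theta_0)}$ via the martingale property, and then extracting the bilinear limit $V_2^+(\Theta_0,x)\,\mu_0(f)$ with the preserved rate $e^{-\beta_2 t}$ by polarization. Once this is done, the three terms in the expansion of $\Ex{Y_t^2}$ must cancel by the bilinear structure $V_2^+(f,x)=V_2^+(\Theta_0,x)\,\mu_0(f)^2$, so no further computation is needed.
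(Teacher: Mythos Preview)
Your proof is correct but takes a somewhat different route from the paper's. The paper splits $f = g + \mu_0(f)\,\Theta_0$ with $g := f - \Pi f$ (so $\mu_0(g)=0$), observes from the recursion in \eqref{eq:iteration moments supercritical f} that $V_n^+(g,\cdot)\equiv 0$ for every $n$, and hence $\Ex{X_t(g)^2}\le \gamma_2\|g\|_\infty^2 e^{-\beta_2 t}$; summing over $(t_k)$ gives $X_{t_k}(g)\to 0$ a.s. The $\Theta_0$-piece is handled by the already-established a.s.\ martingale convergence $X_t(\Theta_0)\to W_\infty(x)$, with no rate required. You instead bound $\Ex{Y_t^2}$ directly, treating the cross-term by the martingale tower property plus polarization. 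This is a bit more computational, but it delivers an explicit exponential rate $\Ex{Y_t^2}=O(e^{-\beta_2 t})$ on the full defect $Y_t$, whereas the paper's proof of this lemma only gets a rate on the $g$-component (a rate for the $\Theta_0$-component is derived later, in the proof of part~(6) of Theorem~\ref{the:supercritical}, via a separate inequality). Both approaches rest on the bilinear identity $V_2^+(f,x)=V_2^+(\Theta_0,x)\,\mu_0(f)^2$, which follows immediately from the recursion and is stated in Theorem~\ref{the:supercritical}(1); the paper's decomposition makes this structural fact transparent, while your polarization recovers it by computation. Your interpolation argument for $L^n$ convergence is also a clean alternative to the subsequence-plus-uniform-integrability route implicit in the paper.
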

 \begin{proof}
 Consider $0<\beta=\beta_2$ given in \eqref{eq:iteration moments supercritical}. 
 Fix a sequence $(t_k)_k$ that satisfies the requirements of the lemma.
 Now, fix $g$ so that $\int g\, d\mu_0=0$. Then, for all $x$ it holds $\Pi(g)(x)=\Theta_0(x) \int g \, d\mu_0=0$,
 which implies that 
 $$
 V_1^+(g,x)=\lim\limits_{t\to \infty} e^{\lambda_0 t}\Ex{\langle Z_t, g\rangle}=0.
 $$
 Using the recursion \eqref{eq:moments supercritical} proved in Subsection
 \ref{sec:moments_supercritical}, we observe that for all $n,x$ the moments $V^+_n(g,x)=0$ and then
from \eqref{eq:iteration moments supercritical f}, we get uniformly on $x,t$
$$ 
\left|\Ex{e^{2\lambda_{0}t} \langle Z_{t}, g \rangle^2 }\right| \le (2 \alpha_{2} + C^+_{1}) \|g\|_\infty^2 e^{-\beta_{2}t}
=\gamma_2\|g\|_\infty^2 e^{-\beta_{2}t}.
$$
From here, we obtain
$$
\Ex{\sum\limits_k \left(e^{\lambda_0 t_k} \langle Z_{t_k}, g\rangle\right)^2}=
\sum\limits_k \Ex{\left(e^{\lambda_0 t_k} \langle Z_{t_k}, g\rangle\right)^2}\le \gamma_2 \|g\|_\infty^2
\sum_k e^{-\beta_2 t_k}<\infty,
$$
showing that, for all $x$, the following limit holds $a.s.$ under $\PP_{\delta_x}$
$$
\lim\limits_{k\to\infty} e^{\lambda_0 t_k} \langle Z_{t_k}, g\rangle=0=\Pi(g).
$$ For any $f\in \CB$, we have the decomposition
$f=g+\Theta_0 \hbox{ $\int f \,d\mu_0$}$, 
with  $g=f-\Pi(f)\in \CB$ satisfying $\Pi(g)=0$. So, for any $x$, 
the following limit holds $\PP_{\delta_x}$-a.s.
$$
\lim\limits_{k\to\infty} e^{\lambda_0 t_k} \langle Z_{t_k}, f\rangle=W_\infty(x) \hbox{ $\int f \,d\mu_0$}.
$$
\end{proof}

Now, we prove that $\Px{W_\infty>0}=h(x)=\lim\limits_{t\to \infty} \Px{N_t>0}$. 
For this purpose, we use that for all $z\in \CC$
in a neighborhood of $0$ (see \eqref{eq:laplace transform}), we have
$$
\begin{array}{l}
\Ex{1-\exp{(z W_\infty(x)) }, W_\infty(x)>0}=\Ex{1-\exp{(z W_\infty(x)) }}=
\lim\limits_{t\to \infty} \Ex{1-\exp{\left(z e^{\lambda_0 t} \langle Z_{t},\Theta_0\rangle\right)}}\\
=\lim\limits_{t\to \infty} \Px{N_t>0} 
\Ex{1-\exp{\left(z e^{\lambda_0 t}\langle Z_{t},\Theta_0\rangle\right)}\,\Big | N_t>0}
=h(x) \int_\RR (1-e^{zy})\, \zeta^+_{\Theta_0,x}(dy).
\end{array}
$$
The probability measure $\zeta^+_{\Theta_0,x}$ was introduced in Section 
\ref{sec:convergence in distribution supercritical} and since $\Theta_0>0$, it is supported on
$\RR_+$. Now, both functions  
$$
z\to\Ex{1-\exp{(z W_\infty(x))}} \hbox{ and } z\to h(x) \int_{\RR_+} (1-e^{zy})\, \zeta^+_{\Theta_0,x}(dy)
$$
 are analytic and agree in a neighborhood of zero. Also, they have analytic continuations
to the open left half plane $\{z\in \CC: \, \Re(z)<0\}$, because $W(x)$ is a nonnegative random variable. 
Therefore both functions agree on that open half plane, which allow us
to take the limit as $z\to -\infty$ on the real axis and conclude:
$$
\Px{ W_\infty(x)>0}=h(x)\,\zeta^+_{\Theta_0,x}(\RR)=h(x).
$$ 
This implies also that, $\Lambda_x^+$, which is the limit law of $e^{\lambda_0 t} Z_{t}$ on $M_F$ under 
$\Px{\hspace{0.2cm}\vsbullet\hspace{0.2cm} \Big | \, N_t>0}$, is the law of
$W_\infty(x)\, \mu_0$, under the probability measure $\Px{\hspace{0.2cm}\vsbullet\hspace{0.2cm} \Big | \, W_\infty(x)>0}$.
Similarly, $\Gamma_x$ is the law of  $W_\infty(x) \,\mu_0$, under the probability measure $\PP_{\delta_x}$.

\subsection{Convergence in Probability and in $L^n$} 
In this section, we prove Theorem \ref{the:supercritical} (5): the measured valued process 
$(e^{\lambda_0 t} Z_{t})_t$ converges in $\PP_{\delta_x}$-probability 
to $W_\infty(x)\, \mu_0$, for every $x$. For that we  use the distance on $M_F$ which metrizes 
the weak convergence defined by
$$
d(\mu,\tau)=\sum\limits_{n\ge 0}  2^{-n}|\mu(f_n)-\tau(f_n)|,
$$
where $(f_{n})_{n}$ is sequence of functions convergence determining, satisfying $f_{0}\equiv 1$ and for 
$n\ge 1$, $f_{n}\in C^{\infty}_{c}(\RR)$. We also assume that $\|f_n\|_\infty \le 1$, for all $n$.

The convergence in probability will follow from the $L^1(\PP_{\delta_x})$-convergence of 
$\,d\left(e^{\lambda_0 t} Z_t,W_\infty(x)\, \mu_0\right)\,$ to $0$ (as $t$ tends to infinity).
 
For every sequence $(s_m)_m$ that converges to
infinity, we can find a subsequence $(t_k=s_{m_k})_k$ so that $\sum\limits_k e^{-\beta t_k}<\infty$, where
$\beta=\beta_2$ as in Lemma \ref{lem:convergence over tk}.
For that is enough to assume that $t_k\ge k$ for large $k$. Using Lemma \ref{lem:convergence over tk}, we have the following 
$\PP_{\delta_x}$-a.s. convergence. For all $n\ge 0$,
$$
\lim\limits_{k\to\infty} e^{\lambda_0 t_k} \langle Z_{t_k},f_n\rangle=
W_\infty(x) \hbox{ $\int f_n \,d\mu_0$}.
$$
Thus, using \eqref{eq:bound C^+_n(f)} and \eqref{eq:moments W infty II}, we have for all $n$
$$
\def\arraystretch{1.7}
\begin{array}{ll}
\Ex{\left|e^{\lambda_0 t_k} \langle Z_{t_k},f_n\rangle-W_\infty(x) 
\hbox{ $\int f_n \,d\mu_0$}\right|}&\hspace{-0.2cm}\le
\Ex{e^{\lambda_0 t_k} \langle Z_{t_k},|f_n|\rangle}+
\Ex{W_\infty(x) \hbox{ $\int |f_n| \,d\mu_0$}}\\
&\hspace{-0.2cm}\le v_1^+(|f_n|,t_k,x)+V_1^+(|f_n|,t_k,x)
\le 2 C_1^+.
\end{array}
$$
Therefore, we obtain
$$
\Ex{d\left(e^{\lambda_0 t_k} Z_{t_k},W_\infty(x) \mu_0\right)}
=\sum\limits_{n\ge _0} 2^{-n} \, \Ex{\left|e^{\lambda_0 t_k} \langle Z_{t_k},f_n\rangle-W_\infty(x) 
\hbox{ $\int f_n \,d\mu_0$}\right|},
$$
which converges to $0$ as $k\to \infty$, thanks to the Dominated Convergence Theorem for series.

So far, we have proved that for every sequence $(s_m)_m$ there exists a subsequence 
$(t_k=s_{m_k})_k$ for which
$$
d\left(e^{\lambda_0 t_k} Z_{t_k},W_\infty(x) \mu_0\right) \stackarrow{L^1(\PP_{\delta_x})}{k\to \infty}{1} 0.
$$
Therefore,  the $L^1(\PP_{\delta_x})$ convergence holds:
$$
\lim\limits_{t\to +\infty} d\left(e^{\lambda_0 t} Z_{t},W_\infty(x) \mu_0\right)=0,
$$
and a fortiori the convergence in Probability.

\begin{remark} Using the same ideas one can prove the $L^p$ convergence of 
$\, d\left(e^{\lambda_0 t} Z_{t},W_\infty(x)\, \mu_0\right)$
towards $0$, for all $1\le p<\infty$.
\end{remark} 

Finally, we prove  Theorem \ref{the:supercritical} (6), namely for $n\ge 1$,
$$
\lim\limits_{t\to +\infty}\,
\sup_{\hbox{\scriptsize$\stackunder{0\le t,\,x\in \RR}{\|f\|_{\infty}\! \le 1}$}}
\,\Big\| \, e^{\lambda_{0}t} \langle Z_{t},f \rangle - 
W_{\infty}(x) \,\hbox{$\int f\,  d\mu_{0}$ }\Big \|_{L^n\left(\PP_{\delta_x}\right)} = 0.
$$
For that, we consider as before the decomposition  
$f=g+\Theta_0 \,\mu_0(f)$, where $g=f-\Pi(f)$ and so $\Pi(g)=0$.  Recall that for all 
$n\ge 1$,  $V_n^+(g)=0=\int g\,d\mu_0$, and then 
from \eqref{eq:iteration moments supercritical f}
(see also \eqref{eq:iteration moments supercritical}).
$$
\Big\| \, e^{\lambda_{0}t} \langle Z_{t},g \rangle - 
W_{\infty}(x)\mu_{0}(g)\Big\|^n_{L^n\left(\PP_{\delta_x}\right)}= 
\Big\| \, e^{\lambda_{0}t} \langle Z_{t},g \rangle-V_n^+(g) \Big\|^n_{L^n\left(\PP_{\delta_x}\right)}
\le e^{-\beta_n t} F_n^+ \|g\|_\infty^n\le e^{-\beta_n t} F_n^+ \left(\|\Theta_0\|_\infty\mu_0(1) +1\right)^n 
\|f\|_\infty^n,
$$ 

For the other term, we use the $L^n\left(\PP_{\delta_x}\right)$ convergence of 
$e^{\lambda_{0}t} \langle Z_{t},\Theta_0 \rangle$ to $W_\infty(x)$, hence
$$
\begin{array}{ll}
\big\| \, e^{\lambda_{0}t} \langle Z_{t},\Theta_0 \mu_0(f) \rangle - 
W_{\infty}(x)\mu_{0}(f)\big\|^n_{L^n\left(\PP_{\delta_x}\right)} &\hspace{-0.2cm}\le |\mu_0(f)|^n 
\big\| \, e^{\lambda_{0}t} \langle Z_{t},\Theta_0 \rangle-W_\infty(x)\big\|^n_{L^n\left(\PP_{\delta_x}\right)}\\
 &\hspace{-0.2cm}\le \|f\|^n_\infty (\mu_0(1))^n \big\| \, e^{\lambda_{0}t} \langle Z_{t},\Theta_0 \rangle-
W_\infty(x)\big\|^n_{L^n\left(\PP_{\delta_x}\right)}.
\end{array}
$$
Therefore, we need to study the uniform convergence in $t,x$ of the last term.
For that, we use the inequality,
for all $n\ge 1, u,v \in \RR_+$
$$
|u-v|^n\le |u^n-v^n|,
$$
with $u=e^{\lambda_{0}t} \langle Z_{t}, \Theta_0\rangle, \, v=W_{\infty}(x)$.
Using Cauchy-Schwartz inequality we get, using \eqref{eq:iteration moments supercritical f}
$$
\begin{array}{l}
\Big\| \, e^{\lambda_{0}t} \langle Z_{t},\Theta_0\rangle - 
W_{\infty}(x)\Big \|_{L^n}^n\le \Ex{|u^n-v^n|}\le  \left(\Ex{(u^n-v^n)^2}\right)^{1/2}=
\left(\Ex{u^{2n}-2u^nv^n+v^{2n}}\right)^{1/2}\\
\le \left(\Ex{v^{2n}-u^{2n}}\right)^{1/2}=\left|V_{2n}^+(\Theta_0,x)-v_{2n}^+(\Theta_0)(t,x)\right|^{1/2}
\le e^{-\beta_{2n}\, t/2} \left(F_{2n}^+\right)^{1/2} \|\Theta_0\|_\infty^n.
\end{array}
$$
Here, we have used that $u,v$ are nonnegative and $\Ex{v\,\Big|\, \F_t}=u$, which gives
$$
\Ex{u^nv^n}=\Ex{u^n\Ex{v^n\,\Big|\, \F_t}}\ge \Ex{u^n\Ex{v\,\Big|\, \F_t}^n}=\Ex{u^{2n}}
$$
and also $\Ex{u^{2n}}\le \Ex{v^{2n}}$.
Putting all together, we obtain the bound (recall that $\beta_{2n}\le \beta_n$)
$$
\sup\limits_{\hbox{\scriptsize$\stackunder{0\le t,\,x\in \RR}{\|f\|_{\infty}\! \le 1}$}}
\Big\| \, e^{\lambda_{0}t} \langle Z_{t},\Theta_0 \mu_0(f) \rangle - 
W_{\infty}(x)\mu_{0}(f)\Big\|_{L^n\left(\PP_{\delta_x}\right)}\le 
e^{-\frac{\beta_{2n}}{2n} t}\left(\Big(F_n^+\Big)^{1/n}+\left(F_{2n}^+\right)^{1/2n}\right)
\Big(\|\Theta_0\|_\infty(\mu_0(1)+1)+1\Big),
$$
and the result is shown.

\subsection{Convergence a.s. for $(e^{\lambda_0 t} Z_t)_t$, proof of 
Theorem \ref{the:convergence a.s.}}
\label{sec:proof theorem convergence a.s.}
Our first step is to show the a.s. convergence for the function $f\equiv1$.
\begin{lemma} 
\label{lem:masse} For all $x$, the following limit holds $\PP_{\delta_x}$-a.s.
$$
\lim\limits_{t\to \infty} e^{\lambda_0 t} \langle Z_{t}, 1\rangle=W_\infty(x) \,  \mu_0(1).
$$
\end{lemma}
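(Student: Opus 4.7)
\textbf{Proof plan for Lemma \ref{lem:masse}.}

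The plan is to promote the along-subsequence a.s.\ convergence supplied by Lemma \ref{lem:convergence over tk} to a genuine a.s.\ limit, by controlling the oscillations of $t\mapsto e^{\lambda_{0}t}N_{t}$ between points of the subsequence through the semimartingale decomposition and assumption (HSM). Fix $0<\delta<1$ and set $t_{k}=k\delta$; then $\sum_{k}e^{-\beta_{2} t_{k}}<\infty$ with $\beta_{2}$ as in \eqref{eq:iteration moments supercritical}, so Lemma \ref{lem:convergence over tk} applied to $f\equiv 1\in\CB$ gives
$$
e^{\lambda_{0}t_{k}}N_{t_{k}}\ \stackarrow{\PP_{\delta_{x}}\text{-a.s.}}{k\to\infty}{1}\ W_{\infty}(x)\,\mu_{0}(1).
$$
Consequently it suffices to prove that $\mathcal{O}_{k}:=\sup_{t\in[t_{k},t_{k+1}]}\bigl|e^{\lambda_{0}t}N_{t}-e^{\lambda_{0}t_{k}}N_{t_{k}}\bigr|\to 0$ $\PP_{\delta_{x}}$-a.s.

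The key identity comes from specializing \eqref{eq:Master General} to $f\equiv1$, using $\mathscr{G}1=0$ and $M^{1}=0$: the total mass satisfies $dN_{t}=\langle Z_{t},V\rangle\,dt+d\bar M_{t}$, where $\bar M$ is the compensated Poisson martingale with predictable bracket $d\langle\bar M\rangle_{t}=\langle Z_{t},b+d\rangle\,dt$. Integration by parts together with the left-eigen identity $\mu_{0}(V+\lambda_{0})=0$ (obtained by differentiating $\mu_{0}(P_{t}1)=e^{-\lambda_{0}t}\mu_{0}(1)$ at $t=0$) gives, for $t\in[t_{k},t_{k+1}]$,
$$
e^{\lambda_{0}t}N_{t}-e^{\lambda_{0}t_{k}}N_{t_{k}}=\int_{t_{k}}^{t}e^{\lambda_{0}r}\langle Z_{r},V+\lambda_{0}\rangle\,dr+\int_{t_{k}}^{t}e^{\lambda_{0}r}\,d\bar M_{r}.
$$

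For the martingale term, Doob's $L^{2}$ maximal inequality and the Feynman-Kac formula give
$$
\EE_{\delta_{x}}\Bigl[\sup_{t\in[t_{k},t_{k+1}]}\Bigl(\int_{t_{k}}^{t}e^{\lambda_{0}r}\,d\bar M_{r}\Bigr)^{2}\Bigr]\le 4\int_{t_{k}}^{t_{k+1}}e^{2\lambda_{0}r}\bigl(P_{r}(b)(x)+P_{r}(d)(x)\bigr)\,dr.
$$
Using HP1)--HP2) together with the relation $d=b-V$, and Feynman-Kac estimates for $P_{r}(V)$ (noting that $V\le A(V)<\infty$ by HV2), one shows $e^{\lambda_{0}r}\bigl(P_{r}(b)+P_{r}(d)\bigr)$ is bounded; the above is then $\le C e^{\lambda_{0}t_{k}}$, whose sum in $k$ is finite, so Chebyshev and Borel-Cantelli yield a.s.\ convergence of the martingale supremum to $0$. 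For the drift term, I would combine the orthogonality $\mu_{0}(V+\lambda_{0})=0$ with the $L^{n}$ convergence of Theorem \ref{the:supercritical}(6): first truncate $V+\lambda_{0}$ at level $K$ (the positive part is automatically bounded since $V\le A(V)$), apply the uniform $L^{1}$-convergence to the bounded truncation so that $\EE_{\delta_{x}}\bigl|e^{\lambda_{0}r}\langle Z_{r},(V+\lambda_{0})\wedge K\rangle\bigr|\to 0$ exponentially by the spectral gap \eqref{eq:bound_uniforme}, and control the tail $\EE_{\delta_{x}}[e^{\lambda_{0}r}\langle Z_{r},(V+\lambda_{0})^{-}\ind_{V+\lambda_{0}<-K}\rangle]$ using Feynman-Kac bounds and $K\to\infty$; summing over $[t_{k},t_{k+1}]$ and Borel-Cantelli again give the a.s.\ limit $0$.

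\textbf{Main obstacle.} The delicate point is precisely this drift estimate, because $V$ (equivalently $d$) is unbounded below, so the integrand $e^{\lambda_{0}r}\langle Z_{r},V+\lambda_{0}\rangle$ is not a priori in $L^{1}$ uniformly, and the uniform convergence results of Theorem \ref{the:supercritical}(6) only apply to bounded functions. The crux is to exploit $A(V)<\infty$ to confine unboundedness to the negative part, and to use the a priori decay of $e^{\lambda_{0}r}P_{r}(d)$ (consistent with $\mathscr{L}\Theta_{0}=-\lambda_{0}\Theta_{0}$) together with the truncation scheme of Section \ref{sec:cutoff} to justify the limit $K\to\infty$. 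Once this drift control is in hand, combining it with the martingale Borel-Cantelli argument and the a.s.\ subsequence limit produces $\mathcal{O}_{k}\to 0$ $\PP_{\delta_{x}}$-a.s., which proves the lemma.
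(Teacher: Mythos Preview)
Your overall strategy---fix a subsequence $(t_{k})$ along which Lemma \ref{lem:convergence over tk} gives a.s.\ convergence, then control the oscillations of $e^{\lambda_{0}t}N_{t}$ on each $[t_{k},t_{k+1}]$ via Borel--Cantelli---is exactly the paper's. The difference lies in how the oscillation is controlled, and here your implementation has a genuine gap.

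You compensate the jump integral and write $e^{\lambda_{0}t}N_{t}-e^{\lambda_{0}t_{k}}N_{t_{k}}$ as a drift $\int e^{\lambda_{0}r}\langle Z_{r},V+\lambda_{0}\rangle\,dr$ plus a martingale with bracket $\int e^{2\lambda_{0}r}\langle Z_{r},b+d\rangle\,dr$. Both terms now involve the \emph{unbounded} death rate $d$, and neither of your proposed controls is justified under (HV)+(HP): the semigroup $P_{r}$ and the convergence in Theorem \ref{the:supercritical}(6) act only on bounded functions, so ``$e^{\lambda_{0}r}P_{r}(d)$ is bounded'' and the truncation/tail argument for $\langle Z_{r},(V+\lambda_{0})^{-}\ind_{\cdot}\rangle$ both beg the question. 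Hypothesis (HSM) does not help either: it bounds $\langle M^{f}\rangle$ for $f\in C_{c}^{\infty}$, but your $\bar M$ is a different martingale (the compensated birth--death jumps for $f\equiv 1$), whose bracket is precisely $\langle Z_{r},b+d\rangle$.

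The paper sidesteps this completely by \emph{not} compensating. Since deaths only decrease $N_{t}$ and $\lambda_{0}<0$, one gets the one-sided bound
\[
\sup_{s\in[t,v]}e^{\lambda_{0}s}N_{s}-e^{\lambda_{0}t}N_{t}\le \int_{[t,v]\times\RR_{+}\times\NN} e^{\lambda_{0}u}\,\ind_{j\le N_{u-}}\,\ind_{\theta\le b^{*}}\,Q(du,d\theta,dj),
\]
whose $L^{2}$ norm is bounded by $C_{1}^{+}b^{*}(v-t)\,e^{\lambda_{0}t}$---only the \emph{bounded} birth rate enters. An analogous inequality (comparing with the right endpoint $t_{k+1}$) handles the infimum. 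Summability of $e^{\lambda_{0}t_{k}}$ and Borel--Cantelli then finish the job. This trick is what you are missing: drop the deaths rather than trying to estimate them.
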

\begin{proof} Consider a sequence $(t_k)_k$ such that $t_k\uparrow \infty$. 
Below we will impose more restrictions on this
sequence. For $k$ fixed,  denote 
$t=t_k,v=t_{k+1}, I=I_{k}=[t,v], \Delta=v-t$. Consider $t\le s\le v$. From \eqref{eq:Master General}
we have, after multiplying by $e^{\lambda_0 s}$, and using that $\lambda_0<0$
$$
\def\arraystretch{1.5}
\begin{array}{ll}
e^{\lambda_0 s}\langle Z_s,1\rangle&\hspace{-0.2cm}=e^{\lambda_0 s}\langle Z_t,1\rangle +
e^{\lambda_0 s} \int_{[t,s]\times \RR_+\times \NN} \ind_{j\le N_{u-}}
\left(\ind_{\theta\le b(X^j_{u-})}-\ind_{ b(X^j_{u-})\le \theta\le  b(X^j_{u-})+d(X^j_{u-})}\right)\, Q(du,d\theta,dj)\\
&\hspace{-0.2cm}\le e^{\lambda_0 t}\langle Z_t,1\rangle +
\int_{[t,s]\times \RR_+\times \NN}  e^{\lambda_0 u} \ind_{j\le N_{u-}} \ind_{\theta\le b^*}\, Q(du,d\theta,dj)\\
&\hspace{-0.2cm}\le e^{\lambda_0 t}\langle Z_t,1\rangle +
\int_{[t,v]\times \RR_+\times \NN}  e^{\lambda_0 u} \ind_{j\le N_{u-}} \ind_{\theta\le b^*}\, Q(du,d\theta,dj).
\end{array}
$$  
Notice that the last term on the right hand side does not depend on $s$ and so we get
$$
0\le \sup\limits_{s\in I} e^{\lambda_0 s}\langle Z_s,1\rangle-e^{\lambda_0 t}\langle Z_t,1\rangle\le
\int_{[t,v]\times \RR_+\times \NN}  e^{\lambda_0 u} \ind_{j\le N_{u-}} \ind_{\theta\le b^*}\, Q(du,d\theta,dj),
$$
which leads to (using that the intensity measure for $Q$ is $du\times d\theta\times n(dj)$)
\begin{equation}
\label{eq:cota supremo}
\begin{array}{ll}
\Ex{\left(\sup\limits_{s\in I} e^{\lambda_0 s}\langle Z_s,1\rangle-e^{\lambda_0 t}\langle Z_t,1\rangle\right)^2}
&\hspace{-0.2cm}\le b^* \int\limits_t^v \Ex{e^{2\lambda_0 u} \langle Z_u,1\rangle}\, du\le b^*e^{\lambda_0 t} 
\int_t^v \Ex{e^{\lambda_0 u} \langle Z_u,1\rangle} \, du\\
&\hspace{-0.2cm}\le C_1^+b^*\Delta \, e^{\lambda_0 t}.
\end{array}
\end{equation}
Recalling that $\lambda_0<0$, so we now assume that $(t_k)_k$ satisfies
$\sum_k e^{\lambda_0 t_k}+e^{-\beta t_k}<+\infty$, where $0<\beta=\beta_2$ as before. 
Therefore, we have 
$$
\sum_k \Ex{\left(\sup\limits_{s\in I_{k}} e^{\lambda_0 s}\langle Z_s,1\rangle-e^{\lambda_0 t_{k}}\langle Z_{t_{k}},1\rangle\right)^2}<\infty,
$$
showing that
\begin{equation}
\label{eq:control of sup}
\sup\limits_{s\in I_k} e^{\lambda_0 s}\langle Z_s,1\rangle-e^{\lambda_0 {t_k}}\langle Z_{t_k},1\rangle
\stackarrow{\PP_{\delta_x}\hbox{- a.s.}}{k\to \infty}{1} 0.
\end{equation}
We now prove a similar result with the infimum.  Recalling that $t\le s\le v$, we consider
$$
\def\arraystretch{1.5}
\begin{array}{ll}
e^{\lambda_0 v}\langle Z_v,1\rangle&\hspace{-0.2cm}=e^{\lambda_0 v}\langle Z_s,1\rangle +
e^{\lambda_0 v} \int_{[s,v]\times \RR_+\times \NN} \ind_{j\le N_{u-}}
\left(\ind_{\theta\le b(X^j_{u-})}-\ind_{ b(X^j_{u-})\le \theta\le  b(X^j_{u-})+d(X^j_{u-})}\right)\, Q(du,d\theta,dj)\\
&\hspace{-0.2cm}\le e^{\lambda_0 s}\langle Z_s,1\rangle +\int_{[s,v]\times \RR_+\times \NN}  
e^{\lambda_0 u} \ind_{j\le N_{u-}} \ind_{\theta\le b^*}\, Q(du,d\theta,dj)\\
&\hspace{-0.2cm}\le e^{\lambda_0 s}\langle Z_s,1\rangle +\int_{[t,v]\times \RR_+\times \NN}  
e^{\lambda_0 u} \ind_{j\le N_{u-}} \ind_{\theta\le b^*}\, Q(du,d\theta,dj).
\end{array}
$$
As before we get
\begin{equation}
\label{eq:control of inf}
e^{\lambda_0 {t_{k+1}}}\langle Z_{t_{k+1}},1\rangle-\inf\limits_{s\in I_k} e^{\lambda_0 s}\langle Z_s,1\rangle
\stackarrow{\PP_{\delta_x}\hbox{- a.s.}}{k\to \infty}{1} 0.
\end{equation}
From Lemma \ref{lem:convergence over tk} and the assumption made on $(t_k)_k$,  
we have that $(e^{\lambda_0 {t_{k+1}}}\langle Z_{t_{k+1}},1\rangle)_k$ and 
$(e^{\lambda_0 {t_{k}}}\langle Z_{t_{k}},1\rangle)_k$
converge to $W_\infty(x)\, \mu_0(1)$ $\PP_{\delta_x}$-a.s. We conclude that on the set where both converge
and \eqref{eq:control of sup},  \eqref{eq:control of inf} hold, we have the a.s. limit
$$
\lim\limits_{t\to \infty} \sup\limits_{s\ge t} e^{\lambda_0 s}\langle Z_s,1\rangle-
\inf\limits_{s\ge t} e^{\lambda_0 s}\langle Z_s,1\rangle=0.
$$
Therefore, we have the $\PP_{\delta_x}$-a.s. convergence
$$
\lim\limits_{t\to \infty} e^{\lambda_0 t}\langle Z_t,1\rangle =W_\infty(x)\, \mu_0(1),
$$
proving the desired result for the function $1$.
\end{proof}

Now, we generalize this result to a non-negative function $f\in C^\infty_{c}(\RR)$. 

\begin{lemma}
\label{lem:Cinfty}
For all $x$ and all $f\in C^\infty_{c}(\RR)$, the following limit holds $\PP_{\delta_x}$-a.s.
$$
\lim\limits_{t \to \infty} e^{\lambda_0 t}\langle Z_{t},f\rangle=W_\infty(x) \hbox{$\int f\, d\mu_0$}.
$$
\end{lemma}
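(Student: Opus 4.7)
The plan is to promote the almost sure convergence from the Dini-like subsequence of Lemma \ref{lem:convergence over tk} to a full convergence in $t$, by showing that the path oscillation of $s\mapsto e^{\lambda_0 s}\langle Z_s, f\rangle$ between consecutive times of a well-chosen grid tends to $0$ almost surely. The heart of the argument is a semimartingale estimate that uses (HSM) to exploit the compact support of $f$.

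First, I would pick a deterministic sequence $(t_k)_{k\ge 1}$ with $t_k\uparrow\infty$, $\Delta_k:=t_{k+1}-t_k\downarrow 0$, and $\sum_k\bigl(e^{-\beta_2 t_k}+e^{\lambda_0 t_k}\bigr)<\infty$; for instance $t_k=k^{1/3}$ works since $\lambda_0<0$ and $\beta_2>0$. Lemma \ref{lem:convergence over tk} then gives $e^{\lambda_0 t_k}\langle Z_{t_k},f\rangle\to W_\infty(x)\mu_0(f)$ $\PP_{\delta_x}$-a.s.\ along this grid, so the proof reduces to showing
$$
\Omega_k:=\sup_{s\in[t_k,t_{k+1}]}\bigl|e^{\lambda_0 s}\langle Z_s,f\rangle-e^{\lambda_0 t_k}\langle Z_{t_k},f\rangle\bigr|\xrightarrow[k\to\infty]{\text{a.s.}}0.
$$

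To estimate $\Omega_k$, I would first compensate the Poisson branching integral in \eqref{eq:Master General} by its predictable projection $\int_0^{\sbullet}\langle Z_u,Vf\rangle\,du$ to obtain a local martingale $\tilde N^f$ with predictable quadratic variation $\langle\tilde N^f\rangle_s=\int_0^s\langle Z_u,(b+d)f^2\rangle\,du$; since $f\in C_c^\infty(\RR)$ has compact support $K$, this is bounded by $C'(f)^2\int_0^s N_u\,du$ with $C'(f)^2=\|f\|_\infty^2\max_K(b+d)$. An integration by parts with the deterministic factor $e^{\lambda_0 s}$ then yields, for $t_k\le s\le t_{k+1}$,
\begin{equation}
e^{\lambda_0 s}\langle Z_s,f\rangle-e^{\lambda_0 t_k}\langle Z_{t_k},f\rangle
=\int_{t_k}^{s}e^{\lambda_0 u}\langle Z_u,g\rangle\,du
+\int_{t_k}^{s}e^{\lambda_0 u}\,dM^f(u)
+\int_{t_k}^{s}e^{\lambda_0 u}\,d\tilde N^f(u),
\label{eq:plan-decomp}
\end{equation}
where $g:=\lambda_0 f+(\mathscr{G}+V)f$ is uniformly bounded, the bound $|\mathscr{G} f|\le C(f)$ coming from (HSM) and $\|Vf\|_\infty\le \|f\|_\infty\max_K|V|$ from compactness of $\supp(f)$. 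By Lemma \ref{lem:masse}, $M^*:=\sup_{u\ge 0}e^{\lambda_0 u}N_u<\infty$ a.s., so the drift piece on the right of \eqref{eq:plan-decomp} is bounded by $D(f)\Delta_k M^*\to 0$ a.s.

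For the two martingale integrals, Doob's $L^2$ inequality together with $\EE_{\delta_x}(N_u)\le C^+_1 e^{-\lambda_0 u}$ from Proposition \ref{pro:supercritical estimation} gives
$$
\Ex{\sup_{s\in[t_k,t_{k+1}]}\!\!\Bigl|\!\int_{t_k}^s e^{\lambda_0 u}\,dM^f(u)\Bigr|^2}
\le 4 C^2(f)\!\int_{t_k}^{t_{k+1}}\!\!e^{2\lambda_0 u}\EE_{\delta_x}(N_u)\,du
\le \frac{4 C^2(f)C^+_1}{|\lambda_0|}\,e^{\lambda_0 t_k},
$$
and the analogous estimate with $C'(f)^2$ in place of $C^2(f)$ for the $d\tilde N^f$ piece. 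Summability of $\sum_k e^{\lambda_0 t_k}$ and the Borel-Cantelli lemma then force both suprema to tend to $0$ $\PP_{\delta_x}$-a.s., concluding $\Omega_k\to 0$ a.s.\ and hence the desired limit. The main obstacle is the bookkeeping in \eqref{eq:plan-decomp}: one has to justify the use of Ito's product formula for the càdlàg semimartingale $\langle Z_s,f\rangle$, verify that compensating the (uncompensated) Poisson term in \eqref{eq:Master General} indeed produces a square-integrable martingale once localized (which again reduces to $\EE_{\delta_x}(N_u)\le C^+_1 e^{-\lambda_0 u}$), and treat any nonlocal contribution to $\mathscr{G} f$ that may not have compact support but is still uniformly bounded by (HSM).
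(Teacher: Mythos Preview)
Your proposal is correct and follows the same high-level strategy as the paper: control the oscillation of $s\mapsto e^{\lambda_0 s}\langle Z_s,f\rangle$ on the intervals $[t_k,t_{k+1}]$ of a deterministic grid via an $L^2$/Borel--Cantelli argument, and combine with the subsequence convergence from Lemma \ref{lem:convergence over tk}. The execution, however, differs in two respects worth noting.

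First, you compensate the Poisson branching integral and apply Ito's product rule with $e^{\lambda_0 s}$, which yields the exact identity \eqref{eq:plan-decomp} and lets you bound $|\Omega_k|$ directly. The paper instead multiplies \eqref{eq:Master General} by $e^{\lambda_0 s}$ without integrating by parts, keeps the Poisson term uncompensated, and obtains only one-sided inequalities by dropping either the birth or the death contribution; this forces a separate argument for $\sup$ and $\inf$ (and implicitly uses $f\ge 0$, handling general $f$ by linearity afterwards). Your route is cleaner here.

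Second, you control the drift $\int_{t_k}^{s} e^{\lambda_0 u}\langle Z_u,g\rangle\,du$ almost surely via $\Delta_k\,M^*$ with $M^*=\sup_{u\ge 0}e^{\lambda_0 u}N_u$, invoking Lemma \ref{lem:masse}; the paper instead bounds this drift in $L^2$, which produces a term of order $\Delta_k^2$ and leads to the constraint $\sum_k\Delta_k^2<\infty$. This is why the paper uses the harmonic grid $t_k=F\sum_{j\le k}1/j$ (so $\Delta_k\asymp 1/k$), whereas your choice $t_k=k^{1/3}$ only needs $\Delta_k\to 0$ together with $\sum_k e^{\lambda_0 t_k}<\infty$. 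Both constraint sets are easy to satisfy; your use of Lemma \ref{lem:masse} trades one summability condition for an earlier almost-sure result.
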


\begin{proof}
For that we use the decomposition \eqref{eq:Master General}: for $s\in I=[t,v]$
$$
\begin{array}{ll}
\langle Z_s,f\rangle&\hspace{-0.2cm}=\langle Z_t,f\rangle +
\int_t^s \langle Z_u, \mathscr{G} f\rangle du+ M^f_s-M^f_t\\
&\hspace{0.1cm}+\int_{[t,s]\times \RR_+\times \NN}  f(X^j_{u-}) \ind_{j\le N_{u-}}
\left(\ind_{\theta\le b(X^j_{u-})}-\ind_{ b(X^j_{u-})\le \theta\le  b(X^j_{u-})+d(X^j_{u-})}\right)\, Q(du,d\theta,dj).
\end{array}
$$  
Using Assumption (HSM) and  after multiplying
by $e^{\lambda_0 s}$, we obtain  that
$$
\begin{array}{ll}
e^{\lambda_0 s}\langle Z_s,f\rangle&\hspace{-0.2cm}\le e^{\lambda_0 t}\langle Z_t,f\rangle +
C(f) \int_t^v e^{\lambda_0 u} \langle Z_u, 1\rangle du+ e^{\lambda_0 t}\sup\limits_{ r\in I}|M^f_r-M^f_t|\\
&\hspace{0.1cm}+\|f\|_\infty \int_{[t,v]\times \RR_+\times \NN}  e^{\lambda_0 u} \ind_{j\le N_{u-}}
\ind_{\theta\le b^*} \, Q(du,d\theta,dj),
\end{array}
$$  
where $C(f) = C$ is the constant appearing in the hypothesis (HSM).

Taking the supremum over $s$ on the left, pass the first term on the right to the left, then squaring both sides and 
taking the expected value,  we get  using Doob's inequality for $p=2$
$$
\def\arraystretch{1.5}
\begin{array}{l}
\Ex{\left(\sup\limits_{s\in I} e^{\lambda_0 s}\langle Z_s,f\rangle-e^{\lambda_0 t}\langle Z_t,f\rangle\right)^2}\le\\
\le 3C^2 \Ex{\left(\int_t^v e^{\lambda_0 u} \langle Z_u, 1\rangle du\right)^2}+
12 e^{2\lambda_0 t} \Ex{\left(M^f_v-M^f_t\right)^2}
+3b^*\|f\|_\infty^2\int_t^v e^{2\lambda_0 u} \Ex{\langle Z_u,1\rangle}\, du\\
\le 3 C^2 \Ex{\sup\limits_{u\in I} e^{2\lambda_0 u} \langle Z_u, 1\rangle^2}\, \Delta^2 +
12 C^2 e^{2\lambda_0 t}\int_t^v \Ex{\langle Z_u,1\rangle} \, du
+3b^*\|f\|_\infty^2 e^{\lambda_0 t}\int_t^v \Ex{ e^{\lambda_0 u} \langle Z_u,1\rangle}\, du\\
\le 3 C^2 \Ex{\sup\limits_{u\in I} e^{2\lambda_0 u} \langle Z_u, 1\rangle^2} \, \Delta^2+
3\left(4 C^2 e^{|\lambda_0|\Delta}+b^*\|f\|_\infty^2\right)
\left(\int_t^v \Ex{ e^{\lambda_0 u} \langle Z_u,1\rangle}\, du\right)\, e^{\lambda_0 t},
\end{array}
$$
where we have used that $\lambda_0(t-u)\le |\lambda_0|\Delta$. Recall that  $\Ex{ e^{\lambda_0 u} \langle Z_u,1\rangle}\le C_1^+$
and $\Ex{ e^{2\lambda_0 u} \langle Z_u,1\rangle^2}\le C_2^+$, uniformly in $u\in \RR_+, x\in \RR$ (see \eqref{eq:C1+}
\eqref{eq:Cn+} and \eqref{eq:bound moments I}). On the other hand, using \eqref{eq:cota supremo}, we have
$$
\Ex{\sup\limits_{u\in I} e^{2\lambda_0 u} \langle Z_u, 1\rangle^2}\le
2  C_1^+b^* \Delta \, e^{\lambda_0 t}+2\Ex{e^{2\lambda_0 t} \langle Z_t, 1\rangle^2}
\le 2  C_1^+b^* \Delta \, e^{\lambda_0 t}+2 C_2^+.
$$
Finally, we obtain the following bound
$$
\Ex{\left(\sup\limits_{s\in I} e^{\lambda_0 s}\langle Z_s,f\rangle-e^{\lambda_0 t}\langle Z_t,f\rangle\right)^2}\le
K e^{|\lambda_0|\Delta}\left(\Delta^2+e^{\lambda_0 t}\right),
$$
for a constant $K$ depending on $f$.  
 
We take 
$$
t_k=F\sum\limits_{j=1}^k \frac{1}{k}\approx F \ln(k), \hbox{ and } \Delta_k=F\frac1{k+1},
$$
for a large positive $F$ so that $\sum\limits_{k}(e^{\lambda_0 t_k}+e^{-\beta_2 t_k}+\Delta_k^2)<+\infty$ and therefore, the following 
limits hold $\PP_{\delta_x}$-a.s.
$$
\begin{array}{l}
\lim\limits_{k \to \infty} \sup\limits_{s\in I_k} e^{\lambda_0 s}\langle Z_s,f\rangle-e^{\lambda_0 t_k}\langle Z_{t_k},f\rangle=0,\\
\lim\limits_{k \to \infty} e^{\lambda_0 t_k}\langle Z_{t_k},f\rangle=W_\infty(x) \hbox{$\int f\, d\mu_0$}.
\end{array}
$$
Using the same ideas, we also obtain the limit
$$
\begin{array}{l}
\lim\limits_{k \to \infty} e^{\lambda_0 t_{k+1}}\langle Z_{t_{k+1}},f\rangle-
\inf\limits_{s\in I_k} e^{\lambda_0 s}\langle Z_s,f\rangle=0,\\
\end{array}
$$
and the convergence $\PP_{\delta_x}$-almost sure is shown
$$
\lim\limits_{t \to \infty} e^{\lambda_0 t}\langle Z_{t},f\rangle=W_\infty(x) \hbox{$\int f\, d\mu_0$}.
$$
\end{proof}

 We consider as previously a convergence determining sequence $(f_{n})_{n}$ with $f_{0}\equiv 1
$ and $f_{n}\in C^\infty_{c}(\RR)$. We deduce that 
 for all $x$,  $\PP_{\delta_x}$-a.s.,  $\forall n\ge 0$, 
$$
\lim\limits_{t \to \infty} e^{\lambda_0 t}\langle Z_{t},f_{n}\rangle=W_\infty(x) \hbox{$\int f_{n}\, d\mu_0$}.
$$
and then it follows the convergence for all $f\in \CB$.

\subsection{Branching Q-process: proof of Theorem \ref{the:Q-process}}
\label{sec:Q-process proof}

The proof is based on the Markov and Branching properties. Consider $0\le s <t$, we 
have for any measurable bounded function $F$,
$$
\begin{array}{ll}
\Ex{F\left(\langle L_s, \Phi\rangle \right),\, N_t>0}&\hspace{-0.2cm}=
\Ex{\EE\left(F\left(\langle L_s, \Phi\rangle \right),\, N_t>0\,\, | \,\, \mathcal{F}_s \right)}
\\
&\hspace{-0.2cm}=\Ex{F\left(\langle L_s, \Phi\rangle \right) \, 
\Big(1-\exp(\langle Z_s\,, \log(1-u_0(t-s,\sbullet))\rangle)\Big),\, N_s>0},
\end{array}
$$
where we recall that $u_0(t-s,z)=\EE_{\delta_z}(N_{t-s}>0)$.

In the critical case, from \eqref{eq:asym survival critical} and  
\eqref{eq:Uniform survival prob} we have that $\,u_0(t-s,\sbullet)\,$ tends to 
$0$ uniformly when $t$ tends to infinity. Then, 
$\ 
1-\exp\Big(\langle Z_s\,, \log(1-u_0\big(t-s,\sbullet)\big)\rangle\Big)\,$ is of the same order as $\, \langle Z_s\,, u_0(t-s,\sbullet)\rangle$,
and we have the almost sure convergence, as $t\to\infty$
$$
\frac{1-\exp\Big(\langle Z_s\,, \log\big(1-u_0(t-s,\sbullet)\big)\rangle\Big)}{\Px{N_t>0}} \to \frac{\langle Z_s\,, \Theta_0\rangle}{\Theta_0(x)}.
$$
Equation \eqref{eq:Uniform survival prob} gives the domination we need to show
$$
\lim\limits_{t\to \infty} \Ex{F\left(\langle L_s, \Phi\rangle \right)\, \Big |\, N_t>0}=
\Ex{F\left(\langle L_s, \Phi\rangle \right) \, \frac{\langle Z_s\,, \Theta_0\rangle}{\Theta_0(x)}, \, N_s>0}.
$$
In the subcritical case, the argument is similar using \eqref{eq:Uniform survival prob subcritical}, and when  $t$ tends to infinity,
$$
\frac{1-\exp\Big(\langle Z_s\,, \log\big(1-u_0(t-s,\sbullet)\big)\rangle\Big)}{\Px{N_t>0}} \to e^{\lambda_0 s}\frac{\langle Z_s\,, \Theta_0\rangle}{\Theta_0(x)},
$$
and then
$$
\lim\limits_{t\to \infty} \Ex{F\left(\langle L_s, \Phi\rangle \right)\, \Big |\,\, N_t>0}=
\Ex{F\left(\langle L_s, \Phi\rangle \right) \, e^{\lambda_0 s}\frac{\langle Z_s\,, \Theta_0\rangle}{\Theta_0(x)}, \, N_s>0}.
$$
The supercritical case is even simpler, because $u_0(t-s,y)\downarrow h(y)>0$ and then
$1-\exp\Big(\langle Z_s\,, \log\big(1-u_0(t-s,\sbullet)\big)\rangle\Big)$ is nonnegative, decreasing and bounded by $1$. 
Then, the Monotone Convergence Theorem shows
$$
\begin{array}{ll}
\lim\limits_{t\to \infty} \Ex{F\left(\langle L_s, \Phi\rangle \right)\, |\,\,  N_t>0}\hspace{-0.2cm}&=
\Ex{F(\langle L_s,\Phi\rangle)\, \frac{1-\exp(\langle Z_s, \log(1-h)\rangle)}{h(x)}}\\
\hspace{-0.2cm}&=\Ex{F(\langle L_s,\Phi\rangle)\, \frac{1-\exp(\langle Z_s, \log(1-h)\rangle)}{1-\exp(\langle Z_0, \log(1-h)\rangle)}}.
\end{array}
$$


\section{Proofs for the different classes of examples.}
\label{sec:examples}

\subsection{Proof of HP5) for the different classes of examples}

\begin{proposition}\label{HP3-diff}
HP5) holds in the three classes of examples introduced in Section \ref{sec:example},
under the corresponding set of hypothesis (HV), (HA) and (HJ).
\end{proposition}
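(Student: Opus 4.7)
The plan is to fix $T>0$ and $x\in\RR$, and show that $\PP_{\delta_x}(T_m\le T)\to 0$ as $m\to\infty$. Since $m\mapsto T_m$ is nondecreasing, this is enough: letting $m\to\infty$ along a sequence and then $T\to\infty$ yields $T_m\nearrow+\infty$ a.s.

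First I would use the domination of $\langle Z_t,1\rangle$ by a Yule process $N^*$ of rate $b^*$ (Section \ref{sec:moments}): for any $\varepsilon>0$, there exists $K=K(T,\varepsilon)$ with $\PP(N^*_T>K)<\varepsilon/2$. Realize the whole branching process on a probability space carrying, indexed by the nodes of the Ulam-Harris tree, independent copies of the driving noise (Brownian motion and/or jump Poisson measure) of the underlying Markov dynamics. On $\{N^*_T\le K\}$ the set $\mathcal V_{[0,T]}:=\bigcup_{t\le T}\mathcal V_t$ of particles ever alive on $[0,T]$ has cardinality at most $K$, and each particle's trajectory is a finite-time segment of the corresponding copy of $(X_t)$, initialized at the position of its parent at the time of birth.

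Next I would verify, for each of the three classes, that the underlying Markov process is a.s.\ non-explosive on $[0,T]$. For the diffusion \eqref{diffusion}, HA1) gives the linear bound $|X_t|\le |x|+\sup_{s\le T}|B_s|+C\int_0^t(|X_s|+1)\,ds$, so Gronwall yields $\sup_{t\le T}|X_t|\le (|x|+1+\sup_{s\le T}|B_s|+CT)e^{CT}<\infty$ a.s. For the diffusion with jumps, HJ1) provides a uniform upper bound on the total jump rate $\sup_y R(y,\RR)<\infty$; hence, a.s., only finitely many jumps occur on $[0,T]$, each with a.s.\ finite displacement (bounded fourth moment), and between consecutive jumps Gronwall applies as before. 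For the drifted pure jump process of Section \ref{23}, the deterministic drift contributes at most $T$ in modulus, and HJ1) again gives a.s.\ finitely many jumps on $[0,T]$ with a.s.\ finite displacements; non-explosion follows.

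With these ingredients, define
\[
R_T:=\sup\{|X^i_t| \,:\, i\in\mathcal V_{[0,T]},\; t\in[\tau^i,\sigma^i\wedge T]\},
\]
where $\tau^i,\sigma^i$ are the birth and death times of particle $i$. On $\{N^*_T\le K\}$, $R_T$ is the maximum of at most $K$ quantities of the form $\sup_{s\le T}|X^{(i)}_s|$, each a.s.\ finite by the non-explosion just established (the birth position of any particle is itself controlled by the sup of its parent's trajectory, so a straightforward recursion along at most $K$ generations keeps everything a.s.\ finite). Therefore $R_T\ind_{\{N^*_T\le K\}}<\infty$ a.s., and one can choose $m$ so large that $\PP(R_T>m,\,N^*_T\le K)<\varepsilon/2$. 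Since $\{T_m\le T\}\subset\{R_T>m\}$, we conclude $\PP_{\delta_x}(T_m\le T)<\varepsilon$ for all $m$ large, as required.

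The main obstacle is the recursive aspect in step three: each daughter inherits her starting position from her mother's trajectory, so a priori the tree of starting positions could blow up. However, once the total particle count is capped by $K$ via the Yule coupling, we only need to compose at most $K$ a.s.-finite sups, which stays a.s.\ finite; this is why the coupling step must come \emph{before} the pathwise estimates on $(X_t)$.
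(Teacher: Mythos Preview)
Your argument is correct and establishes HP5) as stated. It is, however, genuinely different from the paper's proof.

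The paper proceeds via a Lyapunov moment bound: with $h(y)=y^2+1$, one shows (using the semimartingale decomposition \eqref{eq:Master General}, the bounds $\mathscr{G}^{(e)}h\le C\,h$, $(h')^2\le 4h$ available under HA1) and HJ1), Doob's $L^2$ inequality and Gronwall) that
\[
\Ex{\sup_{t\le T}\langle Z_t,h\rangle}\le C_2\,h(x)\,e^{C_2 T}.
\]
Since $\langle Z_t,h\rangle\ge (1+m^2)\langle Z_t,\ind_{[-m,m]^c}\rangle$, Markov's inequality yields the quantitative estimate $(1+m^2)\,\Px{T_m<T}\le C_2\,h(x)\,e^{C_2 T}$, and HP5) follows.

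Your route is more elementary and pathwise: you cap the total particle count on $[0,T]$ via the Yule coupling, then invoke non-explosion of each underlying one-particle dynamics (Gronwall for the diffusion under HA1); finitely many jumps with finite displacement under HJ1)). This avoids the semimartingale machinery entirely. The trade-off is that the paper's method delivers an explicit rate $\Px{T_m<T}=O(m^{-2})$, uniform for $(t,x)$ in compacts, which is what is actually used downstream in Section~\ref{sec:cutoff} to obtain the uniform-on-compacts convergence of the truncated moments; your argument gives only $\Px{T_m<T}\to 0$ for each fixed $(t,x)$, which matches HP5) as written but would need a small extra step (e.g.\ monotonicity in $t$ and continuity in $x$ via the explicit pathwise bounds you derived) to recover that uniformity. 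A minor simplification: the detour through a fixed $K$ is unnecessary, since $N^*_T<\infty$ a.s.\ already makes $R_T$ a finite maximum of a.s.\ finite quantities.
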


\begin{proof} 
We give a proof which applies in the three different classes. Let $e\in \{1,2,3\}$ the index of the example class.  
We denote by $\mathscr{G}^{(e)}$ the generator of the  underlying process. More precisely
$$\mathscr{G}^{(1)}f =\frac12 f''-af' \ ;\  \mathscr{G}^{(2)} f = \frac12 f''-af'+ L_{1}f\ ;\   \mathscr{G}^{(3)} f = -f'+ L_{1}f.$$
 The empirical measure-valued process $(Z_{t})$  satisfies
the semi-martingale decomposition \eqref{eq:Master General}. More precisely, for all functions  $f \in C^2_c(\RR)$, 
we have for $Z_0=\delta_x$, $x\in \RR$
 and
Brownian motions $(B^i)_{i\in \NN}$ and independent Poisson point measure $N^i$ with intensity measure 
$ds \otimes d\theta \otimes \eta(d\xi)$ the decomposition
\begin{equation}
\label{eq:MasterZ}
\begin{array}{ll}
\langle Z_t\, ,f\rangle&\hspace{-0.2cm}=f(x)+\int_0^t \langle Z_s,\mathscr{G}^{(e)} f\rangle ds+ M^{(e)} _{f}(t) \\
&\hspace{0.1cm}+\int_{[0,t]\times \RR_+\times \NN} \ind_{1\le j\le N_{s}} \,f(X^j_s)
\left(\ind_{\theta \le b(X^j_{s-})}-\ind_{b(X^j_{s-})<\theta\le b(X^j_{s-})+d(X^j_{s-})}\right)
Q(ds,d\theta,dj),
\end{array}  
\end{equation}
where $Q$ is a Poisson point measure with intensity measure $ds \otimes d\theta \otimes n(di)$
on on $\RR_+\times \RR_+\times \NN$ 
 and $M^{(e)} _{f}$ a square integrable martingale with predictable quadratic variation
\begin{equation}
\label{eq:bound corchete I}
\left\langle M^{(e)} _{f}\right\rangle_{t}=1_{{e=1,2}}\int_0^{t} \left\langle Z_r, (f')^2\right\rangle \, dr 
+1_{{e=2,3}} \int_0^{t} \left\langle Z_r, \int  (f( z)-f(\sbullet))^2\;R(\sbullet,dz)\right\rangle\, dr.
\end{equation}

Let us  introduce the function  $\ h(y)=y^2+1$, and for any integer $n$ let us introduce the stopping time
$$
\tau_{n}= \inf\{t \ge 0; \langle Z_{t}, h\rangle\ge n\}.
$$
We will prove below that there exists a constant $C_{2}$, independent of $n$, such that for any $T>0$,
\begin{equation}
\label{estimate}
\Ex{\sup\limits_{t\le T\wedge \tau_{n}}\, \langle Z_{t}, h\rangle} \le C_{2}\,h(x)\, e^{C_{2}T}.
\end{equation}
 It is then classical to prove that the sequence $(\tau_{n})_{n}$ tends almost surely to infinity as $n$ tends to infinity  
 (see for example M\'el\'eard-Fournier \cite{fournier}) and by Fatou's Lemma, that
 \begin{equation}
 \label{estimate1}
\Ex{\sup\limits_{t\le T} \, \langle Z_{t}, h\rangle} \le C_{2}h(x)e^{C_{2}T}.
\end{equation}
Now, we prove HP5).  Recall that for any integer $m$, we consider the stopping time
$$
T_{m}= \inf\Big\{t \ge 0: \langle Z_{t}, 1_{\{[-m,m]^c\}}\rangle >0\Big\}.
$$
Let us first note that 
$\langle Z_{t}, h\rangle \ge \left \langle Z_{t}, 1_{\{[-m,m]^c\}}h\right\rangle \ge (1+m^2) \left\langle Z_{t}, 1_{\{[-m,m]^c\}}\right\rangle$.
Now, if $T_m<T$ then, we have $1\le  \sup\limits_{t<T\wedge T_m}\langle Z_t, 1_{\{[-m,m]^c}\}\rangle$, which implies that
$$
(1+m^2)\Px{ T_m<T} \le 
(1+m^2)\Ex{ \sup\limits_{t<T\wedge T_m}\langle Z_t, 1_{\{[-m,m]^c\}}\rangle} \le C_{2}h(x)e^{C_{2}T},
$$
which implies that $(T_{m})$ tends a.s. to infinity.

Let us now come back to the proof of \eqref{estimate}, which is very standard.
We assume $n$ large enough so that   $1+x^2<n$. Using  \eqref{eq:MasterZ} and the positivity of $h$, we get for $0\le s\le t$
\begin{equation}
\label{eq:masterZ g}
 \langle Z_{s\wedge  \tau_{n}},h\rangle \le h(x)+
\int_0^{s\wedge \tau_{n}} \langle Z_r,\mathscr{G}^{(e)}h\rangle dr+M^{(e)} _{h}(s\wedge  \tau_{n})
\hspace{0.2cm}+\int_{[0,s\wedge  \tau_{n}]\times \RR_+\times \NN} \ind_{1\le i\le N_{r-}} \,h(X^i_{r-})\,
\ind_{\theta \le b(X^i_{r-})}\,
Q(dr,d\theta,di).
\end{equation}

The predictable 
quadratic variation for the local martingale is 
\begin{equation}
\label{eq:bound corchete}
\left\langle M^{(e)} _{h}\right\rangle_{s\wedge \tau_{n}}= \pcun_{e\neq 3}\int_0^{s\wedge  \tau_{n}} \left\langle Z_r, (h')^2\right\rangle \, dr 
+\pcun_{e\neq 1} \int_0^{s\wedge  \tau_{n}} \left\langle Z_r, \int  (h( z)-h(\sbullet))^2\;R(\sbullet,dz)\right\rangle\, dr.
\end{equation}
Under the assumptions HA1) and HJ2) and since $z^2-x^2 = (z-x)^2+2x(z-x)$, we have for any $e\in \{1,2,3\}$ that
$$
\mathscr{G}^{(e)}h (x) \leq C^{(e)}\, h(x),\quad (h'(x))^2\le 4 h(x), \quad |h'(x)|\le 2 h(x),
$$
where $C^{(e)}$ is a positive constant. 
For any $T>0$, using Doob's inequality with exponent $p=2$, we obtain for $t\le T$, 
\begin{align*}
\Ex{\sup\limits_{s\le t\wedge \tau_{n}}\langle Z_{s},h\rangle} &\le h(x)+
C^{(e)}\, \Ex{\int_0^{t\wedge \tau_{n}}\langle Z_{s},h\rangle\,ds}+b^*\, \Ex{\int_0^{t\wedge \tau_{n}}\langle Z_{s},h \rangle\,ds}
+C_1 \Ex{\int_0^{t\wedge \tau_{n}} \langle Z_s, h \rangle \,ds}^{1/2}\\
&\le h(x) + C_1 +C_2 \Ex{\int_0^{t\wedge \tau_{n}}\langle Z_{s},h\rangle\,ds}
\le C_2 h(x) + C_2\int_0^t  \Ex{\sup\limits_{r\le s\wedge \tau_{n}}\langle Z_{r},h\rangle} \, dr,
\end{align*}
where $C_1=2\Big(1+\sup\limits_y \int (1+|z-y|^4) R(y,dz)\Big)^{1/2}$, 
and $C_2=3\max\{C^{(1)},C^{(2)},C^{(3)},b^*, C_1,1\}$.
Gronwall's Lemma allows us to conclude the proof of \eqref{estimate}.

Finally, we have proved  that $T_m$ tends a.s. to infinity  as $m\to \infty$.
\end{proof}

\subsection{Proof of Theorem \ref{the:A}}\label{preuve:the:A}

Here  $P^{(1)}_{t}$ denotes  the Feynman-Kac semigroup \eqref{FK}  defined for $t>0$ by 
$$
P^{(1)}_{t}f(x) =  \Ex{\langle Z_{t},f \rangle} = \EE_{x}\left(e^{\int_{0}^t V(X_{s})ds}\, f(X_{t})\right),
$$
for $(X_{t})$ introduced in Subsection \ref{21}.

 As seen in Corollary 3.4 in \cite{CMSM}, this semigroup coincides, for $f\in C^2_{c}(\RR)$, with a
 semigroup $\overline P_{t}$ defined on $L^2(\rho(dy))$, with $\rho(dy)= e^{-2\ell(y)} \, dy$.  This semigroup  is given by 
 \begin{equation}
 \label{serie}
\overline P_t(f)(x)=
\sum_{k\ge 0} e^{-\lambda_k t} \Theta_k(x) \int \Theta_k(y) f(y)\,  e^{-2\ell(y)} \, dy\;.
\end{equation}
Here $(\Theta_k)_k$ is an orthonormal basis of $L^2(\rho(dy))$, made up of eigenfunctions of each $\overline P_t$ for $t\ge 0$, 
with the corresponding set of eigenvalues $(e^{-\lambda_k t})_k$. We point out that $(\lambda_k)_k$ is a strictly
increasing sequence of real numbers converging to $+\infty$.

Using the Dominated Convergence Theorem, we deduce that for 
 $t>0$, the two semigroups  $P^{(1)}_{t}$ and $\overline P_t$  extend  and coincide in  $\CB$. 

\begin{proposition}\label{HP1-HP2-diff}
Under hypotheses (HV), (HD) and (HA),  the properties HP1) and HP2) hold.
\end{proposition}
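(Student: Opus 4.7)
\textbf{Proof plan for Proposition \ref{HP1-HP2-diff}.} The strategy is to transfer the $L^2(\rho)$-spectral results of \cite{CMSM} (summarised in \eqref{serie}) to the Banach spaces $\CB$ and $\CBZ$. As noted after \eqref{serie}, the semigroups $\overline{P}_t$ and $P_t^{(1)}$ agree on $C^2_c(\RR)$ and, by dominated convergence, their extensions coincide on $\CB$ for every $t>0$. This will let me read off HP1) and HP2) from the spectral decomposition, provided I can (a) place the ground state in $\CBZ$ and (b) convert the $L^2(\rho)$-spectral gap into an $L^\infty$-estimate.

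\textbf{Step 1 (HP1): identification of $\Theta_0$.} The ground state $\Theta_0$ from \cite{CMSM} is the first $L^2(\rho)$-eigenfunction of each $\overline{P}_t$, with eigenvalue $e^{-\lambda_0 t}$, and may be chosen strictly positive and $L^2(\rho)$-normalised. To place it in $\CBZ$, I would use the Girsanov transformation licensed by (HA3): writing $\Theta_0 = e^{\ell}\phi$ conjugates $\mathscr{L}^{(1)}$ to the Schr\"odinger operator $\tfrac12 \partial_x^2 + \widetilde V$ with $\widetilde V = V + \tfrac12(a'-a^2)$. Under (HV2), (HD) and (HA3) one has $\widetilde V(x)\to -\infty$ at infinity, so standard Agmon-type estimates yield super-exponential decay of $\phi$; combined with the linear growth of $|\ell|$ in (HA2) this gives $\Theta_0 \in \CBZ$. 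The eigenvalue identity $P_t^{(1)}\Theta_0 = e^{-\lambda_0 t}\Theta_0$ then holds pointwise on $\RR$.

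\textbf{Step 2 (construction of $\mu_0$).} Define $\mu_0(dy) = \Theta_0(y)\,e^{-2\ell(y)}\,dy$. Strict positivity of $\Theta_0$ and of $e^{-2\ell}$ shows $\mu_0$ charges every nonempty open set; the decay of $\Theta_0$ from Step 1 together with (HA2) makes $\mu_0$ a finite measure; the $L^2(\rho)$-normalisation of $\Theta_0$ gives $\int \Theta_0\,d\mu_0 = \int \Theta_0^2\,d\rho = 1$. The projection $\Pi(g) = \Theta_0\int g\,d\mu_0$ then coincides with the spectral projector onto the first eigenspace of $\overline{P}_t$ in $L^2(\rho)$, and $\mu_0$ is a left eigenvector of $P_t^{(1)}$ on $M_F(\RR)$ with eigenvalue $e^{-\lambda_0 t}$.

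\textbf{Step 3 (HP2): upgrading the spectral gap to $L^\infty$.} The series \eqref{serie} and the ordering $\lambda_0<\lambda_1\le \lambda_2\le\cdots$ give, for $g\in L^2(\rho)$,
$$\bigl\|e^{\lambda_0 t}\,\overline P_t\,g-\Pi(g)\bigr\|_{L^2(\rho)}\;\le\; e^{-(\lambda_1-\lambda_0)t}\,\|g\|_{L^2(\rho)}.$$
The main obstacle is converting this $L^2(\rho)$ bound into the uniform bound \eqref{eq:bound_uniforme} for arbitrary $g\in\CB$ (which need not be in $L^2(\rho)$). My plan is to fix a small $t_0>0$, use the semigroup identity $P_t^{(1)} = P_{t_0}^{(1)}\circ P_{t-t_0}^{(1)}$ and the intertwining $P_{t_0}^{(1)}\Pi = e^{-\lambda_0 t_0}\Pi$, to write
$$e^{\lambda_0 t}\,P_t^{(1)} g-\Pi(g)\;=\;e^{\lambda_0 t_0}\,P_{t_0}^{(1)}\!\Bigl(e^{\lambda_0(t-t_0)}\,P_{t-t_0}^{(1)} g-\Pi(g)\Bigr).$$
For $g\in\CB$, I would first truncate $g$ by a sequence of compactly supported cut-offs to land in $L^2(\rho)$, apply the $L^2(\rho)$ spectral inequality to the inner bracket, and then pass to $L^\infty$ by estimating the operator norm of $P_{t_0}^{(1)}:L^2(\rho)\to L^\infty(\RR)$. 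This last ultracontractivity bound,
$$\sup_{x\in\RR}\bigl\|p_{t_0}(x,\sbullet)\bigr\|_{L^2(\rho)}<+\infty,$$
follows from Gaussian-type heat-kernel estimates for \eqref{diffusion} (available from (HA1)--(HA2)) combined with the upper bound $V\le A(V)<\infty$ from (HV2). Controlling the tail of the truncation uses $\Theta_0\in\CBZ$ and $V(x)\to -\infty$ (by (HD)) to ensure the cut-offs contribute an $O(e^{-(\lambda_1-\lambda_0)t})$ error.

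The hard part is this last $L^2\to L^\infty$ passage, i.e.\ the ultracontractivity / kernel estimate; everything else is a packaging of the results in \cite{CMSM}. Once the kernel bound is in hand, HP1) follows from Steps 1--2 and HP2) from Step 3 with the explicit constants $\lambda_1$, $H$ coming from the $L^2(\rho)$ gap and the $L^2(\rho)\to L^\infty$ norm of $P_{t_0}^{(1)}$.
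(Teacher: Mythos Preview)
Your plan is sound, but you are working harder than the paper does. The paper's proof is essentially one sentence: it invokes Theorem~2.7 of \cite{CMSM}, which already delivers HP1) and the uniform $L^\infty$ estimate \eqref{eq:bound_uniforme} directly --- not merely the $L^2(\rho)$ spectral gap you take as your starting point in Step~3. In other words, the ultracontractivity\,/\,$L^2(\rho)\!\to\!L^\infty$ upgrade that you flag as ``the hard part'' is precisely what is already packaged inside Theorem~2.7 of \cite{CMSM}; your Steps~1--3 amount to a sketch of how one would \emph{prove} that theorem (via the Girsanov conjugation, Agmon decay for $\Theta_0$, and a kernel/ultracontractivity bound), rather than how one \emph{uses} it. The ideas you outline are exactly those that underlie \cite{CMSM} (the paper even cites the relevant pointwise bounds, formulas~(4.13)--(4.14) of \cite{CMSM}, later in the proof of Proposition~\ref{HP3-HP4-diff}), so your route would work but duplicates the cited reference.

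The only additional content in the paper's proof is the observation that Theorem~2.7(2) of \cite{CMSM} gives \eqref{eq:bound_uniforme} only for $t\ge t_0>0$; for $0\le t\le t_0$ the bound follows trivially from the uniform boundedness of $e^{\lambda_0 t}P_t^{(1)}$ and $\Pi$ on $\CB$, enlarging $H$ to absorb the resulting constant (legitimate since $e^{-(\lambda_1-\lambda_0)t}$ is bounded away from zero on $[0,t_0]$). Your proposal does not address this small-time regime explicitly, but it is a minor bookkeeping point and would fall out of your argument as well.
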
 
\begin{proof}
The proof of HP1) and HP2) follows at once from the hypotheses and from  Theorem 2.7
of \cite{CMSM}. Note that in Theorem 2.7-$(2)$  of \cite{CMSM},  Inequality  \eqref{eq:bound_uniforme} is
proved  for $t\ge t_0>0$, for
some $t_0>0$, 
and the inequality for $0\le t\le t_0$ follows from the continuity of
the operators. 
\end{proof}
We notice that the measure $\mu_0$ given in HP2) is $\mu_0(dy)=\Theta_0(y) \rho(dy)=\Theta_0(y) e^{-2\ell(y)} \, dy$.


\medskip 
\begin{proposition}\label{HP3-HP4-diff}
Under hypothesis (HV), (HD) and (HA),  the properties  HP3), HP4) and HQ) hold.
\end{proposition}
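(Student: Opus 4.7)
The plan is to transfer the $L^2(\rho)$--spectral information from \cite{CMSM} --- already invoked to establish HP1) and HP2) via Proposition \ref{HP1-HP2-diff} --- to the sup-norm setting, exploiting the strong confining effect \eqref{H4} coming from (HV) and (HD).

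First I would address HP4), which is the cornerstone. Starting from the eigen-expansion \eqref{serie} valid in $L^2(\rho)$, I would recall from \cite{CMSM} that each eigenfunction $\Theta_{k}$ actually belongs to $\CBZ$ (this is where (HA3) enters: the Girsanov change relating $\mathscr{L}^{(1)}$ to $\frac12 u''+\tilde V u$ together with $\tilde V\to-\infty$ forces the $\Theta_k$'s to decay at infinity) and that the eigenvalues grow quickly enough that $\sum_k e^{-\lambda_k t}\|\Theta_k\|_\infty^2<\infty$ for every $t>0$. For $f\in\CB$, I would write $P_t f = P_{t/2}(P_{t/2} f)$, noting that $P_{t/2}f\in L^2(\rho)$ because $\rho$ integrates bounded functions polynomially (using that $V(X_s)\le -E|X_s|$ for large $|X_s|$ via \eqref{H4}, which makes the Feynman-Kac weight integrable against $\rho$). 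Then \eqref{serie} gives a series that converges in sup norm to a function in $\CBZ$, establishing HP4).

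Next, for HP3), strong continuity in $\CBZ$ follows from HP4) and a density argument: on $C^\infty_c(\RR)$ one gets $P_t f\to f$ uniformly by standard Feynman-Kac estimates (dominated convergence, using $V\le A(V)<\infty$) and by the $\CBZ$-valued resolvent bound one extends to all of $\CBZ$. Irreducibility (in the sense of \cite{AGGGLMNFS}) follows from the strict positivity of the diffusion transition density on $\RR\times\RR$ for every $t>0$ together with the positivity of $e^{\int_0^t V(X_s)ds}$, via \cite{AGGGLMNFS} Prop. 3.3 p.183. The property $\int_0^t P_s\,ds: \MB\to \CB$ comes from the fact that $(X_s)$ has a jointly continuous transition density $p_s(x,y)$ on $(0,\infty)\times\RR\times\RR$ (standard for SDEs with $C^1$ drift of linear growth), so
\[
\int_0^t P_s f(x)\,ds=\int_0^t\!\!\int p_s(x,y)\,\EE_{(x,y),s}\!\left(e^{\int_0^s V(X_r)dr}\right) f(y)\,dy\,ds,
\]
where $\EE_{(x,y),s}$ denotes conditioning on $(X_0,X_s)=(x,y)$; continuity in $x$ of the Brownian bridge conditional expectation plus uniform integrability coming from \eqref{H4} yields $\CB$-regularity for every $f\in\MB$.

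Finally, for HQ), notice the potential driving $(Q_t)$ is $-(b+d)$, which by (HD) still satisfies $-(b+d)(x)\to -\infty$. If $b+d$ does not vanish, HQ holds trivially by definition; otherwise we rerun the $L^2(\rho)$--spectral machinery of \cite{CMSM} for the operator $\frac12 u''-au'-(b+d)u$, which fits the same framework as $\mathscr{L}^{(1)}$, and obtain for $(Q_t)$ a discrete spectrum with a spectral gap above a dominant eigenvalue. This yields quasicompactness in $L^2(\rho)$, and the transfer to $\CBZ$ is done exactly as for HP4) (eigenfunctions in $\CBZ$, uniformly convergent eigen-expansion for $t>0$). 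The main obstacle I anticipate is the transfer step from $L^2(\rho)$ to $\CBZ$: it requires uniform bounds on the eigenfunctions and summability of $\sum_k e^{-\lambda_k t}\|\Theta_k\|_\infty^2$, both of which ultimately rest on the decay assumption (HD) and the linear growth control (HA2) on $\ell$.
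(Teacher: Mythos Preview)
Your overall strategy --- transfer the $L^2(\rho)$ spectral picture of \cite{CMSM} to the sup-norm setting via the confining effect of $V$ --- is the right one and matches the paper's spirit, but the paper executes it far more economically by citing off-the-shelf results, and in one place (HQ) it uses a genuinely different and simpler mechanism.

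For HP3)--HP4) the paper proceeds almost entirely by citation: strong continuity in $\CBZ$ is read off from \cite{BL} Proposition 2.2.7, and the crucial fact that $P_t^{(1)}$ maps $\MB$ to $\CBZ$ is quoted directly from \cite{CMSM} Theorem 2.1(iv) (specifically the pointwise bounds (4.13)--(4.14) there). Your eigen-expansion route would ultimately reprove those bounds, but the step ``$P_{t/2}f\in L^2(\rho)$ because $\rho$ integrates bounded functions polynomially'' is not quite right as stated: under (HA2) the speed measure $\rho(dy)=e^{-2\ell(y)}dy$ can grow exponentially, so boundedness of $P_{t/2}f$ is not enough --- you genuinely need the decay forced by \eqref{H4}, which is exactly the content of the \cite{CMSM} estimates you would be re-deriving. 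For irreducibility the paper uses the spectral expansion itself (the leading term $e^{-\lambda_0 t}\Theta_0(x)\int f\Theta_0\,d\rho$ is strictly positive and dominates the remainder for large $t$), whereas your transition-density argument is an equally valid alternative. Your bridge-conditioning argument for $\int_0^t P_s\,ds:\MB\to\CB$ is unnecessarily heavy: once HP4) is known, this follows immediately by dominated convergence since each $P_s f\in\CBZ$ for $s>0$.

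The real divergence is HQ). You propose to rerun the $L^2(\rho)$ spectral machinery for the potential $-(b+d)$ and then transfer to $\CBZ$. The paper instead observes that $P_t^{(1)}$ is genuinely \emph{compact} (not merely quasi-compact) in $\CBZ$ for $t>0$, by \cite{BL} Theorem 5.1.11, and that compactness of a semigroup is preserved under bounded perturbation of the generator (\cite{Pazy} Proposition 1.4 p.79). Since $(Q_t)$ is obtained from $(P_t^{(1)})$ by the bounded perturbation $-2b$, it is compact, hence quasi-compact, and HQ) follows in two lines. This compactness-plus-perturbation route avoids any second spectral analysis and is the cleaner argument to retain.
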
 
\begin{proof}
The strong continuity of $\left(P_{t}^{(1)}\right)$ in $\CBZ$ follows from \cite{BL} Proposition 2.2.7 p.18 since 
 the hypotheses  Hyp. 2.0.1 in page 3 in \cite{BL} are satisfied in our case. 
Theorem 2.1 (iv) in \cite{CMSM} implies that $\left(P_{t}^{(1)}\right)_t$ maps $\MB$ to $\CBZ$. Indeed we show in particular in 
\cite{CMSM} Section 4.2 formulas (4.13) and (4.14), that $P^{(1)}_{t}1 \in \CBZ$. 
This also implies that $\int_{0}^t P^{(1)}_{s}\, ds$ 
maps $\MB$ to $\CB$.

Let us show that $\left(P_{t}^{(1)}\right)$ is irreducible (see Definition 3.1 (3) page 182  in \cite{AGGGLMNFS}). For that we take $f\in \CBZ^+$.
Let us first recall (as seen in \cite{CMSM}) that the function $\Theta_{0}$ appearing in \eqref{serie} is (strictly) positive on 
$\RR$ and that for any $t>0$, 
$$
P_{t}^{(1)}f(x) = \int_{\RR} f(y) \sum_{k} e^{-\lambda_{k}t}\Theta_{k}(x) \Theta_{k}(y) e^{-2\ell(y)}dy.
$$
 Let us fix $x\in \RR$. Using again the bounds obtained from \cite{CMSM} Section 4.2 formulas (4.13) and (4.14), we have
 $$P_{t}^{(1)}f(x) = e^{-\lambda_{0}t}\Theta_{0}(x)\int_{\RR} f(y)\Theta_{0}(y)  e^{-2\ell(y)}dy + R_{t}(x,f),$$
 where 
 $$|R_{t}(x,f)| \le C(x) \|f\|_{\infty} O(e^{-\lambda_{1}t}),$$
 $C(x)$ being a positive constant, from where irreducibility follows.
 
 \noindent For any $t>0$, the operator $P_{t}^{(1)}$  is compact in $\CBZ$ (see, for example, \cite{BL} Theorem 5.1.11 p. 84). 
 The semigroup $\left(Q_{t}^{(1)}\right)_t$ that is associated with $\left(P_{t}^{(1)}\right)_t$ 
 by the bounded perturbation $-2b$ of the generator 
 $\mathscr{L}^{(1)}$  is also compact (see, for example, \cite{Pazy} Theorem 1.1 p.76 ). Then it is quasi-compact.

\end{proof}

\subsection{Proof of Theorem \ref{the:B}}\label{preuve:the:B}
Here  $P^{(2)}_{t}$ denotes  the Feynman-Kac semigroup \eqref{FK}  defined for $t>0$ by 
$$
P^{(2)}_{t}f(x) =  \Ex{\langle Z_{t},f \rangle} = \EE_{x}\left(e^{\int_{0}^t V(X_{s})ds}\, f(X_{t})\right),
$$
for $(X_{t})$ introduced in Subsection \ref{22}.
Let us denote by $\mathscr{L}^{(2)}$ the associated generator. Recall that
$$\mathscr{L}^{(2)} f = \mathscr{L}^{(1)} f + L_{1}f,$$
where  $\mathscr{L}^{(1)}$ is the  generator associated with $P^{(1)}_{t}$ and that
$$L_{1}f(x) =  \int (f(z)- f(x))\;R(x,dz).$$

\medskip 
\begin{lemma}
\label{borneC0}
Under the  assumptions (HV) and (HA)
and the additional assumptions HJ1)-HJ2), the operator $L_{2}$ defined by
$$
L_{2}f(x) =  \int f(z)\;R(x,dz)
$$
is  bounded in $\CBZ$. It immediately follows that $L_{1}$ is  bounded in $\CBZ$.
\end{lemma}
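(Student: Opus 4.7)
The plan is to establish Lemma \ref{borneC0} in three short steps: boundedness of $L_2f$ in sup-norm, continuity of $L_2f$, and vanishing of $L_2f$ at infinity, after which the statement for $L_1$ follows by subtracting a multiplication operator.

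First, I would handle boundedness and continuity of $L_2f$ for $f\in\CBZ$. Writing $M=\sup_y\int R(y,dz)$, assumption HJ1) gives $M<\infty$ (since $1\le 1+|z-y|^4$), so
$$
|L_2 f(x)| \le \|f\|_\infty\, R(x,\RR) \le M\,\|f\|_\infty,
$$
which is the operator-norm bound. Continuity of $L_2f$ is immediate from HJ2): if $x_n\to x$, then $R(x_n,\cdot)\to R(x,\cdot)$ weakly, and since $f\in\CBZ\subset C_b(\RR)$, $L_2f(x_n)=\int f\,dR(x_n,\cdot)\to\int f\,dR(x,\cdot)=L_2f(x)$.

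Second, and this is the only nontrivial point, I would show $L_2f$ vanishes at infinity. Fix $\varepsilon>0$ and choose $K>0$ such that $|f(z)|\le\varepsilon$ for $|z|\ge K$. Split
$$
|L_2f(x)|\le \|f\|_\infty\,R(x,[-K,K]) + \varepsilon\, M.
$$
For $|x|\ge 2K$ we have $[-K,K]\subset\{z:|z-x|\ge |x|/2\}$, so Markov's inequality combined with the uniform fourth-moment bound in HJ1) yields
$$
R(x,[-K,K]) \le \frac{1}{(|x|/2)^4}\,\sup_y\int|z-y|^4\,R(y,dz) \le \frac{C}{|x|^4}\xrightarrow[|x|\to\infty]{}0.
$$
Thus $\limsup_{|x|\to\infty}|L_2f(x)|\le \varepsilon M$, and since $\varepsilon$ is arbitrary, $L_2f\in\CBZ$. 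This step is the main technical point: it is precisely what the fourth-moment assumption in HJ1) is used for (any uniform moment of positive order would do).

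Third, I would deduce the statement for $L_1$ by writing $L_1f(x)=L_2f(x)-\phi(x)f(x)$ with $\phi(x)=\int R(x,dz)=L_2(\mathbf{1})(x)$. By HJ1), $\phi$ is bounded by $M$, and by HJ2) (applied to the constant function $1\in C_b(\RR)$), $\phi$ is continuous; hence $\phi\in C_b(\RR)$. Then $f\in\CBZ$ and $\phi\in C_b(\RR)$ imply $\phi f\in\CBZ$, and combined with the previous two steps we obtain $L_1f\in\CBZ$ with $\|L_1f\|_\infty\le 2M\|f\|_\infty$, giving boundedness of $L_1$ on $\CBZ$.
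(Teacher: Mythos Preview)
Your proof is correct and follows essentially the same approach as the paper: both use the uniform fourth-moment bound in HJ1) to show that the mass of $R(x,\cdot)$ on any fixed compact set decays like $|x|^{-4}$, forcing $L_2f$ to vanish at infinity. The only cosmetic difference is that the paper first reduces to compactly supported $f$ by density in $\CBZ$ and then bounds $\int_K f\,dR(x,\cdot)$ directly, whereas you work with general $f\in\CBZ$ via an $\varepsilon$-truncation; your version is arguably cleaner since it avoids the density step.
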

\begin{proof}
Assume that $f\in \CBZ$. The continuity on $x$ of $L_2 f(x)$ follows from Hypothesis HJ1). In order to prove 
that $L_2 f \in \CBZ$, 
it is enough to do it under the extra assumption that $f$ is compactly supported. Denote by $K$ 
the support of $f$, then
$$
\left| \int f(z)\;R(x,dz)\right| =\left|  \int_{z\in K} \frac{f(z)}{1+ (x-z)^2}\;(1+ (x-z)^2)\;R(x,dz)\right| 
\leq \sup_{z\in K}  \frac{1}{1+  (x-z)^4}\; \|f\|_{\infty} \int (1+  (x-z)^4)\; R(y, dz), 
$$
which tends to $0$, when $x$ tends to infinity, using HJ2). 

The boundedness of $L_{1}$ follows  using the continuity  in $x$ of the total mass of the jump measure $R$.
\end{proof}

\medskip
\begin{proposition} 
\label{B4}
Under the hypotheses (HV), (HA), (HD) and (HJ), the  semigroup $\left(P^{(2)}_{t}\right)_t$ is 
a strongly continuous, irreducible, and compact semigroup in $\CBZ$. 
Its spectrum in $\CBZ$ contains a simple isolated real eigenvalue $-\lambda_{0}$, with a corresponding positive eigenfunction
$\Theta_0\in \CBZ$, namely for all $t\ge 0, x$,
$$
P^{(2)}_t(\Theta_0)(x)=e^{-\lambda_0 t}\Theta_0(x).
$$
The rest of the
spectrum is contained in a closed half plane $\Re z\le -\lambda_{1}$
for some $\lambda_{1}>\lambda_{0}$.  
\end{proposition}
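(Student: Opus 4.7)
The plan is to view $(P^{(2)}_t)$ as a bounded perturbation of the already well-understood semigroup $(P^{(1)}_t)$ from Propositions \ref{HP1-HP2-diff} and \ref{HP3-HP4-diff}, and then to combine compactness with Krein-Rutman machinery to extract the spectral structure. The key input is Lemma \ref{borneC0}, which ensures that $L_1$ is a bounded operator on $\CBZ$, so that $\mathscr{L}^{(2)} = \mathscr{L}^{(1)} + L_1$ is indeed a bounded perturbation of $\mathscr{L}^{(1)}$.

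First I would deduce strong continuity of $(P^{(2)}_t)$ on $\CBZ$ from the bounded perturbation theorem (Theorem 1.1 page 75 in \cite{Pazy}), exactly as done for the semigroup $(Q_t)$ in Lemma \ref{prop:Qt}. Compactness of each $P^{(2)}_t$ for $t>0$ would then follow from the Dyson-Phillips expansion of $P^{(2)}_t$ in terms of $P^{(1)}_s$ and $L_1$: every term is a norm-convergent composition involving the compact operators $P^{(1)}_s$ (compactness of $P^{(1)}_s$ is part of the proof of Proposition \ref{HP3-HP4-diff}) and the bounded operator $L_1$, and the resulting series converges in operator norm, so compactness is preserved. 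For positivity and irreducibility, I would use the probabilistic lower bound $P^{(2)}_t f \ge e^{-\kappa t} P^{(1)}_t f$ valid for $f \ge 0$, where $\kappa = \sup_y R(y,\RR) < \infty$ by HJ1); this is obtained by conditioning on the event that no jump occurs before time $t$ in the underlying process. Irreducibility of $(P^{(1)}_t)$ established in Proposition \ref{HP3-HP4-diff} then transfers immediately to $(P^{(2)}_t)$, while positivity is direct from the Feynman-Kac formula.

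For the spectral conclusion I would fix $t>0$ and apply a Krein-Rutman-type theorem (see Chapter V in \cite{AGGGLMNFS}) to the compact, positive, irreducible operator $P^{(2)}_t$ on $\CBZ$: the spectral radius $e^{-\lambda_0 t}$ is attained as a simple eigenvalue with a strictly positive eigenfunction $\Theta_0 \in \CBZ$, and by the semigroup identity $P^{(2)}_s \Theta_0 = e^{-\lambda_0 s} \Theta_0$ for every $s\ge 0$. Compactness of $P^{(2)}_t$ then forces the nonzero spectrum to consist of isolated eigenvalues of finite multiplicity accumulating only at $0$; the second largest modulus is therefore strictly smaller than $e^{-\lambda_0 t}$, providing a $\lambda_1 > \lambda_0$ such that the rest of the spectrum lies in $\{\Re z \le -\lambda_1\}$. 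The main obstacle is verifying that the hypotheses of Krein-Rutman genuinely survive the perturbation by $L_1$, most notably irreducibility in $\CBZ$ and the fact that the dominant eigenfunction lies in $\CBZ$ rather than merely in $\CB$; the domination bound above handles the former, and the latter follows because $P^{(2)}_t$ maps $\CB$ into $\CBZ$ (transferred from HP4) for $(P^{(1)}_t)$ through the Dyson-Phillips series, as $L_1$ preserves $\CBZ$).
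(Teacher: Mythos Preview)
Your proposal is correct and follows essentially the same route as the paper: bounded perturbation theory for strong continuity, compactness inherited from $(P^{(1)}_t)$, irreducibility via a domination argument, and then the Perron--Frobenius machinery of \cite{AGGGLMNFS} for the spectral gap. The only noteworthy variation is in the irreducibility step: you dominate from below by $e^{-\kappa t}P^{(1)}_t$ via the probabilistic ``no jump before $t$'' event, whereas the paper splits $\mathscr{L}^{(2)}=\tilde{\mathscr{G}}+L_2$ with $L_2\ge 0$, applies Duhamel to get $P^{(2)}_t f\ge S_t f$ for the diffusion semigroup $(S_t)$ with potential $\tilde V=V-\tilde R$, and then invokes \cite{CMSM} for the irreducibility of $(S_t)$. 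Both lower bounds are by a Feynman--Kac diffusion semigroup already known to be irreducible, so the difference is cosmetic.
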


\begin{proof}
The operator  $\mathscr{L}^{(2)} $  is a bounded perturbation of $\mathscr{L}^{(1)}$, hence it generates a strongly 
continuous semigroup in $\CBZ$ (see, for example,  \cite{Pazy} Theorem 1.1 p.76 ). 
The semigroup  $\left(P^{(2)}_{t}\right)_t$, being associated
to  $\left(P^{(1)}_{t}\right)_t$ by a bounded perturbation  of the generator $\mathscr{L}^{(1)}$, is also compact  (see
for example \cite{Pazy} Proposition 1.4 p.79). Therefore the eigenvalues of the generator  $\mathscr{L}^{(2)} $ with
maximal real part are in finite number and have 
finite multiplicity.

We now split the generator $\mathscr{L}^{(2)} $ in a different way. We have
$$
\mathscr{L}^{(2)} =\tilde{\mathscr{G}} + L_{2}
$$
where
$
\tilde{\mathscr{G}}=\frac12 f''-af'+\tilde Vf \;
$ and 
$
\tilde V=V-\int R(x, dz).
$

We denote by $(S_{t})$ the
 strongly continuous semigroup in $\CBZ$  with generator $\tilde{\mathscr{G}}$. 
Using  Duhamel's formula (see for example \cite{BA}),
$$
P^{(2)}_{\!t}f = S_{\!t} f +\int_{0}^t S_{\!t-u} \,L_{2} \,P^{(2)}_{\!u}f \;du\;,
$$
 we
conclude that if $f\in \CBZ$ is nonnegative, then for any $t\ge0$
$$
P^{(2)}_{\!t}f\ge S_{\!t}f\;.
$$

It is easy to verify that $\tilde{\mathscr{G}}$ satisfies the
assumptions of Theorem 2.1 in \cite{CMSM}, and we deduce asymptotic properties for $(S_{\!t})$. For this
semigroup, there exist  a strictly positive function $\tilde \Theta_{0}$
and a strictly positive measure $\tilde \mu_{0}$ such that for any given $f$ nonnegative (nonzero) and any $m$  positive
(nonzero measure) we have
$$
\lim_{t\to\infty}e^{\tilde \lambda_{0}t}\int S_{\!t}f\; dm=
\int \tilde \Theta_{0} \; dm \int d\tilde\mu_{0}\,f>0\;,
$$
which implies that for some $t_{0}>0$
$$
\int S_{\!t_{0}}f\; dm>0\;.
$$
Hence $(S_{\!t})$ is irreducible 
(see definition 3.1 (ii) page 182 in \cite{AGGGLMNFS} ). This implies 
that $(P^{(2)}_t)$ is irreducible. 

\noindent The spectral results follow from Corollary 2.2 page 210 in \cite{AGGGLMNFS}.

\end{proof}

\begin{proposition}\label{HP1-HP2-diff-sauts-comp-res}
Under hypotheses (HV), (HD), (HA) and (HJ),  the properties HP1) and HP2) hold.
\end{proposition}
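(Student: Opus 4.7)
The plan is to derive HP1) and HP2) for $P^{(2)}_{t}$ by harvesting the spectral structure already packaged in Proposition \ref{B4}, and then extending from $\CBZ$ to $\CB$ via a semigroup argument using HP4) (which is proved separately in Proposition \ref{HP3-diff_sauts}).

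First, I would read off HP1) directly from Proposition \ref{B4}: that statement already produces a simple isolated real eigenvalue $-\lambda_{0}$ of the compact operator $P^{(2)}_{t}$ in $\CBZ$ with a strictly positive eigenfunction $\Theta_{0}\in \CBZ$, so $P^{(2)}_{t}\Theta_{0}=e^{-\lambda_{0}t}\Theta_{0}$ for all $t\ge 0$. The boundedness and positivity of $\Theta_{0}$ are exactly the HP1) requirements. Next, for HP2) I would apply the adjoint Perron--Frobenius theory for irreducible positive compact semigroups (Proposition 3.5 and its corollaries, Chapter C-III, in Arendt et al.\ \cite{AGGGLMNFS}) to obtain a left eigenmeasure $\mu_{0}\in M_{F}(\RR)$ for $(P^{(2)}_{t})$ at eigenvalue $e^{-\lambda_{0}t}$, i.e.\ $\int P^{(2)}_{t}g\,d\mu_{0}=e^{-\lambda_{0}t}\int g\,d\mu_{0}$ for $g\in\CBZ$. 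Irreducibility of $(P^{(2)}_{t})$ (from Proposition \ref{B4}) forces $\mu_{0}$ to charge every nonempty open set. I then normalize so that $\int \Theta_{0}\,d\mu_{0}=1$, and define $\Pi(g)=\Theta_{0}\int g\,d\mu_{0}$; by construction $\Pi$ is the spectral projection onto the one-dimensional eigenspace $\RR\Theta_{0}$.

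Since the rest of the spectrum of $P^{(2)}_{1}$ lies in the closed disc of radius $e^{-\lambda_{1}}$ for some $\lambda_{1}>\lambda_{0}$ (Proposition \ref{B4}), the restriction of $e^{\lambda_{0}t}P^{(2)}_{t}$ to the complementary spectral subspace $\ker\Pi \cap \CBZ$ has spectral radius bounded by $e^{-(\lambda_{1}-\lambda_{0})t}$. Standard holomorphic functional calculus then yields a constant $H'$ such that for all $g\in\CBZ$ and all $t\ge 0$,
\begin{equation*}
\|e^{\lambda_{0}t}P^{(2)}_{t}g-\Pi(g)\|_{\infty}\le H'\,\|g\|_{\infty}\,e^{-(\lambda_{1}-\lambda_{0})t}.
\end{equation*}

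The remaining task, and the main technical point, is the extension of this bound from $\CBZ$ to $\CB$, which is the actual content of HP2). I would use HP4) (proved in Proposition \ref{HP3-diff_sauts}): for any $t_{0}>0$ and any $g\in\CB$, $P^{(2)}_{t_{0}}g\in\CBZ$ with $\|P^{(2)}_{t_{0}}g\|_{\infty}\le C_{t_{0}}\|g\|_{\infty}$. Fix once and for all some $t_{0}>0$. For $t\ge t_{0}$, write $P^{(2)}_{t}g=P^{(2)}_{t-t_{0}}(P^{(2)}_{t_{0}}g)$ and apply the $\CBZ$-estimate to $P^{(2)}_{t_{0}}g$; using the left eigenmeasure identity $\Pi(P^{(2)}_{t_{0}}g)=e^{-\lambda_{0}t_{0}}\Pi(g)$, which holds \emph{a priori} on $\CBZ$, the exponential factors telescope to give
\begin{equation*}
\|e^{\lambda_{0}t}P^{(2)}_{t}g-\Pi(g)\|_{\infty}\le H' C_{t_{0}}e^{\lambda_{1}t_{0}}\,\|g\|_{\infty}\,e^{-(\lambda_{1}-\lambda_{0})t}.
\end{equation*}
For $0\le t\le t_{0}$, one absorbs everything into the constant using the uniform bound $\|e^{\lambda_{0}t}P^{(2)}_{t}\|_{\infty\to\infty}+\|\Pi\|_{\infty\to\infty}\le \mathrm{const}\cdot e^{\lambda_{0}t_{0}}$. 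Taking $H$ to be the maximum of these constants (times $e^{(\lambda_{1}-\lambda_{0})t_{0}}$ so the estimate is valid for all $t\ge 0$) gives HP2). The bottleneck throughout is the careful passage between the functional-analytic arena ($\CBZ$, where compactness lives) and the statement space ($\CB$, where HP2) is formulated), which is what forces the two-step argument via HP4).
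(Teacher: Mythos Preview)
Your argument is essentially correct and covers the same ground as the paper's proof, with one difference in the extension step from $\CBZ$ to $\CB$. The paper obtains HP2) on $\CBZ$ from the same Arendt et al.\ machinery (it cites Corollary~2.2 p.~210 directly), and then extends to $\CB$ by a more elementary route: approximate any $g\in\CB$ pointwise by a uniformly bounded sequence $(g_{n})\subset\CBZ$, pass to the limit in the inequality $|e^{\lambda_{0}t}P^{(2)}_{t}g_{n}(x)-\Pi(g_{n})(x)|\le H\|g_{n}\|_{\infty}e^{-(\lambda_{1}-\lambda_{0})t}$ using the Dominated Convergence Theorem (both inside the Feynman--Kac formula and in $\int g_{n}\,d\mu_{0}$), and take the supremum over $x$.

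Your factorization through HP4) also works, but it has a small unstated step: you invoke $\Pi(P^{(2)}_{t_{0}}g)=e^{-\lambda_{0}t_{0}}\Pi(g)$ for $g\in\CB$, while noting it holds ``a priori on $\CBZ$''. That identity for $g\in\CB$ is precisely the left-eigenmeasure property on $\CB$, which itself requires justification---and the natural justification is the same pointwise-approximation plus Dominated Convergence argument the paper uses. So your route is valid but slightly circuitous: it trades one DCT extension (the full HP2) inequality) for another (the eigenmeasure identity), plus an appeal to HP4) from a separate proposition. The paper's direct approach is shorter and self-contained.
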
 
\begin{proof}
HP1) follows from Proposition \ref{B4}. The proof HP2) is as follows. HP2) holds in $\CBZ$ thanks to
 \cite{AGGGLMNFS} Corollary 2.2 p. 210. Now, any function $f$ in $\CB$ is the pointwise limit of a sequence of
 functions in $\CBZ$, which are uniformly bounded by $\|f\|_{\infty}$. Thus, the inequality in HP2) can be extended to 
 $\CB$ by the Dominated Convergence Theorem.
\end{proof}

\begin{proposition}\label{HP3-diff_sauts}
Under hypotheses (HV), (HD), (HA) and (HJ), the properties  HP3), HP4) and HQ) hold.
\end{proposition}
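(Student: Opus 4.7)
The plan is to leverage the decomposition $\mathscr{L}^{(2)} = \tilde{\mathscr{G}} + L_{2}$ introduced in the proof of Proposition~\ref{B4} (with $\tilde V = V - \int R(\sbullet, dz)$), to transfer the properties already established for the purely diffusive semigroup to $(P^{(2)}_{t})$ via Duhamel's formula, and then to observe that $(Q^{(2)}_t)$ is a bounded perturbation of a compact semigroup. Let $(S_t)$ denote the strongly continuous semigroup on $\CBZ$ generated by $\tilde{\mathscr{G}}$. Since $\tilde V$ still satisfies HV) and HD) (the jump rate $\int R(x, dz)$ is bounded and continuous by HJ1-HJ2), Proposition~\ref{HP3-HP4-diff} applies to $(S_t)$: it satisfies HP3) and HP4).

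Strong continuity and irreducibility of $(P^{(2)}_t)$ in $\CBZ$ have already been established in Proposition~\ref{B4}, so only the second half of HP3) and HP4) remain. For HP4), I would use the Duhamel identity
\begin{equation*}
P^{(2)}_{t}f = S_{t}f + \int_{0}^{t} P^{(2)}_{t-s}\bigl(L_{2}\, S_{s}f\bigr)\, ds,
\end{equation*}
valid for $f\in \CB$ (this form pairs the bounded perturbation $L_2$ with the already smoothed factor $S_sf$). For $f\in \CB$ and any $s>0$, $S_{s}f\in\CBZ$ by HP4) for $(S_t)$; Lemma~\ref{borneC0} then gives $L_{2}S_{s}f\in\CBZ$; and since $(P^{(2)}_t)$ is a strongly continuous semigroup on $\CBZ$, $P^{(2)}_{t-s}\bigl(L_{2}S_{s}f\bigr)\in\CBZ$. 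A uniform bound $\|P^{(2)}_{t-s}L_{2}S_{s}f\|_{\infty}\le K\|f\|_{\infty}$, derivable from the boundedness of $L_2$ and of the semigroups on finite time intervals, lets us conclude by the Dominated Convergence Theorem that the integral lies in $\CBZ$, hence $P^{(2)}_tf\in\CBZ$.

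For the second half of HP3) — that $\int_{0}^{T}P^{(2)}_{s}\,ds$ sends $\MB$ to $\CB$ — I would argue by the same Duhamel formula combined with Fubini's theorem. The term $\int_0^T S_s f(x)\,ds$ already belongs to $\CB$ by HP3) for $(S_t)$. Exchanging the order of integration in the double integral and applying HP4) to the inner diffusion factor (so that the relevant integrand is continuous and decays at infinity), one deduces that the full iterated integral belongs to $\CB$ as well. The main technical point will be verifying the uniform integrable bounds that justify Fubini and Dominated Convergence; these follow from the boundedness of $L_{2}$ on $\CBZ$ and the fact that $\|P^{(2)}_{t}\|_{\infty}$ is locally bounded.

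Finally, for HQ): the semigroup $(Q^{(2)}_{t})$ is obtained from $(P^{(2)}_{t})$ by the bounded perturbation $-2b$ (bounded thanks to HV2) of the generator $\mathscr{L}^{(2)}$. By Proposition~\ref{B4}, $(P^{(2)}_{t})$ is a compact semigroup in $\CBZ$; invoking the classical stability of compactness of strongly continuous semigroups under bounded perturbations of the generator (e.g.\ \cite{Pazy}, Proposition~1.4 p.~79, as already used in Proposition~\ref{HP3-HP4-diff}), we conclude that $(Q^{(2)}_{t})$ is compact, hence quasi-compact in $\CBZ$. The main obstacle, as usual in this jump-diffusion setting, is the careful bookkeeping of the Duhamel expansion to ensure that the non-local operator $L_{2}$ respects both the spatial decay at infinity (HP4) and the regularizing time-integrated bound (HP3); once Lemma~\ref{borneC0} is available, however, this is routine.
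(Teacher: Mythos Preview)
Your proposal is correct and rests on the same core idea as the paper: bootstrap HP3)--HP4) from the pure-diffusion semigroup via a Duhamel decomposition, and obtain HQ) by stability of compactness under the bounded perturbation $-2b$. The execution differs only in presentation. The paper derives the decomposition probabilistically, conditioning on the first jump time $J$ of $(X_t)$, which yields the form $P^{(2)}_t f = \tilde P^{(1)}_t f + \int_0^t \tilde P^{(1)}_u\!\big[L_2\, P^{(2)}_{t-u} f\big]\,du$ and then uses that $\tilde P^{(1)}_u$ maps $\MB$ into $\CBZ$ for $u>0$; this gives the single strong statement $P^{(2)}_t:\MB\to\CBZ$, from which both HP4) and the $\int_0^T P^{(2)}_s\,ds:\MB\to\CB$ clause of HP3) follow at once. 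You instead invoke the dual form $P^{(2)}_t = S_t + \int_0^t P^{(2)}_{t-s}\,L_2\,S_s\,ds$ and treat HP4) and HP3) separately via Lemma~\ref{borneC0} and Fubini. The only point to make explicit in your write-up is that the abstract Duhamel identity, a priori valid on $\CBZ$, extends to $f\in\MB$ (needed for HP3)); this is immediate either by monotone-class/bounded-pointwise approximation through $\CBZ$, or equivalently by the first-jump computation the paper carries out --- so the two arguments really are two sides of the same coin. Your HQ) argument is identical to the paper's.
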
 
\begin{proof}
Thanks to Proposition \ref{B4}, we only have to prove that 
for $t>0$, $P_{t}^{(2)}$ is a bounded operator from  $\MB$ to $\CBZ$.
\
Let us denote by $J$, the first jump time of the underlying process $(X_{s})$.
We have for each $f\in\MB$
$$
P_{t}f(x)=\EE_{x}\left(e^{\int_{0}^{t}V(X_{s})\,ds}
\pcun_{J> t}f\big(X_{t}\big)\right)
+\EE_{x}\left(e^{\int_{0}^{t}V(X_{s})\,ds}
\pcun_{J\le t}f\big(X_{t}\big)\right)\;.
$$
For the first term, we use a coupling argument to obtain 
$$
\EE_{x}\left(e^{\int_{0}^{t}V(X_{s})\,ds}
\pcun_{J> t}f\big(X_{t}\big)\right)=   \EE_{x}\left(e^{\int_{0}^{t}\left(V(X^{(1)}_{s})-\tilde R(X^{(1)}_{s})\right)\,ds}
f\big(X^{(1)}_{t}\big) \right) = \tilde P_{t}^{(1)} f(x),
$$
where $X^{(1)}$  is a
diffusion process  with generator $\mathscr{L}^{(1)}$ issued from $x$ and
$$
\tilde R(y)=\int R(y,\,dz).
$$
The results of \cite{CMSM}  allow to conclude that $\tilde P_{t}^{(1)}$ is a bounded operator from $\MB$ to $\CBZ$, since the function $V-\tilde R$ has the same properties as $V$.

For the second term we have by the strong Markov property and a coupling argument that 
$$
\EE_{x}\left(e^{\int_{0}^{t}V(X_{s})\,ds}
\pcun_{J\le t}f\big(X_{t}\big)\right)=
\EE_{x}\left(e^{\int_{0}^{J}V(X_{s})\,ds}\pcun_{J\le t}
\EE_{X_{J}}\left(e^{\int_{0}^{t-J}V(X_{s})\,ds} f\big(X_{t-J}\big)\right)\right)
$$
$$
=\int_{0}^{t}du\;
\EE_{x}\left(e^{\int_{0}^{u}(V(X^{(1)}_{\tau})-\tilde R(X^{(1)}_{\tau}))\,d\tau}
\int R(X^{(1)}_{u},dz)\;
\EE_{z}\left(e^{\int_{0}^{t-u}V(X_{s})\,ds} f\big(X_{t-u}\big)\right)\right)\;.
$$

For each $t>0$ and $0<u<t$ fixed, we have
$$
\EE_{x}\left(e^{\int_{0}^{u}(V(X^{(1)}_{\tau})-\tilde R(X^{(1)}_{\tau}))\,d\tau}
\int R(X^{(1)}_{u},dz)\;
\EE_{z}\left(e^{\int_{0}^{t-u}V(X_{s})\,ds}
  f\big(X_{t-u}\big)\right)\right)
=\tilde P_{u}^{(1)}
\int R(\,\sbullet\,,dz)\;
\EE_{z}\left(e^{\int_{0}^{t-u}V(X_{s})\,ds} f\big(X_{t-u}\big)\right)(x).
$$
Since  $\tilde P_{u}^{(1)}$ is a bounded
operator from $\MB$ to $\CBZ$, we obtain that
 for each $t>0$ and $0<u<t$ fixed, 
$$
\tilde P_{u}^{(1)}
\int R(\,\sbullet\,,dz)\;
\EE_{z}\left(e^{\int_{0}^{t-u}V(X_{s})\,ds} f\big(X_{t-u}\big)\right)\in\CBZ\;.
$$

Therefore by Dominated Convergence Theorem, the function
$$
x\longrightarrow \int_{0}^{t}du\;\Ex{e^{\int_o^u (V(X_{\tau})-\tilde R(X_{\tau}))\,d\tau}
\int R(X_{u},dz)\;\EE_{\partial_z}\left(e^{\int_{0}^{t-u}V(X_{s})\,ds} f\big(X_{t-u}\big)\right)}\in \CBZ
$$
and its uniform norm is bounded by
$$
\left\|\int_{0}^{t}du\;
\EE_{\sbullet}\left(e^{\int_{0}^{u}(V(X_{\tau})-\tilde R(X_{\tau}))\,d\tau}
\int R(X_{u},dz)\;
\EE_{z}\left(e^{\int_{0}^{t-u}V(X_{s})\,ds}
  f\big(X_{t-u}\big)\right)\right)\right\|_{\infty}
$$
$$
\le 
t\, e^{t\,\sup_{y}V(y)}\;\|f\|_{\infty}\;\sup_{x}\int R(x,dz)\;.
$$

 \noindent For any $t>0$, the operator $P_{t}^{(2)}$  is compact in $\CBZ$ (as seen in proof of Proposition 
 \ref{B4}). The semigroup $Q_{t}^{(2)}$ being associated to $P_{t}^{(2)}$  by the bounded perturbation  $-2b$ 
 of the generator $\mathscr{L}^{(2)}$  is also compact (  \cite{Pazy} Theorem 1.1 p.76 ). It is then quasi-compact.

\end{proof}

\subsection{Proof of Theorem \ref{the:C}}\label{preuve:the:C}

We denote by $P_{t}^{(3)}$ the semigroup $(P_{t})$ in this case.

Under  the assumptions (HV), (HD), (HJ) and HJ4),  the hypotheses HP1) and HP2) follow from 
Cloez Gabriel  \cite{cloezgabriel}. 


In order to prove HP3) and HP4), we need to introduce and study the semi group $(G_{t})$  on $\MB$ given by 
\begin{equation}
\label{def:G}
G_{t}f(x)=e^{\int_{0}^{t} V(x+\tau)\,d\tau}f(x+t)\;.
\end{equation}

\begin{lemma}\label{prop:G}
The semigroup $(G_{t})$ defined in \eqref{def:G} satisfies 
\begin{enumerate}[i)]
\item 
$(G_{t})$ is a semigroup of bounded operators in $\MB$, $\MBZ$,
$\CB$ and $\CBZ$. It maps nonnegative functions to nonnegative
functions.
\item For any $t>0$,  $G_{t}$ maps continuously  $\MB$ to $\MBZ$ and $\CB$ to
  $\CBZ$. 
\item In all these spaces, its asymptotic growth rate is negative.
\item $(G_{t})$ is a strongly continuous semigroup in $\CBZ$. 

\medskip The same results hold for the semigroup
\begin{equation}\label{def:G2}
\widetilde G_{t}f(x)=e^{\int_{0}^{t}(V(x+\tau)-\tilde R(x+\tau))\,d\tau}f(x+t)\;.
\end{equation}

\end{enumerate}

\end{lemma}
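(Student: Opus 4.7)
The plan is to verify the four items of Lemma \ref{prop:G} by a direct analysis of
$$M(t,y):=\int_{0}^{t}V(y+\tau)\,d\tau,\qquad M(t):=\sup_{y\in\RR}M(t,y),$$
since $G_{t}f(x)=e^{M(t,x)}f(x+t)$ and $\|G_{t}f\|_{\infty}\le e^{M(t)}\|f\|_{\infty}$. The semigroup property is immediate from the cocycle identity $M(t+s,x)=M(t,x)+M(s,x+t)$, positivity preservation is obvious, and boundedness on $\MB$, $\CB$ follows from the uniform upper bound $M(t,x)\le tA(V)$ given by HV2. Preservation of $\MBZ$ and $\CBZ$ uses that $f(x+t)\to0$ as $|x|\to\infty$ while the exponential factor stays bounded by $e^{tA(V)}$; continuity of $G_{t}f$ in $x$ when $f\in\CB$ follows from continuity of $V$ and $f$ via the Dominated Convergence Theorem. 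This settles item (i).

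For item (ii), the regularizing property $\MB\to\MBZ$ and $\CB\to\CBZ$ at $t>0$ rests on the key estimate \eqref{H4}: for $|x|$ large enough that $[x,x+t]\subset\{|u|\ge x_{0}\}$,
$$M(t,x)\le -E\int_{0}^{t}|x+\tau|\,d\tau\xrightarrow[|x|\to\infty]{}-\infty,$$
so $e^{M(t,x)}\to 0$ and hence $G_{t}f(x)\to0$ for any $f\in\MB$. For item (iii), I bound $M(t)$ for large $t$ by splitting the integration interval $[y,y+t]$ at $\{|u|\le x_{0}\}$ versus $\{|u|> x_{0}\}$: on the first set $V\le A(V)$ over a length at most $2x_{0}$, while on the second $V\le-Ex_{0}$ over a length at least $t-2x_{0}$, giving
$$M(t)\le 2x_{0}A(V)-Ex_{0}(t-2x_{0}),\qquad t\ge 2x_{0}.$$
Since $\|G_{t}\|\le e^{M(t)}$ in all four spaces, this yields an asymptotic growth rate $\le -Ex_{0}<0$.

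For item (iv), strong continuity in $\CBZ$, I decompose
$$G_{t}f(x)-f(x)=\bigl(e^{M(t,x)}-1\bigr)f(x+t)+\bigl(f(x+t)-f(x)\bigr).$$
The second term vanishes uniformly as $t\to0$ by uniform continuity of $f\in\CBZ$. For the first term, given $\varepsilon>0$ pick $R$ with $|f(y)|<\varepsilon$ for $|y|\ge R$: on $|x|\le R$ the function $V$ is bounded on the compact $[-R-1,R+1]$, so $M(t,x)\to0$ uniformly and $|e^{M(t,x)}-1|\to0$ uniformly, whereas on $|x|>R$ with $t\le1$ one simply has $|(e^{M(t,x)}-1)f(x+t)|\le(e^{A(V)}+1)\varepsilon$. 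This is the only genuinely delicate step, because $V$ is not bounded below and the exponent can blow up to $-\infty$ at infinity — the resolution is precisely that $f$ is small there.

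Finally, for $\widetilde G_{t}$, observe that $\tilde R(y)=\int R(y,dz)$ is nonnegative, bounded by HJ1, and continuous by HJ2; consequently $V-\tilde R$ satisfies the same structural hypotheses as $V$ (continuous, bounded above, and $(V-\tilde R)(y)\le V(y)\le -E|y|$ for $|y|\ge x_{0}$), so the identical proof applies verbatim. I expect the main obstacle to lie in the uniform control in item (iv), which is the only point where the unboundedness from below of $V$ interacts nontrivially with the $\CBZ$-topology; the rest of the lemma reduces to straightforward pointwise estimates on $M(t,x)$.
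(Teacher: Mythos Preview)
Your proof is correct and follows essentially the same route as the paper. The only minor differences are cosmetic: the paper handles (ii) for $t>1$ via the factorization $G_{t}=G_{1}G_{t-1}$ rather than directly, and for (iii) it uses the qualitative bound $V(y)\le -1+A\,\pcun_{K}(y)$ for some compact $K$ instead of the explicit linear estimate \eqref{H4}; your treatment of (iv) is considerably more detailed than the paper's one-line justification, but the underlying idea is the same.
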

\begin{proof}
i) follows immediately from the definition and the upper bound on $V$.

For ii), if $0<t\le 1$ we have
$$
\int_{0}^{t}V(x+\tau)\,d\tau\le t \;\sup_{|y-x|\le 1}V(y),
$$
which tends to $-\infty$ if $|x|$ tends to $+\infty$. This implies that
$G_{t}$ maps continuously $\MB$ to $\MBZ$ and hence $\CB$ to
  $\CBZ$ for any $0<t\le 1$. 

For $t>1$, we write 
$$
G_{t}=G_{1}\,G_{t-1}\;.
$$
We will now estimate the asymptotic growth rate of $(G_{t})$ in $\MB$
(or in $\CB$).

Since $V$ tends to $-\infty$ for $x$  tending to $\pm\infty$,
there exists a compact set $K$ and a constant $A>0$ such that
$$
V(y)\le -1+A\,\pcun_{K}(y)\;.
$$
Therefore
$$
\int_{0}^{t}V(x+\tau)\,d\tau\le A\,|K|-t\;.
$$
This implies that the asymptotic growth rate of $(G_{t})$ in $\MB$
(or $\CB$) is at most $-1$ and iii) follows.

iv) follows from the divergence of $V$ at infinity and continuity of
$V$.

\end{proof}

\begin{proposition}\label{HP3-sauts}
Under hypotheses (HV), (HD), (HJ) and HJ4), the properties HP3), HP4) and HQ) hold.
\end{proposition}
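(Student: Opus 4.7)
The plan is to carry over the Duhamel strategy used in the proof of Proposition~\ref{HP3-diff_sauts} for the diffusion-with-jumps case, replacing the Feynman--Kac diffusion semigroup $\tilde P_t^{(1)}$ there by the drift semigroup $\widetilde G_t$ of Lemma~\ref{prop:G}, which has been engineered to play exactly that role. Conditioning on the first jump time $J$ of the underlying process (whose instantaneous jump rate is $\tilde R(X_s)=\int R(X_s,dz)$) and invoking the strong Markov property at $J$ will produce the identity
$$
P_t^{(3)} f(x) = \widetilde G_t f(x) + \int_0^t \widetilde G_u\bigl( L_2\, P_{t-u}^{(3)} f\bigr)(x)\,du,
$$
where $L_2 g(x)=\int g(z)\,R(x,dz)$ is bounded on $\MB$ with norm $\sup_x\tilde R(x)<\infty$ by HJ1) and maps $\CBZ$ into itself by Lemma~\ref{borneC0}.

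From this decomposition HP3) and HP4) should fall out along standard lines. For HP3), since the splitting $\mathscr{L}^{(3)}=(\partial_x+V-\tilde R)+L_2$ has a first piece that generates the strongly continuous semigroup $(\widetilde G_t)$ on $\CBZ$ (Lemma~\ref{prop:G}(iv)) and a second piece $L_2$ that is bounded there, Pazy's bounded-perturbation theorem (\cite{Pazy}, Thm.~1.1, p.~76) delivers strong continuity of $(P_t^{(3)})$ in $\CBZ$. Irreducibility is direct from the pointwise domination $P_t^{(3)} f\ge \widetilde G_t f$ for $f\ge 0$ combined with the explicit translation formula $\widetilde G_t f(x)=e^{\int_0^t(V-\tilde R)(x+\tau)d\tau}f(x+t)$: for any $f\in\CBZ$ with $f\ge 0,\ f\not\equiv 0$ and any nonzero positive measure $\phi$, a suitable $t$ makes $\{f>0\}-t$ meet the support of $\phi$. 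For HP4), the leading term $\widetilde G_t f$ lies in $\CBZ$ by Lemma~\ref{prop:G}(ii), and for each $u\in(0,t)$ the integrand $\widetilde G_u(L_2 P_{t-u}^{(3)} f)$ lies in $\MBZ$ because $\widetilde G_u$ maps $\MB$ into $\MBZ$; the uniform bound $\|f\|_\infty e^{tA(V)}\sup_x\tilde R(x)$ on the integrand together with dominated convergence then places the integral in $\CBZ$, with continuity either inherited from the Feller structure of $(X_t)$ or obtained by iterating Duhamel once more so that both $L_2$-factors sit inside $\CBZ$-mapping pieces. Finally, $\int_0^t P_s^{(3)}ds:\MB\to\CB$ is handled by the change of variables $y=x+s$, which recasts the leading-term contribution as $\int_x^{x+t}e^{\int_x^y(V-\tilde R)(\sigma)d\sigma}f(y)\,dy$, a function continuous in $x$ for any $f\in\MB$; a Fubini swap reduces the iterated Duhamel term to an expression of the same shape.

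The true obstacle is HQ), which demands, when $\lambda_0<0$, that either $b+d$ does not vanish (a condition not guaranteed by the other assumptions) or $Q_t^{(3)}$ is quasicompact in $\CBZ$. My plan is to reapply the Cloez--Gabriel spectral gap machinery of \cite{cloezgabriel} --- which, modulo bookkeeping, produces HP1)--HP2) for $P_t^{(3)}$ itself --- to the Feynman--Kac semigroup $Q_t^{(3)}$, whose effective potential is $V-2b=-(b+d)$. This new potential is nonpositive and, by HD), still diverges to $-\infty$ at $\pm\infty$; the drift and the jump kernel $R$, and in particular the Doeblin minorization HJ4), are unchanged. The delicate step, and what I expect to be the main technical work, is to check that every ingredient of the Cloez--Gabriel proof (Lyapunov function, coupling/entropy estimate, Doeblin contraction from HJ4) depends only on the confinement of the potential at infinity and not on its sign, so that the whole argument transfers verbatim from $V$ to $-(b+d)$. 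Once this is confirmed one gets a decomposition $Q_t^{(3)}=e^{-\nu_0 t}\Pi_Q+\mathcal O(e^{-\nu_1 t})$ with $\nu_1>\nu_0$, exhibiting an isolated simple dominant eigenvalue separated from the rest of the spectrum by an exponential gap, which is strictly stronger than the quasicompactness required by HQ).
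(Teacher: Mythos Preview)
Your Duhamel/first-jump decomposition, the bounded-perturbation argument for strong continuity via Pazy, the change-of-variable treatment of $\int_0^T P_s^{(3)}\,ds$, and the plan to invoke Cloez--Gabriel for HQ) are all in line with the paper's proof (the paper simply cites Theorem~2.1 of \cite{cloezgabriel} for HQ) without further verification, so your more cautious discussion is if anything more careful than the paper's).

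There is, however, one genuine gap: your irreducibility argument via the domination $P_t^{(3)} f \ge \widetilde G_t f$ cannot work. The drift semigroup $\widetilde G_t f(x) = e^{\int_0^t (V-\tilde R)(x+\tau)\,d\tau}\, f(x+t)$ translates the support of $f$ \emph{unidirectionally}: $\mathrm{supp}(\widetilde G_t f)=\mathrm{supp}(f)-t$ moves only to the left as $t$ increases. So if $\phi$ is a positive measure supported entirely to the right of $\mathrm{supp}(f)$ (say $\mathrm{supp}(f)\subset(-\infty,a)$ and $\mathrm{supp}(\phi)\subset(a,\infty)$), then $\langle \phi, \widetilde G_t f\rangle=0$ for every $t\ge 0$, and the lower bound tells you nothing. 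The constant drift alone cannot push mass to the right; this is precisely where the two-sided jump minorization HJ4) has to enter, and your argument never uses it. The paper's proof does not attempt to get irreducibility from $\widetilde G_t$: instead it observes that any nonzero $f\ge 0$ in $\CBZ$ dominates $\delta\,\ind_{[x_1,x_2]}$ on some interval, and then invokes Lemma~2.2 of \cite{cloezgabriel} (which relies on HJ4)) to obtain $P_t^{(3)}\ind_{[x_1,x_2]}\ge \eta\,\ind_{[y_1,y_2]}$ for \emph{arbitrary} intervals $[y_1,y_2]$, from which irreducibility follows.
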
 

\begin{proof}

In $\CBZ$, the semi group $P_{t}^{(3)}$ is a perturbation of the
strongly continuous semigroup $(G_{t})$ by the bounded operator
$L_{1}$. Hence by  Theorem 1.1 page 76 in \cite{Pazy} it is also a
strongly continuous semigroup in $\CBZ$.

To prove the irreducibility, we observe that if $f$ is a nonnengative
nonzero function in $\CBZ$, there exist an interval $[x_{1},\,x_{2}]$ 
and a constant  $\delta>0$ such that $f(x) \ge \delta$ for any $x\in [x_{1},\,x_{2}]$. 
This implies that for any $t>0$,
$$
P_{t}^{(3)}f\ge \delta\; P_{t}^{(3)}\pcun_{[x_{1},\,x_{2}]}\;.
$$
 From Lemma 2.2 in \cite{cloezgabriel}, we deduce that for any $y_{1}, y_{2}$, there exists $\eta>0$ such that
 $$P_{t}^{(3)}\pcun_{[x_{1},\,x_{2}]} \ge \eta\pcun_{[y_{1},\,y_{2}]}\;.$$
  Irreducibility in $\CBZ$  follows from \cite{AGGGLMNFS} Definition 3.1 (ii) p.182. 

We have now to prove that for any $T>0$, $\int_{0}^T P^{(3)}_{s}ds$ maps $\MB$ into $\CB$.

We use the jump decomposition for the process
$(X_{s})$. Denoting as before by $J$ the  the first jump (stopping) time of $(X_{s})$.  We have for $f\in \MB$
$$
\int_{0}^{T} P^{(3)}_{t}f(x)\;dt
=
\int_{0}^{T}\EE_{x}\left(e^{\int_{0}^{t}V(X_{s})\,ds}
\pcun_{J\ge t}f\big(X_{t}\big)\right)\;dt
+\int_{0}^{T}\EE_{x}\left(e^{\int_{0}^{J}V(X_{s})\,ds} \pcun_{J< t}\EE_{X_{J}}\left(e^{\int_{0}^{t-J}V(X_{s})\,ds}
 f\big(X_{t-J}\big)\right)\right)\;dt\;.
$$

For the first term we have the explicit expression
$$
\int_{0}^{T}\EE_{x}\left(e^{\int_{0}^{t}V(X_{s})\,ds}
\pcun_{J\ge t}f\big(X_{t}\big)\right)\;dt
=\int_{0}^{T}e^{\int_{0}^{t}(V(x+s)-\tilde R(x+s))\,ds}f(x+t)\;dt
=
\int_{x}^{x+T}e^{\int_{x}^{u}(V(\tau)-\tilde R(\tau))\,d\tau}f(u)\;du,
$$
which is obviously continuous (even uniformly Lipschitz continuous) in $x$.

We also have
$$
\left|\int_{0}^{T}e^{\int_{0}^{t}(V(x+s)-\tilde R(x+s))\,ds}f(x+t)\;dt \right| \le \|f\|_{\infty}\int_{0}^{T}e^{\int_{0}^{t}(V(x+s)-\tilde
  R(x+s))\,ds}\;dt
\le T\; \|f\|_{\infty}\;e^{T\;\sup_{|y-x|\le T}V(y)},
$$
which tends to zero when $|x|$ tends to infinity.

For the second term we have by the strong Markov property 
$$ 
\EE_{x}\left(e^{\int_{0}^{t}V(X_{s})\,ds}
\pcun_{J< t}f\big(X_{t}\big)\right)=
\EE_{x}\left(e^{\int_{0}^{J}V(X_{s})\,ds}\pcun_{J< t}
\EE_{X_{J}}\left(e^{\int_{0}^{t-J}V(X_{s})\,ds} f\big(X_{t-J}\big)\right)\right)
$$
$$
=\int_{0}^{t}du\;e^{\int_{0}^{u}(V((x+s)-\tilde R(x+s))\,ds}
\int R(x+u,dz)\;
\EE_{z}\left(e^{\int_{0}^{t-u}V(X_{s})\,ds} f\big(X_{t-u}\big)\right)\;.
$$
Then,
\begin{align*}&\int_{0}^T dt\int_{0}^{t}du\;e^{\int_{0}^{u}(V((x+s)-\tilde R(x+s))\,ds}
\int R(x+u,dz)\;
\EE_{z}\left(e^{\int_{0}^{t-u}V(X_{s})\,ds}
  f\big(X_{t-u}\big)\right)\\
= &\int_{{D(x,T)}} dy d\tau \, e^{\int_{0}^{y-x} (V(x+s)-\tilde R(x+s))ds} \int R(y,dz)\;
\EE_{z}\left(e^{\int_{0}^{\tau}V(X_{s})\,ds} f\big(X_{\tau}\big)\right)\;,
\end{align*}
by the change of variable $u=y-x$ and $t = \tau +u$. The set $D(x,T)$ is a triangle 
whose vertices are given by the coordinates  $(x,0)$, $(x+T,0)$, $(x+T, -T)$. This implies the continuity in $x$. 

To prove HP4) it remains to prove that the term above tends to $0$ as $x$ tends to $\pm \infty$. It is immediate (thanks to HJ2) 
to observe that $$e^{\int_{0}^{u}(V((x+s)-\tilde R(x+s))\,ds}
\int R(x+u,dz)\;
\EE_{z}\left(e^{\int_{0}^{t-u}V(X_{s})\,ds}
  f\big(X_{t-u}\big)\right)$$ tends to $0$ when $x$ tends to infinity, and we apply the Dominated Convergence Theorem.
  
  The proof of HQ) is immediately deduced from Theorem 2.1 in Cloez-Gabriel \cite{cloezgabriel}.

\end{proof}

{\bf Acknowledgments :}
We thank Jean Bertoin for suggesting us to prove the almost convergence in the supercritical case.
This work has been supported by the Chair Mod\'elisation Math\'ematique et Biodiversit\'e of Veolia
Environnement-Ecole Polytechnique-Museum National d'Histoire Naturelle-Fondation X and by the European
Union (ERC, SINGER, 101054787). Views and opinions expressed are however those of the authors only
and do not necessarily reflect those of the European Union or the European Research Council. Neither the
European Union nor the granting authority can be held responsible for them. 
J. San Martin is thankful for the hospitality of CMAP-Ecole polytechnique and S. M\'el\'eard acknowledges the hospitality of CMM-Universidad de Chile. 

\end{document}